\def \hueco{\noalign{\medskip}}
\def \pato{\forall\,}
\def \beq{\begin{equation}}
	\def \eeq{\end{equation}}
\def \ba{\begin{array}}
	\def \ea{\end{array}}
\def \dis{\displaystyle}
\def\x  {\boldsymbol x}
\def\n        {{\boldsymbol n}}
\def\tz     {\widetilde{z}}
\def\tpsi       {\widetilde{\psi}}
\def\tw      {\widetilde{w}}
\def\tchi   {\widetilde{\chi}}
\crefname{hypothesis}{Hypothesis}{Hypotheses}
\title{Unconditionally Energy Stable Second Order Numerical Scheme for a Microemulsion model
}
\author{Natasha S. Sharma\thanks{Department of Mathematical Sciences, The University of Texas at El Paso , El Paso TX 
  (\email{nssharma@utep.edu}).}
\and Giordano Tierra\thanks{Department of Mathematics, , University of North Texas, Denton TX 
  (\email{gtierra@unt.edu}).}
}
\begin{document}
	
	\maketitle
	
	\begin{abstract}
We present a numerical scheme for solving a sixth-order Cahn-Hilliard type equation that captures the dynamics of phase transitions in a ternary mixture consisting of two immiscible fluids and a surface active molecule that is amphiphilic. We show that by considering a suitable midpoint approximation for the nonlinear terms in the differential equation, we obtain an unconditionally energy-stable numerical scheme that is second-order in time. 
We demonstrate that our proposed numerical scheme satisfies these key properties for a wide range of physical parameters in two and three dimensions. 
Moreover, we present the results of a numerical study to report on the impact of each physical parameter on the behavior of the dynamics of the phase transitions, which are in agreement with the experimental observations.

	\end{abstract}
	
	\begin{keywords}
		microemulsions, Cahn-Hilliard, energy-stable, finite elements, 
		binary \\fluid-surfactant phase-field equations 
	\end{keywords}
	
	\begin{MSCcodes}
		35K51, 35M13, 35Q35, 65M12, 65M60
	\end{MSCcodes}
	
	\section{Introduction}
	This paper is concerned with a second-order numerical scheme for the sixth-order Cahn-Hilliard type equations that model micro-emulsions. Here, `sixth-order' refers to the order of the spatial derivatives involved in this Cahn-Hilliard system and these high-order derivatives capture the curvature effects.
	On a macroscopic level, microemulsions are a thermodynamically stable mixture of two immiscible fluids and a surface active molecule called the surfactant introduced in the system to reduce the surface tension at the interface of these immiscible fluids. The microemulsion phase can be thought of as a single-phase structured fluid consisting of homogeneous regions of oil-rich, water-rich phases separated by surfactant monolayers that exist at the interface of these homogeneous regions. It is this remarkable property of the coexistence of water-oil-surfactant coupled with its ability to capture many essential static properties of the ternary oil-water-surfactant systems that makes microemulsion models suitable for capturing different physical processes such as enhanced oil recovery \cite{bera2015microemulsions} particularly recovery methods involving microemulsion flooding~\cite{enhancedOilME2009}, development of environmentally-friendly solvents \cite{paul2015ionic}, consumer and commercial cleaning product formulations \cite{klier2000properties}, and drug delivery systems \cite{callender2017microemulsion}.
	
	Despite the aforementioned applications {especially the extensive volume of literature in the pharmaceutical community on the use of microemulsions 
		(see Figure 17 in the survey article~\cite{callender2017microemulsion} for a brief overview), as well as experimental studies on the mathematical model~\cite{gompper1994fluctuating, gompper1993ginzburg1, gompper1993ginzburg2,GompperSchick90,GompperSchick94,
			GompperZschocke92}, the advances in the mathematical and numerical analysis and the ensuing numerical simulations have been limited. One of the key challenges impeding this advancement is the presence of sixth-order spatial derivatives lurking in this gradient flow system. Additionally, there is a need to design a suitable temporal scheme that yields an unconditionally stable, solvable scheme with minimal numerical dissipation. We note in response to the challenges of developing a suitable numerical scheme, there are only two available publications~\cite{diegelsharma-me23, hoppelinsenmann-me18} to the best of the authors' knowledge. This mismatch between the advances in the experimental, physical and the mathematical/numerical analysis has motivated the authors in this work to develop a new numerical scheme to approximate solutions to this type of system.}

{
To provide a context for the numerical scheme proposed in this work, we first discuss the existing numerical methods available in the literature. Hoppe and Linsenmann} in~\cite{hoppelinsenmann-me18} presented a pioneering numerical scheme where 
	this sixth-order Cahn-Hilliard type equation was rewritten as a parabolic equation and a nonlinear fourth-order partial differential equation. The spatial discretization involved a continuous Galerkin finite element method for this Ciarlet Raviart-type mixed formulation. We note that in using a continuous Galerkin finite element method for the nonlinear fourth order partial differential equation (PDE), the authors {in~\cite{hoppelinsenmann-me18}} end up with a continuous interior penalty (C0-IP) method~\cite{brennerfrontiers2010, brennergudi2012, engel2002} where the C$^1$-continuity requirement (for traditional $H^2$-conforming finite element methods discretizing the fourth order equation) is replaced with interior penalty techniques. The C0-IP method is characterized by a stabilization term that penalizes jumps in the gradient and is scaled by a penalty parameter. As is standard with other discontinuous Galerkin methods, this penalty parameter has to be suitably chosen to guarantee stability and other important properties. The temporal discretization in this work relied on the backward Euler method, and while sub-optimal error estimates were derived, the question of energy dissipation remained unaddressed. In a more recent work~\cite{diegelsharma-me23}, the authors also considered a C0-IP method for the spatial discretization, whereas for the temporal discretization, a variant of the Eyre's convex splitting scheme~\cite{eyre1998} was considered. Although this proposed scheme was shown to be unconditionally stable and uniquely solvable, 
	it is known that such a splitting scheme introduces a large amount of numerical dissipation, which prevents it from obtaining the right dynamics unless the time step is sufficiently small \cite{guillentierra2013, Xuetal2019}. Moreover, some of the proofs rely on considering a very specific range of model parameters, which might limit the applicability of the proposed scheme to all types of realistic situations.
	
{
In this paper, we propose a new numerical scheme where the spatial discretization is based on using continuous Lagrange finite elements. In order to do that we recast the sixth-order Cahn-Hilliard equation as a coupled system of three second-order elliptic equations. The idea of recasting a sixth-order Cahn-Hilliard type equation as a system of second-order equations dates back to the works of Backofen et al.~\cite{backofen2007},  Vignal and co-authors~\cite{vignal2015} and Pei et al.~\cite{pei2019}, on the phase-field-crystal equation, which mathematically can be viewed as a particular case of the system studied in this work. For the temporal discretization, we design a new approximation to guarantee unconditional stability with no numerical dissipation. One of the main advantages of our proposed scheme is that we can prove the key properties of unconditional stability and solvability for a wider range of model parameters than previous works on this topic. 
}
		

	Our paper is organized as follows. Section~\ref{sec:model} describes the model. In section~\ref{sec:num-scheme}, we present our numerical scheme and prove the key properties satisfied by it. Section~\ref{sec:simulations} presents the numerical performance of the scheme, including the results of a study of parameters influencing the behaviors. Finally, concluding remarks are provided in section~\ref{sec:conclusions}. 
	\section{Model}\label{sec:model}
	Let $\Omega\subset\mathbb{R}^d$ (with $d=1,2,3$) be a bounded, convex polygonal spatial domain with Lipschitz boundary $\partial\Omega$ and let $[0,T]$ denote a finite time interval. Let $\phi:=\phi(\x,t)$ denote the scalar order parameter, which is proportional to the local differences between the oil and water concentrations.
	
	In this work, we consider the free energy proposed by Gompper et al., which has been physically justified by scattering experiments \cite{GompperSchick90, GompperSchick94, GompperZschocke92}. In particular, the total free energy of the microemulsion system is represented by
	\beq\label{eq:energy-orig}
	E(\phi)
	\,:=\,
	\int_\Omega
	\left(
	\beta f_0(\phi)
	+\frac12g(\phi)|\nabla\phi|^2
	+\frac{\lambda}2(\Delta\phi)^2
	\right)d\x\,,
	\eeq
where $\lambda>0$, $\beta>0$, and $g(\phi)$ are introduced to represent the balance between the different amphiphilic effects. {The coefficient $\lambda$ influences the interacting tendencies between the oil and water phases by incorporating the curvature effects of the interface \cite{gompper1994self}. 
In fact, for binary mixtures it is known that the second-order term in the expansion of the free energy in the region of nonuniform composition
corresponds to the term $(\Delta\phi)^2$ \cite{cahn1958}.} The function $f_0(\phi)$ represents the volumetric free energy whose minima are associated with the oil-rich phase ($\phi=-1$), the water-rich phase ($\phi=1$), and the microemulsion phase ($\phi=0$) respectively and is defined as:
	$$
	f_0(\phi)
	\,=\,
	(\phi^2 - 1)^2(\phi^2 + h_0)
	\,=\,
	\phi^6 
	+ (h_0 - 2)\phi^4
	+ (1 -2h_0)\phi^2
	+ h_0\,,
	$$
	with $h_0>0$ such that
	$$
	f_0'(\phi)
	\,=\,
	2\phi(\phi^2 - 1)\Big(2(\phi^2 + h_0)
	+(\phi^2 - 1)\Big)
	\,=\,
	6\phi^5
	+ 4(h_0 - 2)\phi^3
	+ 2(1 -2h_0)\phi\,.
	$$
	Moreover, the {scalar order parameter}-dependent coefficient $g(\cdot)$ is given by: 
	\beq\label{eq:deffunctiong}
	g(\phi)
	\,=\,
	g_2\phi^2
	+ g_0\,,
	\eeq
	with $g_2>0$ and $g_0\in\mathbb{R}$. {The properties of the amphiphile and its concentration determine the form of $f_0(\cdot), \ g(\cdot)$ and the magnitude of $\lambda$ \cite{gompper1993ginzburg1}. Furthermore, their mathematical structure is deduced from scattering experiments~\cite{GompperZschocke92}. } {A more detailed physical interpretation of the  various physical parameters involved is provided in~\cite{gompper1994self,PawlowZajaczkowski11}.}	
Then, the dynamics of the microemulsion are given by a sixth-order PDE of the Cahn-Hilliard \cite{cahn1958}
	form 
	\beq\label{eq:6thorderPDE}
	\phi_t
	-\nabla\cdot\left(M(\phi)\nabla\left(\frac{\delta E}{\delta \phi}\right)\right)
	=0\,,
	\eeq
	where { $	\phi_t\equiv\frac{\partial \phi}{\partial t}$,} $M(\phi)\geq0$ denotes a mobility {function that we will consider constant for simplicity ($M(\phi)=M>0$)} and
	\beq\label{eq:chem-pot}
	\frac{\delta E}{\delta \phi}
	\,=\,
	\beta f_0'(\phi) 
	+ \frac12g'(\phi)|\nabla\phi|^2
	-\nabla\cdot\big(g(\phi)\nabla\phi\big)
	+ \lambda\Delta \big(\Delta\phi\big)\,.
	\eeq
	In particular, equation \eqref{eq:6thorderPDE}-\eqref{eq:chem-pot} are supplemented with {the following Neumann type boundary conditions:
		\beq\label{eq:bound-cond}
		\nabla\Big(\frac{\delta E}{\delta \phi}\Big)\cdot\n\Big|_{\partial\Omega}
		\,=\,
		\Big(g(\phi)\nabla\phi - \lambda\nabla\Delta \phi\Big)\cdot\n\Big|_{\partial\Omega}
		\,=\,
		\nabla\phi\cdot\n\Big|_{\partial\Omega}
		\,=\,
		0\,
		\eeq
		and the initial condition 
		\beq\label{eq:initial-cond}
		\phi(\x,0)
		\,=\,
		\phi_0(\x)
		\quad
		\mbox{ in }
		\Omega\,.
		\eeq
	}
	Additionally, it satisfies a dissipative energy law (derived by testing with $\frac{\delta E}{\delta \phi}$):
	$$
	\frac{d}{dt}E(\phi(t)) 
	+ M\left\|\nabla\left(\frac{\delta E}{\delta \phi}\right)\right\|^2_{L^2}
	\,=\,0
	\quad
	\pato t>0\,.
	$$
	Furthermore, considering suitable boundary conditions, the equation 
	\eqref{eq:6thorderPDE} satisfies mass conservation, that is:
	$$
	\int_\Omega \phi_t\,d\x
	\,=\,
	\frac{d}{dt}\left(\int_\Omega \phi\, d\x\right)
	\,=\,
	0\,.
	$$
	The well-posedness of the initial boundary value equation system~\eqref{eq:6thorderPDE}-\eqref{eq:initial-cond} has been studied by Pawłow and Zaj\c{a}czkowski in~\cite{PawlowZajaczkowski11}.
	\subsection{Formulation as a system of second order PDEs}
	Introducing two auxiliary unknowns, $\mu(\x,t)$ and $\sigma(\x,t)$, we can rewrite~\eqref{eq:6thorderPDE} as a coupled system of three second-order PDEs:
	\beq\label{eq:system}
	\left\{\ba{rcl}
	\phi_t 
	- M\Delta\mu
	&=&0\,,
	\\ \hueco\dis
	\beta f_0'(\phi) 
	+ \frac12g'(\phi)|\nabla\phi|^2
	-\nabla\cdot\big(g(\phi)\nabla\phi\big)
	- \lambda\Delta \sigma
	&=&\mu\,,
	\\ \hueco\dis
	\sigma
	&=&-\Delta\phi\,,
	\ea\right.
	\eeq
	supplemented by the initial condition
	$$
	\phi\big|_{t=0}
	\,=\,
	\phi_0
	\quad
	\mbox{ in }
	\Omega,
	$$
	and the boundary conditions
	$$
	\nabla\mu\cdot\n\Big|_{\partial\Omega}
	\,=\,
	\Big(g(\phi)\nabla\phi + \lambda\nabla\sigma\Big)\cdot\n\Big|_{\partial\Omega}
	\,=\,
	\nabla\phi\cdot\n\Big|_{\partial\Omega}
	\,=\,
	0\,.
	$$
	The following lemma establishes the key properties of energy stability and mass conservation that hold true for the above system.
	\begin{lemma}
		System \eqref{eq:system} satisfies the following energy law:
		\beq\label{eq:energylaw}
		\frac{d}{dt}E(\phi,\sigma) 
		+ M\|\nabla\mu\|^2_{L^2}
		\,=\,0
		\,,
		\eeq
		with
		\beq\label{eq:energy}
		E(\phi,\sigma)
		\,:=\,
		\int_\Omega
		\left(
		\beta f_0(\phi)
		+\frac12g(\phi)|\nabla\phi|^2
		+\frac{\lambda}2\sigma^2
		\right)d\x\,.
		\eeq
		Additionally, the system \eqref{eq:system} also satisfies the following mass conservation property:
		\beq\label{eq:conservation}
		\frac{d}{dt}\left(\int_\Omega \phi\, d\x\right)
		\,=\,
		0\,.
		\eeq			
	\end{lemma}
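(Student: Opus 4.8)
The plan is to prove the energy law by the standard multiplier technique: test each of the three equations in \eqref{eq:system} against a suitable quantity, integrate over $\Omega$, and add the resulting identities so that the cross terms cancel. Concretely, I would test the first equation with $\mu$, the second equation with $\phi_t$, and the time-differentiated third equation $\sigma_t=-\Delta\phi_t$ with $\lambda\sigma$. This choice is dictated by the gradient-flow structure, since combining the three equations recovers the variational derivative $\frac{\delta E}{\delta\phi}$ of \eqref{eq:chem-pot}.

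First, testing $\phi_t-M\Delta\mu=0$ with $\mu$ and integrating by parts using $\nabla\mu\cdot\n|_{\partial\Omega}=0$ gives $\int_\Omega\phi_t\mu\,d\x+M\|\nabla\mu\|^2_{L^2}=0$. Next, testing the second equation with $\phi_t$: the term $\beta f_0'(\phi)\phi_t$ integrates to $\frac{d}{dt}\int_\Omega\beta f_0(\phi)\,d\x$; integrating by parts $-\int_\Omega\nabla\cdot(g(\phi)\nabla\phi)\phi_t\,d\x$ and $-\lambda\int_\Omega\Delta\sigma\,\phi_t\,d\x$ produces the volume terms $\int_\Omega g(\phi)\nabla\phi\cdot\nabla\phi_t\,d\x$ and $\lambda\int_\Omega\nabla\sigma\cdot\nabla\phi_t\,d\x$ together with the boundary contribution $-\int_{\partial\Omega}\big((g(\phi)\nabla\phi+\lambda\nabla\sigma)\cdot\n\big)\phi_t\,dS$, which vanishes by the boundary conditions of \eqref{eq:system}. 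Using $\tfrac12 g'(\phi)|\nabla\phi|^2\phi_t+g(\phi)\nabla\phi\cdot\nabla\phi_t=\frac{d}{dt}\big(\tfrac12 g(\phi)|\nabla\phi|^2\big)$ then yields $\frac{d}{dt}\int_\Omega\big(\beta f_0(\phi)+\tfrac12 g(\phi)|\nabla\phi|^2\big)\,d\x+\lambda\int_\Omega\nabla\sigma\cdot\nabla\phi_t\,d\x=\int_\Omega\mu\phi_t\,d\x$.

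It then remains to rewrite $\lambda\int_\Omega\nabla\sigma\cdot\nabla\phi_t\,d\x$. Integrating by parts and using that $\nabla\phi\cdot\n|_{\partial\Omega}=0$ for all $t$ (hence $\nabla\phi_t\cdot\n|_{\partial\Omega}=0$), together with the third equation $\sigma=-\Delta\phi$ differentiated in time, gives $\lambda\int_\Omega\nabla\sigma\cdot\nabla\phi_t\,d\x=-\lambda\int_\Omega\sigma\Delta\phi_t\,d\x=\lambda\int_\Omega\sigma\sigma_t\,d\x=\frac{d}{dt}\int_\Omega\tfrac{\lambda}{2}\sigma^2\,d\x$. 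Substituting back produces $\frac{d}{dt}E(\phi,\sigma)=\int_\Omega\mu\phi_t\,d\x$, and combining with the first identity gives \eqref{eq:energylaw}. For the mass conservation \eqref{eq:conservation}, I would simply integrate the first equation of \eqref{eq:system} over $\Omega$ and apply the divergence theorem together with $\nabla\mu\cdot\n|_{\partial\Omega}=0$, so that $\frac{d}{dt}\int_\Omega\phi\,d\x=M\int_\Omega\Delta\mu\,d\x=0$.

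The only delicate point is the treatment of the boundary terms: the boundary integral arising from the $g(\phi)\nabla\phi$ contribution alone does \emph{not} vanish, and one must keep it paired with the boundary integral coming from $\lambda\nabla\sigma$ so that the combined Neumann condition $(g(\phi)\nabla\phi+\lambda\nabla\sigma)\cdot\n|_{\partial\Omega}=0$ can be invoked; similarly, the vanishing of the boundary term in the last step relies on differentiating the condition $\nabla\phi\cdot\n|_{\partial\Omega}=0$ in time. Everything else is bookkeeping of time derivatives. The argument is formal in the sense that it presupposes enough regularity of $(\phi,\mu,\sigma)$ to justify the integrations by parts and the exchange of $\partial_t$ and $\nabla$, which is where one appeals to the well-posedness result of Pawłow and Zaj\c{a}czkowski~\cite{PawlowZajaczkowski11}.
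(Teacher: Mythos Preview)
Your proof is correct and follows essentially the same multiplier strategy as the paper: test \eqref{eq:system}$_1$ by $\mu$, \eqref{eq:system}$_2$ by $\phi_t$, and the time-differentiated \eqref{eq:system}$_3$ by $\lambda\sigma$, then add. One minor remark: since $\nabla\phi\cdot\n|_{\partial\Omega}=0$ is itself one of the boundary conditions, the boundary term $\int_{\partial\Omega}(g(\phi)\nabla\phi\cdot\n)\phi_t\,dS$ actually vanishes on its own, so the pairing you describe as ``delicate'' is not strictly necessary---though invoking the combined condition as you do is of course still valid.
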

	\begin{proof}
		Testing \eqref{eq:system}$_1$ by $\mu$, testing \eqref{eq:system}$_2$ by $\phi_t$ and adding both relations we obtain:
		\beq\label{eq:proofeneaux1}
		\frac{d}{dt}\left(\int_\Omega
		\left(
		\beta f_0(\phi)
		+\frac12g(\phi)|\nabla\phi|^2
		\right)d\x\right)
		+ M\|\nabla\mu\|^2_{L^2}
		-\lambda\int_\Omega\Delta\sigma \phi_t d\x
		\,=\,
		0\,.
		\eeq
		Taking the time derivative \eqref{eq:system}$_3$ we obtain
		$$
		\sigma_t 
		\,=\,
		- \Delta\phi_t
		$$
		and testing this expression by $\lambda\sigma$ we obtain
		\beq\label{eq:proofeneaux2}
		\frac{d}{dt}\left(\int_\Omega\frac\lambda2\sigma^2 \,d\x\right)
		\,=\,
		- \lambda\int_\Omega \Delta\phi_t\,\sigma \,d\x
		\,.
		\eeq
		Hence, integrating by parts and adding \eqref{eq:proofeneaux1} and \eqref{eq:proofeneaux2} we arrive at 
		relation \eqref{eq:energylaw}.
		\\
		Lastly, the mass conservation property~\eqref{eq:conservation} is obtained by testing \eqref{eq:system}$_1$ by $1$.
	\end{proof}

	\section{Numerical Scheme}\label{sec:num-scheme}
	To present our fully discrete scheme, we first describe the space-discrete formulation of the system, followed by its temporal discretization. 
	We then prove its key properties, such as solvability, unconditional stability, and discrete mass conservation.
	{Standard notation will be adopted for Lebesgue spaces $L^p(\Omega)$ and Sobolev spaces $H^s(\Omega)$ with norm $\|\cdot\|_{H^s}$. In the particular case $s = 0$, we write $L^2(\Omega)$ instead of $H^0(\Omega)$. For the sake of simplicity, in what follows we denote the $L^2$-inner product over $\Omega$ as
		$$
		(u,v)
		\,=\,
		\int_\Omega u\,v\,d\x
		\quad\mbox{ with }\quad
		\|u\|^2_{L^2}\,=\,(u,v)\,.
		$$
	}
	

	\subsection{Finite Element space-discrete scheme}
	Let $\{\mathcal{T}_h\}_h$ denote a family of geometrically conforming, shape regular triangulations, where we denote the mesh size by $h>0$.
	The unknowns of our problem $(\phi,\mu,\sigma)$ are approximated using the following conforming finite element spaces 
	$$
	\Phi_h\times M_h\times \Sigma_h
	\subset
	H^1(\Omega)\times H^1(\Omega)\times H^1(\Omega)\,.
	$$
%

Lastly, we let $\Pi_{X_h}$ denote the $L^2$-projection operator into the space $X_h\subset L^2(\Omega)$. In order to simplify the notation we omit the use of subscript $h$ to denote the space-discrete functions. Then, the problem reads: 
Find  the continuous-in-time, space-discrete approximation $(\phi, \mu, \sigma)\,\in\,\Phi_h\times M_h\times \Sigma_h$ 
such that
$$
\phi\big|_{t=0}
\,=\,
\Pi_{\Phi_h}\phi_0
\quad\mbox{ in }\Omega,
$$
and
\beq\label{eq:weaksystem}
\left\{\ba{rcl}
(\phi_t,\bar\mu) 
+ M(\nabla\mu,\nabla\bar\mu)
&=&0\,,
\\ \hueco\dis
\beta (f_0'(\phi) ,\bar\phi)
+ \frac12(g'(\phi)|\nabla\phi|^2,\bar\phi)
+(g(\phi)\nabla\phi,\nabla\bar\phi)
+ \lambda(\nabla \sigma,\nabla\bar\phi)
&=&
(\mu,\bar\phi)\,,
\\ \hueco\dis
(\sigma,\bar\sigma)
&=&
(\nabla\phi,\nabla\bar\sigma)\,,
\ea\right.
\eeq
holds a.e. $0 \le t\le T$ and for any $(\bar\phi,\bar\mu,\bar\sigma)\in \Phi_h\times M_h\times \Sigma_h$.

Below, we show that the above scheme satisfies the space-discrete analog of the energy law~\eqref{eq:energylaw}.
\begin{lemma}
	Any solution $(\phi(t),\mu(t),\sigma(t))$ of the space-discrete scheme \eqref{eq:weaksystem} satisfies the following space-discrete version of the energy law \eqref{eq:energylaw}:
	\beq\label{eq:energylawdiscrtespace}
	\frac{d}{dt}E(\phi(t),\sigma(t)) 
	+ M\|\nabla\mu(t)\|^2_{L^2}
	\,=\,0
	\,.
	\eeq
\end{lemma}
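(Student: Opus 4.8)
The plan is to reproduce, at the discrete level, the argument used for the continuous energy law \eqref{eq:energylaw}, relying on the fact that in continuous time the chain rule still holds exactly, so — unlike the fully discrete case — no special treatment of the nonlinear terms is needed. The only point requiring a moment's care is the admissibility of the test functions: since $\Phi_h$ and $\Sigma_h$ are finite-dimensional and time-independent, any solution curve $t\mapsto\phi(t)\in\Phi_h$ satisfies $\phi_t(t)\in\Phi_h$, and likewise $\sigma_t(t)\in\Sigma_h$; hence both $\phi_t$ and $\lambda\sigma$ are legitimate test functions in \eqref{eq:weaksystem}.

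Concretely, I would first take $\bar\mu=\mu$ in \eqref{eq:weaksystem}$_1$ to obtain $(\phi_t,\mu)=-M\|\nabla\mu\|^2_{L^2}$. Next, take $\bar\phi=\phi_t$ in \eqref{eq:weaksystem}$_2$ and use the pointwise-in-time identities $\beta(f_0'(\phi),\phi_t)=\frac{d}{dt}\int_\Omega\beta f_0(\phi)\,d\x$ and $\frac12(g'(\phi)|\nabla\phi|^2,\phi_t)+(g(\phi)\nabla\phi,\nabla\phi_t)=\frac{d}{dt}\int_\Omega\frac12 g(\phi)|\nabla\phi|^2\,d\x$; the left-hand side of \eqref{eq:weaksystem}$_2$ then collapses to $\frac{d}{dt}\int_\Omega\big(\beta f_0(\phi)+\frac12 g(\phi)|\nabla\phi|^2\big)\,d\x+\lambda(\nabla\sigma,\nabla\phi_t)$, which by the first relation equals $-M\|\nabla\mu\|^2_{L^2}$. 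Finally, differentiate \eqref{eq:weaksystem}$_3$ in time to get $(\sigma_t,\bar\sigma)=(\nabla\phi_t,\nabla\bar\sigma)$ for all $\bar\sigma\in\Sigma_h$, test with $\bar\sigma=\lambda\sigma$ to obtain $\frac{d}{dt}\int_\Omega\frac\lambda2\sigma^2\,d\x=\lambda(\nabla\phi_t,\nabla\sigma)$, and substitute this to cancel the coupling term $\lambda(\nabla\sigma,\nabla\phi_t)$; recalling the definition \eqref{eq:energy} of $E(\phi,\sigma)$ yields \eqref{eq:energylawdiscrtespace}.

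I do not expect a genuine obstacle here — the proof is essentially bookkeeping. The one thing worth stressing is that, because the mixed weak form \eqref{eq:weaksystem} is already written in integrated-by-parts form and only $H^1$-conformity of the discrete spaces is assumed, no integration by parts is performed in the discrete argument (in contrast to the continuous proof, where the term $-\lambda\int_\Omega\Delta\sigma\,\phi_t\,d\x$ had to be integrated by parts against $\sigma$). The real difficulty in preserving this dissipation identity arises only after time discretization, where the chain rule fails and the midpoint-type approximation of the nonlinearities must be designed precisely so that the telescoping structure of $\beta f_0(\phi)$ and $\frac12 g(\phi)|\nabla\phi|^2$ is recovered; that is deferred to the fully discrete scheme below.
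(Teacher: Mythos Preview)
Your proposal is correct and follows exactly the paper's approach: the paper's proof simply states that one takes $(\bar\phi,\bar\mu,\bar\sigma)=(\phi_t,\mu,\lambda\sigma)$ in \eqref{eq:weaksystem} and considers the time derivative of \eqref{eq:weaksystem}$_3$. Your additional remarks on the admissibility of $\phi_t\in\Phi_h$ and on the absence of integration by parts at the discrete level are accurate and worth keeping, though the paper itself omits them.
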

\begin{proof}
	The proof readily follows by taking $(\bar\phi,\bar\mu,\bar\sigma)=(\phi_t(t),\mu(t),\lambda\sigma(t))$
	as test functions in the space-discrete scheme \eqref{eq:weaksystem} and considering the time derivative of \eqref{eq:weaksystem}$_3$.
\end{proof}

\subsection{Fully discrete numerical scheme}
For simplicity, we consider a uniform partition of the time interval $[0,T]$ {into $N$ subintervals} with time step $\Delta t>0$ such that $t^n=n\Delta t${,  $n=0,\cdots, N$}. Moreover we denote the midpoint approximation of a variable $b$ as $b^{n+\frac12}=(1/2)(b^{n+1} + b^n)$. 
Accordingly, the proposed numerical scheme reads: 
\\
\textbf{Initialization.}
\\
Let $\phi^0=\Pi_{\Phi_h}\phi(0)$, $\mu^0=\Pi_{M_h}\mu(0)$, $\sigma^0=\Pi_{\Sigma_h}\sigma(0)$ with
$$
\sigma(0)
\,=\,
\sigma_0
\,=\,
-\Delta\phi_0
$$
and
$$
\mu(0)
\,=\,
\mu_0
\,=\,
\beta f_0'(\phi_0) 
+ \frac12g'(\phi_0)|\nabla\phi_0|^2
-\nabla\cdot\big(g(\phi_0)\nabla\phi_0\big)
- \lambda\Delta\sigma_0
\,.
$$
\\
\textbf{Step $n+1$.}
\\Given $(\phi^n,\mu^n,\sigma^n)\in \Phi_h\times M_h\times \Sigma_h$, find $(\phi^{n+1},\mu^{n+1},\sigma^{n+1})\in \Phi_h\times M_h\times \Sigma_h$ such that
\beq\label{eq:scheme}
\left\{\ba{rcl}\dis
\left(\frac{\phi^{n+1} - \phi^n}{\Delta t},\bar\mu\right)
+ M(\nabla\mu^{n+\frac12},\nabla\bar\mu)
&=&0\,,
\\ \hueco\dis
\beta (\widehat{f_0}'(\phi^{n+1},\phi^n) ,\bar\phi)
+ \frac12\left(g'(\phi^{n+\frac12})\frac{|\nabla\phi^{n+1}|^2 + |\nabla\phi^n|^2}{2},\bar\phi\right)&&
\\ \hueco\dis
+\left(\frac{g(\phi^{n+1}) + g(\phi^{n})}{2}\nabla\phi^{n+\frac12},\nabla\bar\phi\right)
+ \lambda(\nabla \sigma^{n+\frac12},\nabla\bar\phi)
&=&(\mu^{n+\frac12},\bar\phi)\,,
\\ \hueco\dis
(\sigma^{n+1},\bar\sigma)
&=&(\nabla\phi^{n+1},\nabla\bar\sigma)\,,
\ea\right.
\eeq
for any $(\bar\phi,\bar\mu,\bar\sigma)\in \Phi_h\times M_h\times \Sigma_h$
with
$$
\ba{rcl}
\widehat{f_0}'(\phi^{n+1},\phi^n)
&:=&
\dis\frac{f_0(\phi^{n+1}) - f_0(\phi^{n})}{\phi^{n+1} - \phi^n}
\\ \hueco
&=&\dis
\frac12\big((\phi^{n+1} + \phi^n)^2 + (\phi^{n+1})^2 + (\phi^n)^2\big)\big((\phi^{n+1})^3 + (\phi^n)^3\big)
\\ \hueco
&&
+\,
(h_0 - 2)(\phi^{n+1} + \phi^n)\big((\phi^{n+1})^2 + (\phi^n)^2\big)\\ \hueco
&&
+\,
(1 -2h_0)(\phi^{n+1} + \phi^n)
\\ \hueco
&=&\dis
(\phi^{n+1})^5 + (\phi^{n+1})^4\phi^n + (\phi^{n+1})^3(\phi^n)^2 + (\phi^{n+1})^2(\phi^n)^3 
\\ \hueco
&&
+\, (\phi^{n})^5 
+ \phi^{n+1}(\phi^n)^4 
\\ \hueco
&&
+\, (h_0 - 2)\Big((\phi^{n+1})^3 + (\phi^{n+1})^2\phi^n + \phi^{n+1}(\phi^n)^2 + (\phi^n)^3\Big)
\\ \hueco
&&
+\,(1 - 2h_0)(\phi^{n+1}  + \phi^n)\,.
\ea
$$
{
	\begin{remark}
	The midpoint/secant approximation $\widehat{f_0}'(\cdot,\cdot)$ of $f_0'(\cdot)$ has been successfully applied to gradient flows with other polynomial functionals (see~\cite{TierraGuillen15} and the references therein) yielding second-order in time numerical schemes.
	\end{remark}
}
\subsubsection{Energy stability of the numerical scheme}
We proceed to establish the key properties satisfied by our fully discrete scheme, starting with proving the energy law and the discrete mass conservation.
\begin{lemma}\label{lem:stabilityandconservation}
	{If $1\in M_h$,} then scheme \eqref{eq:scheme} satisfies the following discrete version of the energy law \eqref{eq:energylaw}:
	\beq\label{eq:discreteEL}
	\frac1{\Delta t}\Big(E(\phi^{n+1},\sigma^{n+1}) - E(\phi^{n},\sigma^{n})\Big)
	+ M\|\nabla\mu^{n+\frac12}\|^2_{L^2}
	\,=\,0
	\,.
	\eeq
	Moreover, scheme is conservative:
	\beq\label{eq:discreteconservation}
	\int_\Omega \phi^{n+1} \,d\x
	\,=\,
	\int_\Omega \phi^{n} \,d\x
	\,=\,
	\dots
	\,=\,
	\int_\Omega \phi^0 \,d\x\,.
	\eeq
\end{lemma}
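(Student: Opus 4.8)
The plan is to reproduce, at the fully discrete level, the testing procedure from the proof of the continuous energy law: take $\bar\mu=\mu^{n+\frac12}$ in \eqref{eq:scheme}$_1$, take $\bar\phi=\phi^{n+1}-\phi^n$ in \eqref{eq:scheme}$_2$, and use the time-increment of \eqref{eq:scheme}$_3$ tested against $\bar\sigma=\lambda\sigma^{n+\frac12}$. Each of these choices belongs to the relevant finite element space, so the substitutions are legitimate. The argument then rests on three exact discrete chain-rule identities, after which the pieces are assembled and divided by $\Delta t$.

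First I would process \eqref{eq:scheme}$_2$ with $\bar\phi=\phi^{n+1}-\phi^n$. By construction $\widehat{f_0}'$ is a secant quotient, so $\widehat{f_0}'(\phi^{n+1},\phi^n)\,(\phi^{n+1}-\phi^n)=f_0(\phi^{n+1})-f_0(\phi^n)$ pointwise, and the $\beta$-term contributes $\beta\int_\Omega\big(f_0(\phi^{n+1})-f_0(\phi^n)\big)\,d\x$. For the two coupled $g$-terms the identity to establish is
\beq\label{eq:gident}
\frac12\Big(g'(\phi^{n+\frac12})\tfrac{|\nabla\phi^{n+1}|^2+|\nabla\phi^n|^2}{2},\phi^{n+1}-\phi^n\Big)
+\Big(\tfrac{g(\phi^{n+1})+g(\phi^n)}{2}\,\nabla\phi^{n+\frac12},\nabla(\phi^{n+1}-\phi^n)\Big)
=\frac12\int_\Omega\big(g(\phi^{n+1})|\nabla\phi^{n+1}|^2-g(\phi^n)|\nabla\phi^n|^2\big)\,d\x,
\eeq
which follows from the bilinear splitting $ab-cd=\tfrac{a+c}{2}(b-d)+\tfrac{b+d}{2}(a-c)$ with $a=g(\phi^{n+1})$, $b=|\nabla\phi^{n+1}|^2$, $c=g(\phi^n)$, $d=|\nabla\phi^n|^2$, together with $|\nabla\phi^{n+1}|^2-|\nabla\phi^n|^2=2\,\nabla\phi^{n+\frac12}\cdot\nabla(\phi^{n+1}-\phi^n)$ and --- the single place where we genuinely use that $g$ is quadratic --- the exact relation $g(\phi^{n+1})-g(\phi^n)=g'(\phi^{n+\frac12})(\phi^{n+1}-\phi^n)$. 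Checking \eqref{eq:gident} is the main (elementary but essential) obstacle; everything else is bookkeeping.

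Next I would deal with the $\sigma$-coupling: subtracting \eqref{eq:scheme}$_3$ at two consecutive time levels gives $(\sigma^{n+1}-\sigma^n,\bar\sigma)=(\nabla(\phi^{n+1}-\phi^n),\nabla\bar\sigma)$ for all $\bar\sigma\in\Sigma_h$, and taking $\bar\sigma=\lambda\sigma^{n+\frac12}$ rewrites the last term of \eqref{eq:scheme}$_2$ as $\lambda(\nabla\sigma^{n+\frac12},\nabla(\phi^{n+1}-\phi^n))=\tfrac{\lambda}{2}\int_\Omega\big((\sigma^{n+1})^2-(\sigma^n)^2\big)\,d\x$. Substituting the three evaluations back, the left-hand side of \eqref{eq:scheme}$_2$ collapses exactly to $E(\phi^{n+1},\sigma^{n+1})-E(\phi^n,\sigma^n)$, with right-hand side $(\mu^{n+\frac12},\phi^{n+1}-\phi^n)$. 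On the other side, \eqref{eq:scheme}$_1$ with $\bar\mu=\mu^{n+\frac12}$ reads $\tfrac{1}{\Delta t}(\phi^{n+1}-\phi^n,\mu^{n+\frac12})+M\|\nabla\mu^{n+\frac12}\|_{L^2}^2=0$, so $(\mu^{n+\frac12},\phi^{n+1}-\phi^n)=-\Delta t\,M\|\nabla\mu^{n+\frac12}\|_{L^2}^2$; combining and dividing by $\Delta t$ gives \eqref{eq:discreteEL}. Finally, the conservation property \eqref{eq:discreteconservation} uses the hypothesis $1\in M_h$: with $\bar\mu=1$ in \eqref{eq:scheme}$_1$ the gradient term vanishes, leaving $\int_\Omega(\phi^{n+1}-\phi^n)\,d\x=0$, and a one-step induction on $n$ finishes the proof.
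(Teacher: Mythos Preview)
Your proof is correct and follows essentially the same route as the paper's: test \eqref{eq:scheme}$_1$ with $\mu^{n+\frac12}$, test \eqref{eq:scheme}$_2$ with the time increment of $\phi$, difference \eqref{eq:scheme}$_3$ in time and test with $\sigma^{n+\frac12}$, then combine; the $g$-identity \eqref{eq:gident} you isolate is exactly the computation the paper writes out, and your explicit remark that it relies on $g$ being quadratic (so that $g(\phi^{n+1})-g(\phi^n)=g'(\phi^{n+\frac12})(\phi^{n+1}-\phi^n)$ holds exactly) is a useful clarification the paper leaves implicit. The only cosmetic difference is that the paper divides by $\Delta t$ from the outset while you do so at the end.
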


\begin{proof}
	Taking $\bar\mu=\mu^{n+\frac12}$ in \eqref{eq:scheme}$_1$ we obtain
	\beq\label{eq:proof1}
	\left(\frac{\phi^{n+1} - \phi^n}{\Delta t},\mu^{n+\frac12}\right)
	+ M\|\nabla\mu^{n+\frac12}\|^2_{L^2}
	\,=\,0\,.
	\eeq
	Taking $\bar\phi=\frac1{\Delta t}(\phi^{n+1} - \phi^n)$ in \eqref{eq:scheme}$_2$ and using that
	$$
	\ba{rcl}\dis
	&&\dis\frac12\left(g'(\phi^{n+\frac12})\frac{|\nabla\phi^{n+1}|^2 + |\nabla\phi^n|^2}{2},\frac{\phi^{n+1} - \phi^n}{\Delta t}\right)
	\\ \hueco
	&+&\dis
	\left(\frac{g(\phi^{n+1}) + g(\phi^{n})}{2}\nabla\phi^{n+\frac12}, \frac{\nabla(\phi^{n+1} - \phi^n)}{\Delta t}\right)
	\\ \hueco\dis
	&=&\dis \frac1{4\Delta t}\int_\Omega\Big(g(\phi^{n+1}) - g(\phi^{n})\Big)\Big(|\nabla\phi^{n+1}|^2 + |\nabla\phi^n|^2\Big) d\x
	\\ \hueco
	&+&\dis \frac1{4\Delta t}\int_\Omega
	\Big(g(\phi^{n+1}) + g(\phi^{n})\Big)\Big(|\nabla\phi^{n+1}|^2 - |\nabla\phi^n|^2\Big)d\x
	\\ \hueco\dis
	&=&\dis \frac1{2\Delta t}\int_\Omega\Big(g(\phi^{n+1})|\nabla\phi^{n+1}|^2 - g(\phi^{n})|\nabla\phi^n|^2\Big) d\x\,,
	\ea
	$$
	we obtain
	\beq\label{eq:proof2}
	\ba{c}\dis
	\frac1{\Delta t}\int_\Omega \Big(f_0(\phi^{n+1}) - f_0(\phi^{n})\Big)d\x
	+\dis \frac1{2\Delta t}\int_\Omega\Big(g(\phi^{n+1})|\nabla\phi^{n+1}|^2 - g(\phi^{n})|\nabla\phi^n|^2\Big) d\x
	
	\\ \hueco\dis
	+\dis \lambda\left(\nabla \sigma^{n+\frac12},\frac{\phi^{n+1} - \phi^n}{\Delta t}\right)
	=\left(\frac{\phi^{n+1} - \phi^n}{\Delta t},\mu^{n+\frac12}\right)\,.
	\ea
	\eeq
	We can subtract equation \eqref{eq:scheme}$_3$ at time $t^{n+1}$ with the same equation at $t^n$ and divide by $\Delta t$ to obtain
	\beq\label{eq:proof3}
	\left(\frac{\sigma^{n+1} - \sigma^{n}}{\Delta t},\bar\sigma\right)
	=\left(\frac{\nabla\phi^{n+1} - \nabla\phi^{n}}{\Delta t},\nabla\bar\sigma\right)
	\eeq
	and taking $\bar\sigma=\sigma^{n+\frac12}$ in \eqref{eq:proof3} we obtain
	\beq\label{eq:proof4}
	\frac1{2\Delta t}\int_\Omega \Big(|\sigma^{n+1}|^2 - |\sigma^{n}|^2\Big)d\x
	=\left(\frac{\nabla\phi^{n+1} - \nabla\phi^{n}}{\Delta t},\nabla\sigma^{n+\frac12} \right)\,.
	\eeq
	Then, by adding \eqref{eq:proof1}, \eqref{eq:proof2} and \eqref{eq:proof4} we obtain relation \eqref{eq:discreteEL}.
	\\
	Finally, taking $\bar\mu=1$ we obtain \eqref{eq:discreteconservation}.
\end{proof}


\begin{corollary}
	Let $(\phi^{n+1},\mu^{n+1},\sigma^{n+1})$ solve~\eqref{eq:scheme}. If there exists a constant $E_0$ (which is independent of $h$) such that $E(\phi^0,\sigma^0)\le E_0$ holds, then we can find a constant $C^*=2C_P^4\lambda^{-1}E_0$ independent of $h, \ \Delta t $ and $T$ such that
	\begin{align}\label{eq:uni-bd}
		\|\phi^{n+1}\|^2_{L^2}\le C^*\,.
	\end{align}
\end{corollary}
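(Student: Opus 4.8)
The plan is to combine the discrete energy law from Lemma~\ref{lem:stabilityandconservation} with a Poincaré-type inequality and the nonnegativity of the other terms in the energy. First I would iterate the discrete energy law \eqref{eq:discreteEL}: summing over the time steps and dropping the nonnegative dissipation term $M\|\nabla\mu^{n+\frac12}\|^2_{L^2}\ge 0$ yields the monotonicity estimate
\beq\label{eq:energymonotone}
E(\phi^{n+1},\sigma^{n+1})
\,\le\,
E(\phi^{n},\sigma^{n})
\,\le\,
\dots
\,\le\,
E(\phi^{0},\sigma^{0})
\,\le\,
E_0\,,
\eeq
valid for every $n\ge 0$ (here we use the hypothesis $1\in M_h$ so that Lemma~\ref{lem:stabilityandconservation} applies, and the assumed bound $E(\phi^0,\sigma^0)\le E_0$).

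Next I would extract control of $\|\phi^{n+1}\|_{L^2}$ from $E(\phi^{n+1},\sigma^{n+1})\le E_0$. The key observation is that each of the three terms in the energy \eqref{eq:energy} is nonnegative: $f_0(\phi)=(\phi^2-1)^2(\phi^2+h_0)\ge 0$ since $h_0>0$; $g(\phi)|\nabla\phi|^2 = (g_2\phi^2+g_0)|\nabla\phi|^2$ — \emph{here one needs $g_0\ge 0$} so that $g(\phi)\ge 0$; and $\frac{\lambda}{2}\sigma^2\ge 0$ since $\lambda>0$. Hence in particular
\beq\label{eq:sigmabound}
\frac{\lambda}{2}\|\sigma^{n+1}\|^2_{L^2}
\,\le\,
E(\phi^{n+1},\sigma^{n+1})
\,\le\,
E_0\,,
\qquad\text{so}\qquad
\|\sigma^{n+1}\|^2_{L^2}
\,\le\,
\frac{2E_0}{\lambda}\,.
\eeq
Now I use the discrete relation \eqref{eq:scheme}$_3$: taking $\bar\sigma=\phi^{n+1}$ (assuming $\Phi_h\subset\Sigma_h$, or more carefully working with $\phi^{n+1}$ minus its mean so that the test function lies in the appropriate space and a Poincaré inequality with constant $C_P$ is available) gives $\|\nabla\phi^{n+1}\|^2_{L^2}=(\sigma^{n+1},\phi^{n+1})\le \|\sigma^{n+1}\|_{L^2}\|\phi^{n+1}\|_{L^2}$. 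Combining with the Poincaré inequality $\|\phi^{n+1}\|_{L^2}\le C_P\|\nabla\phi^{n+1}\|_{L^2}$ (applied after subtracting the conserved mean, whose contribution is likewise controlled by $E_0$ via $f_0$, or absorbed into the constant) yields $\|\phi^{n+1}\|^2_{L^2}\le C_P^2\|\nabla\phi^{n+1}\|^2_{L^2}\le C_P^2\|\sigma^{n+1}\|_{L^2}\|\phi^{n+1}\|_{L^2}$, hence $\|\phi^{n+1}\|_{L^2}\le C_P^2\|\sigma^{n+1}\|_{L^2}$. Squaring and inserting \eqref{eq:sigmabound} gives $\|\phi^{n+1}\|^2_{L^2}\le C_P^4\cdot\frac{2E_0}{\lambda}=2C_P^4\lambda^{-1}E_0=C^*$, which is exactly \eqref{eq:uni-bd}, with $C^*$ manifestly independent of $h$, $\Delta t$ and $T$.

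\textbf{The main obstacle} is the handling of the mean value in the Poincaré step: the Poincaré inequality $\|v\|_{L^2}\le C_P\|\nabla v\|_{L^2}$ only holds for zero-mean $v$, whereas $\phi^{n+1}$ has a generally nonzero (but, by \eqref{eq:discreteconservation}, time-independent) mean $\bar\phi := \frac{1}{|\Omega|}\int_\Omega\phi^0\,d\x$. The clean fix is to write $\phi^{n+1}=(\phi^{n+1}-\bar\phi)+\bar\phi$, apply Poincaré to the zero-mean part, note $\nabla(\phi^{n+1}-\bar\phi)=\nabla\phi^{n+1}$, and bound $|\bar\phi|$ using the coercivity of $f_0$ at infinity together with $\beta\int_\Omega f_0(\phi^{n+1})\,d\x\le E_0$ (since $f_0$ grows like $\phi^6$, control of $\int f_0(\phi^{n+1})$ gives control of $\|\phi^{n+1}\|_{L^6}\ge c|\Omega|^{1/6}|\bar\phi|$, hence of $|\bar\phi|$); all such contributions get absorbed into $C^*$ at the cost of enlarging the constant. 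A secondary, purely technical point is the choice of admissible test function in \eqref{eq:scheme}$_3$: one needs either $\Phi_h\subset\Sigma_h$ or $\Sigma_h\subset\Phi_h$ for $\phi^{n+1}$ (or its mean-zero shift) to be a legitimate test function $\bar\sigma$; this is a mild compatibility assumption on the finite element spaces that should be stated alongside $1\in M_h$. Apart from these bookkeeping issues the argument is a short chain of elementary inequalities.
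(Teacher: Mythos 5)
Your proof follows essentially the same route as the paper's: sum the discrete energy law to obtain $E(\phi^{n+1},\sigma^{n+1})\le E_0$, extract $\|\sigma^{n+1}\|_{L^2}^2\le 2E_0/\lambda$ from the $\frac{\lambda}{2}\sigma^2$ term, and combine \eqref{eq:scheme}$_3$ tested with $\phi^{n+1}$ and the Poincar\'e inequality to conclude $\|\phi^{n+1}\|_{L^2}^2\le C_P^4\|\sigma^{n+1}\|_{L^2}^2$. The caveats you flag --- nonnegativity of $g(\phi)|\nabla\phi|^2$ (which genuinely fails for the physically relevant choice $g_0<0$ used throughout the paper's simulations), the zero-mean hypothesis needed for Poincar\'e, and the space compatibility required to take $\bar\sigma=\phi^{n+1}$ --- are all silently assumed in the paper's own proof, so your more careful version is, if anything, an improvement.
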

\begin{proof}
	We first begin by summing~\eqref{eq:discreteEL} over $l=0,\cdots, n$ to obtain
	\begin{align*}
		E(\phi^{n+1},\sigma^{n+1}) 
		+ M \Delta t \sum_{l=0}^n\|\nabla\mu^{l+\frac12}\|^2_{L^2}
		\,=\, E(\phi^{0},\sigma^{0}).   
	\end{align*}
	Then, combining the above with Poincare's inequality with constant $C_P$ and using~\eqref{eq:scheme}$_3$, we have
	\begin{align}
		\|  \phi^{n+1}\|_{L^2}^2
		\le C_P^2\|\nabla \phi^{n+1}\|_{L^2}^2 =  
		C_P^2 \big(\sigma^{n+1},\phi^{n+1}\big)\le
		\frac{C_P^4}{2}\|\sigma^{n+1}\|_{L^2}^2 
		+
		\frac{1}{2}\|\phi^{n+1}\|_{L^2}^2\,,
	\end{align}
	from which we derive the following estimate:
	\begin{align}
		\|  \phi^{n+1}\|_{L^2}^2
		\le
		C_P^4\|\sigma^{n+1}\|_{L^2}^2
		\le
		\frac{2C_P^4}{\lambda}E_0\,.        
	\end{align}
\end{proof}
\subsubsection{Existence of solution of the numerical scheme}

The main idea to show the existence of a solution of the fully discrete numerical scheme \eqref{eq:scheme} is to take advantage of Lemma~$1.4$ in Chapter 2 in Temam's book \cite{Temam} (this Lemma is a consequence of Brouwer's Fixed Point Theorem) since the system \eqref{eq:scheme} can be understood as a nonlinear coercive problem in finite dimension. These types of arguments have already been used to show the existence of solutions for nonlinear schemes in the phase field context \cite{guillentierra2014}.
	\begin{lemma}[\cite{Temam}]\label{Lem_tem}
		Let $X$ be a finite dimensional Hilbert space with scalar product $(\cdot,\cdot)$ and with norm denoted by $|\cdot|$. Let $P$ be a continuous mapping from $X$ into itself, that is, $P: X\rightarrow X$. Assume that there exists  a radius $\rho > 0$ such that
		$(P(\xi),\xi)>0$ for $|\xi|=\rho>0$. Then, there exists $\xi \in X, |\xi| \le \rho$, such that $P(\xi) = 0$.
	\end{lemma}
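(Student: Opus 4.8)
The plan is to obtain this statement as a standard corollary of Brouwer's fixed point theorem via a contradiction argument. First I would choose an orthonormal basis of $X$ and identify $X$ isometrically with $\mathbb{R}^N$, where $N=\dim X$, so that the closed ball $B_\rho:=\{\xi\in X:\ |\xi|\le\rho\}$ becomes a nonempty, compact, convex subset of Euclidean space, on which Brouwer's theorem is available.

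Next, suppose for contradiction that $P(\xi)\neq 0$ for every $\xi\in B_\rho$. Since $P$ is continuous and nonvanishing on the compact set $B_\rho$, the map
\[
S(\xi)\,:=\,-\,\rho\,\frac{P(\xi)}{|P(\xi)|}
\]
is well-defined and continuous on $B_\rho$, and satisfies $|S(\xi)|=\rho$ for all $\xi$; in particular $S$ maps $B_\rho$ continuously into itself. By Brouwer's fixed point theorem there exists $\xi_0\in B_\rho$ with $S(\xi_0)=\xi_0$.

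I would then extract the contradiction from this fixed point. From $S(\xi_0)=\xi_0$ we get $|\xi_0|=|S(\xi_0)|=\rho$, so $\xi_0$ lies on the sphere $|\xi|=\rho$, where by hypothesis $(P(\xi_0),\xi_0)>0$. On the other hand, using $\xi_0=S(\xi_0)$ together with the definition of $S$,
\[
(P(\xi_0),\xi_0)\,=\,\big(P(\xi_0),S(\xi_0)\big)\,=\,-\,\rho\,\frac{(P(\xi_0),P(\xi_0))}{|P(\xi_0)|}\,=\,-\,\rho\,|P(\xi_0)|\,<\,0\,,
\]
which is impossible. Hence the assumption fails, and there exists $\xi\in X$ with $|\xi|\le\rho$ and $P(\xi)=0$.

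The argument has essentially no hard step; the two points requiring care are that the normalized map $S$ is legitimate, which is guaranteed precisely by the contradiction hypothesis that $P$ does not vanish on $B_\rho$, and that Brouwer's theorem is invoked in a genuinely finite-dimensional setting, which is why one first passes to coordinates. In the paper this lemma will be applied with $X=\Phi_h\times M_h\times\Sigma_h$ equipped with a suitable inner product and with $P$ built so that $P(\xi)=0$ is equivalent to $\xi=(\phi^{n+1},\mu^{n+1},\sigma^{n+1})$ solving \eqref{eq:scheme}; the existence of a radius $\rho$ with $(P(\xi),\xi)>0$ on $|\xi|=\rho$ will follow from the coercivity encoded in the discrete energy estimate of Lemma~\ref{lem:stabilityandconservation}.
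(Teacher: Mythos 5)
Your argument is correct and is the standard proof of this lemma; the paper itself does not prove it but merely cites Temam and notes that it is a consequence of Brouwer's fixed point theorem, which is exactly the route you take (normalizing $-P$ to map the closed ball into its boundary sphere, extracting a fixed point, and contradicting the sign hypothesis on $|\xi|=\rho$). Your closing remark about how the lemma is applied with $X=\widetilde\Phi_h\times\widetilde M_h\times\widetilde\Sigma_h$ and a coercive $\mathcal{G}$ also matches the paper's subsequent use.
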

	
	\begin{lemma}
		Given $(\phi^n,\mu^n, \sigma^n)$ then there exists at least one solution\\ $(\phi^{n+1}, \mu^{n+1},\sigma^{n+1})$ of scheme \eqref{eq:scheme}.
	\end{lemma}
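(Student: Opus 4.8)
The plan is to apply Lemma~\ref{Lem_tem} (a corollary of Brouwer's fixed point theorem): we recast \eqref{eq:scheme} as the problem of finding a zero of a continuous self-map $P$ of a finite-dimensional Hilbert space, designed so that $(P(\xi),\xi)$ equals (a multiple of) the discrete energy increment appearing in Lemma~\ref{lem:stabilityandconservation}, hence is strictly positive on a sufficiently large sphere. The first task is to reduce the number of unknowns. Since $1\in M_h$, testing \eqref{eq:scheme}$_1$ with $\bar\mu=1$ shows any solution satisfies $(\phi^{n+1}-\phi^n,1)=0$, so we look for the increment $\psi:=\phi^{n+1}-\phi^n$ in the finite-dimensional space $\Phi_h^0:=\{v\in\Phi_h:\int_\Omega v\,d\x=0\}$ and set $\phi^{n+1}:=\phi^n+\psi$. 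By the Riesz representation theorem, \eqref{eq:scheme}$_3$ defines a linear map $T_\sigma:\Phi_h\to\Sigma_h$ with $(T_\sigma v,\bar\sigma)=(\nabla v,\nabla\bar\sigma)$ for all $\bar\sigma\in\Sigma_h$, and we set $\sigma^{n+1}:=T_\sigma\phi^{n+1}$ (so that \eqref{eq:scheme}$_3$ holds at step $n+1$; the datum $\sigma^n$ likewise satisfies the discrete relation $(\sigma^n,\bar\sigma)=(\nabla\phi^n,\nabla\bar\sigma)$).

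For the chemical potential, note that since $(\psi,1)=0$, equation \eqref{eq:scheme}$_1$ is a solvable discrete Poisson--Neumann problem that determines $\nabla\mu^{n+\frac12}$, equivalently $\mu^{n+\frac12}$ modulo constants, uniquely and linearly from $\psi$ (coercivity on the subspace orthogonal to constants follows from Poincar\'e's inequality); write $\mu^{n+\frac12}=\theta(\psi)+c$ with $\theta(\psi)$ the mean-zero part and $c$ a constant, still free. The only equation left is \eqref{eq:scheme}$_2$, and restricted to test functions $\bar\phi\in\Phi_h^0$ it does not involve $c$, so we define $P:\Phi_h^0\to\Phi_h^0$ by $(P(\psi),\bar\phi)=\Delta t\,\big(A(\psi;\bar\phi)-(\theta(\psi),\bar\phi)\big)$ for all $\bar\phi\in\Phi_h^0$, where $A(\psi;\bar\phi)$ denotes the left-hand side of \eqref{eq:scheme}$_2$. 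Once a zero $P(\psi)=0$ is found, the functional $\bar\phi\mapsto A(\psi;\bar\phi)-(\theta(\psi),\bar\phi)$ vanishes on $\Phi_h^0$, hence equals $c_\ast(1,\bar\phi)$ on all of $\Phi_h$ for a unique scalar $c_\ast$, and $\mu^{n+\frac12}:=\theta(\psi)+c_\ast\in M_h$ together with $\mu^{n+1}:=2\mu^{n+\frac12}-\mu^n$ makes the full system \eqref{eq:scheme} hold. The map $P$ is continuous because $T_\sigma$, $\theta$ and the polynomials $\widehat{f_0}'$, $g$, $g'$ all depend continuously on $\psi$.

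It remains to verify the coercivity hypothesis $(P(\psi),\psi)>0$ for $\|\psi\|$ large. Taking $\bar\phi=\psi=\phi^{n+1}-\phi^n$ and reusing the algebraic identities from the proof of Lemma~\ref{lem:stabilityandconservation} --- the secant identity $\widehat{f_0}'(\phi^{n+1},\phi^n)(\phi^{n+1}-\phi^n)=f_0(\phi^{n+1})-f_0(\phi^n)$, the collapse of the two $g$-terms into $\frac12\int_\Omega\big(g(\phi^{n+1})|\nabla\phi^{n+1}|^2-g(\phi^n)|\nabla\phi^n|^2\big)d\x$, the time-differenced \eqref{eq:scheme}$_3$ tested with $\sigma^{n+\frac12}$, and \eqref{eq:scheme}$_1$ tested with $\mu^{n+\frac12}$ (which yields $-(\theta(\psi),\psi)=M\Delta t\,\|\nabla\mu^{n+\frac12}\|^2_{L^2}$) --- one obtains
\[ (P(\psi),\psi)=\Delta t\Big(E(\phi^{n+1},\sigma^{n+1})-E(\phi^{n},\sigma^{n})+M\Delta t\,\|\nabla\mu^{n+\frac12}\|^2_{L^2}\Big). \]
Since $E(\phi^n,\sigma^n)$ is a fixed constant and the last term is nonnegative, it suffices that $E(\phi^n+\psi,\,T_\sigma(\phi^n+\psi))\to+\infty$ as $\|\psi\|\to\infty$ in the fixed finite-dimensional space $\Phi_h^0$; this holds because $\beta\int_\Omega f_0(\phi^n+\psi)\,d\x$ grows like $\beta\|\phi^n+\psi\|^6_{L^6}$, which dominates every other contribution --- in particular the possibly negative term $\frac{g_0}2\|\nabla\phi^{n+1}\|^2_{L^2}$ of the gradient part is absorbed exactly as in the preceding Corollary, by writing $\|\nabla\phi^{n+1}\|^2_{L^2}=(\sigma^{n+1},\phi^{n+1})$ and hiding a small multiple of $\|\sigma^{n+1}\|^2_{L^2}$ in $\frac\lambda2\|\sigma^{n+1}\|^2_{L^2}$. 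Hence $(P(\psi),\psi)>0$ on a large enough sphere, Lemma~\ref{Lem_tem} produces $\psi\in\Phi_h^0$ with $P(\psi)=0$, and $\big(\phi^n+\psi,\ 2\mu^{n+\frac12}-\mu^n,\ T_\sigma(\phi^n+\psi)\big)$ is the desired solution of \eqref{eq:scheme}.

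The step I expect to be the main obstacle is the bookkeeping for $\mu$: because $\mu^{n+\frac12}$ enters \eqref{eq:scheme}$_1$ only through its gradient and \eqref{eq:scheme}$_2$ only through its action on $\Phi_h$, one must confine $\psi$ to the mean-zero subspace (so that \eqref{eq:scheme}$_1$ is compatible), solve the fixed-point problem only for $\psi$, and recover the additive constant of $\mu$ a posteriori. Making this reduction to a single coercive equation precise is the only delicate point; the growth/coercivity estimate itself is essentially the one already carried out in the Corollary.
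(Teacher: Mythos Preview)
Your argument is correct and rests on the same two ingredients as the paper's proof---Temam's fixed-point Lemma~\ref{Lem_tem} and the discrete energy identity of Lemma~\ref{lem:stabilityandconservation} for coercivity---but the organization differs. The paper keeps all three increments
\[
(\psi,w,z)=\Big(\phi^{n+1}-\phi^n,\ \mu^{n+\frac12}-\textstyle\oint_\Omega\mu^{n+\frac12},\ \sigma^{n+1}+\sigma^n\Big)
\]
as unknowns in the product of mean-zero spaces $\widetilde\Phi_h\times\widetilde M_h\times\widetilde\Sigma_h$ and applies Lemma~\ref{Lem_tem} to a map $\mathcal G$ on that product; the computation of $(\mathcal G(\chi),\chi)$ yields $M\Delta t\|\nabla w\|_{L^2}^2+\frac\lambda2\|z-\sigma^n\|_{L^2}^2$ plus the same energy difference you obtain, so coercivity in $w$ and $z$ comes for free from those two squares while coercivity in $\psi$ comes from the growth of $\int f_0$. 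You instead eliminate $\sigma^{n+1}=T_\sigma\phi^{n+1}$ and the mean-zero part $\theta(\psi)$ of $\mu^{n+\frac12}$ as linear functions of $\psi$, run the fixed-point on $\Phi_h^0$ alone, and recover the additive constant of $\mu$ a posteriori. Your reduction is tidier (a single coercive equation, and the ``bookkeeping for $\mu$'' you flagged is handled cleanly), at the price of invoking the auxiliary discrete Poisson solve for $\theta(\psi)$; the paper's product-space approach avoids that solve but must track three directions at once. One minor remark: your identity $\|\nabla\phi^{n+1}\|_{L^2}^2=(\sigma^{n+1},\phi^{n+1})$ tacitly assumes $\Phi_h\subset\Sigma_h$, which holds in the paper's equal-order setting; in any case the coercivity already follows from norm equivalence on the fixed finite-dimensional space, since $\beta\int f_0(\phi)\gtrsim\|\phi\|_{L^6}^6$ dominates the possibly negative quadratic term $\frac{g_0}2\|\nabla\phi\|_{L^2}^2$.
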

	\begin{proof}
		We first introduce the following change of variables: 
		$$
		\psi
		\,=\,
		\phi^{n+1} - \phi^{n}\,, 
		\quad
		w
		\,=\,
		\mu^{n+\frac12} - \oint_{\Omega} \mu^{n+\frac12} d\x
		\,=\,
		\mu^{n+\frac12} - \oint_{\Omega} \mathcal{L}(\phi^{n+1} , \phi^n) d\x
		$$
		and
		$$
		z=\sigma^{n+1}+ \sigma^{n},
		$$
		where we use the shorthand $\mathcal{L}(\phi^{n+1} , \phi^n)$ defined as
		\beq
		\mathcal{L}(\phi^{n+1} , \phi^n)
		\,=\,
		\beta\widehat{f_0}'(\phi^{n+1},\phi^n)
		+ g'(\phi^{n+\frac12})\frac{|\nabla\phi^{n+1}|^2 + |\nabla\phi^n|^2}{4}\,.
		\eeq
		We can now define the mean value spaces:
		$$
		\widetilde\Phi_h
		\,=\,\left\{
		\psi\in\Phi_h
		\mbox{ such that }
		\oint_{{\Omega}} \psi d\x
		=0
		\right\}\,,
		\widetilde M_h
		\,=\,\left\{
		w \in M_h
		\mbox{ such that }
		\oint_{{\Omega}} w d\x
		=0
		\right\},
		$$
		and
		$$
		\widetilde\Sigma_h
		\,=\,\left\{
		z\in\Sigma_h
		\mbox{ such that }
		\oint_{{\Omega}} z d\x
		=0
		\right\}\,.
		$$
		In terms of these newly introduced variables and spaces, we rewrite the scheme~\eqref{eq:scheme} as follows:
		Find $(\psi,w,z)\in\widetilde\Phi_h\times\widetilde M_h
		\times\widetilde\Sigma_h
		$ satisfying
		\beq\label{eq:eq-scheme}
		\left\{\ba{rcl}\dis
		\left(\psi,\tw\right)
		+ \Delta t M(\nabla w,\nabla\tw)
		&=&0\,,
		\\ \hueco\dis
		\left(\mathcal{L}(\psi +\phi^n, \phi^n ),\tpsi\right)
		+ \frac{\lambda}{2}(\nabla z,\nabla\tpsi)
		\\ \hueco\dis		
		+\frac14\Big(g(\psi + \phi^{n}) + g(\phi^{n})\nabla(\psi 		+ 2\phi^{n}),\nabla\tpsi\Big)		
		&=&\dis
		\left(w + \oint_{\Omega} \mathcal{L}(\psi +\phi^n , \phi^n) \ d\x ,\tpsi\right)\,,
		\\ \hueco\dis
		(z-\sigma^n ,\tz)
		&=&
		(\nabla\psi+ \nabla \phi^n,\nabla\tz)\,,
		\ea
		\right.
		\eeq
		for all $(\tw,\tpsi,\tz)\in\widetilde\Phi_h\times\widetilde M_h
		\times\widetilde\Sigma_h$. 
		Notice that any solution of \eqref{eq:eq-scheme} satisfies 
		$$
		\int_{\Omega}\psi \,d\x 
		\,=\, 
		\int_{\Omega}w \,d\x
		\,=\,
		\int_{\Omega}z \,d\x
		\,=\,
		0\,.
		$$
		Also note that if $(\psi,w,z)$ solves~\eqref{eq:eq-scheme} then the variables $(\phi^{n+1},\mu^{n+1},\sigma^{n+1})$ satisfies  scheme~\eqref{eq:scheme}. Hence we just need to prove existence of solution of \eqref{eq:eq-scheme}.  The remainder of this proof will be devoted to this task.
		We set $\chi=(\psi,w,z)$,
		$\tchi=(\tpsi,\tw,\tz)$, we define the space
		$
		X=\widetilde\Phi_h\times\widetilde M_h
		\times\widetilde\Sigma_h
		$
		with scalar product
		$
		(\chi,\tchi)_X
		=
		\Big((\psi,w,z),(\tpsi,\tw,\tz)\Big)_X
		=
		(\psi,\tpsi) + (w,\tw) + (z,\tz)\,,
		$
		and
		we introduce the following functional $\mathcal{G}(\chi)$.
		$$
		\ba{rcl}\dis
		\Big(\mathcal{G}(\chi),\tchi\Big)
		& =&\dis  \left(\psi,\tw\right)
		+ M \Delta t (\nabla w,\nabla\tw) 
		+ \left(\mathcal{L}(\psi +\phi^n, \phi^n ),\tpsi\right)
		\\ \hueco\dis	
		&+&\dis\frac14\Big(g(\psi + \phi^{n}) + g(\phi^{n})\nabla(\psi + 2\phi^{n}),\nabla\tpsi\Big)
		\\ \hueco\dis
		&-& (w ,\tpsi)+ \dis\frac{\lambda}{2}(\nabla z,\nabla\tpsi)
		+ \frac{\lambda}{2}  (z-\sigma^n ,\tz)
		- \frac{\lambda}{2}(\nabla\psi 
		+ \nabla \phi^n,\nabla\tz)\,.
		\ea
		$$
		Then, following the arguments in~\cite{Temam}, we use Lemma~\ref{Lem_tem} to show the existence of the solution by establishing the existence of $r>0$ such that
		\begin{align*}
			\Big(\mathcal{G}(\chi),\chi\Big)>0 
			\quad\text{ for }\quad
			|\chi|=r\,.
		\end{align*}
		To this end, consider
		$$
		\ba{rcl}\dis
		\Big(\mathcal{G}(\chi),\chi\Big)
		&=&\dis
		M \Delta t \|\nabla w\|^2_{L^2} 
		+ \big(\mathcal{L}(\psi+\phi^n, \phi^n ),\psi\big)
		\\ \hueco\dis		
		&+&\dis\frac14\Big(g(\psi + \phi^{n}) + g(\phi^{n})\nabla(\psi + 2\phi^{n}),\nabla\psi\Big)
		\\ \hueco\dis
		&+&\dis \frac{\lambda}{2}(\nabla z,\nabla\psi)
		+ \frac{\lambda}{2}  (z-\sigma^n ,z)
		- \frac{\lambda}{2}(\nabla\psi + \nabla \phi^n,\nabla z)\,.
		\ea
		$$
		We can write
		$$
		\ba{rcl}\dis
		\beta \left(\widehat{f_0}'(\psi +\phi^n,\phi^n) ,\psi\right)
		&=&\dis
		\beta\int_\Omega
		\frac{f_0(\psi +\phi^n) - f_0(\phi^n)}{\psi +\phi^n - \phi^n}\psi d\x
		\\ \hueco\dis		
		&=&\dis
		\beta\int_\Omega
		\Big(f_0(\psi +\phi^n) - f_0(\phi^n)\Big)d\x\,.
		\ea
		$$
		Using the following relations:
		$$
		\ba{rcl}\dis
		g_2(\psi + 2\phi^{n})\psi
		=g_2(\psi+\phi^n)^2 - g_2(\phi^n)^2 
		&=& \dis g(\psi+\phi^n) - g(\phi^n)\,,\\
		\hueco\dis
		\left(\nabla(\psi + 2\phi^{n}),\nabla\psi\right)
		\pm (\nabla \phi^n, \nabla \phi^n)
		&=& \|\nabla(\psi + \phi^n)\|^2_{L^2}  - \|\nabla\phi^n\|^2_{L^2} \,,
		\ea
		$$
we can write
		$$
		\ba{rcl}\dis
		&&\frac12\left(g'\left(\frac{\psi+2\phi^{n}}2\right)\frac{|\nabla(\psi + \phi^n)|^2 + |\nabla\phi^n|^2}{2},\psi\right)
		\\ \hueco
		&+&\dis
		\left(\frac{g(\psi+ \phi^n) +
			g(\phi^{n})}{2}\nabla\left(\frac{\psi+2\phi^{n}}2\right),\nabla\psi\right)
		\\ \hueco\dis
		&=&\dis\frac14 \int_\Omega g_2(\psi + 2\phi^{n})\psi\Big(|\nabla(\psi + \phi^n)|^2  + |\nabla\phi^n|^2\Big)d\x
		\\ \hueco\dis
		&+&
		\dis\frac14 \int_\Omega\big(g(\psi + \phi^n) + g(\phi^n)\big) \nabla(\psi + 2\phi^{n}) \nabla\psi
		d\x
		\\ \hueco\dis
		&=&\dis\frac14 \int_\Omega 
		\big(g(\psi+\phi^n) - g(\phi^n)\big)\Big(|\nabla(\psi + \phi^n)|^2  + |\nabla\phi^n|^2\Big)d\x
		\\ \hueco\dis
		&+&\dis \frac14 \int_\Omega \big(g(\psi + \phi^n) + g(\phi^n)\big)
		\Big(|\nabla(\psi + \phi^{n})|^2 - |\nabla\phi^{n}|^2\Big)
		d\x
		\\ \hueco\dis
		&=&\dis  \frac12 \int_\Omega
		\Big[g(\psi + \phi^n)|\nabla(\psi + \phi^{n})|^2
		-g(\phi^n)|\nabla\phi^{n}|^2\Big]d\x\,.
		\ea
		$$
		Consequently,
		$$
		\ba{rcl}\dis
		&&\dis\Big(\mathcal{L}(\psi +\phi^n, \phi^n ),\psi\Big)
		+
		\left(\frac{g(\psi+ \phi^n) +
			g(\phi^{n})}{2}\nabla\left(\frac{\psi+2\phi^{n}}2\right),\nabla\psi\right)
		\\
		\hueco \dis
		&=&\dis
		\int_\Omega
		\left(\beta f_0(\psi +\phi^n) + \frac12 g(\psi + \phi^n)|\nabla(\psi + \phi^{n})|^2 \right)d\x
		\\ \hueco\dis
		&-&\dis
		\int_\Omega
		\left[\beta f_0(\phi^n)
		+ \frac12g(\phi^n)|\nabla\phi^{n}|^2\right]d\x\,.
		\ea
		$$
		From \eqref{eq:eq-scheme}$_3$ in the previous time step and taking $\tz=z$ we know that
		$$
		(\sigma^{n},z)
		-(\nabla\phi^{n},\nabla z)
		=0\,.
		$$
		In view of this relation, we can express the remaining three terms of $\Big(\mathcal{G}(\chi),\chi\Big)$ as
		$$
		\ba{rcl}\dis
		&&\dis\frac{\lambda}{2}(\nabla z,\nabla\psi)
		+ \frac{\lambda}{2}  (z-\sigma^n ,z)
		- \frac{\lambda}{2}(\nabla\psi + \nabla \phi^n,\nabla z)
		\\ \hueco\dis
		&=&\dis\frac{\lambda}{2}  (z-\sigma^n ,z)
		- \frac{\lambda}{2}(\nabla \phi^n,\nabla z)
		-\frac{\lambda}2\Big((\sigma^{n},z)
		-(\nabla\phi^{n},\nabla z)\Big)
		\\ \hueco\dis
		&=&\dis\frac{\lambda}{2}  (z-2\sigma^n ,z)
		=\frac{\lambda}{2}  \|z - \sigma^n\|^2_{L^2}
		-\frac{\lambda}{2}  \|\sigma^n\|^2_{L^2}\,.
		\ea
		$$
		Hence, we have obtained
		$$
		\ba{rcl}
		\mathcal{G}(\chi,\chi) 
		&=&\dis M \Delta t \|\nabla w\|^2_{L^2} 
		+ \frac{\lambda}{2}  \|z - \sigma^n\|^2
		\\ \hueco
		&+&\dis\int_\Omega
		\left(\beta\widehat{f_0}(\psi +\phi^n) + \frac12 g(\psi + \phi^n)|\nabla(\psi + \phi^{n})|^2 \right)d\x
		\\ \hueco
		&-&\dis 
		\int_\Omega
		\left[\beta\widehat{f_0}(\phi^n)
		+ \frac12g(\phi^n)|\nabla\phi^{n}|^2\right]d\x
		-\frac{\lambda}{2}  \|\sigma^n\|^2_{L^2}\,,
		\ea
		$$
		and so the hypothesis of Lemma~\ref{Lem_tem} holds for $\psi$ large enough. Consequently, there exists 
		$(\psi,w,z)\in X$ such that $\mathcal{G}(\psi,w,z)=0$ and therefore $(\psi,w,z)$ is a solution of scheme \eqref{eq:eq-scheme}.
		
	\end{proof}
	{
		\begin{remark}
			The numerical scheme \eqref{eq:scheme} satisfies a discrete version of energy law \eqref{eq:energylaw} without introducing any additional numerical dissipation. This indicates that the dynamics of the discrete energy will follow exactly the dynamics of the continuous energy. On the other hand, the resulting scheme is nonlinear, which might lead to a time-step constraint for rigorously showing unique solvability, although in the numerical results presented in Section~\ref{sec:simulations} we do not observe any unreasonable constraint on the size of the time step.
		\end{remark}
	}
	\subsection{Iterative Algorithm}
	The numerical scheme derived in this work {is} a nonlinear algebraic system. Hence, we consider an iterative algorithm to approximate the scheme. We detail here the iterative algorithm that we have considered to approximate scheme \eqref{eq:scheme}.
	\\\textbf{Initialization ($\ell=0$).}
	\\ Set $(\phi^{\ell=0},\mu^{\ell=0},\sigma^{\ell=0})=(\phi^n,\mu^n,\sigma^n)\in\Phi_h\times M_h\times \Sigma_h$.
	\\\textbf{Algorithm.}
	\\Given $(\phi^\ell,\mu^\ell,\sigma^\ell)\in\Phi_h\times M_h\times \Sigma_h$, find $(\phi^{\ell+1},\mu^{\ell+1},\sigma^{\ell+1})\in\Phi_h\times M_h\times \Sigma_h$ such that for all $(\bar\phi,\bar\mu,\bar\sigma)\in \Phi_h\times M_h\times \Sigma_h$:
	\beq\label{eq:iterativealgorithm}
	\left\{\ba{rcl}\dis
	\left(\frac{\phi^{\ell+1} - \phi^n}{\Delta t},\bar\mu\right)
	+ M(\nabla\mu^{\ell+\frac12},\nabla\bar\mu)
	&=&0\,,
	\\ \hueco\dis
	\beta\Big(\widehat{f_0}'(\phi^{\ell},\phi^n) ,\bar\phi\Big)
	+ \frac12\left(g'(\phi^{\ell+\frac12})\frac{|\nabla\phi^{\ell}|^2 + |\nabla\phi^n|^2}{2},\bar\phi\right)&&
	\\ \hueco\dis
	+\left(\frac{g(\phi^{\ell}) + g(\phi^{n})}{2}\nabla\phi^{\ell+\frac12},\nabla\bar\phi\right)
	+ \lambda(\nabla \sigma^{\ell+\frac12},\nabla\bar\phi)
	&=&(\mu^{\ell+\frac12},\bar\phi)\,,
	\\ \hueco\dis
	(\sigma^{\ell+1},\bar\sigma)
	&=&(\nabla\phi^{\ell+1},\nabla\bar\sigma)\,.
	\ea\right.
	\eeq
	We iterate until all the relative $L^2$-errors reach the required tolerance \texttt{TOL}:
	$$
	\frac{\|(\phi^{\ell+1} - \phi^{\ell},\mu^{\ell+1} - \mu^{\ell},\sigma^{\ell+1} - \sigma^{\ell})\|_{L^2\times L^2\times L^2}}{\|(\phi^{\ell+1} ,\mu^{\ell+1} ,\sigma^{\ell+1} ,)\|_{ L^2\times L^2\times L^2}}
	\,\leq\,
	\texttt{TOL}\,.
	$$
	In particular, the tolerance will be fixed in all the examples as $\texttt{TOL}=10^{-7}$ unless otherwise mentioned. 
	Moreover, we impose $50$ as the maximum number of iterations the iterative algorithm can perform to achieve the required tolerance. However, none of the reported simulations require so many iterations at any time. 
	
	%
	
	\begin{lemma}\label{lem:iterativealgorithm}
		If $1\in\Phi_h$, $\Phi_h\subset M_h$ and $\Phi_h\subset\Sigma_h$, then there exists a unique solution of iterative scheme \eqref{eq:iterativealgorithm} if $\dis\Delta t 
		\leq
		\frac{3\lambda^2}{2|g_0|^3 M}$ whenever $g_0\ne 0$.
	\end{lemma}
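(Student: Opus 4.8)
The plan is to exploit that, unlike~\eqref{eq:scheme}, the iterative scheme~\eqref{eq:iterativealgorithm} is \emph{linear} in the unknowns $(\phi^{\ell+1},\mu^{\ell+1},\sigma^{\ell+1})$: the secant term $\widehat{f_0}'(\phi^{\ell},\phi^n)$, the weight $g'(\phi^{\ell+\frac12})$ multiplying $|\nabla\phi^{\ell}|^2+|\nabla\phi^n|^2$, and the coefficient $g(\phi^{\ell})+g(\phi^n)$ are all frozen at the known iterate $\phi^{\ell}$ (and at $\phi^n$), while every remaining occurrence of the unknowns enters affinely. Since $\Phi_h\times M_h\times\Sigma_h$ is finite dimensional and the trial and test spaces coincide, \eqref{eq:iterativealgorithm} is a square linear system, so it suffices to show that its homogeneous version has only the trivial solution; existence then follows automatically. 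Thus let $(\Phi,U,S)\in\Phi_h\times M_h\times\Sigma_h$ be the difference of two solutions of~\eqref{eq:iterativealgorithm} (for fixed data $\phi^n,\mu^n,\sigma^n$ and fixed previous iterate $\phi^{\ell},\mu^{\ell},\sigma^{\ell}$). Then, for all $(\bar\phi,\bar\mu,\bar\sigma)\in\Phi_h\times M_h\times\Sigma_h$,
\beq\label{eq:sketch-homog}
\ba{rcl}\dis
\frac1{\Delta t}(\Phi,\bar\mu)+\frac M2(\nabla U,\nabla\bar\mu)&=&0\,,
\\ \hueco\dis
\frac14\big((g(\phi^{\ell})+g(\phi^n))\nabla\Phi,\nabla\bar\phi\big)+\frac\lambda2(\nabla S,\nabla\bar\phi)-\frac12(U,\bar\phi)&=&0\,,
\\ \hueco\dis
(S,\bar\sigma)-(\nabla\Phi,\nabla\bar\sigma)&=&0\,,
\ea
\eeq
where, if $g'(\phi^{\ell+\frac12})$ is taken implicitly, the middle line picks up one more bilinear term whose diagonal value $\frac{g_2}4\int_\Omega\Phi^2(|\nabla\phi^{\ell}|^2+|\nabla\phi^n|^2)\,d\x$ is nonnegative and therefore harmless below.

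Next I would reproduce the structure of the discrete energy estimate behind Lemma~\ref{lem:stabilityandconservation}. Taking $\bar\mu=U$ in \eqref{eq:sketch-homog}$_1$ gives $(\Phi,U)=-\frac{M\Delta t}2\|\nabla U\|^2_{L^2}$; taking $\bar\sigma=S$ in \eqref{eq:sketch-homog}$_3$ gives $(\nabla\Phi,\nabla S)=\|S\|^2_{L^2}$; inserting both into \eqref{eq:sketch-homog}$_2$ tested with $\bar\phi=\Phi$ yields
\beq\label{eq:sketch-energy}
\frac14\big((g(\phi^{\ell})+g(\phi^n))\nabla\Phi,\nabla\Phi\big)+\frac\lambda2\|S\|^2_{L^2}+\frac{M\Delta t}4\|\nabla U\|^2_{L^2}=0\,.
\eeq
If $g_0\ge0$, then $g(\phi^{\ell})+g(\phi^n)=g_2((\phi^{\ell})^2+(\phi^n)^2)+2g_0\ge0$ pointwise, so \eqref{eq:sketch-energy} is a sum of three nonnegative terms equal to zero, forcing $\|S\|_{L^2}=\|\nabla U\|_{L^2}=0$ with no constraint on $\Delta t$ (consistent with the statement, where the constraint is vacuous unless $g_0\neq0$). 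When $g_0<0$ the first term may be negative; bounding $g(\phi^{\ell})+g(\phi^n)\ge2g_0=-2|g_0|$ leaves
\beq\label{eq:sketch-bad}
\frac\lambda2\|S\|^2_{L^2}+\frac{M\Delta t}4\|\nabla U\|^2_{L^2}\le\frac{|g_0|}2\|\nabla\Phi\|^2_{L^2}\,,
\eeq
and the whole point becomes reabsorbing $\|\nabla\Phi\|^2_{L^2}$ into the two good terms.

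This is precisely where the hypotheses $\Phi_h\subset\Sigma_h$ and $\Phi_h\subset M_h$ enter. Testing \eqref{eq:sketch-homog}$_3$ with $\bar\sigma=\Phi$ gives $\|\nabla\Phi\|^2_{L^2}=(S,\Phi)\le\|S\|_{L^2}\|\Phi\|_{L^2}$; testing \eqref{eq:sketch-homog}$_1$ with $\bar\mu=\Phi$ gives $\|\Phi\|^2_{L^2}=-\frac{M\Delta t}2(\nabla U,\nabla\Phi)\le\frac{M\Delta t}2\|\nabla U\|_{L^2}\|\nabla\Phi\|_{L^2}$. Eliminating $\|\Phi\|_{L^2}$ between the two produces the cubic estimate $\|\nabla\Phi\|^3_{L^2}\le\frac{M\Delta t}2\,\|S\|^2_{L^2}\,\|\nabla U\|_{L^2}$, hence $\|\nabla\Phi\|^2_{L^2}\le(\frac{M\Delta t}2)^{2/3}\|S\|^{4/3}_{L^2}\|\nabla U\|^{2/3}_{L^2}$. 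Substituting this into \eqref{eq:sketch-bad} and splitting the right-hand side with a weighted Young inequality $A^{2/3}B^{1/3}\le\frac23 tA+\frac1{3t^2}B$ (with $A=\|S\|^2_{L^2}$, $B=\|\nabla U\|^2_{L^2}$ and $t>0$ free), one picks $t$ as an appropriate power of $\lambda/|g_0|$ times a power of $M\Delta t$ and checks that $\Delta t\le\frac{3\lambda^2}{2|g_0|^3M}$ is enough to push the two resulting coefficients below $\frac\lambda2$ and $\frac{M\Delta t}4$; then \eqref{eq:sketch-bad} again forces $\|S\|_{L^2}=\|\nabla U\|_{L^2}=0$. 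The $\lambda^2$ and $|g_0|^3$ in the threshold are exactly the footprint of this cubic-estimate-plus-Young step, and are consistent with the scaling $[\lambda^2/(|g_0|^3M)]=[\text{time}]$.

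In either case we have reached $\|S\|_{L^2}=\|\nabla U\|_{L^2}=0$. Then $S=0$, so \eqref{eq:sketch-homog}$_3$ with $\bar\sigma=\Phi\in\Sigma_h$ gives $\nabla\Phi=0$; testing \eqref{eq:sketch-homog}$_1$ with $\bar\mu=1$ (legitimate since $1\in\Phi_h\subset M_h$) gives $\int_\Omega\Phi\,d\x=0$, so $\Phi\equiv0$; finally \eqref{eq:sketch-homog}$_2$ with $\bar\phi=1\in\Phi_h$ gives $\int_\Omega U\,d\x=0$, which with $\nabla U=0$ yields $U\equiv0$. Hence the homogeneous problem is trivial and \eqref{eq:iterativealgorithm} is uniquely solvable. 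I expect the delicate point to be the $g_0<0$ case --- not its logic, which is routine, but tracking the constants through the cubic estimate and the Young split sharply enough to land exactly on $\frac{3\lambda^2}{2|g_0|^3M}$ rather than a larger multiple of $\lambda^2/(|g_0|^3M)$.
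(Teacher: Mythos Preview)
Your argument is correct and follows the same overall route as the paper: reduce to the homogeneous difference system, test it to obtain an identity in which $\|S\|_{L^2}^2$ and $\|\nabla U\|_{L^2}^2$ are good terms and $|g_0|\|\nabla\Phi\|_{L^2}^2$ is the only bad one, and then control $\|\nabla\Phi\|_{L^2}^2$ by feeding \eqref{eq:sketch-homog}$_1$ and \eqref{eq:sketch-homog}$_3$ (each tested with $\Phi$, which is where $\Phi_h\subset M_h$ and $\Phi_h\subset\Sigma_h$ are used) back into the estimate. The only difference is in how that last control is executed: the paper applies two weighted Young inequalities in sequence (one to $(\sigma,\phi)$ and one to $(\nabla\mu,\nabla\phi)$) with specific weights $\lambda/|g_0|$ and $\Delta t|g_0|M/\lambda$, whereas you chain Cauchy--Schwarz into the cubic bound $\|\nabla\Phi\|_{L^2}^3\le\frac{M\Delta t}{2}\|S\|_{L^2}^2\|\nabla U\|_{L^2}$ and then split once with Young. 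Your worry about landing exactly on the constant $3/2$ is unfounded: optimizing your Young parameter $t$ actually gives the threshold $\Delta t<\tfrac{27\lambda^2}{2|g_0|^3M}$, nine times larger than the paper's $\tfrac{3\lambda^2}{2|g_0|^3M}$ (the paper's choice of Young weights is not optimal for its own argument), so the stated hypothesis is comfortably sufficient. Your endgame ($S=0\Rightarrow\nabla\Phi=0\Rightarrow\Phi=0\Rightarrow U=0$, using $1\in\Phi_h\subset M_h$) is a valid variant of the paper's, which instead recovers $\phi=0$ directly from \eqref{eq:sketch-homog}$_1$ tested with $\bar\mu=\phi$ once $\nabla\mu=0$.
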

	\begin{proof}
		Let us consider two solutions to the algorithm \eqref{eq:iterativealgorithm}, call them \\$(\phi_1^{\ell+1},\mu_1^{\ell+1},\sigma_1^{\ell+1})$ and $(\phi_2^{\ell+1},\mu_2^{\ell+1},\sigma_2^{\ell+1})$, and denote their difference by $(\phi,\mu,\sigma)$. That is,
		$$
		(\phi,\mu,\sigma)
		\,=\,
		(\phi_1^{\ell+1},\mu_1^{\ell+1},\sigma_1^{\ell+1})
		-(\phi_2^{\ell+1},\mu_2^{\ell+1},\sigma_2^{\ell+1})
		\in\Phi_h\times M_h\times \Sigma_h\,.
		$$
		Then, taking the difference between the system for $(\phi_1^{\ell+1},\mu_1^{\ell+1},\sigma_1^{\ell+1})$ and \\$(\phi_2^{\ell+1},\mu_2^{\ell+1},\sigma_2^{\ell+1})$, we obtain
		\beq\label{eq:iterativealgorithmproof}
		\left\{\ba{rcl}\dis
		\frac1{\Delta t}(\phi,\bar\mu)
		+ \frac{M}2(\nabla\mu,\nabla\bar\mu)
		&=&0\,,
		\\ \hueco\dis
		\frac{g_2}4\Big( \phi\big(|\nabla\phi^{\ell}|^2 + |\nabla\phi^n|^2\big),\bar\phi\Big)
		+\frac{1}{4}\left(\big(g(\phi^{\ell}) + g(\phi^{n})\big)\nabla\phi,\nabla\bar\phi\right)
		&&
		\\ \hueco\dis		
		+ \frac{\lambda}2(\nabla \sigma,\nabla\bar\phi)
		- \frac12(\mu,\bar\phi)
		&=&
		0\,,
		\\ \hueco\dis
		(\sigma,\bar\sigma)
		-(\nabla\phi,\nabla\bar\sigma)
		&=&
		0\,.
		\ea\right.
		\eeq
		Testing \eqref{eq:iterativealgorithmproof} by 
		$(\bar\phi,\bar\mu,\bar\sigma)=(4\phi,2\mu,2{\lambda}\sigma)$
		we obtain
		$$
		\ba{rcl}\dis
		\Delta t M\|\nabla\mu\|^2_{L^2}
		&+&\dis{g_2}\int_\Omega\phi^2\big(|\nabla\phi^{\ell}|^2 + |\nabla\phi^n|^2\big)d\x
		\\ \hueco		
		&+&\dis\int_\Omega\big(g(\phi^{\ell}) + g(\phi^{n})\big)|\nabla\phi|^2 d\x
		+ 2{\lambda}\|\sigma\|^2_{L^2}
		\,=\,
		0\,.
		\ea
		$$
		In particular, using the definition of $g(\phi)$ in \eqref{eq:deffunctiong}, we can write
		\beq\label{eq:iterativealgorithmproof2}
		\ba{rclcl}\dis
		\Delta t M\|\nabla\mu\|^2_{L^2}
		&+&\dis{g_2}\int_\Omega\phi^2\big(|\nabla\phi^{\ell}|^2 + |\nabla\phi^n|^2\big)d\x
		&&
		\\ \hueco\dis
		&+&\dis g_2\int_\Omega \big( (\phi^{\ell})^2 + (\phi^{n})^2\big)|\nabla\phi|^2 d\x		
		+ 2{\lambda}\|\sigma\|^2_{L^2}
		&=&\dis
		-2g_0\|\nabla\phi\|^2_{L^2}
		\\ \hueco\dis
		&&&\leq&\dis
		2|g_0|\|\nabla\phi\|^2_{L^2}\,.
		\ea
		\eeq
		Taking $\bar\sigma=\phi$ in \eqref{eq:iterativealgorithmproof}$_3$ and applying Young's inequality, we can write
		$$
		\|\nabla\phi\|^2_{L^2}
		\,\leq\,
		\frac{\lambda}{4|g_0|}\|\sigma\|^2_{L^2}
		+\frac{|g_0|}{\lambda}\|\phi\|^2_{L^2}\,.
		$$
		Next, taking $\bar\mu=\phi$ in \eqref{eq:iterativealgorithmproof}$_1$ and applying Young's inequality again, we have
		$$
		\frac1{\Delta t}\|\phi\|^2_{L^2}
		\,\leq\,
		\frac{\Delta t|g_0| M^2}{4\lambda}\|\nabla\mu\|^2_{L^2}
		+\frac{\lambda}{4\Delta t|g_0|}\|\nabla\phi\|^2_{L^2}\,.
		$$
		Therefore, combining both the previous expressions, we get
		$$
		\|\nabla\phi\|^2_{L^2}
		\,\leq\,
		\frac{\lambda}{4|g_0|}\|\sigma\|^2_{L^2}
		+\frac{|g_0|}{\lambda}\left(\frac{(\Delta t)^2|g_0| M^2}{4\lambda}\|\nabla\mu\|^2_{L^2}
		+\frac{\lambda}{4|g_0|}\|\nabla\phi\|^2_{L^2}\right)\,,
		$$
		hence
		$$
		\frac{3}{4}\|\nabla\phi\|^2_{L^2}
		\,\leq\,
		\frac{\lambda}{4|g_0|}\|\sigma\|^2_{L^2}
		+\frac{(\Delta t)^2|g_0|^2 M^2}{4\lambda^2}\|\nabla\mu\|^2_{L^2}\,,
		$$
		or, 
		$$
		\|\nabla\phi\|^2_{L^2}
		\,\leq\,
		\frac{\lambda}{3|g_0|}\|\sigma\|^2_{L^2}
		+\frac{(\Delta t)^2|g_0|^2 M^2}{3\lambda^2}\|\nabla\mu\|^2_{L^2}\,.
		$$
		Using this last expression in \eqref{eq:iterativealgorithmproof2} we obtain
		$$
		\ba{c}\dis
		\left(\Delta t M - \frac{2(\Delta t)^2|g_0|^3 M^2}{3\lambda^2}\right)\|\nabla\mu\|^2_{L^2}
		+{g_2}\int_\Omega\phi^2\big(|\nabla\phi^{\ell}|^2 + |\nabla\phi^n|^2\big)d\x
		\\ \hueco\dis
		+g_2\int_\Omega \big( (\phi^{\ell})^2 + (\phi^{n})^2\big)|\nabla\phi|^2 d\x
		\,+\, {\lambda}\|\sigma\|^2_{L^2}
		\leq
		0\,,
		\ea
		$$ 
		provided 
		$\dis\frac{2(\Delta t)^2|g_0|^3 M^2}{3\lambda^2}
		\leq
		\Delta t M 
		$ or,	
		$\dis\Delta t 
		\leq
		\frac{3\lambda^2}{2|g_0|^3 M}$. Hence, we can deduce that
		$\nabla\mu=\bf{0}$ (which implies that $\mu=C$), $\phi=0$ and $\sigma=0$.
		Finally, checking \eqref{eq:iterativealgorithmproof}$_2$ we obtain
		$$
		0
		\,=\,
		(\mu,\bar\phi)
		\,=\,
		(C,\bar\phi)
		\,=\,
		C(1,\bar\phi)
		\quad
		\pato\bar\phi\in\Phi_h .
		$$ 
		Thus, if $1\in\Phi_h$, then $C=\mu=0$.
	\end{proof}
	
	\begin{remark}
		If $g_0=0$, iterative scheme \eqref{eq:iterativealgorithm} is uniquely solvable.
	\end{remark}
	
	{
		\begin{remark}
	 Although we have not rigorously established the convergence of the solution of iterative algorithm \eqref{eq:iterativealgorithm} to the solution of the nonlinear system \eqref{eq:scheme}, the proposed iterative algorithm performs well in simulations as it can be seen in the results presented in Section~\ref{sec:simulations}. Studying this convergence is important and non-trivial, but in our case standard arguments, such as taking difference between the nonlinear scheme and the iterative algorithm to construct an equation for the error and then take advantage of the discrete energy stability structure of the problem, would require an extremely small time step to control the terms arising from the variational derivative of $g(\phi)|\nabla\phi|^2$, making the assumptions unpractical. In fact, one might design nonlinear schemes and iterative algorithms with this difficulty in mind to obtain a less restrictive requirement for the time step, but then the other good properties of the scheme would be lost (i.e. the second order accuracy and the discrete energy stability). Designing new numerical schemes that balance all these difficulties at the same time is an interesting goal but it is out of the scope of this work.
	 
	\end{remark}}
	{
		\begin{remark}
			The following choice of the discrete spaces satisfy all the requirements of Lemma~\ref{lem:iterativealgorithm} and Lemma~\ref{lem:stabilityandconservation}: 
			$$
			\Phi_h\times M_h\times \Sigma_h 
			\,=\,
			\mathbb{P}_k\times\mathbb{P}_k\times\mathbb{P}_k\,,
			$$
			where $\mathbb{P}_{k}$ denotes the subspace of $H^1(\Omega)$ consisting of elementwise polynomials of degree at most $k\ge0$.
		\end{remark}
	}	
	
	\section{Simulations}\label{sec:simulations}
	In this section, we present the results of several numerical experiments to demonstrate the performance of the proposed numerical scheme. All the simulations have been carried out using \textit{FreeFem++} software \cite{freefem}, the color plots have been generated using \textit{Paraview} \cite{Paraview}, and the rest of the plots have been generated using \textit{MATLAB} \cite{matlab}. We need to mention that in order to be consistent in the presentation of results, for all the color plots, we have used a color scale bar for $\phi$ between $[-1,1]$ and at the same time for all the simulations we present the evolution in time of the maximum and minimum value of $\phi$, to illustrate that depending on the choice of the parameters the interval where the values of $\phi$ lies can change. In all the $2D$ plots the horizontal axis corresponds with the $x$-axis and the vertical with the $y$-axis, while in the $3D$ ones we have incorporated a small plot of the axis in the corner of the images to indicate the orientation of each axis.
	
	{The goal of the simulations presented in this section are twofold: first we investigate the experimental order of convergence in time of the proposed numerical scheme and then we illustrate its ability in capturing the micro-emulsion phase. To this end, we recall that on a macroscopic level, the micro-emulsion phase is a single phase structured fluid consisting of regions of oil-rich and water-rich phases which form a complicated, intertwined structure \cite{D0RA08092F}. In order to resemble the conditions studied experimentally, we set initial conditions consisting on droplets of oil and water dispersed in a \textit{continuous} medium of surfactant.	In particular, the dynamics that we observe in our simulations are consistent with numerical studies previously reported in other works~\cite{diegelsharma-me23, hoppelinsenmann-me18}. 
	}

	{For all the simulations in the next subsections we consider the following choice of the discrete spaces, which satisfy the requirements of Lemma~\ref{lem:stabilityandconservation} and Lemma~\ref{lem:iterativealgorithm}: 
		$$
		\Phi_h\times M_h\times \Sigma_h 
		\,=\,
		\mathbb{P}_1\times\mathbb{P}_1\times\mathbb{P}_1\,,
		$$}	
	After that, we present the results of several $2D$ simulations to illustrate the influence of the different physical parameters on the system's dynamics to understand the model's limitations.
	
	Finally, we present some $3D$ numerical results to evidence the applicability of the scheme proposed in this work. 
	
		%

	\subsection{Experimental of order of convergence (EOC)}

	In this section we compute numerically the order of convergence in time of the numerical scheme \eqref{eq:scheme}. We consider a square domain $\Omega={(}0,32{)}^2$ discretized using a structured mesh of size $h=1/100$, with temporal interval $[0,10^{-5}]$. Moreover the choice of the parameters values is $M=1$, $g_2=1$, $g_0=-1$, $h_0=0.5$, $\lambda=0.5$ and $\beta=1$. In order to report the results we need to introduce some additional notation. We denote the individual absolute errors  {at final time $T$} and their convergence rates between two 
	consecutive 
	time steps of size $\Delta t$ and $\widehat{\Delta t}$ as
	\begin{align*}
		&		e_2(\psi):=\|\psi_{exact} - \psi_s\|_{L^2(\Omega)},
		\,e_1(\psi):=\|\psi_{exact} - \psi_s\|_{H^1(\Omega)}
		\mbox{ and }\\
		&		r_i(\cdot):=\left[\log\left(\frac{e_i(\cdot)}{\widehat{e}_i(\cdot)}\right)\right]\Big/ \left[\log\left(\frac{\Delta t}{\widehat{\Delta t}}\right)\right]\,.
	\end{align*}
	The EOC (Experimental Order of Convergence) is computed using as exact solution the one obtained by solving the system using the time step  $\Delta t = 10^{-8}$ and the initial condition
	\begin{align*}
		\phi^0(x,y)&=
		-\tanh\left(\frac{\sqrt{(x-7)^2 + (y-7)^2}-3}{\sqrt{2\lambda}}\right)
		\\ &		
		-\tanh\left(\frac{\sqrt{(x-20)^2 + (y-20)^2}-6}{\sqrt{2\lambda}}\right)
		+1\,.
	\end{align*}
	The configuration considered as reference/exact solution is presented in Figure~\ref{fig:Ex1FinalConfig}. The results of this study are presented in Table~\ref{tab:Ex1_order}. We can observe how as expected the proposed scheme achieves second order in time for all of the unknowns both in the $L^2(\Omega)$ and $H^1(\Omega)$ norms.
	
	\begin{figure}[h]
		\begin{center}
			\includegraphics[width=0.25\textwidth]{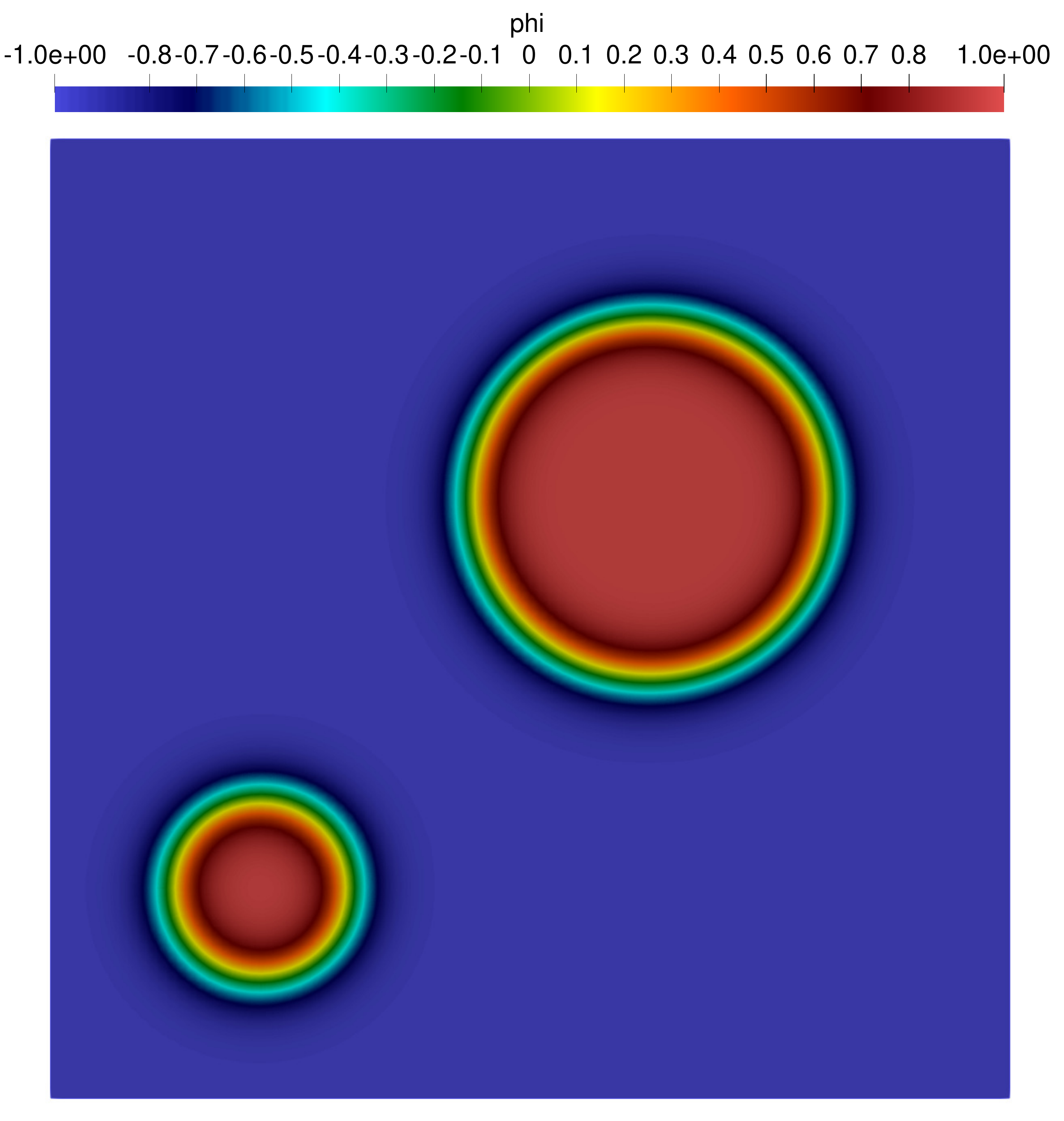}
			\includegraphics[width=0.25\textwidth]{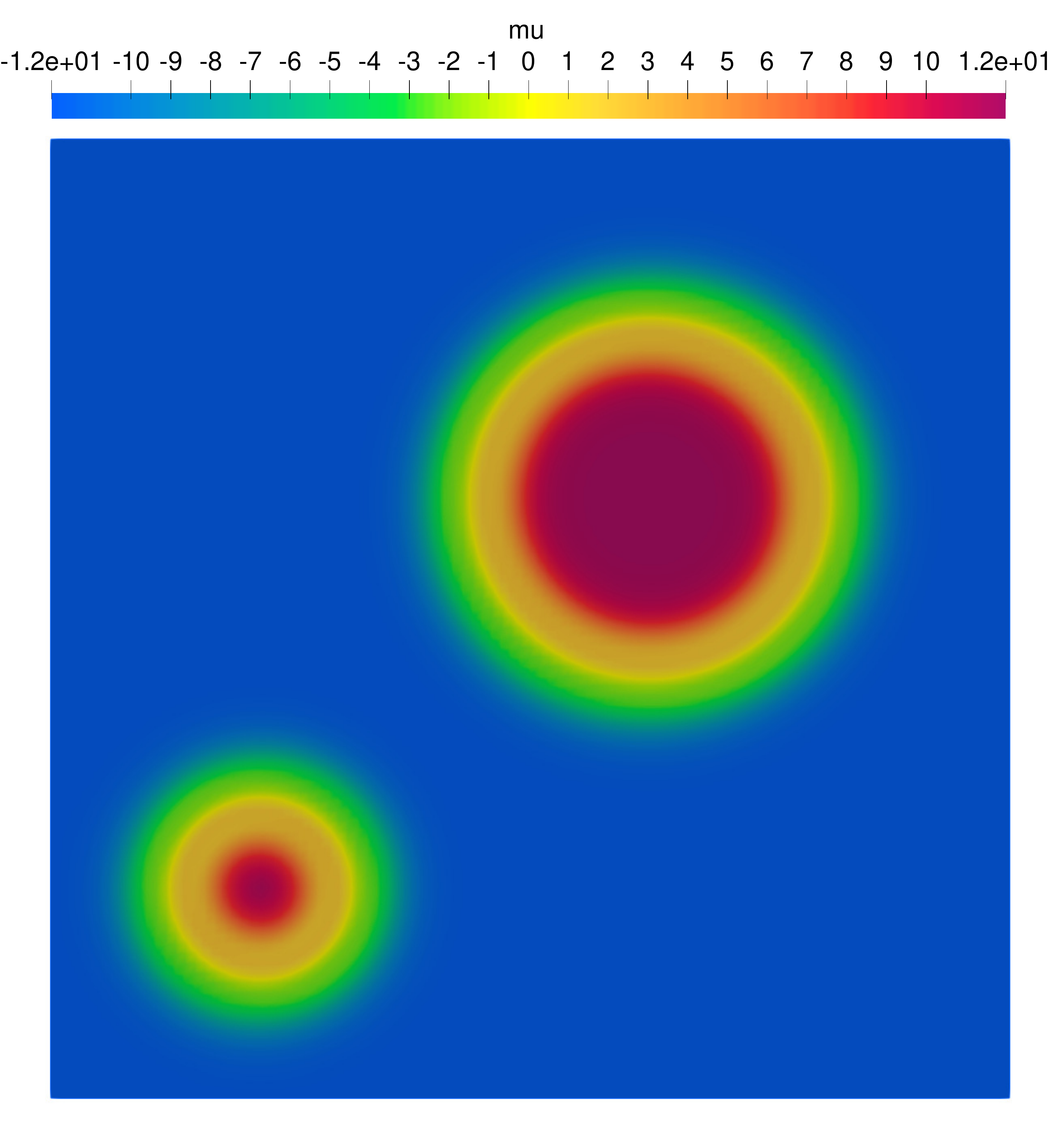}
			\includegraphics[width=0.25\textwidth]{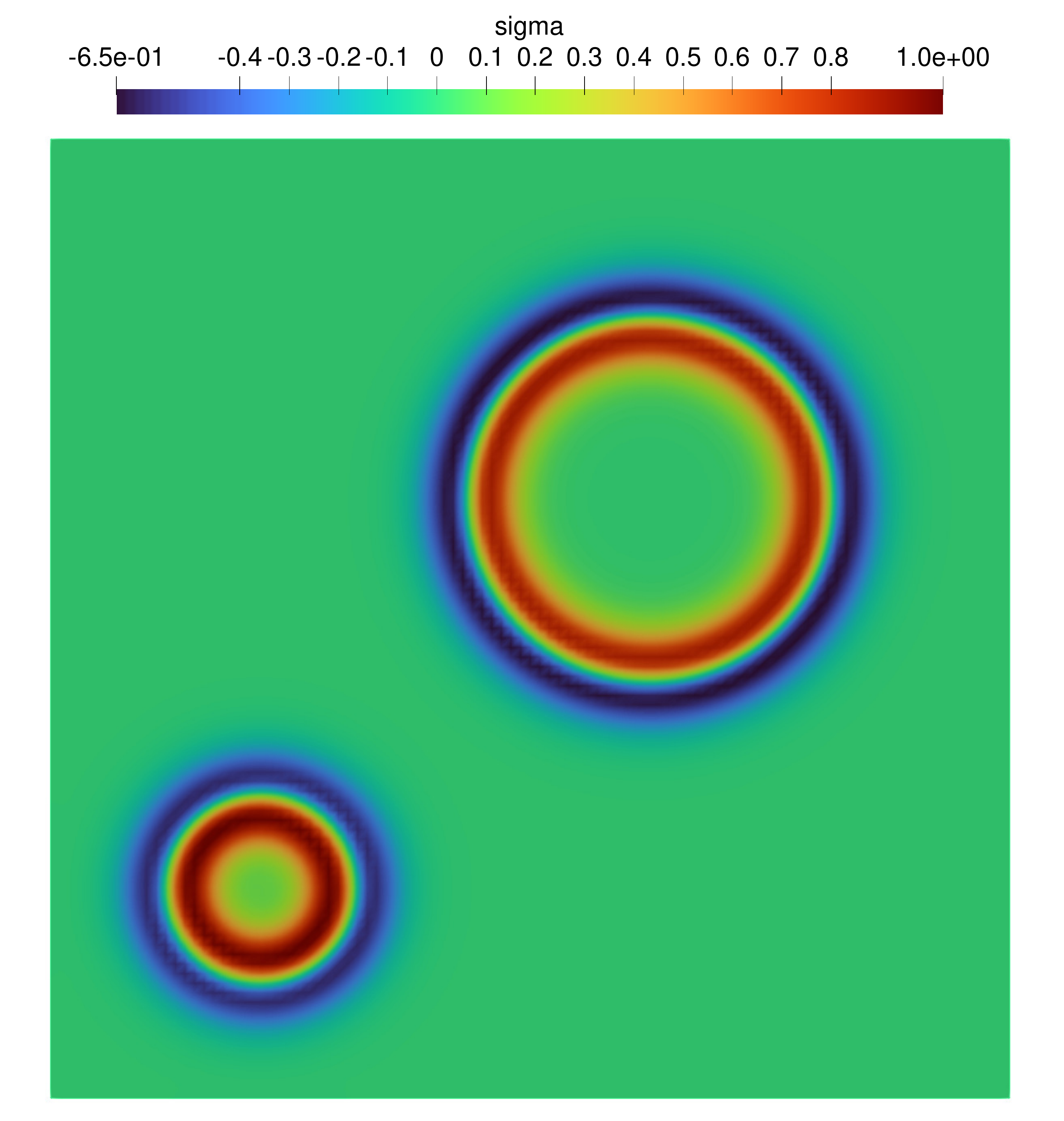}
			\caption{Example I. Experimental Order of Convergence. Reference solution for $\phi$ (left), $\mu$ (center) and $\sigma$ (right).} \label{fig:Ex1FinalConfig}
		\end{center}
	\end{figure}

	\begin{table}[h]
		\begin{tabular}{|c|cc|cc|}                      
			\hline                                                                                                                                                                                                                                                              
			$\Delta t $& $e_2(\phi)$  & $r_2(\phi)$  & $e_1(\phi)$ & $r_1(\phi)$  
			\\ \hline                                                                                                               
			$1\times10^{-6}$ 
			& $0.2013\times10^{-7}$ & $ - $ 
			& $0.5656\times10^{-5}$ & $ - $ 
			\\ 
			$2{^{-1}}\times10^{-6}$ 
			& $0.0494\times10^{-7}$ & $ 2.0283 $ 
			& $ 0.1329\times10^{-5}$ & $ 2.0894 $ 
			\\ 
			$3{^{-1}}\times10^{-6}$ 
			& $0.0219\times10^{-7}$ & $ 2.0010 $ 
			& $ 0.0590\times10^{-5}$ & $ 2.0009 $ 
			\\ 
			$4{^{-1}}\times10^{-6}$ 
			& $0.0123\times10^{-7}$ & $ 2.0036 $ 
			& $ 0.0332\times10^{-5}$ & $ 2.0037 $ 
			\\ 
			$5{^{-1}}\times10^{-6}$ 
			& $0.0079\times10^{-7}$ & $ 2.0033 $ 
			& $ 0.0212\times10^{-5}$ & $ 2.0026 $ 
			\\ \hline
			$\Delta t$ & $e_2(\mu)$ & $r_2(\mu)$
			&  $e_1(\mu)$ & $r_1(\mu)$ 
			\\ \hline                                           
			$1\times10^{-6}$ 
			& $0.1272\times10^{-3}$ & $ - $ 
			& $0.0598$ & $ - $ 
			\\ 
			$2{^{-1}}\times10^{-6}$ 
			& $0.0134\times10^{-3}$ & $ 3.2453 $ 
			& $0.0038$ & $ 3.9702 $ 
			\\ 
			$3{^{-1}}\times10^{-6}$ 
			& $0.0060\times10^{-3}$ & $ 2.0037 $ 		
			& $0.0017$ & $ 2.0035 $ 
			\\ 
			$4{^{-1}}\times10^{-6}$ 
			& $0.0033\times10^{-3}$ & $ 2.0030 $ 		
			& $0.0010$ & $ 2.0025 $ 
			\\ 
			$5{^{-1}}\times10^{-6}$ 
			& $0.0021\times10^{-3}$ & $ 2.0047 $ 		
			& $0.0006$ & $ 2.0039 $
			\\ \hline
			$\Delta t$ 	& $e_2(\sigma)$  & $r_2(\sigma)$  & $e_1(\sigma)$ & $r_1(\sigma)$
			\\ \hline                                           
			$1\times10^{-6}$ 
			& $0.1702\times10^{-5}$ & $ - $ 
			& $0.6135\times10^{-3}$ & $ - $ 
			\\ 
			$2{^{-1}}\times10^{-6}$ 
			& $0.0355\times10^{-5}$ & $ 2.2623 $ 
			& $0.0984\times10^{-3}$ & $ 2.6406 $ 
			\\ 
			$3{^{-1}}\times10^{-6}$ 
			& $0.0158\times10^{-5}$ & $ 2.0013 $ 
			& $0.0437\times10^{-3}$ & $ 2.0011 $ 
			\\ 
			$4{^{-1}}\times10^{-6}$ 
			& $0.0089\times10^{-5}$ & $ 2.0025 $ 
			& $0.0246\times10^{-3}$ & $ 2.0024 $ 
			\\ 
			$5{^{-1}}\times10^{-6}$ 
			& $0.0057\times10^{-5}$ & $ 2.0040 $ 
			& $0.0157\times10^{-3}$ & $ 2.0040 $ 
			\\ \hline
		\end{tabular}
		\caption{Experimental absolute errors and order of convergences with reference solution computed using $\Delta t=10^{-8}$. }\label{tab:Ex1_order}
	\end{table}

	\subsection{Study of the dynamics of the system for different choice of parameters}
	In this subsection, we perform simulations to understand the role of each of the physical parameters in the behavior of the system. This becomes particularly crucial since the properties of the surfactant/amphiphile and its concentration appear indirectly in the model through the parameters:~$\lambda$, $\beta$, and $g(\phi)$ (see~\cite{PawlowZajaczkowski11} for a detailed discussion). For ease of benchmarking, we perform simulations using initial conditions similar to those considered previously by other researchers in~\cite{diegelsharma-me23,hoppelinsenmann-me18}. The initial condition is chosen to consist of several regions (or balls) of the oil-rich phase ($\phi=-1,$ blue in the plots), several regions (also balls) of water-rich phases ($\phi=1,$ red in the plots) and the rest of the domain filled with the microemulsion phase ($\phi=0,$ green in the plots). In the context of modeling the dynamics of ternary mixtures, this initial configuration represents the dispersion of droplets of oil and water in a `continuous' surfactant/amphiphile media.
	
	For all the simulations we consider a square domain $\Omega={(}-5,5{)}^2$ discretized using a structured spatial mesh of size $h=1/100$, the temporal interval set to $[0,T]=[0,0.5]$ and the time step set to $\Delta t=10^{-5}$. We first run the simulations using the following values of the physical parameters: $M=0.1$, $g_2=1$, $g_0=-4$, $h_0=0.5$, $\lambda=0.1$ and $\beta=1$. The obtained dynamics are presented in Figure~\ref{fig:standarddynamics} and they will be used as a reference profile to compare with other possible values of the parameters considered in the next subsections. 

	In Figure~\ref{fig:Standardplots} we present more information of this simulation. We can observe how the energy of the system is decreasing in time and how the volume ($\int_\Omega\phi^{n+1} d\x$) is conserved in time, which is consistent with the theoretical results in Lemma~\ref{lem:stabilityandconservation}. Moreover, we plot 
	the evolution  in time of the maximum and the minimum of $\phi$ ($\max_{\x\in\Omega}\phi(\x)$ and $\min_{\x\in\Omega}\phi(\x)$) over the domain to illustrate that the solution of the system does not remain in the interval $[-1,1]$, which is a drawback of the model (which does not satisfy a maximum principle result at the continuous level), rather than the numerical scheme. Furthermore, we also report on the number of iterations that the system needs to perform to achieve the required tolerance $\texttt{TOL}=10^{-7}$. This information is presented so as to show that the number of iterations depends on the choice of model parameters and that this number can change in accordance with the constraint of well-posedness of the iterative algorithm presented in Lemma~\ref{lem:iterativealgorithm}. We show that by either reducing the value of $\lambda$ or increasing the value of $|g_0|$ or $M$ makes it more challenging to approximate the nonlinear system~\eqref{eq:scheme}.
	Finally, we omit to plot the evolution of the volume for the rest of the simulations in this section since all of them satisfy the conservation property as proved in Lemma~\ref{lem:iterativealgorithm}.

	\begin{figure}[H]
		\begin{center}
			\includegraphics[width=0.19\textwidth]{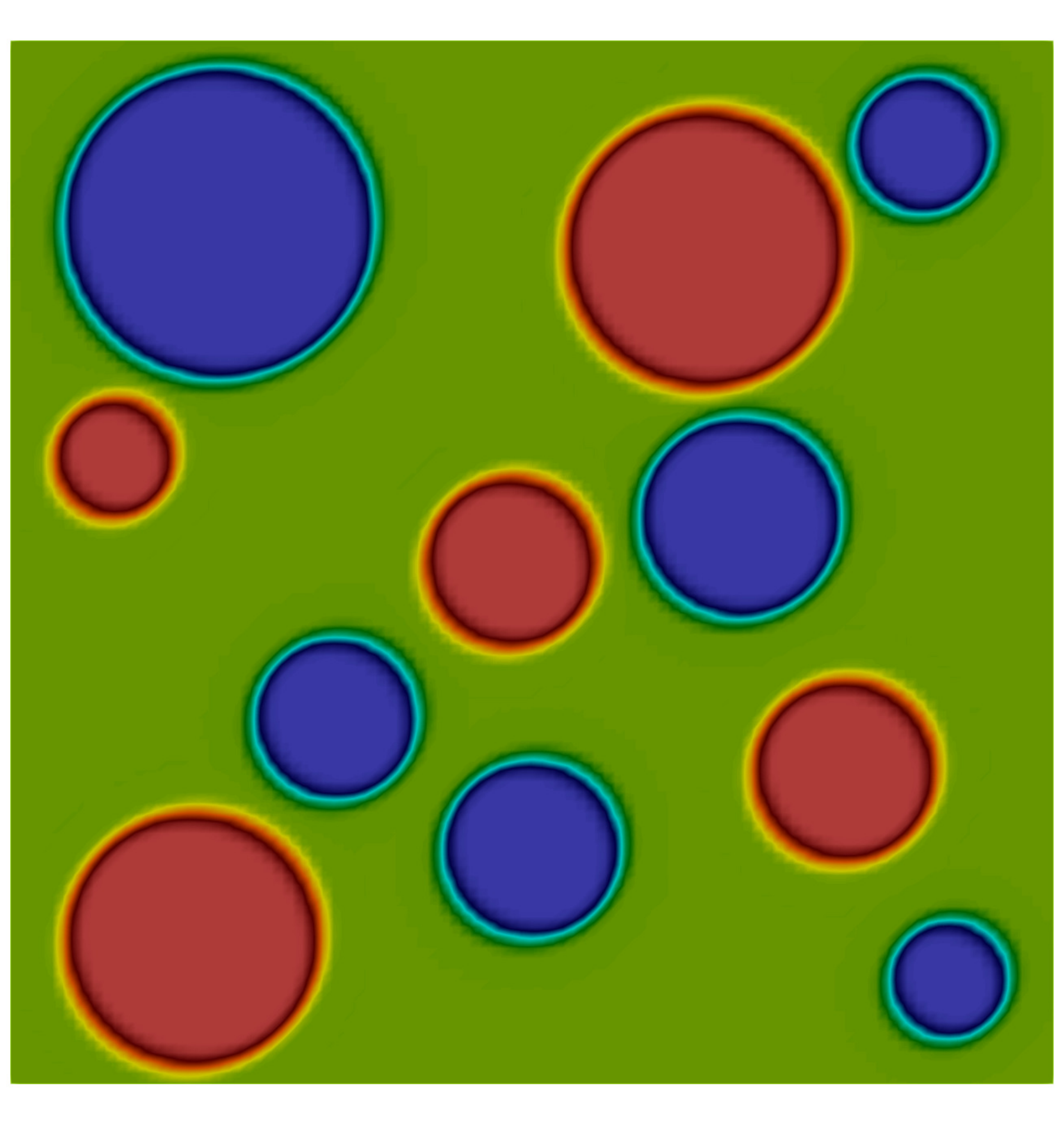}
			\includegraphics[width=0.19\textwidth]{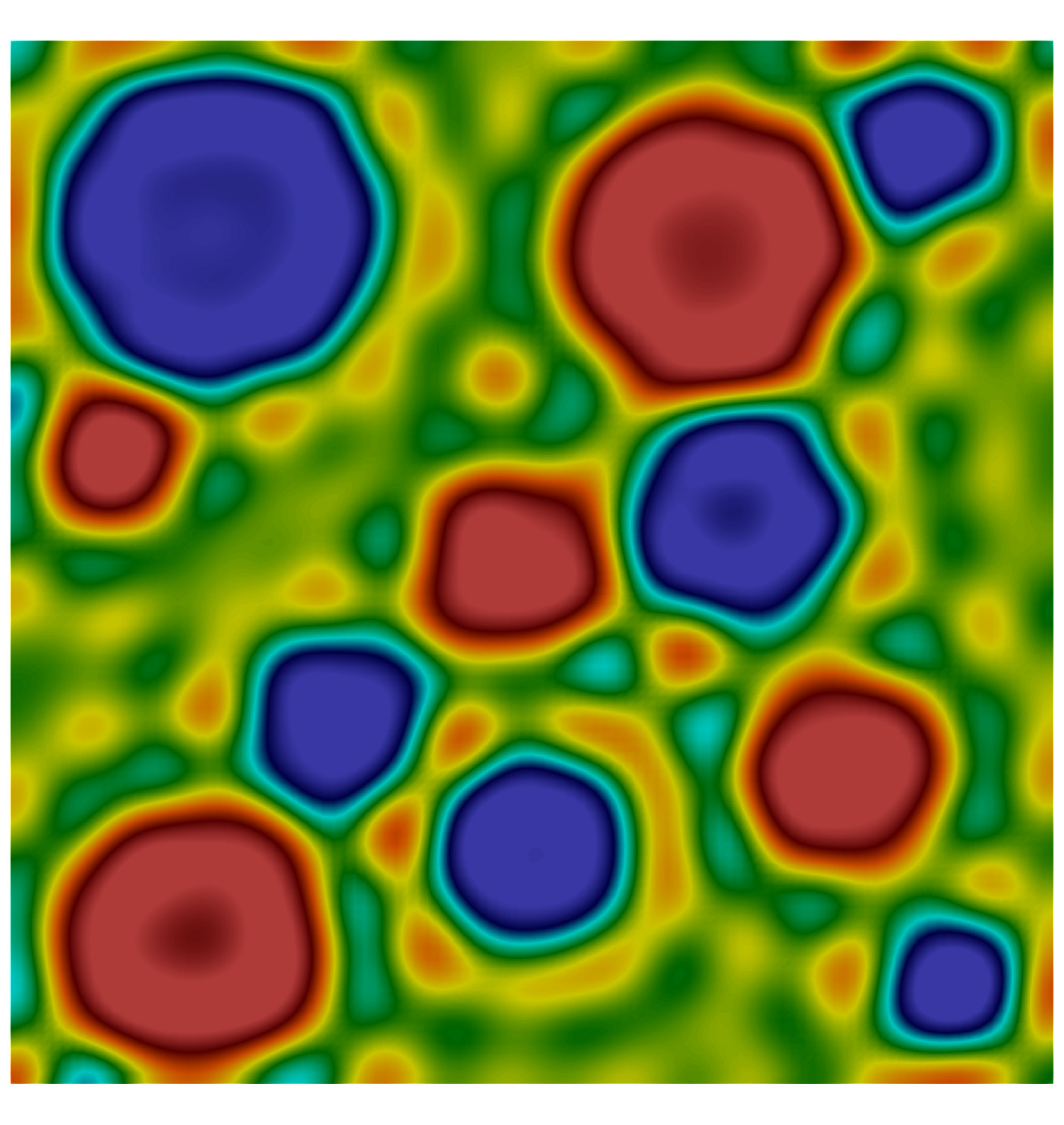}
			\includegraphics[width=0.19\textwidth]{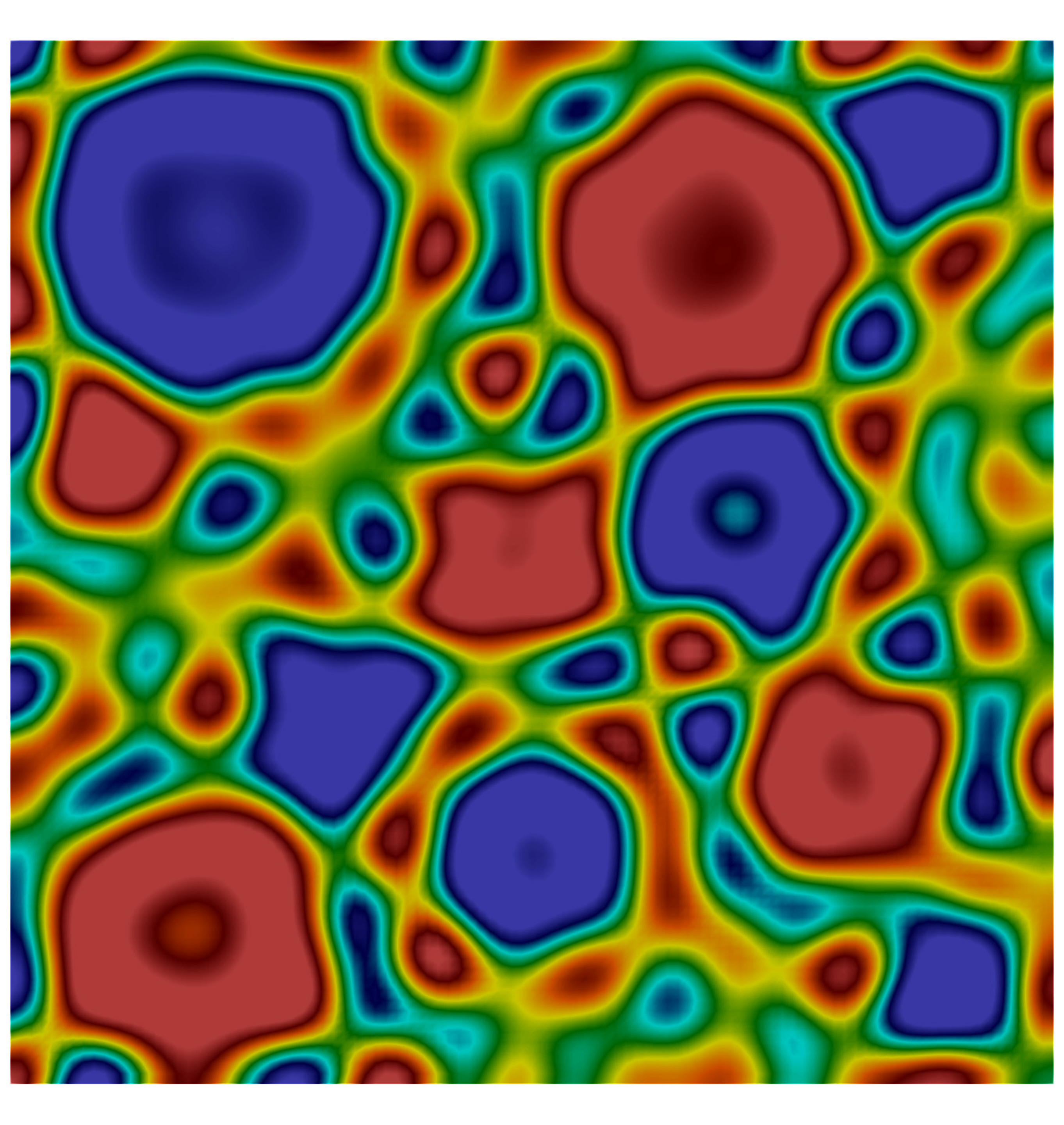}
			\includegraphics[width=0.19\textwidth]{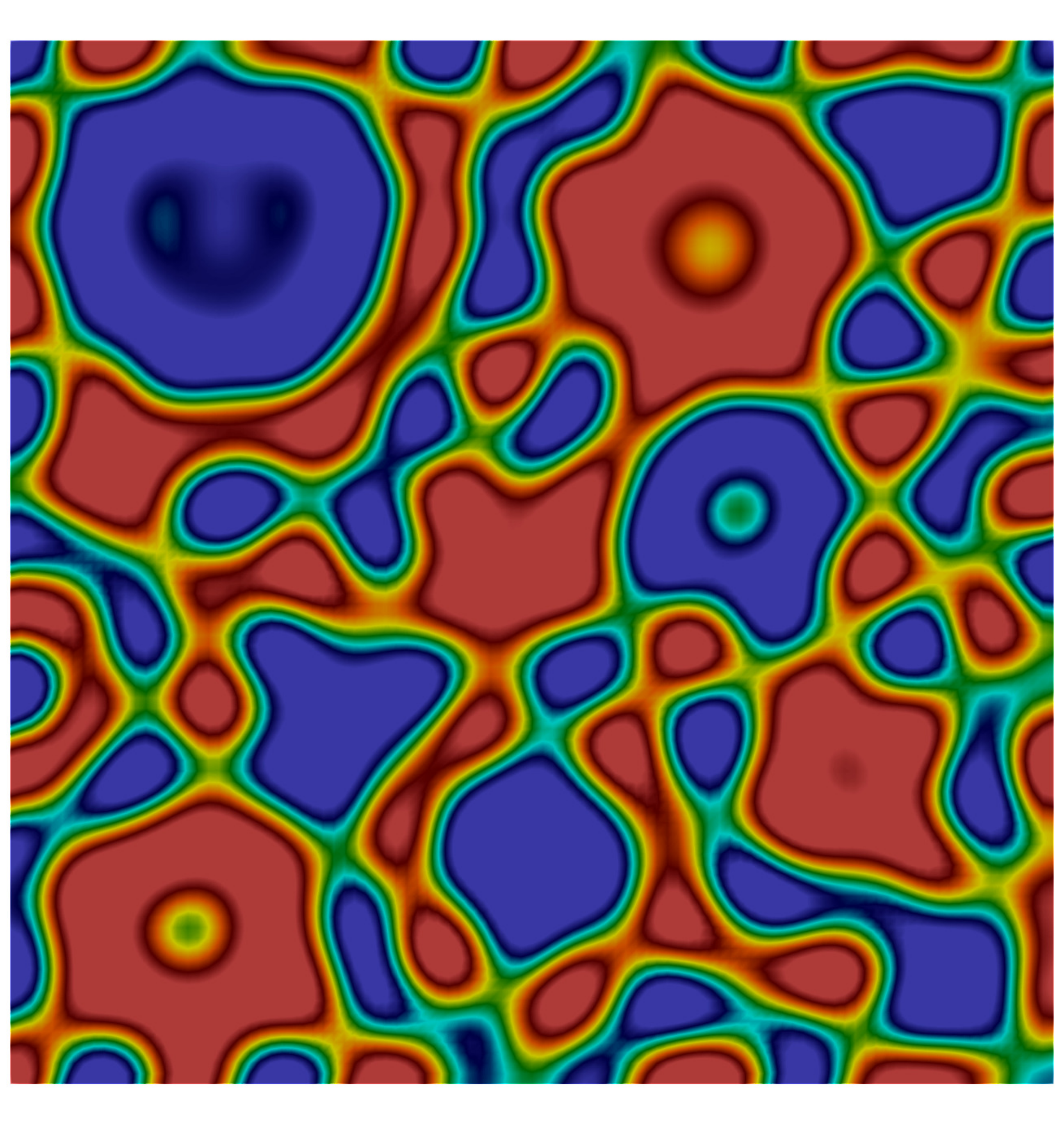}
			\includegraphics[width=0.19\textwidth]{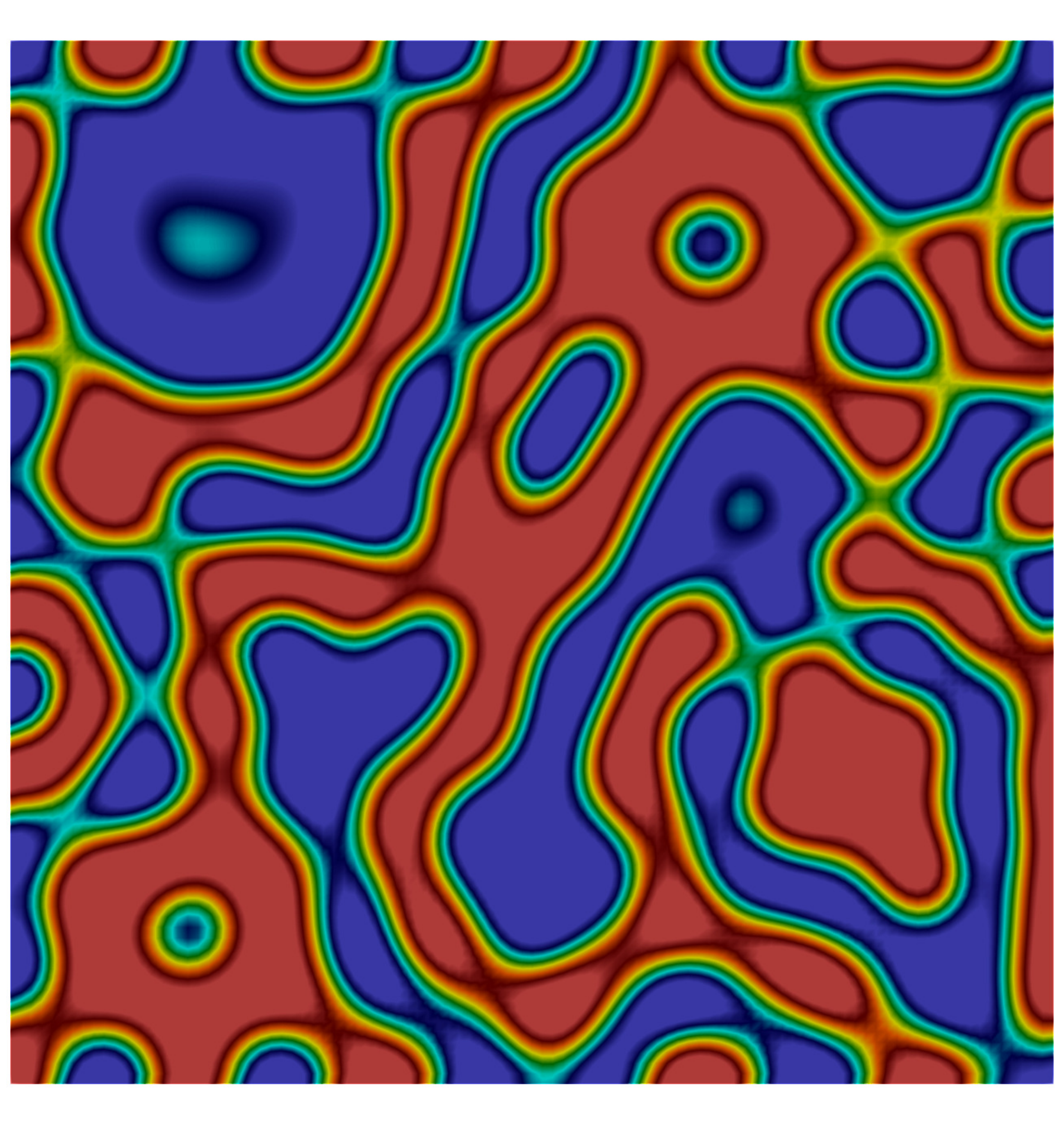}
			\\
			\includegraphics[width=0.19\textwidth]{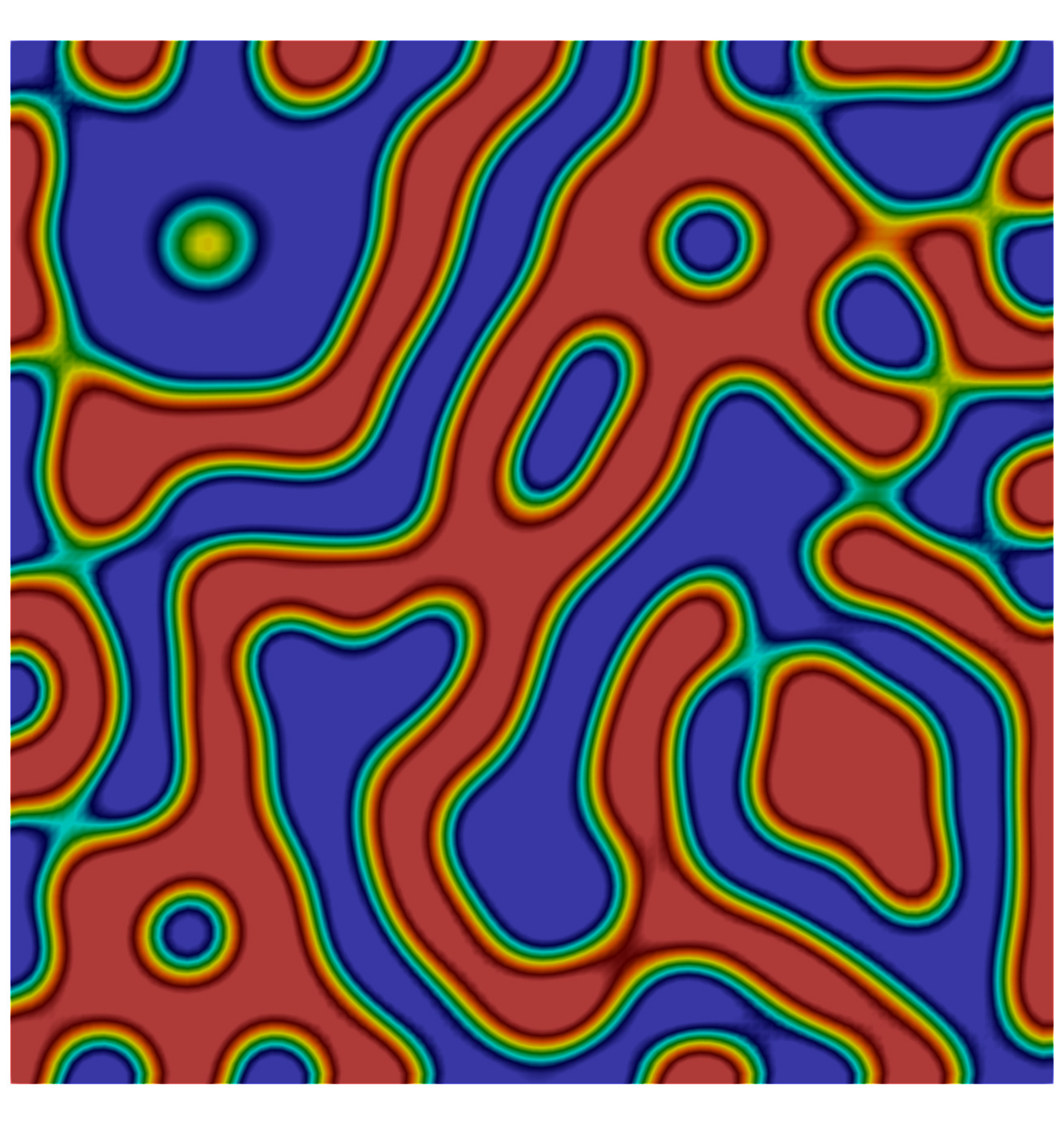}
			\includegraphics[width=0.19\textwidth]{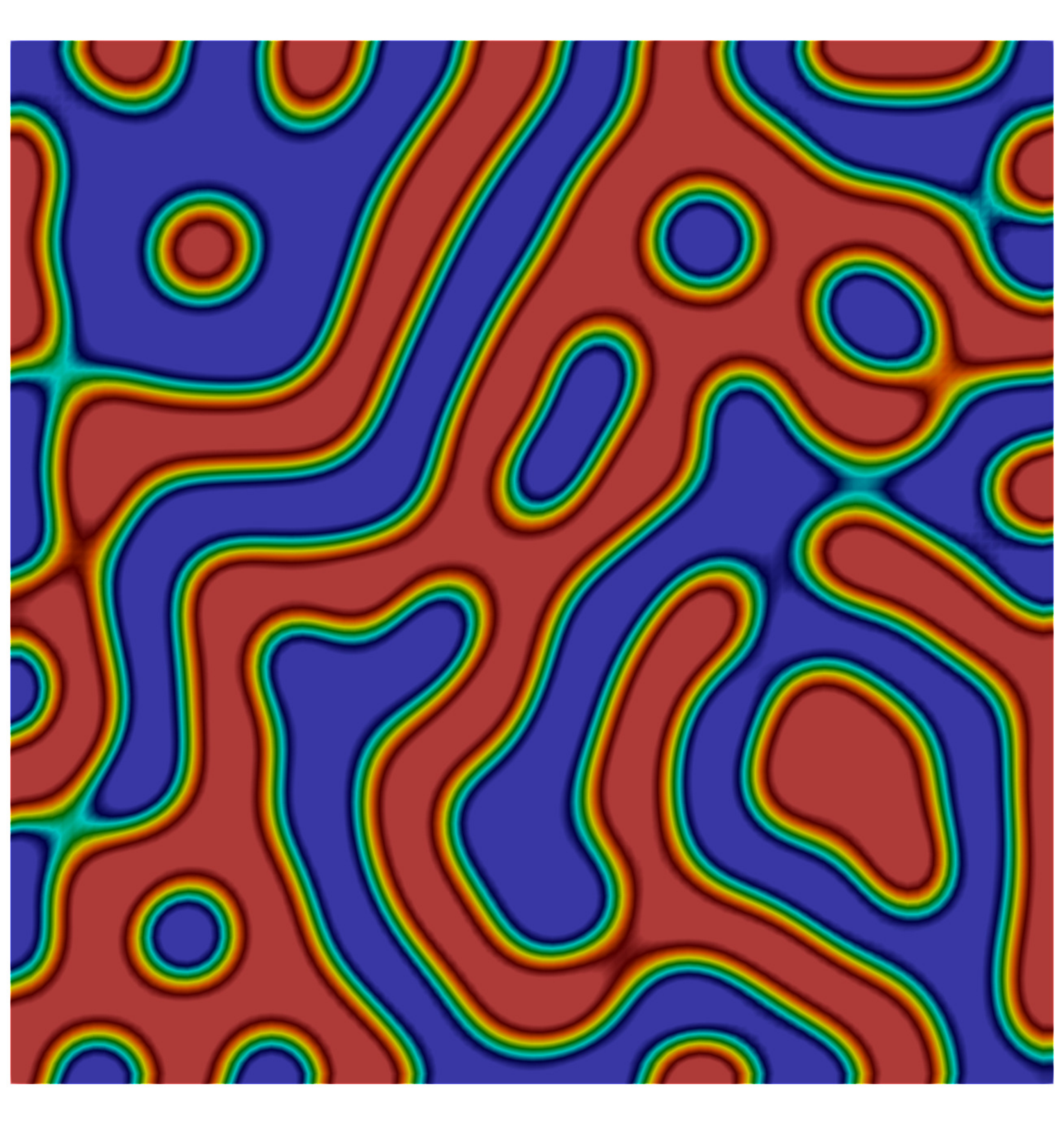}
			\includegraphics[width=0.19\textwidth]{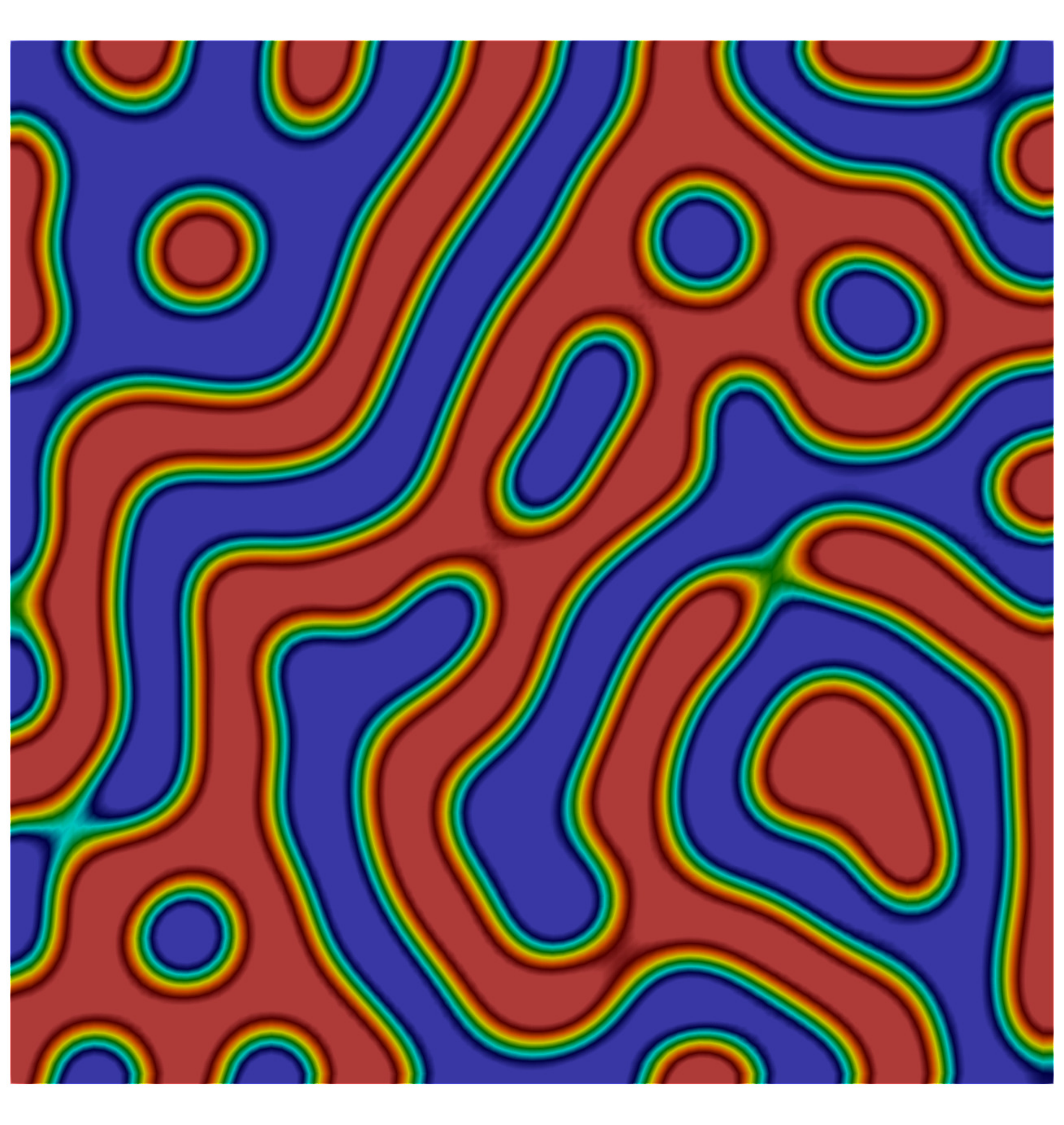}
			\includegraphics[width=0.19\textwidth]{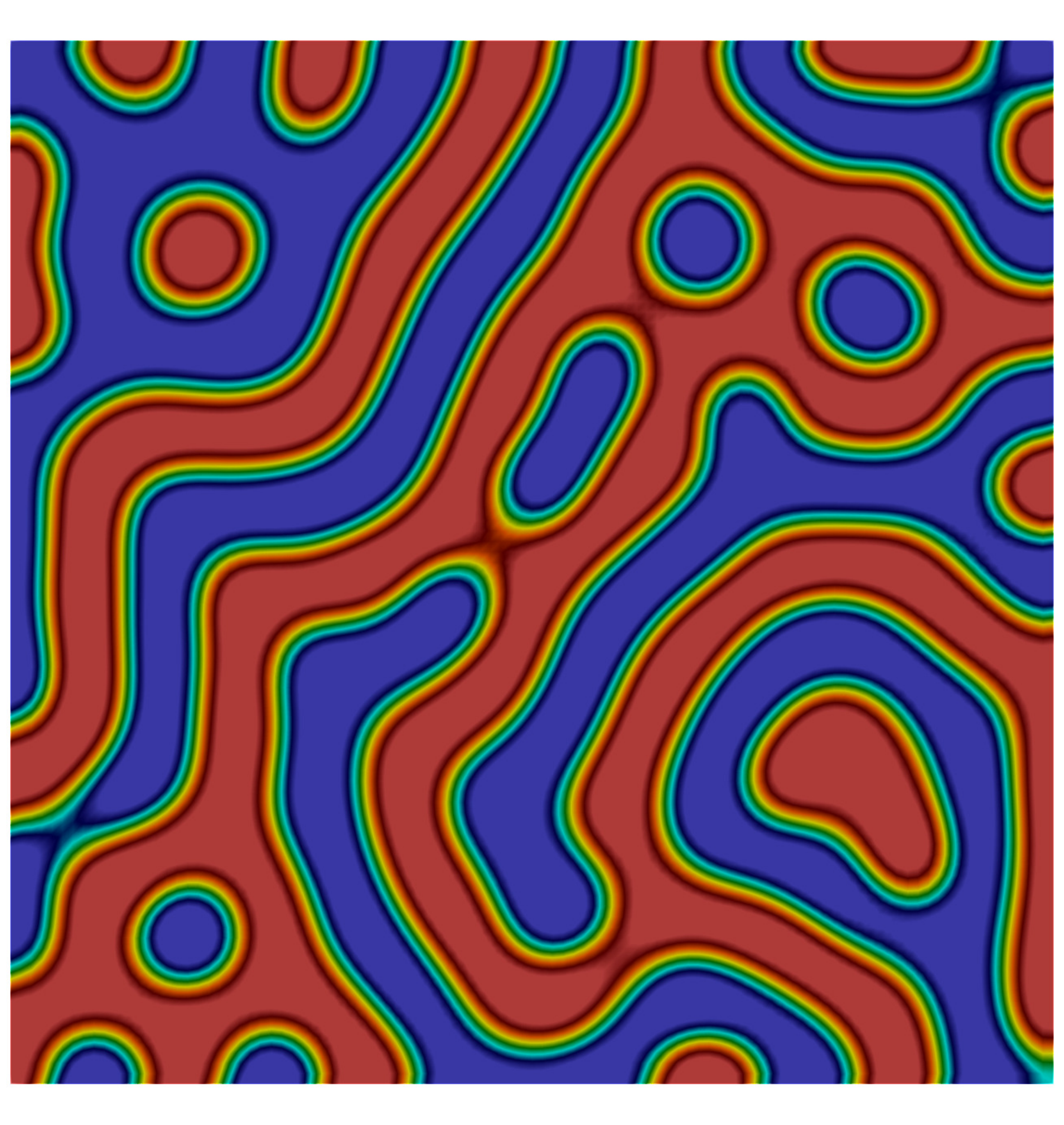}
			\includegraphics[width=0.19\textwidth]{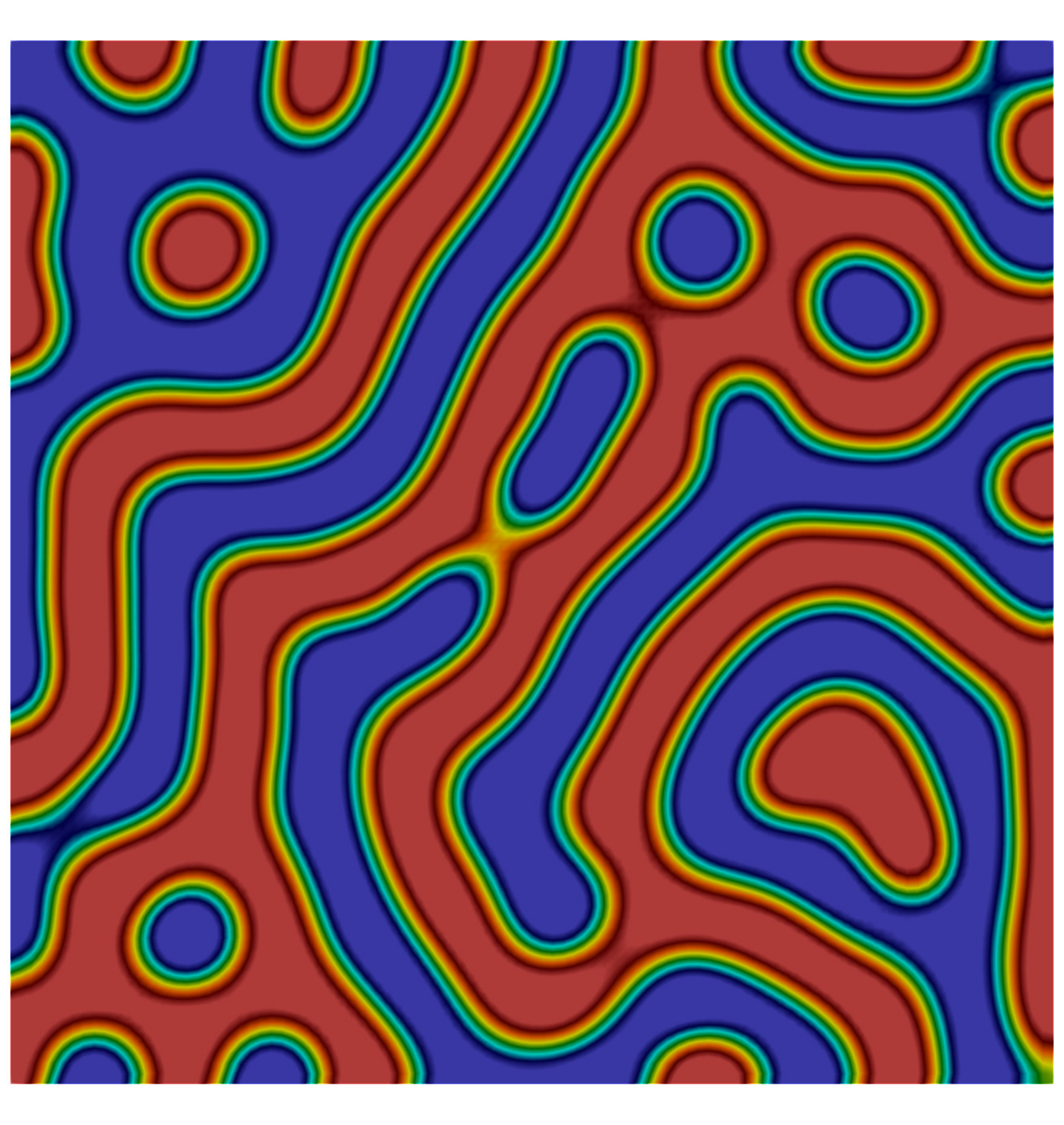}
			\caption{Evolution of $\phi$ at times $t=0, 0.01, 0.025, 0.05, 0.1, 0.15, 0.25, 0.35, 0.45$ and $0.5$ (from left to right and top to bottom) taking $M=0.1$, $g_2=1$, $g_0=-4$, $h_0=0.5$, $\lambda=0.1$ and $\beta=1$.} \label{fig:standarddynamics}
		\end{center}
	\end{figure}

	\begin{figure}[h]
		\begin{center}
			\includegraphics[width=0.45\textwidth]{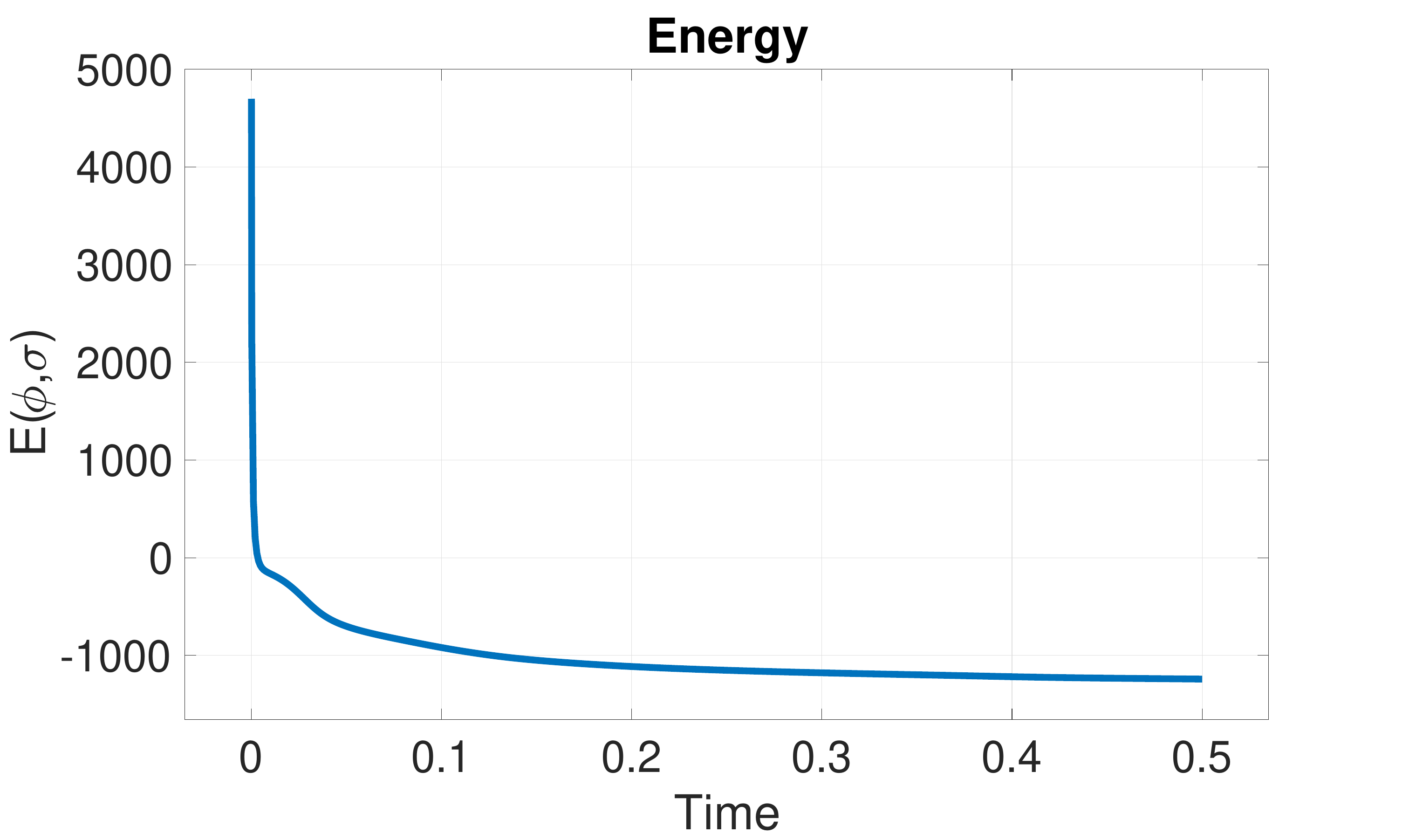}
			\includegraphics[width=0.45\textwidth]{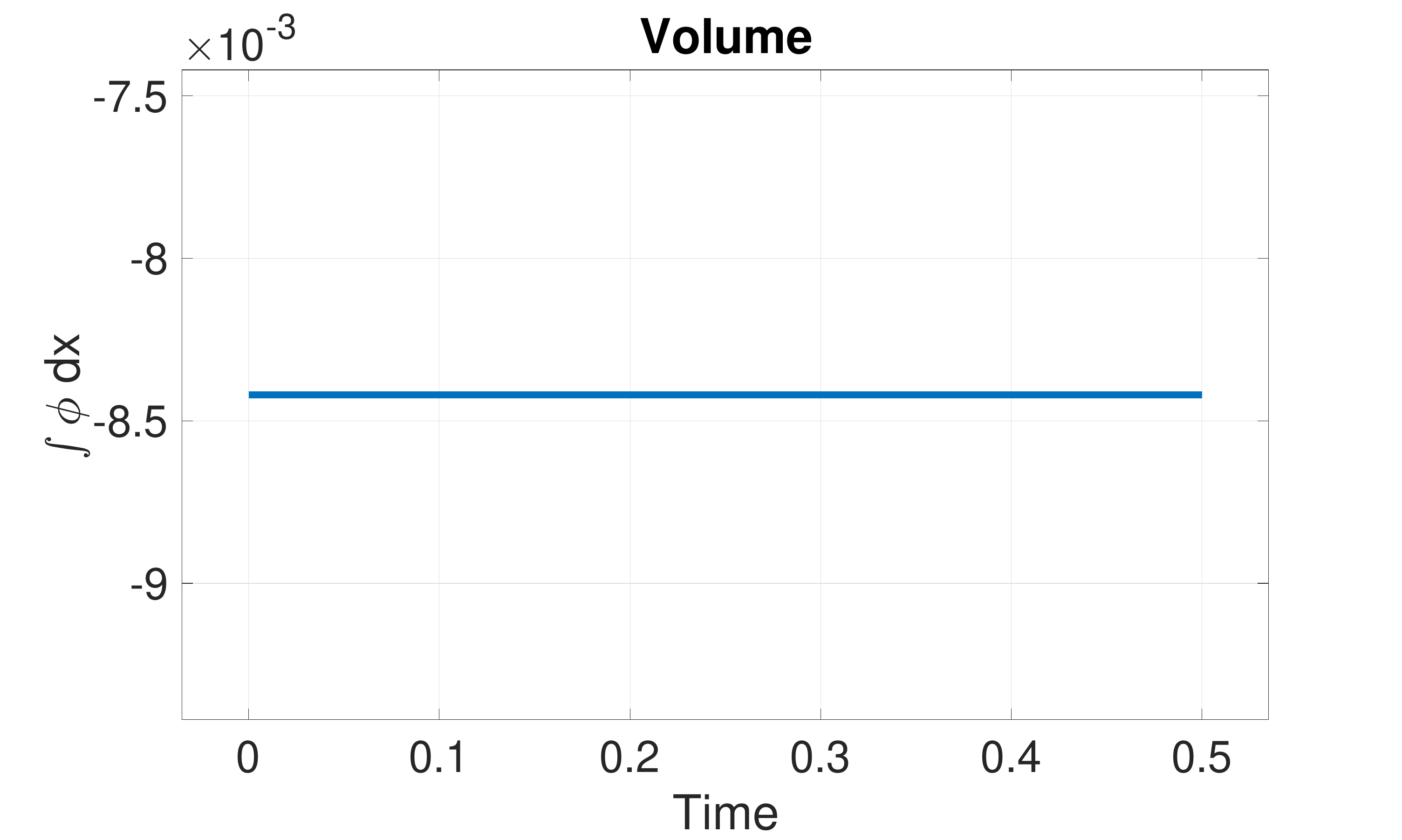}
			\\
			\includegraphics[width=0.45\textwidth]{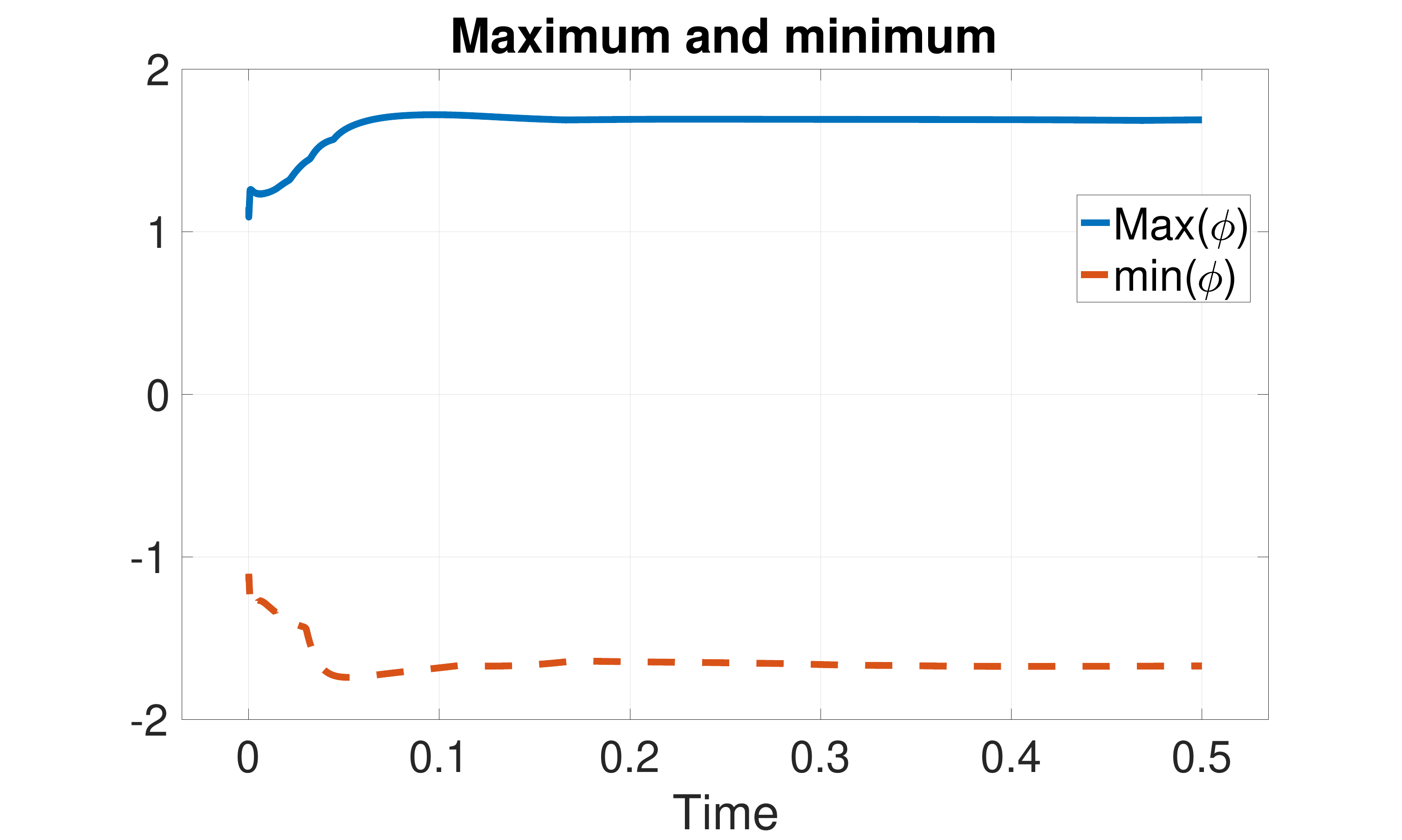}
			\includegraphics[width=0.45\textwidth]{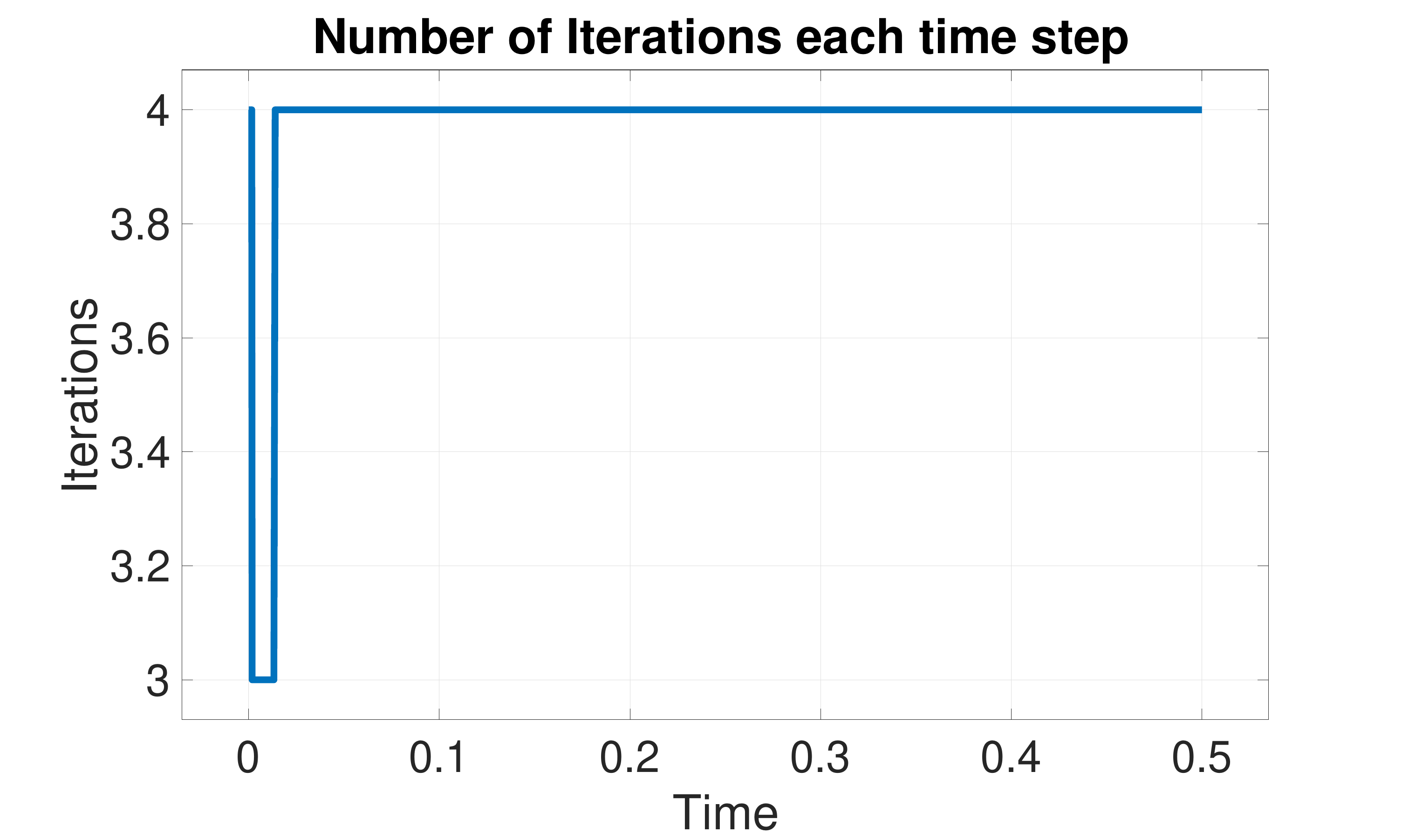}
			\caption{Evolution in time of the energy (top left), the volume (top right), maximum and minimum of $\phi$ (bottom left) and the number of iterations to achieve tolerance $\texttt{TOL}=10^{-7}$ (bottom right) taking $M=0.1$, $g_2=1$, $g_0=-4$, $h_0=0.5$, $\lambda=0.1$ and $\beta=1$.} \label{fig:Standardplots}
		\end{center}
	\end{figure}
	
	\subsubsection{Study of influence of parameter $\beta$ on the dynamics of the system}\label{sub:beta}
	In this example, we fix the parameters to
	$M=0.1$, $g_2=1$, $g_0=-4$, $h_0=0.5$, $\lambda=0.1$ and  we consider different values of parameter $\beta$, namely $\beta=1$, $\beta=10$ and $\beta=10^2$. The dynamics associated to these simulations are presented in Figure~\ref{fig:Exbetas} and the evolution of the energies, maximum of $\phi$, minimum of $\phi$ and number of iterations are presented in Figure~\ref{fig:betasplot}. 
	In view of the discrete energy functional~\eqref{eq:energy},
	it is clear that the coefficient $\beta$ influences the tendency of the ternary system to morph into water-rich and oil-rich regions, hence increasing the value of $\beta$ leads to more significant bulk regions of water and oil. This is evident from the profiles of $\phi$ in the bottom row subfigures of Figure~\ref{fig:Exbetas} which possess relatively larger bulk regions of oil and water. The other impact of this coefficient is discussed in~\cite{PawlowZajaczkowski11} and it is that at the microemulsion phase ($\phi=0$),
	the value of $\beta f_0(\phi)$ is inversely proportional to the concentration of the amphiphile/surfactant. That is, $\beta f_0(0)= \beta h_0$ is low for high concentrations of the amphiphile/surfactant and vice versa. 
	
	
	In fact, we can observe that larger values of $\beta$ makes the system to achieve values that are closer to the minima of $f_0(\phi)$ but without having an specific effect on the width of the interface. In all the simulations the total energy decreases as expected and the number of iterations of the iterative algorithm increases at the beginning of the dynamics as the value of $\beta$ increases, but without being a problem for the choice of discretization parameters considered.

	\begin{figure}[h]
		\begin{center}
			\includegraphics[width=0.19\textwidth]{images/Ex2/standard/Ex2_Standard_0}
			\includegraphics[width=0.19\textwidth]{images/Ex2/standard/Ex2_Standard_50}
			\includegraphics[width=0.19\textwidth]{images/Ex2/standard/Ex2_Standard_150}
			\includegraphics[width=0.19\textwidth]{images/Ex2/standard/Ex2_Standard_350}
			\includegraphics[width=0.19\textwidth]{images/Ex2/standard/Ex2_Standard_500}
			\\
			\includegraphics[width=0.19\textwidth]{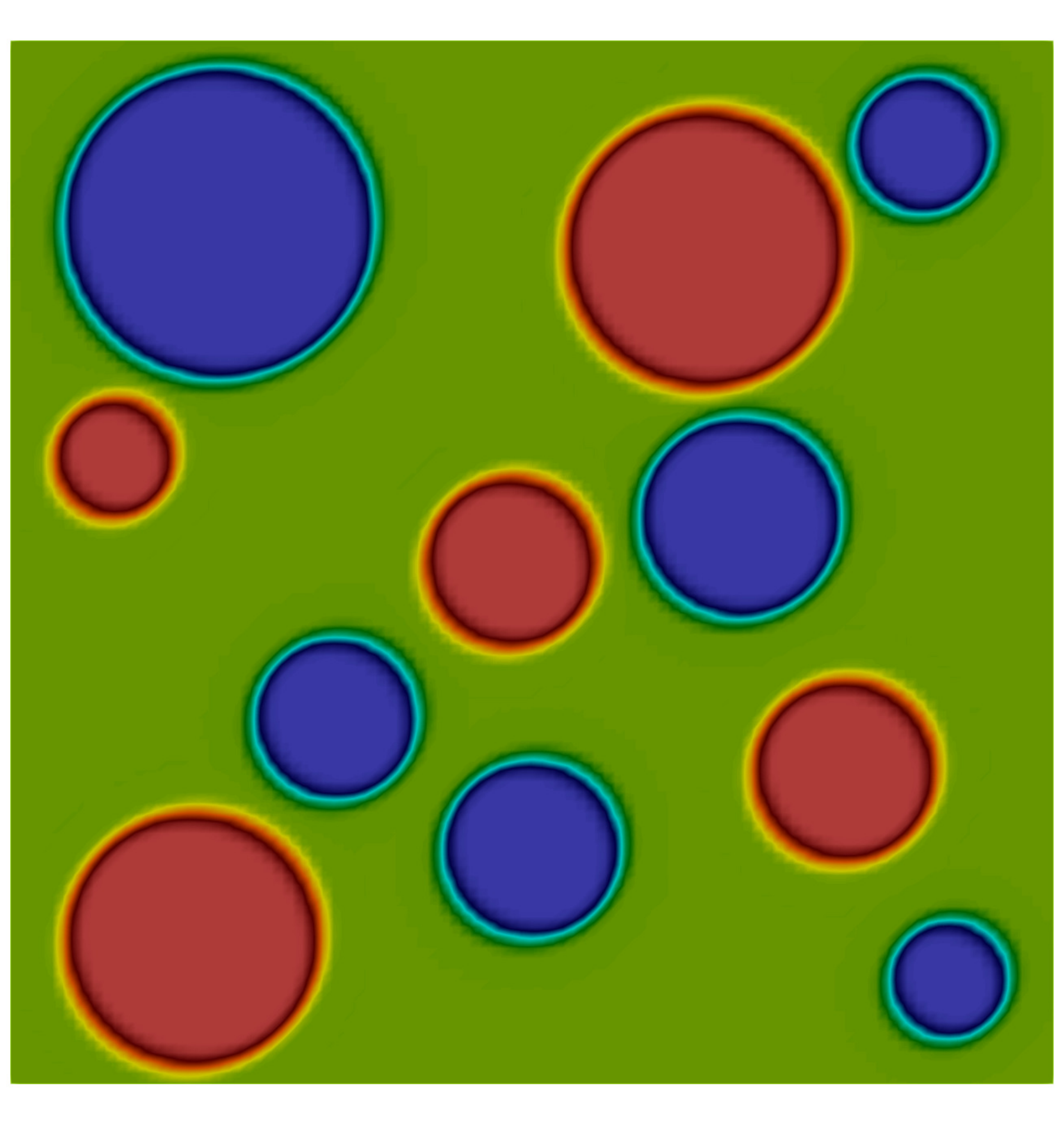}
			\includegraphics[width=0.19\textwidth]{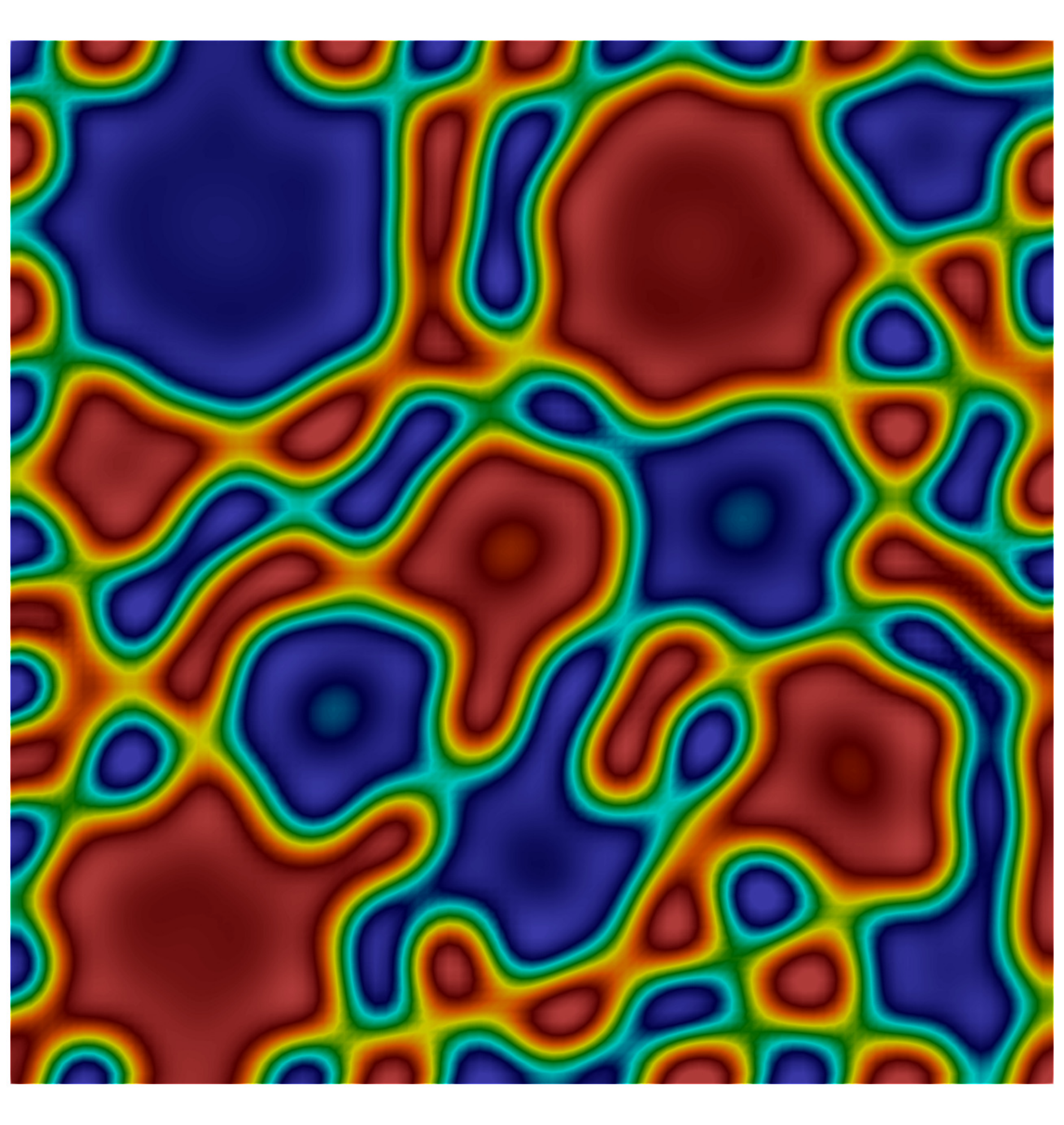}
			\includegraphics[width=0.19\textwidth]{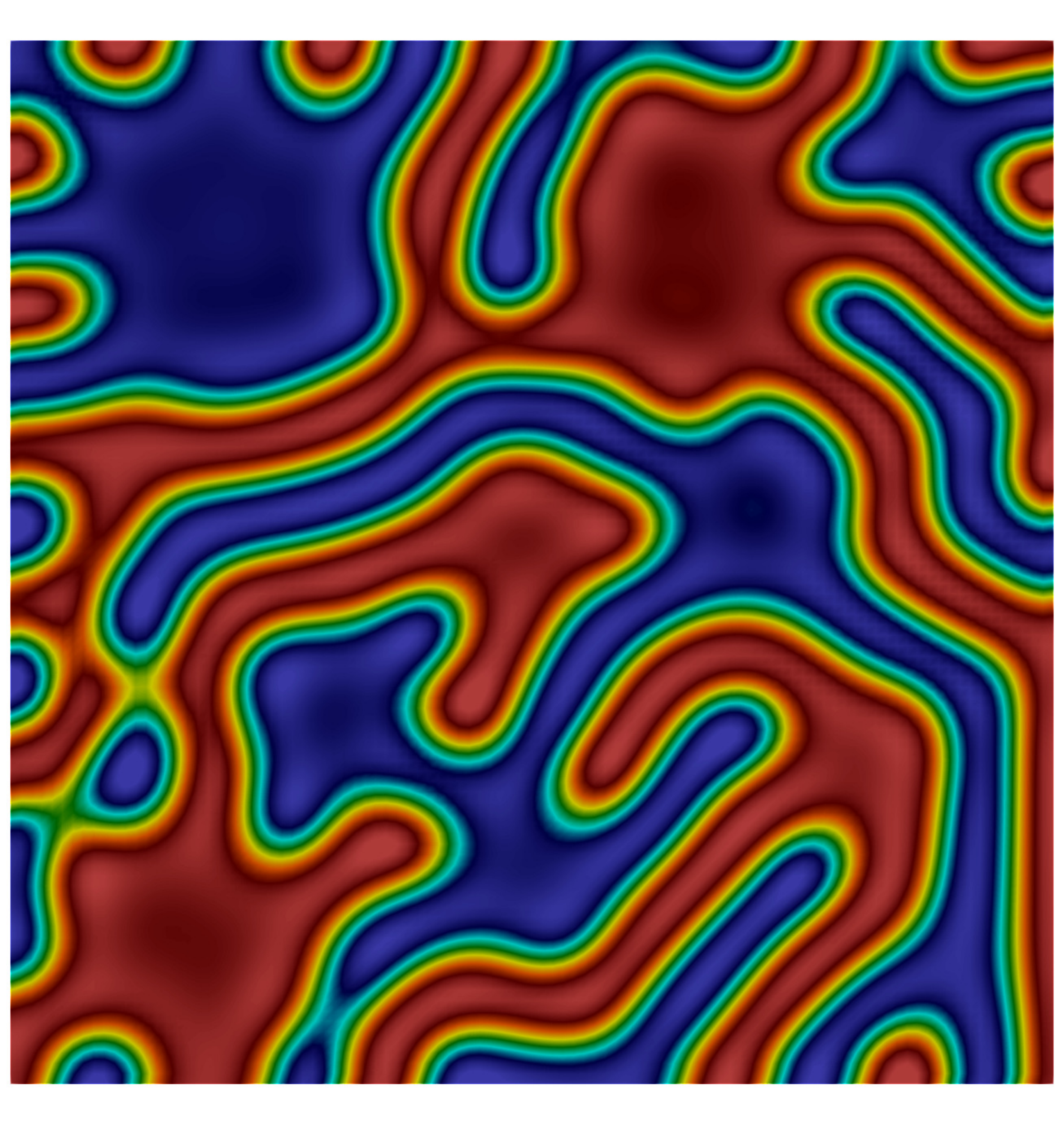}
			\includegraphics[width=0.19\textwidth]{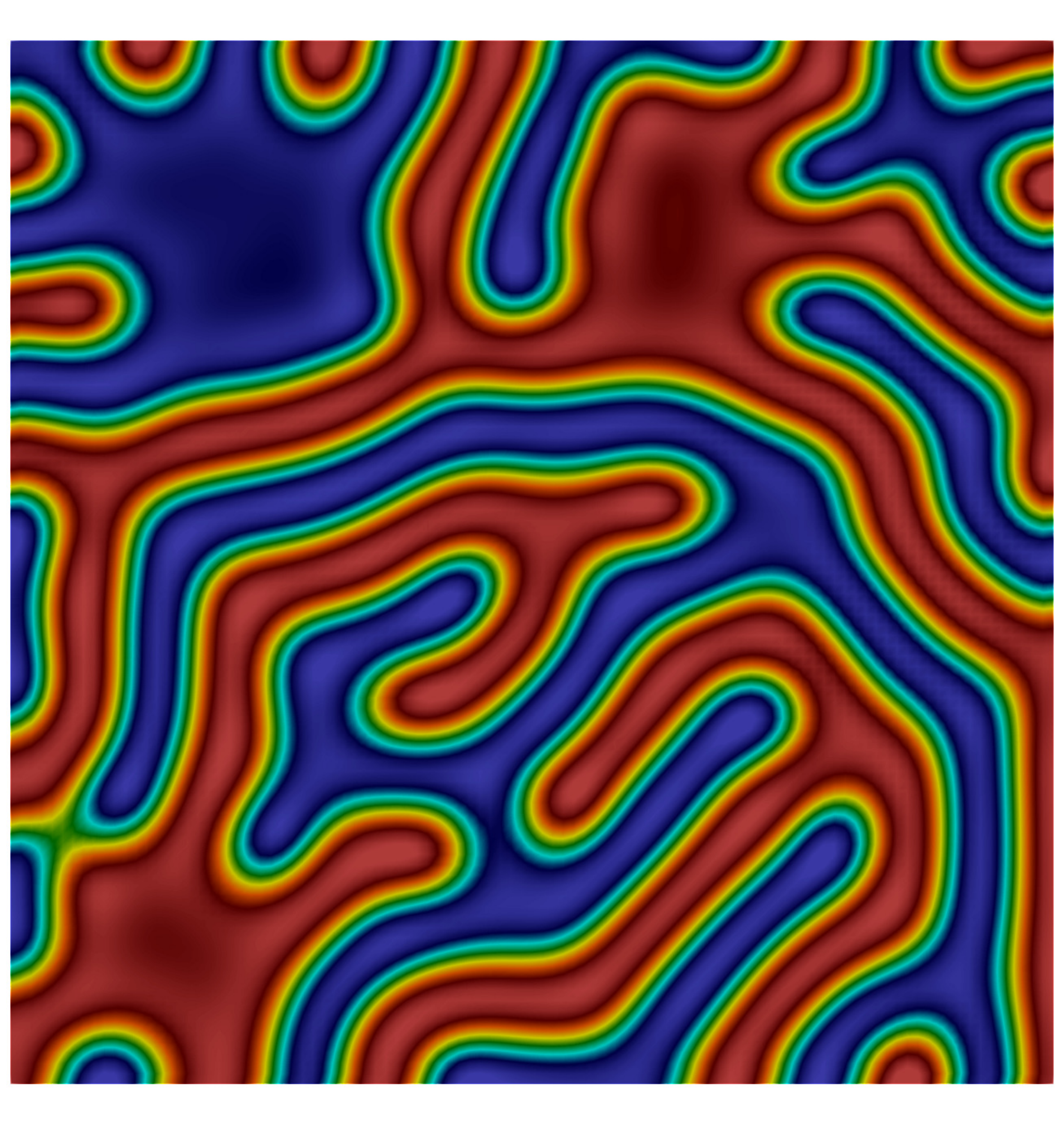}
			\includegraphics[width=0.19\textwidth]{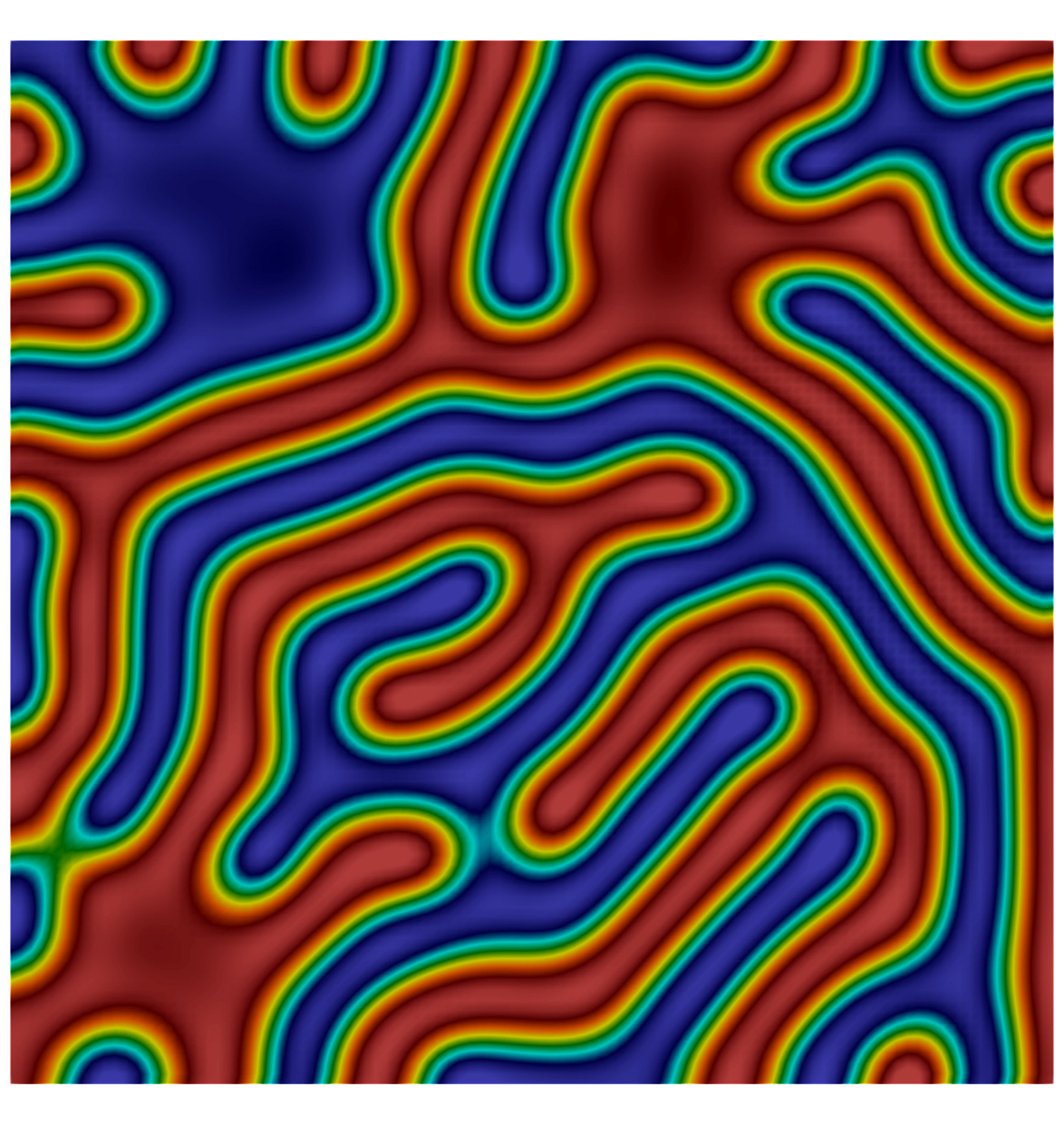}
			\\
			\includegraphics[width=0.19\textwidth]{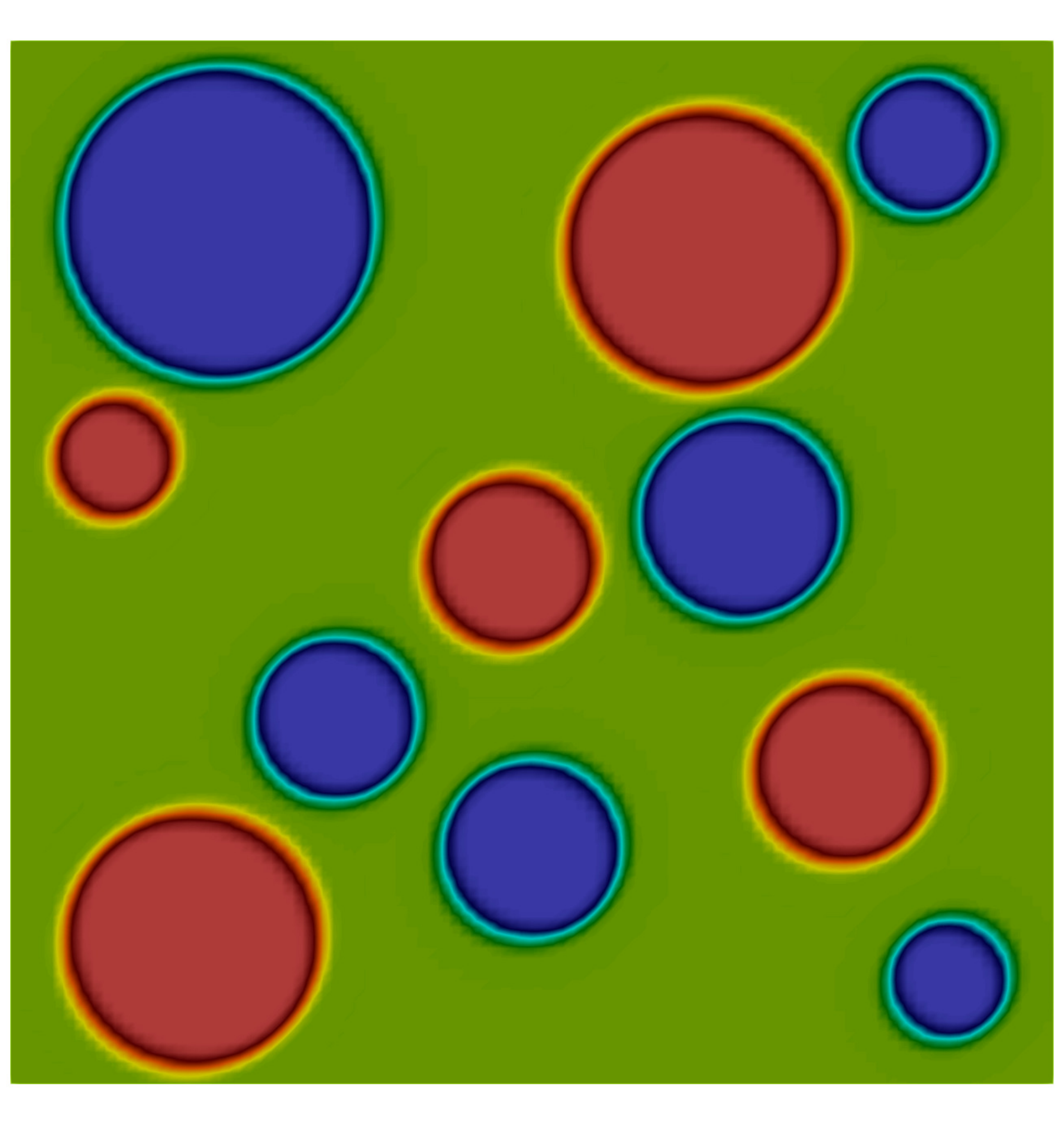}
			\includegraphics[width=0.19\textwidth]{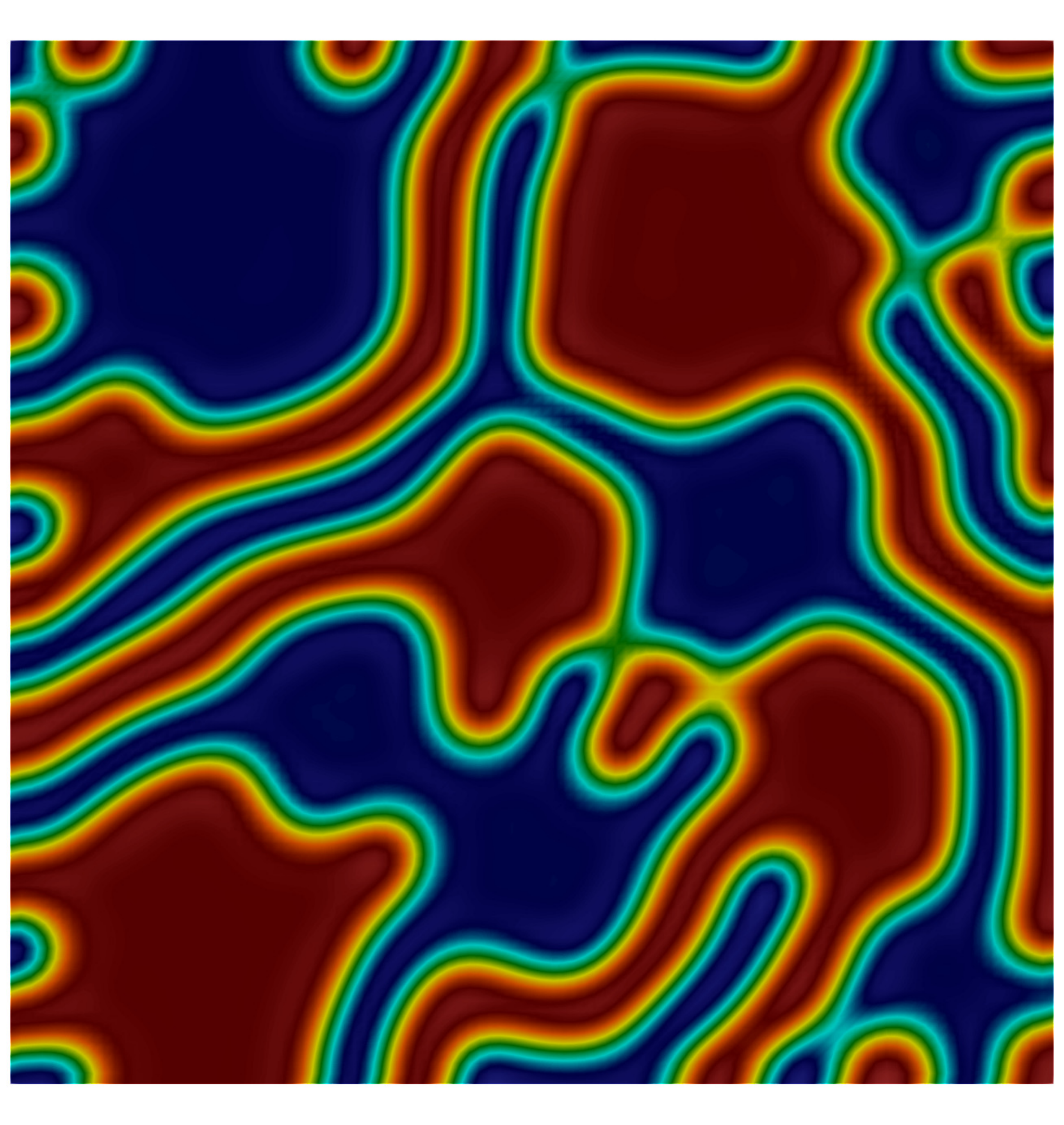}
			\includegraphics[width=0.19\textwidth]{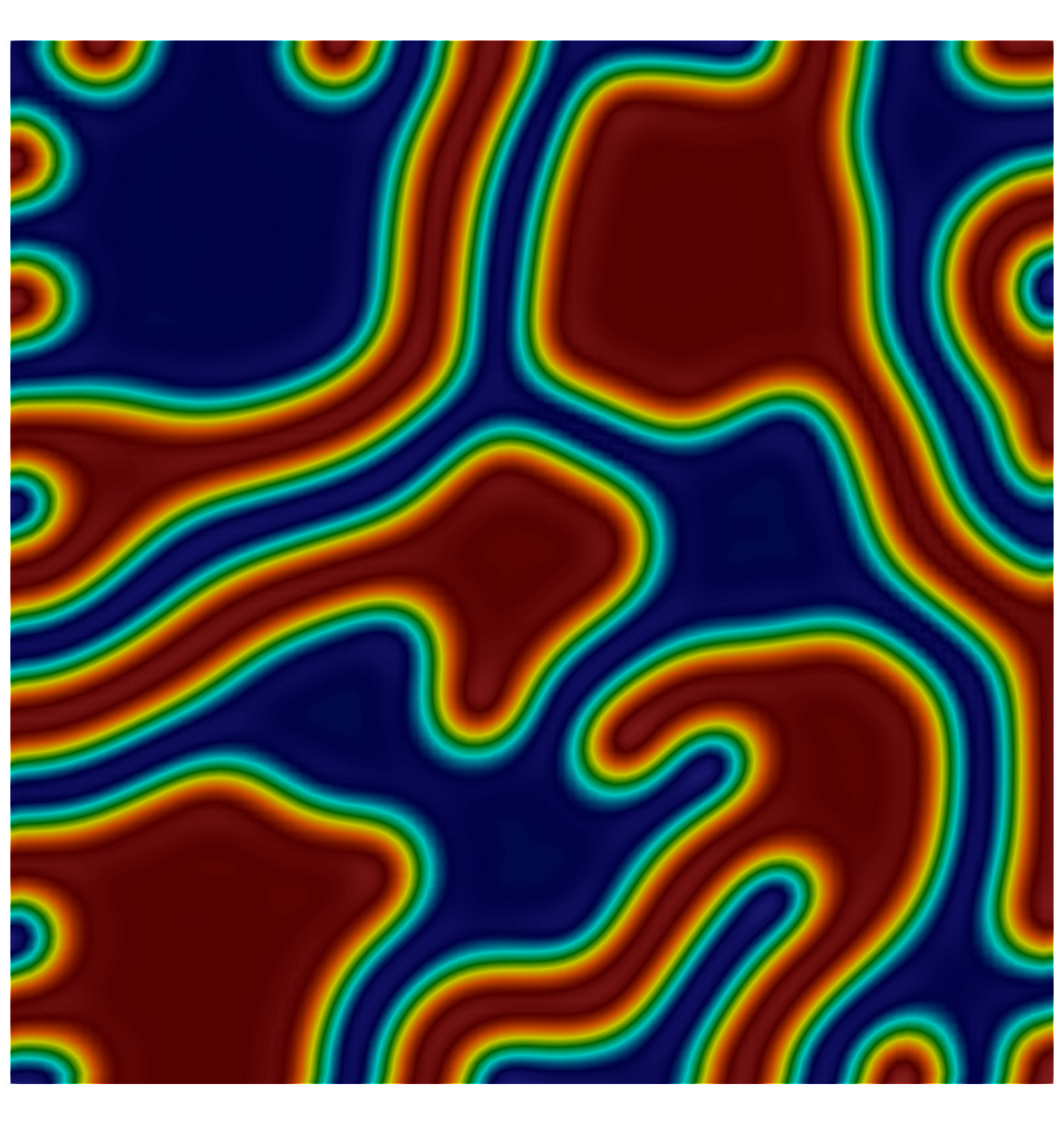}
			\includegraphics[width=0.19\textwidth]{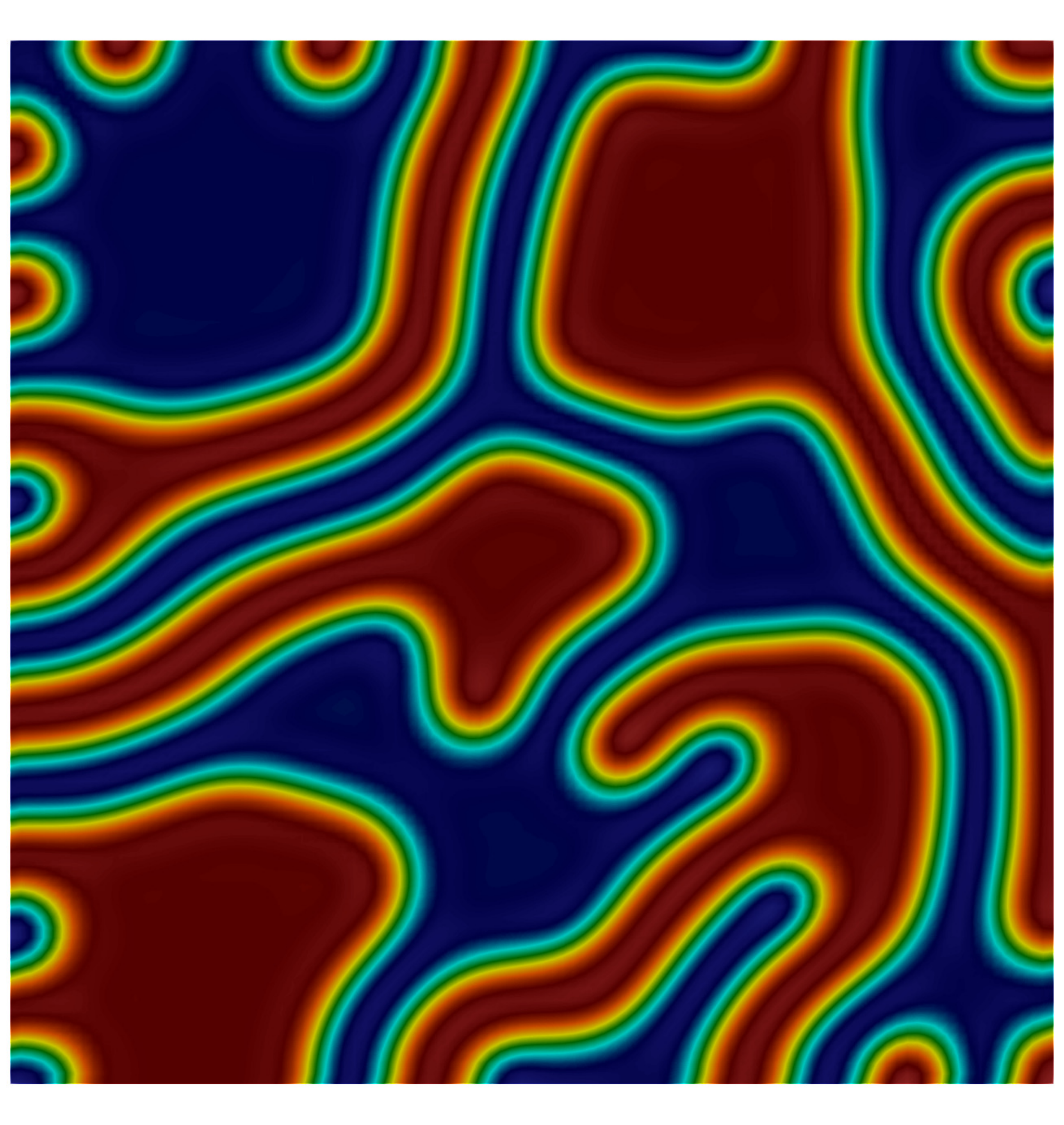}
			\includegraphics[width=0.19\textwidth]{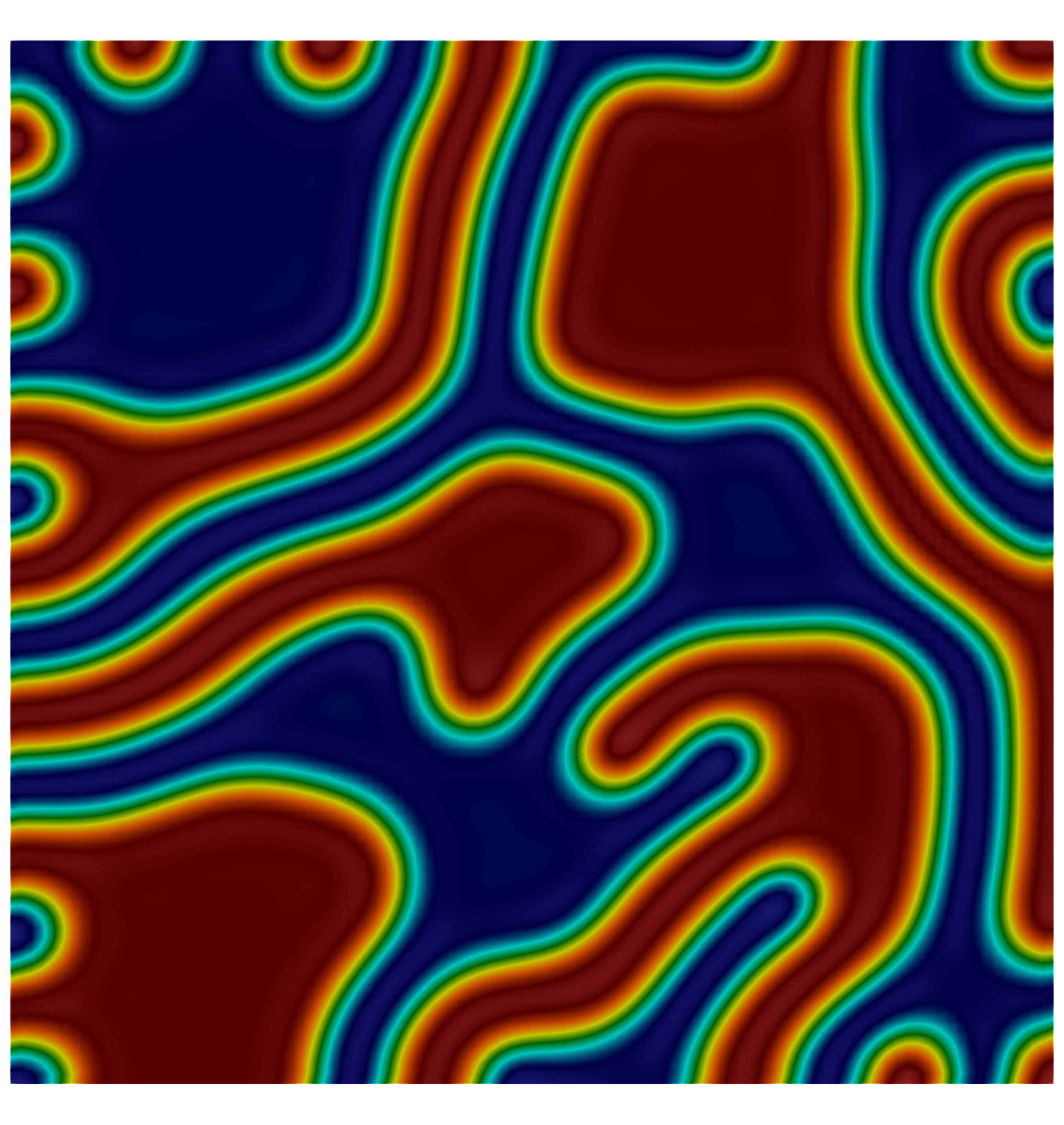}
			\caption{Evolution of $\phi$ at times $t=0, 0.05, 0.15, 0.35$ and $0.5$ (from left to right) taking $M=0.1$, $g_2=1$, $g_0=-4$, $h_0=0.5$ and $\lambda=0.1$ with $\beta=1$ (top row), $\beta=10$ (center row) and $\beta=10^2$ (bottom row).} \label{fig:Exbetas}
		\end{center}
	\end{figure}

	\begin{figure}[h]
		\begin{center}
			\includegraphics[width=0.45\textwidth]{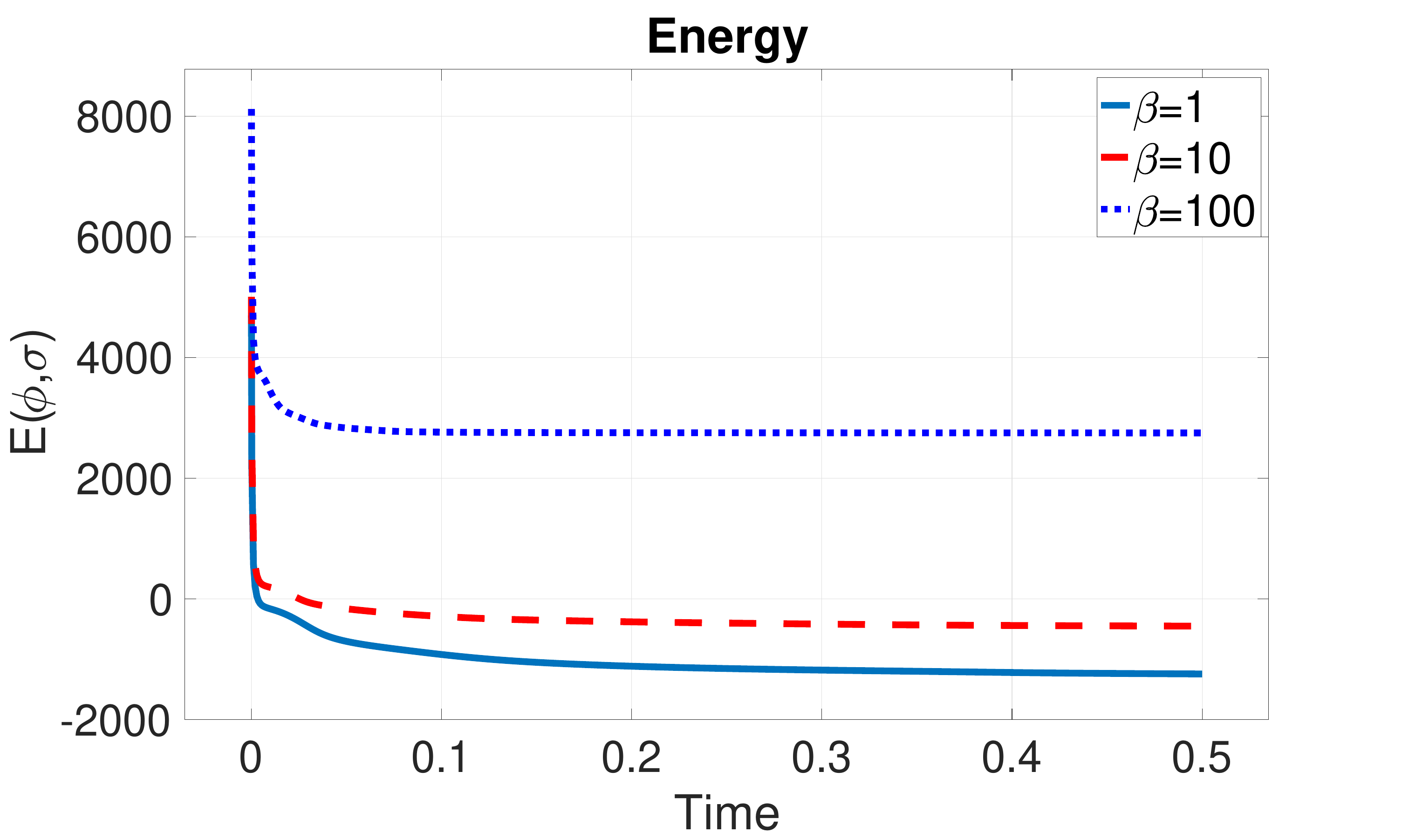}
			\includegraphics[width=0.45\textwidth]{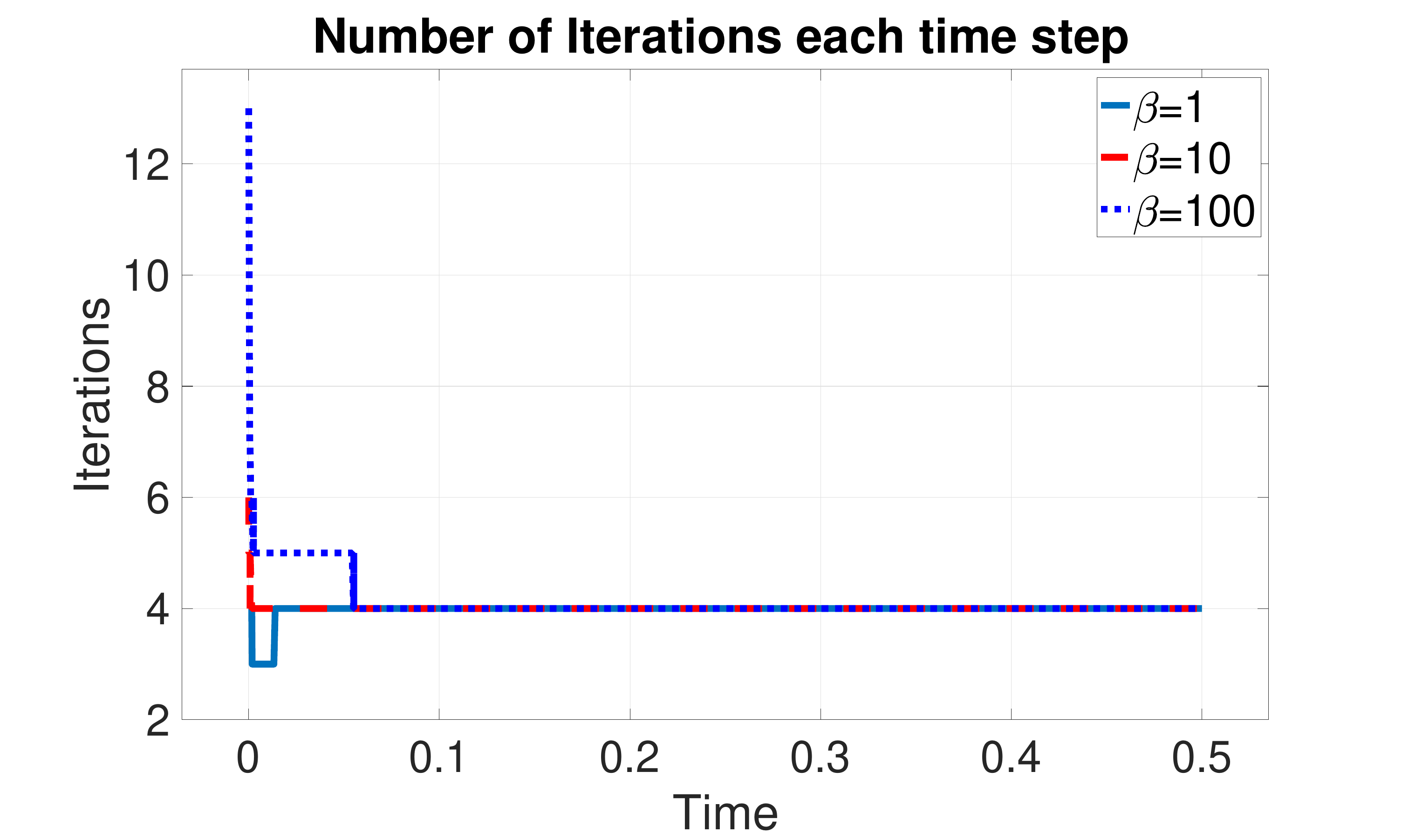}
			\\
			\includegraphics[width=0.45\textwidth]{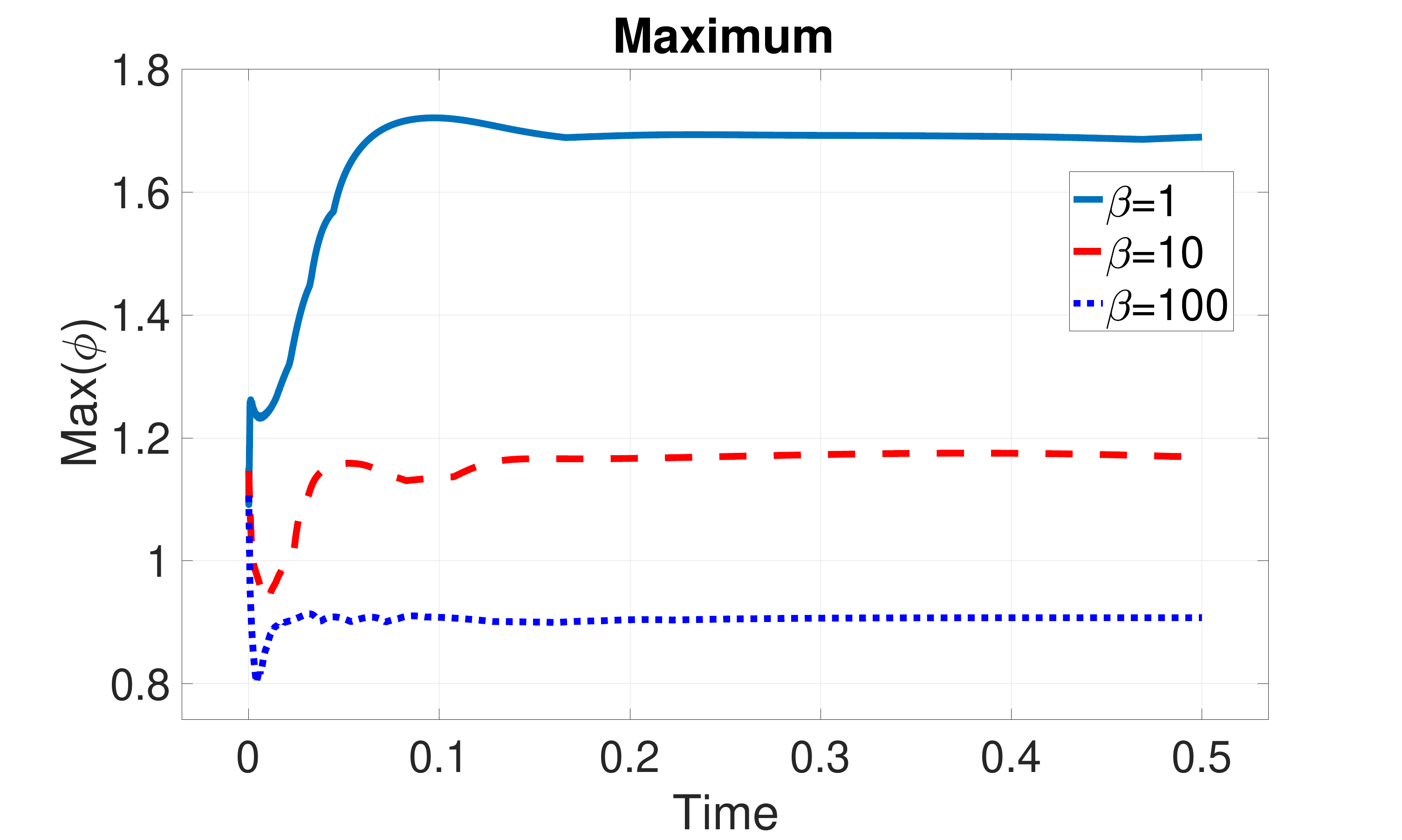}
			\includegraphics[width=0.45\textwidth]{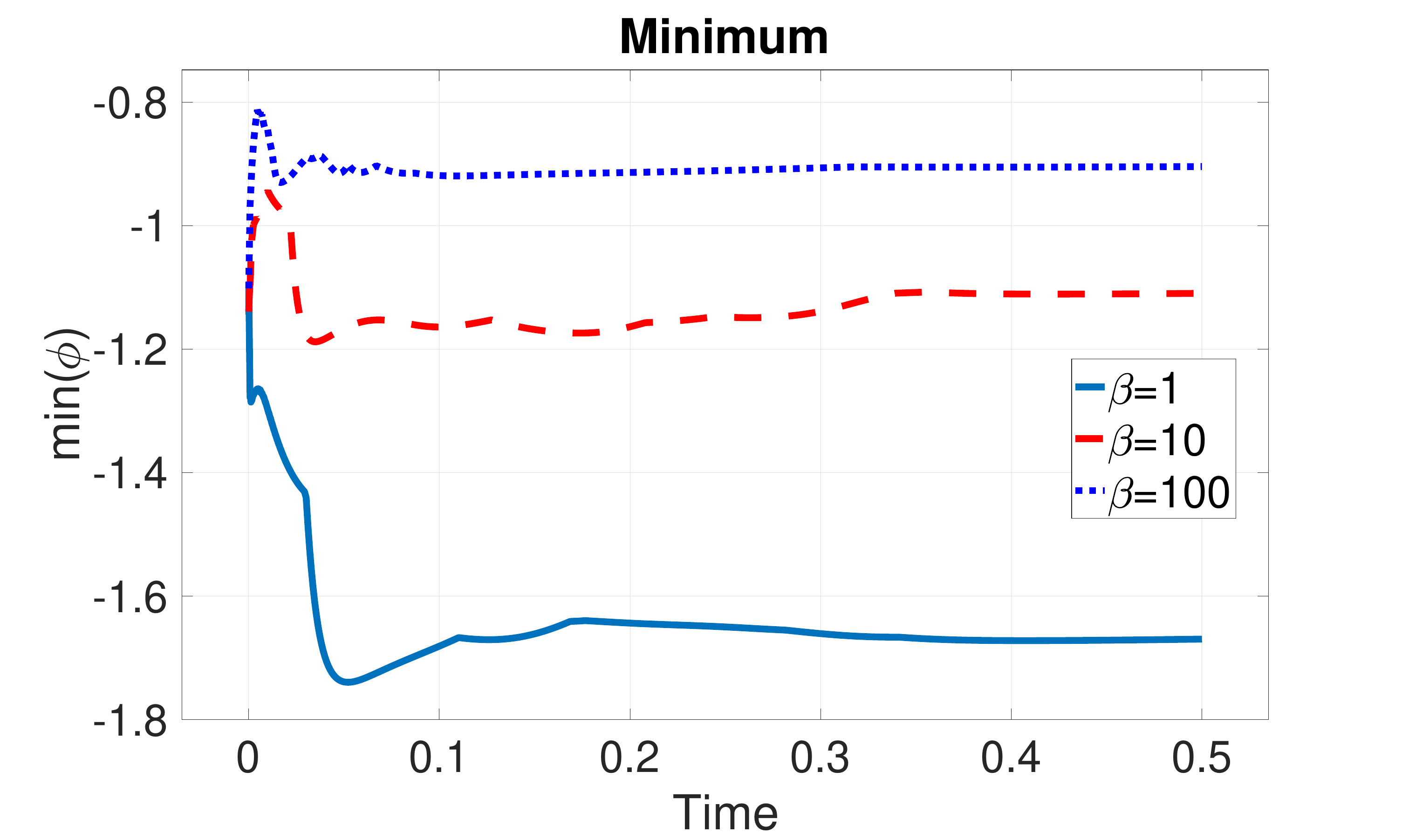}
			\caption{Evolution in time of the energies (top left), the number of iterations to achieve tolerance $\texttt{TOL}=10^{-7}$ (top right), maximum of $\phi$ (bottom left) and minimum of $\phi$ (bottom right) taking $M=0.1$, $g_2=1$, $g_0=-4$, $h_0=0.5$, $\lambda=0.1$ and $\beta=1, 10$ and $10^2$.} \label{fig:betasplot}
		\end{center}
	\end{figure}
	
	\subsubsection{Study of influence of parameter $g_2$ on the dynamics of the system}
	In this example, we fix the parameters to
	$M=0.1$, $g_0=-4$, $h_0=0.5$, $\lambda=0.1$, $\beta=1$ and  we consider different values of parameter $g_2$ {in the definition of the function $g(\phi)$ in \eqref{eq:deffunctiong}} { which impacts the amphiphilic presence}, namely $g_2=1$, $g_2=10$ and $g_2=10^2$. The dynamics associated to these simulations are presented in Figure~\ref{fig:g2s} and the evolution of the energies, maximum of $\phi$, minimum of $\phi$ and number of iterations are presented in Figure~\ref{fig:g2splot}. 
	The impact of varying $g_2$ in the phase field-dependent coefficient $g(\phi)=g_2\phi^2 + g_0$ can be explained as follows. As the coefficient $g_2$ is increased, the dynamics of the ternary system leans more towards interactions between the phases rather than separating into the bulk regions. Thus, the influence of increasing the magnitude of $g_2$ is reflected in larger interacting/mixing regions.  This behavior is correctly captured by the approximations generated from our numerical scheme as seen in the bottom row subfigures of Figure~\ref{fig:g2s}. Furthermore, based on the experimental observations discussed in~\cite{PawlowZajaczkowski11}, it seems that $g(\phi)$ must be positive in the oil-rich and water-rich phases, \textit{so this would imply} the condition:~$g_2>-g_0$.
	We would like to point out that unlike previous numerical simulations in~\cite{diegelsharma-me23,hoppelinsenmann-me18}, we present simulations in both cases, where this condition holds and when it is violated.

\begin{figure}[h]
\begin{center}
	\includegraphics[width=0.19\textwidth]{images/Ex2/standard/Ex2_Standard_0}
	\includegraphics[width=0.19\textwidth]{images/Ex2/standard/Ex2_Standard_50}
	\includegraphics[width=0.19\textwidth]{images/Ex2/standard/Ex2_Standard_150}
	\includegraphics[width=0.19\textwidth]{images/Ex2/standard/Ex2_Standard_350}
	\includegraphics[width=0.19\textwidth]{images/Ex2/standard/Ex2_Standard_500}
	\\
	\includegraphics[width=0.19\textwidth]{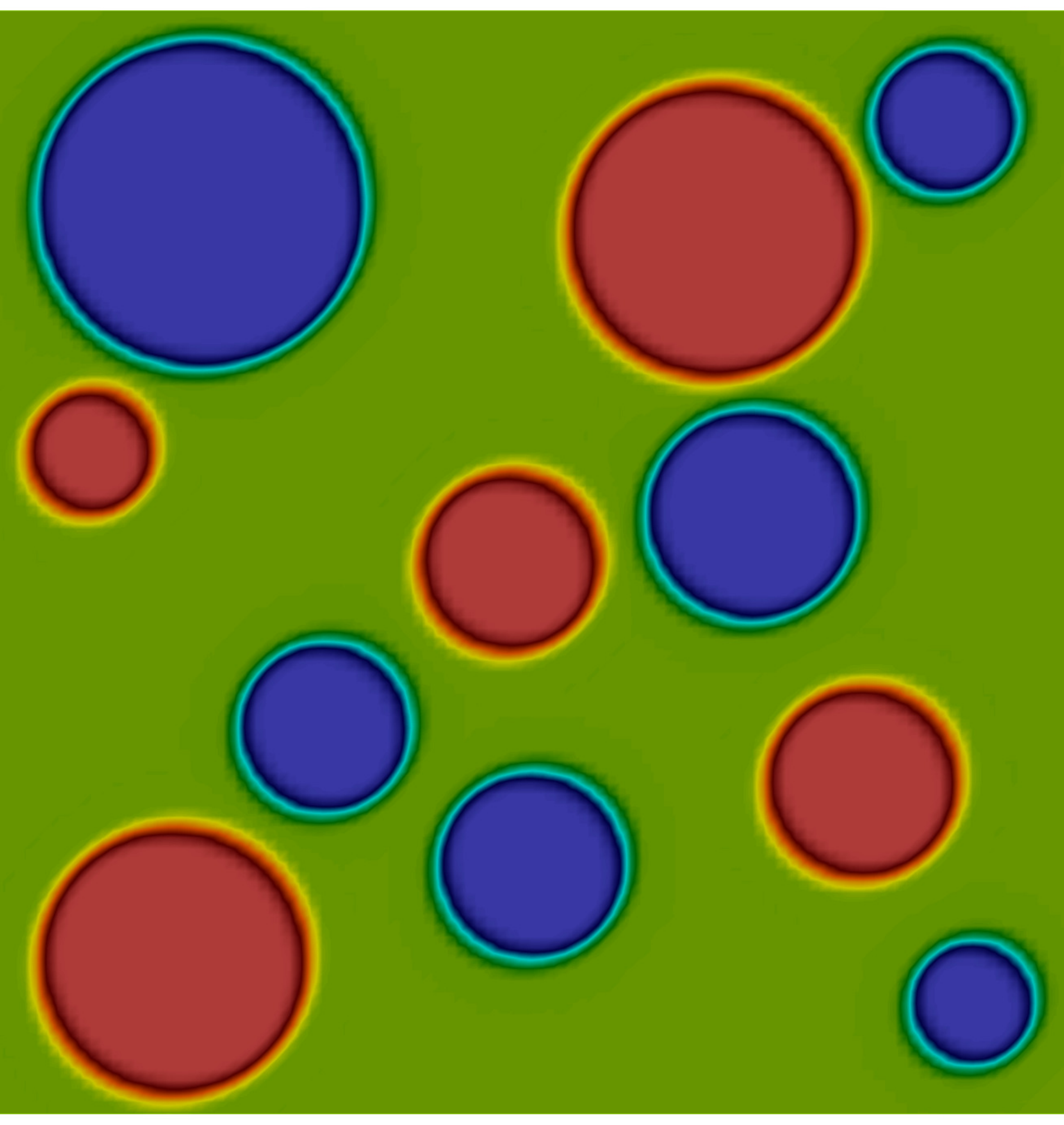}
	\includegraphics[width=0.19\textwidth]{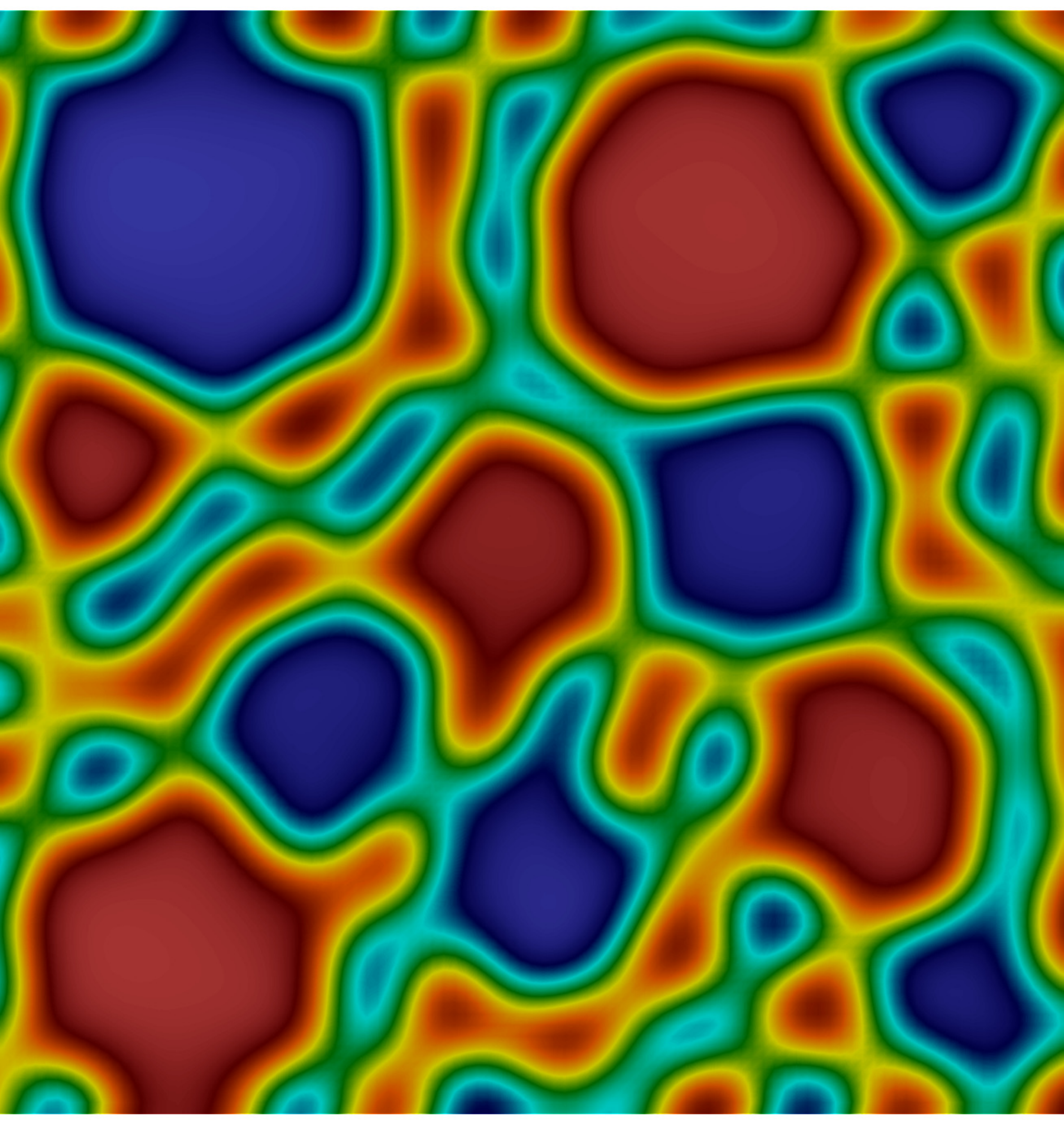}
	\includegraphics[width=0.19\textwidth]{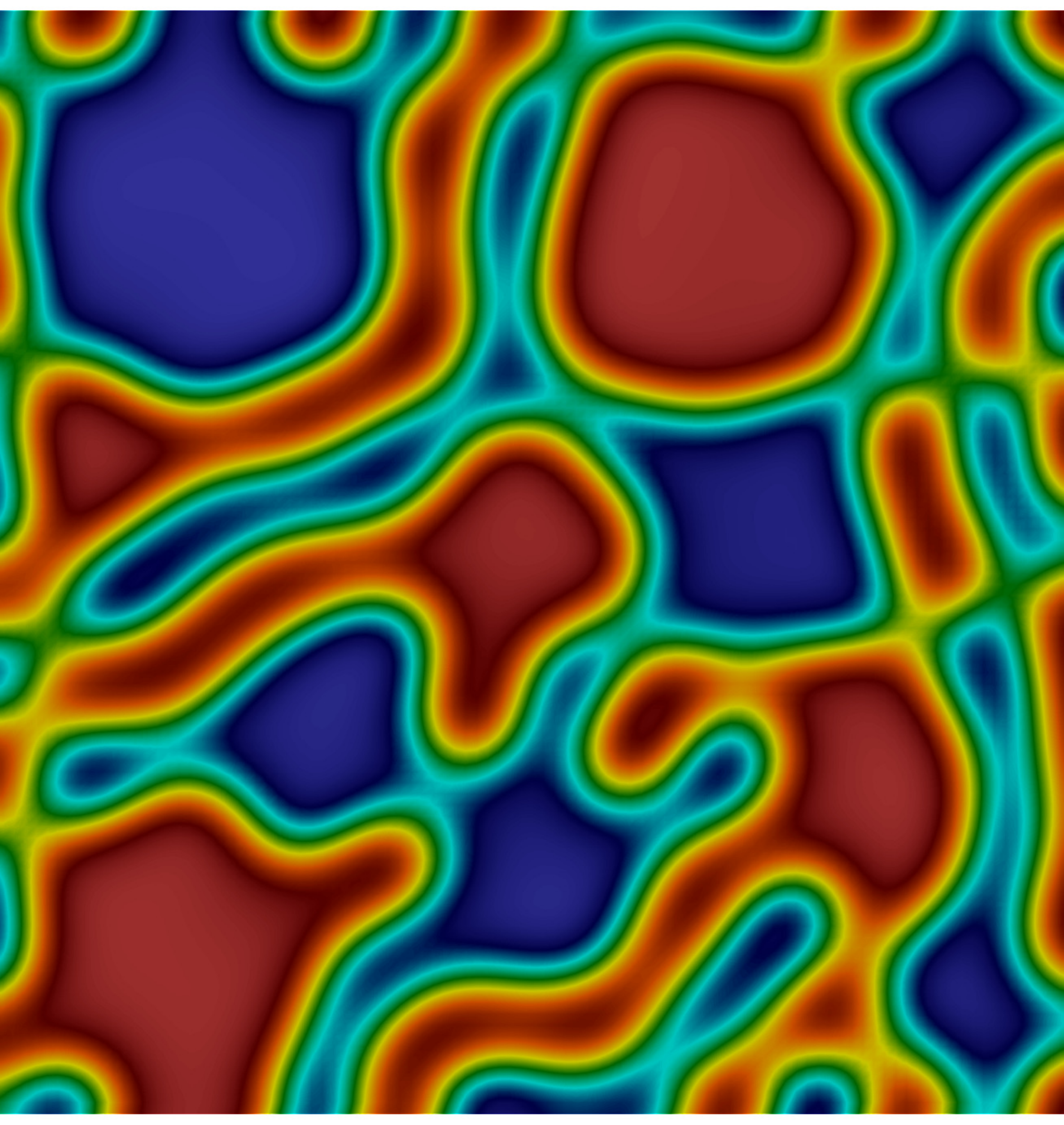}
	\includegraphics[width=0.19\textwidth]{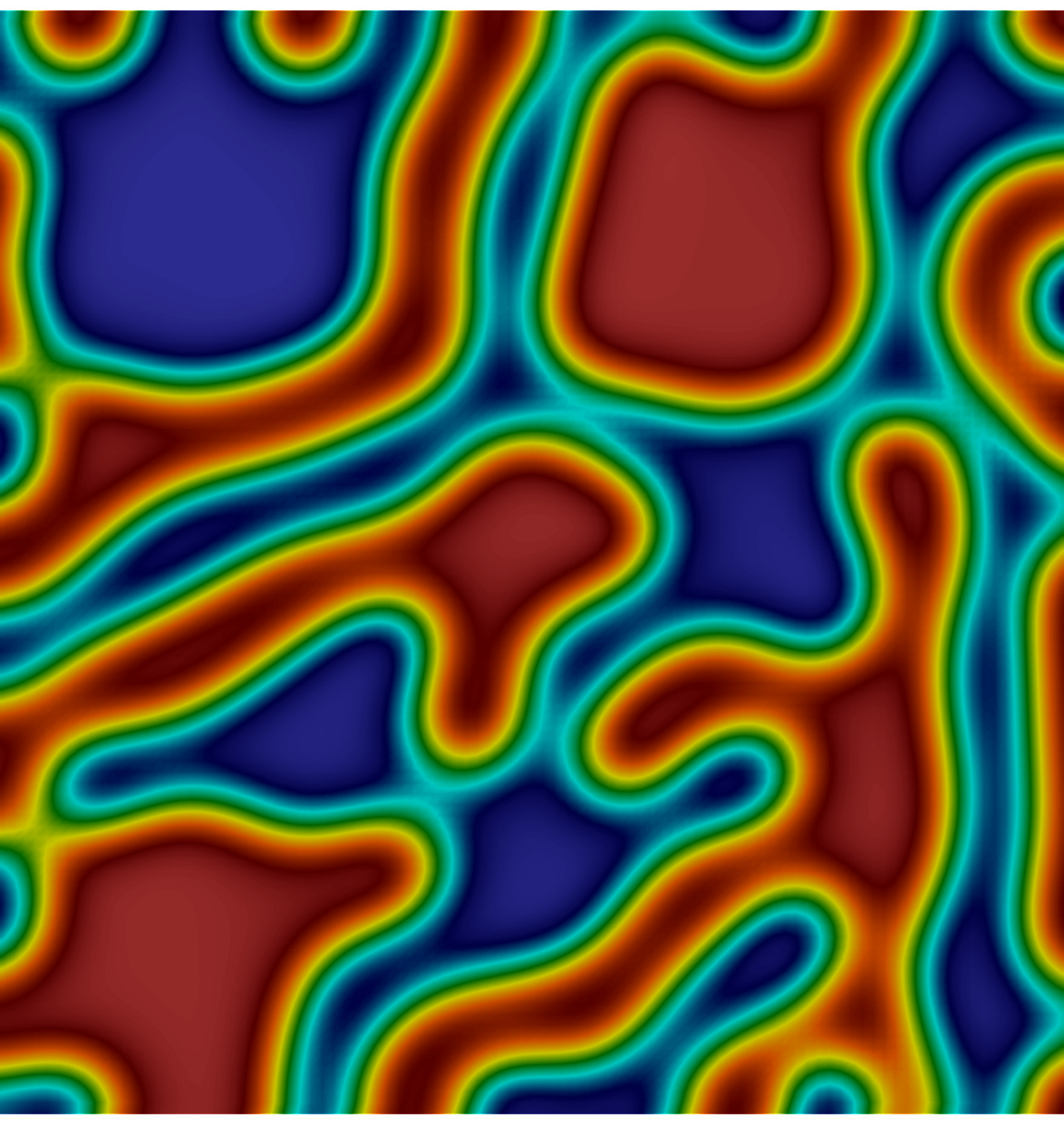}
	\includegraphics[width=0.19\textwidth]{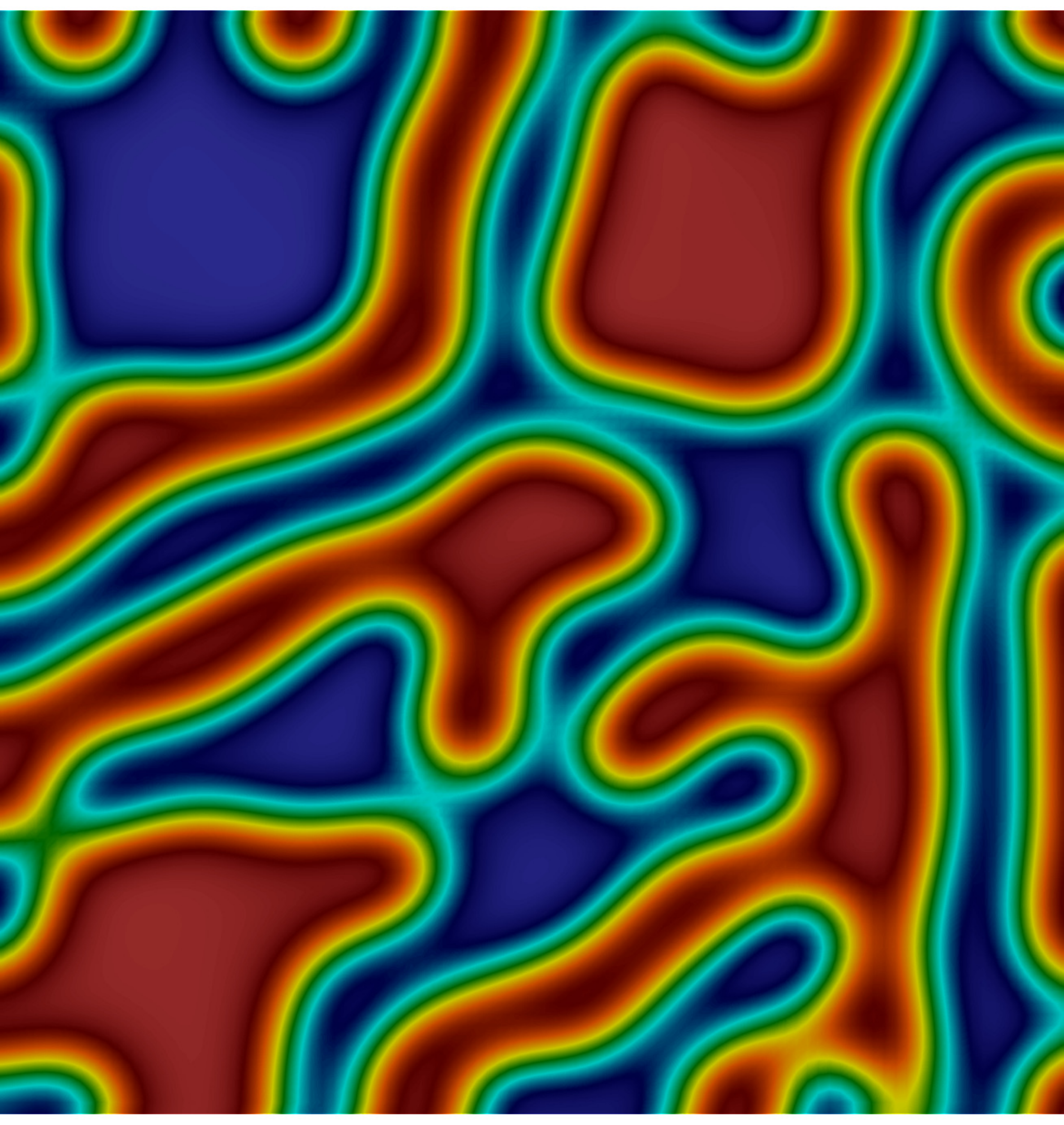}
	\\
	\includegraphics[width=0.19\textwidth]{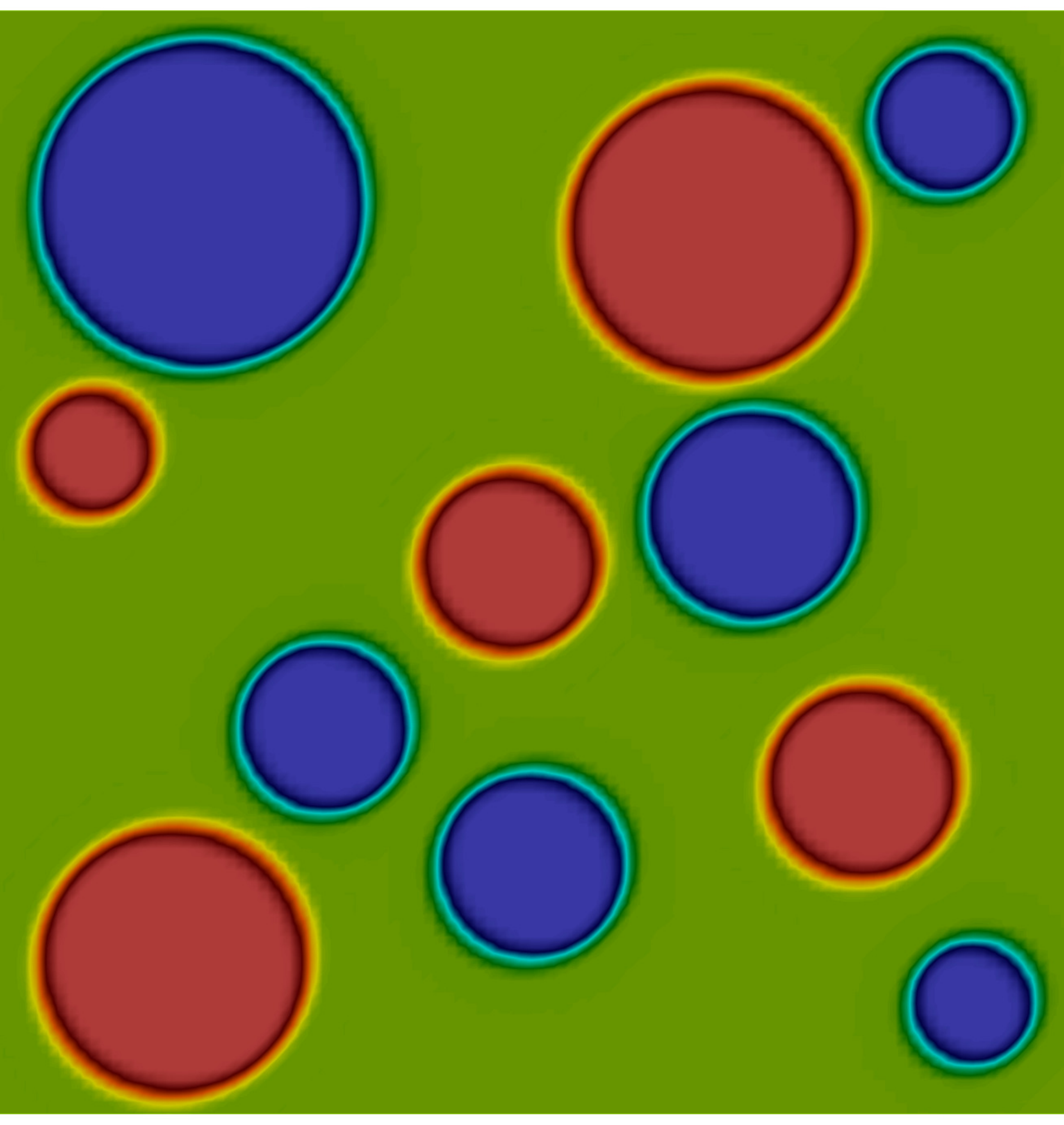}
	\includegraphics[width=0.19\textwidth]{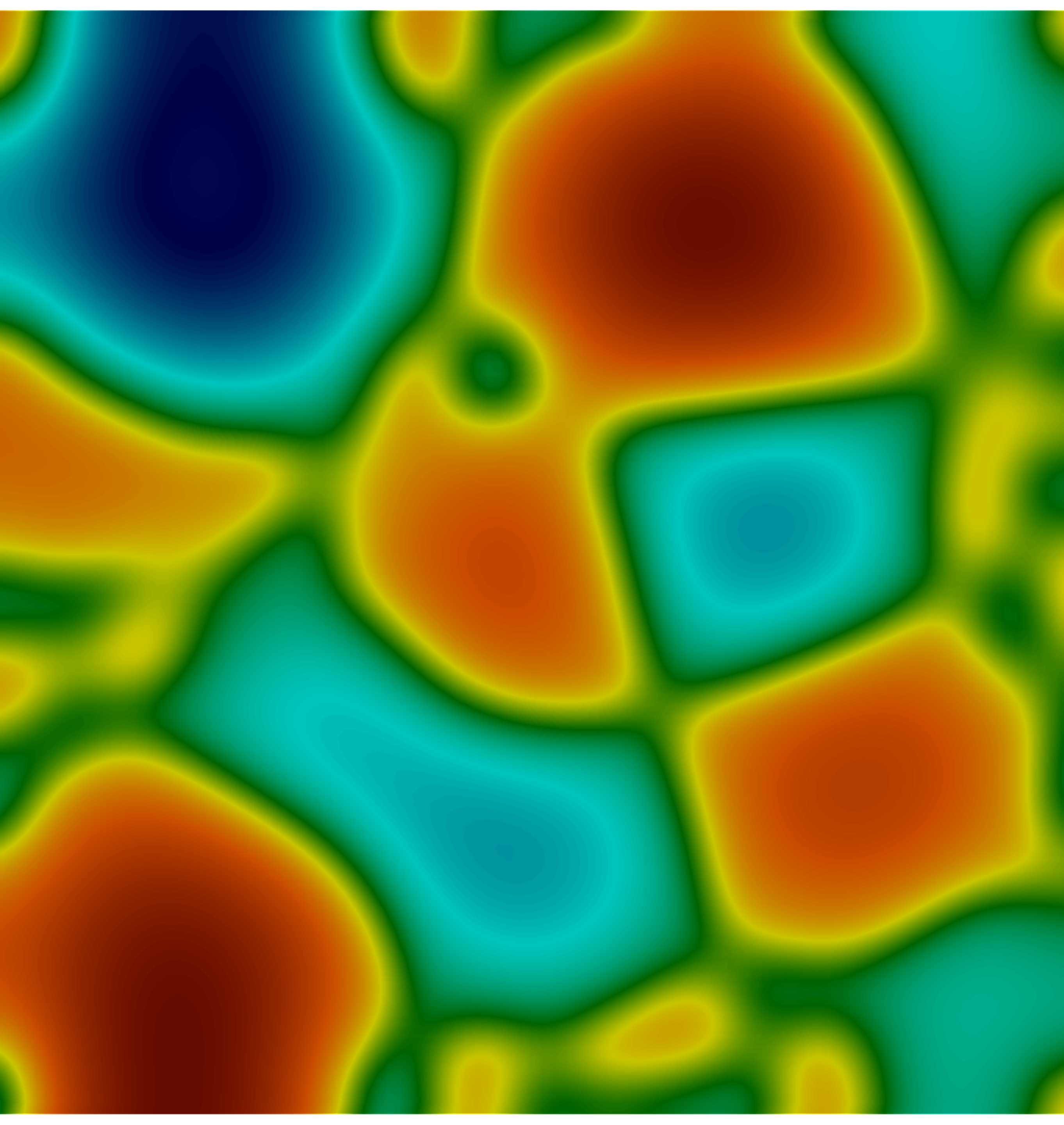}
	\includegraphics[width=0.19\textwidth]{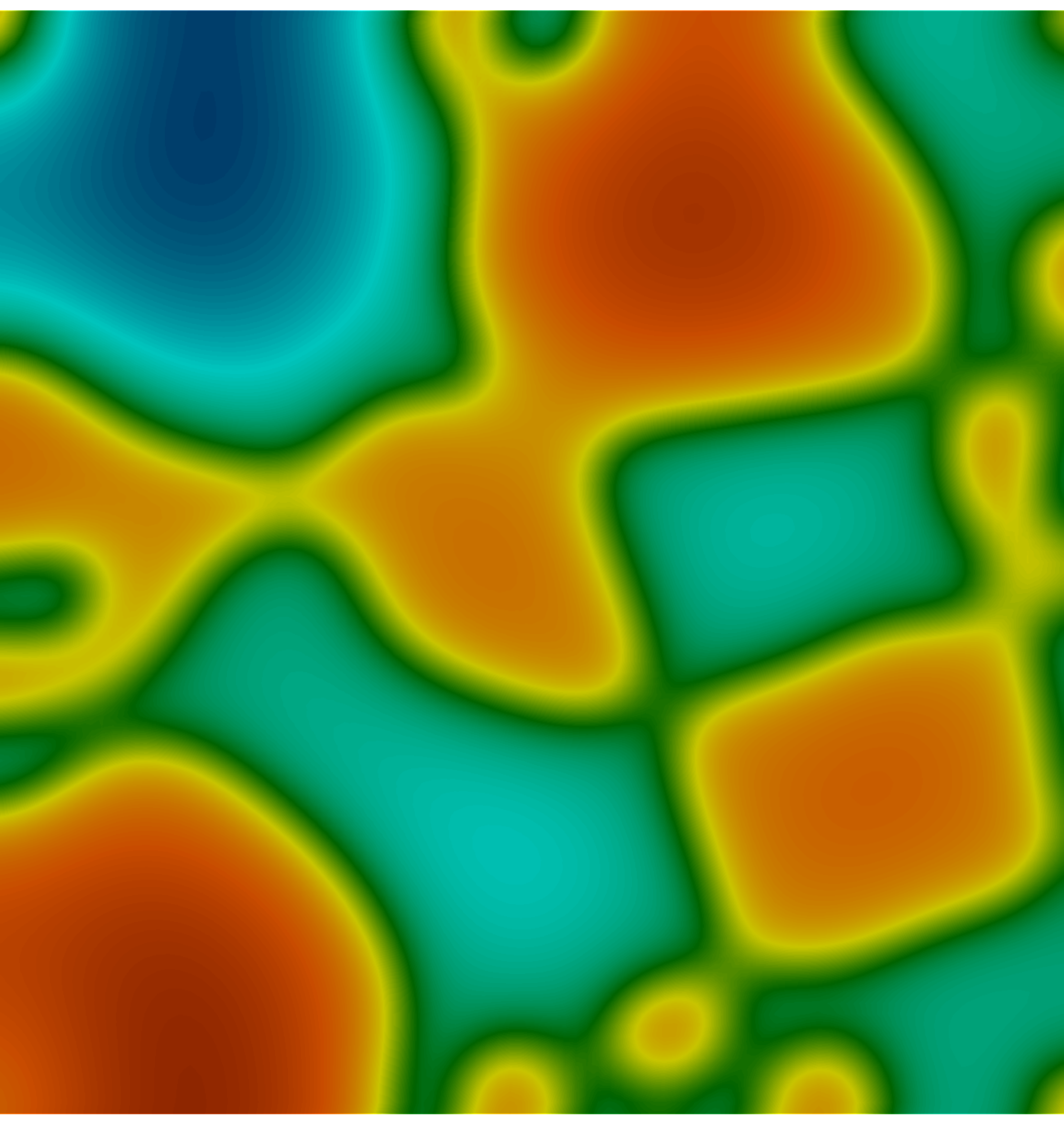}
	\includegraphics[width=0.19\textwidth]{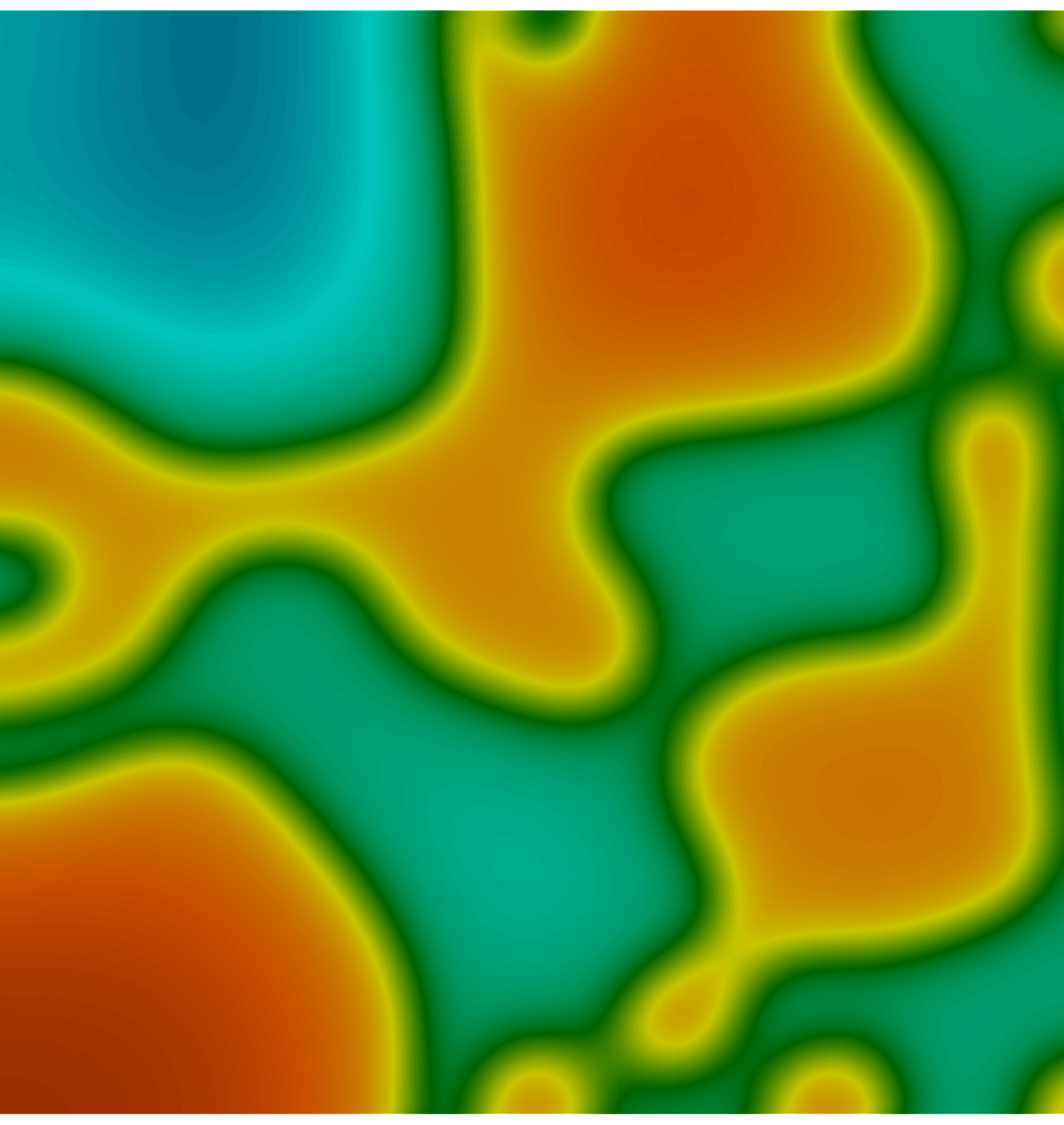}
	\includegraphics[width=0.19\textwidth]{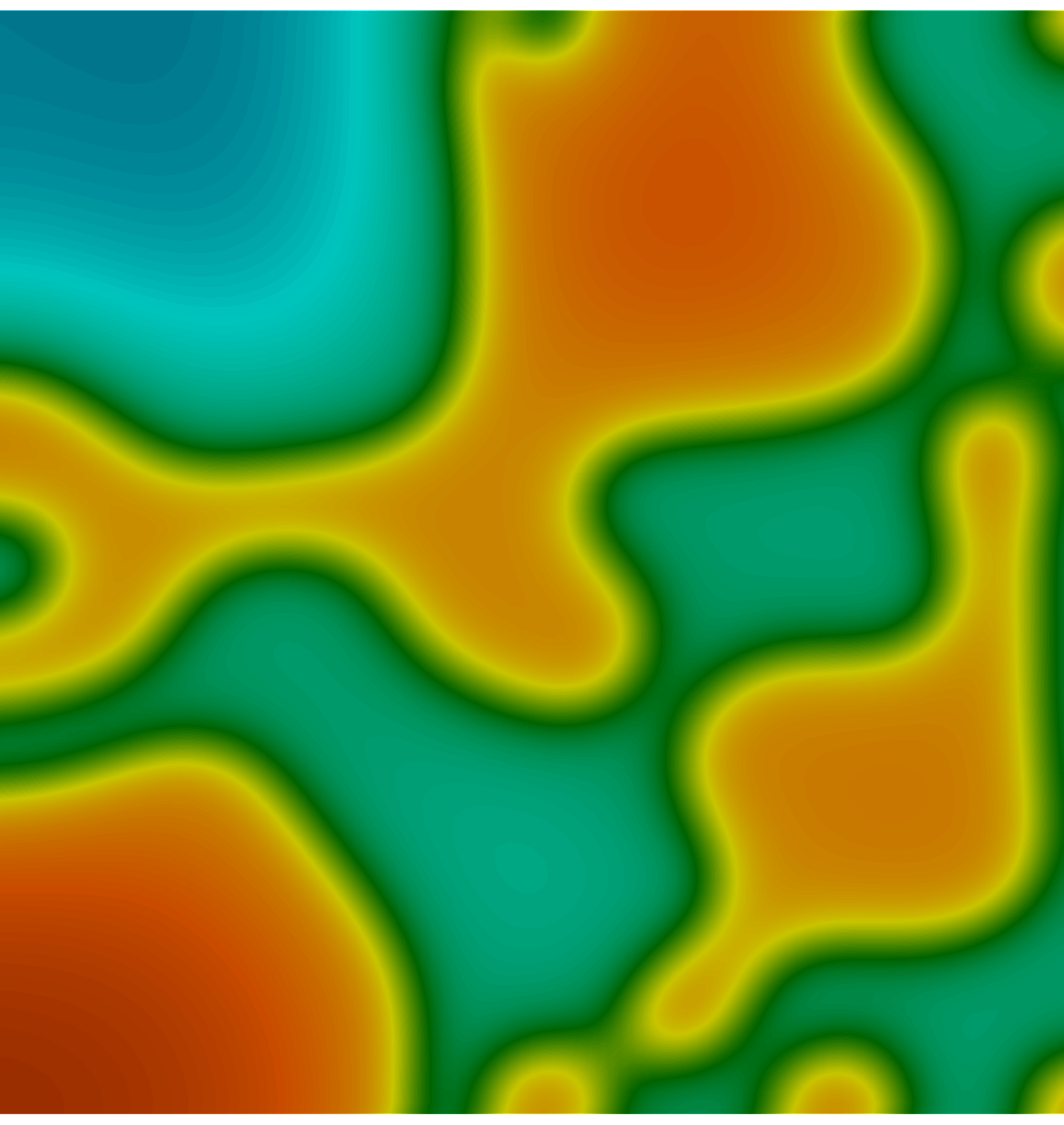}
	\caption{Evolution of $\phi$ at times $t=0, 0.05, 0.15, 0.35$ and $0.5$ (from left to right) taking $M=0.1$, $g_0=-4$, $h_0=0.5$, $\lambda=0.1$ and $\beta=1$ with $g_2=1$ (top row), $g_2=10$ (center row) and $g_2=10^2$ (bottom row).} \label{fig:g2s}
\end{center}
\end{figure}

\begin{figure}[h]
\begin{center}
	\includegraphics[width=0.45\textwidth]{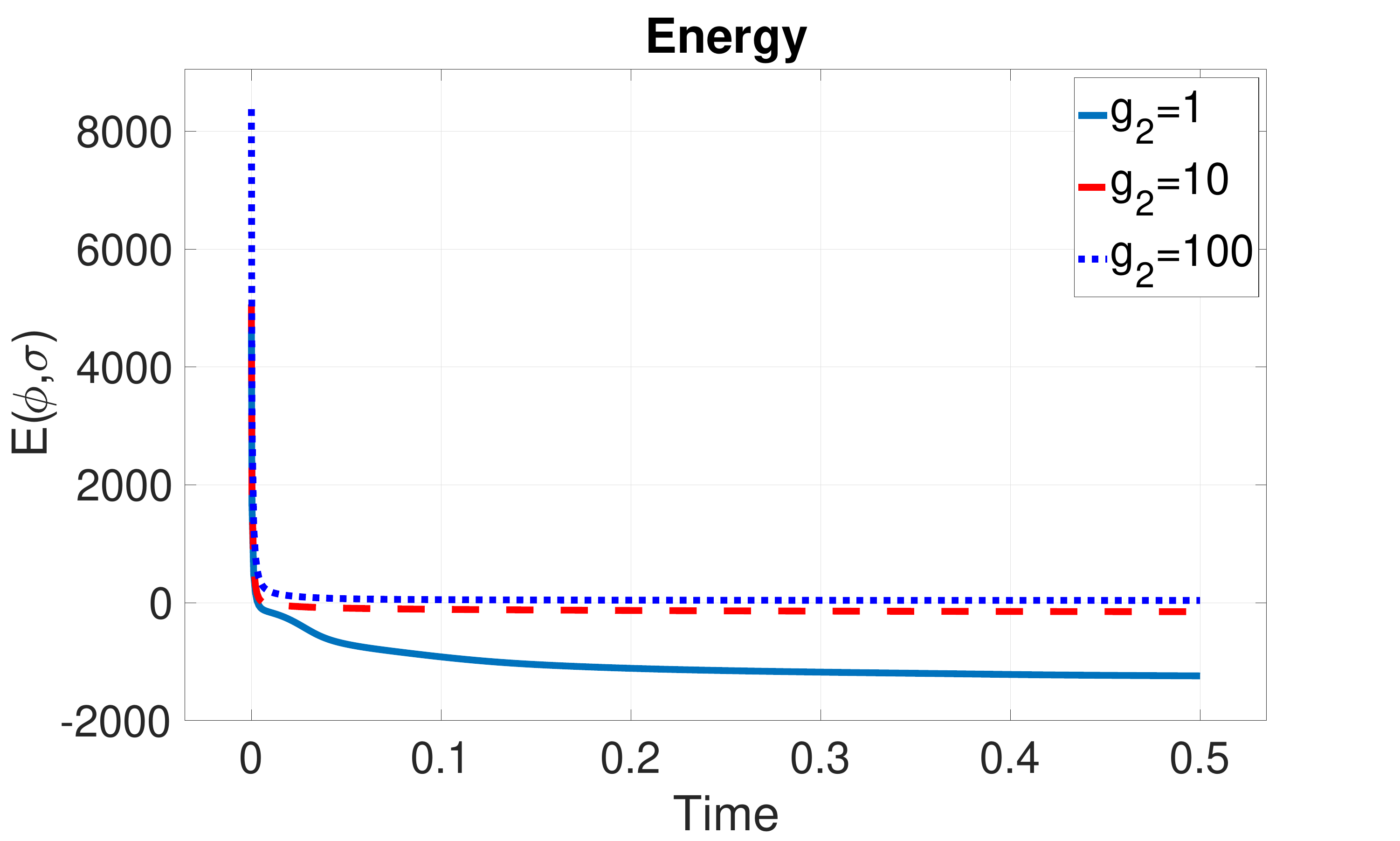}
	\includegraphics[width=0.45\textwidth]{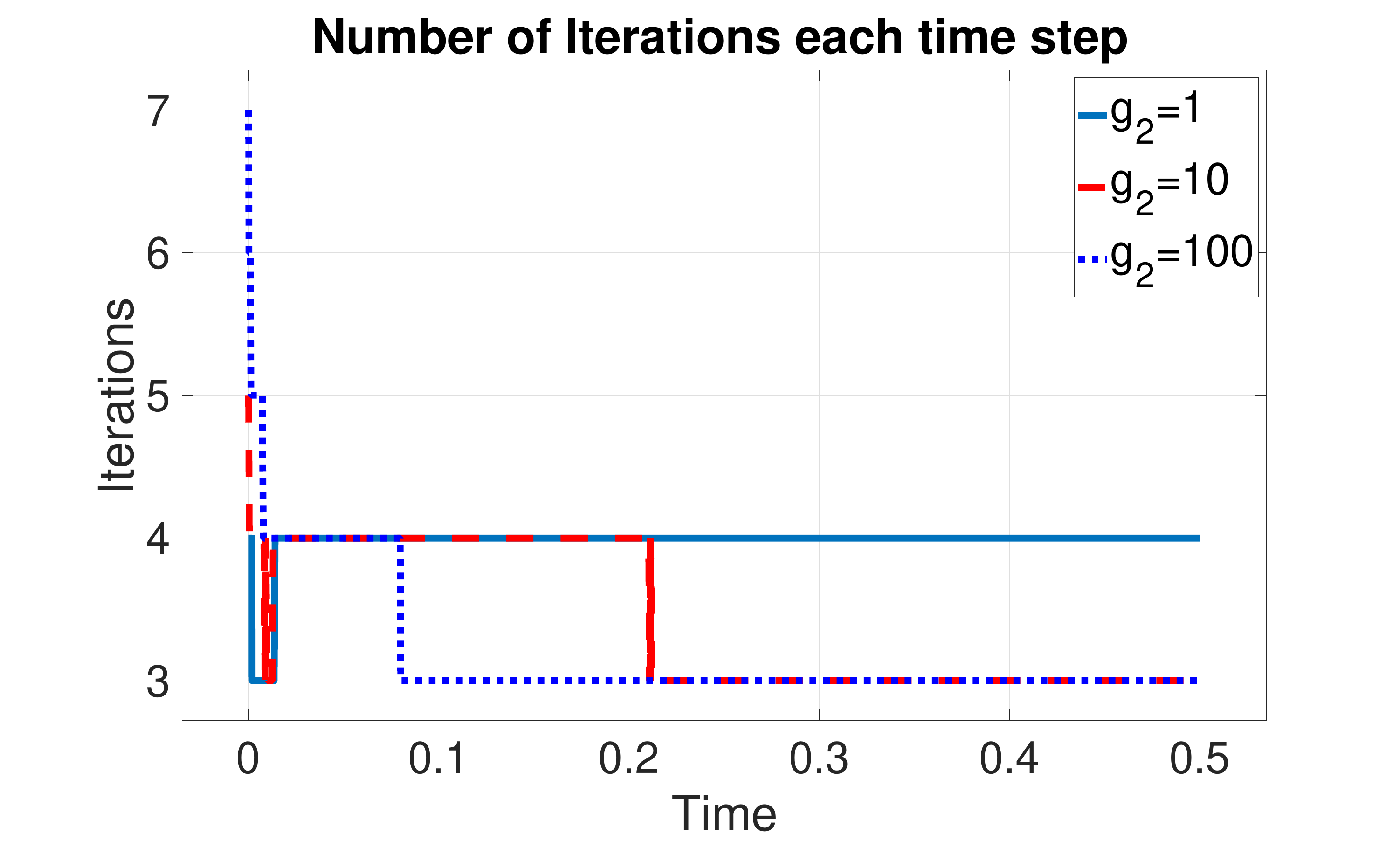}
	\\
	\includegraphics[width=0.45\textwidth]{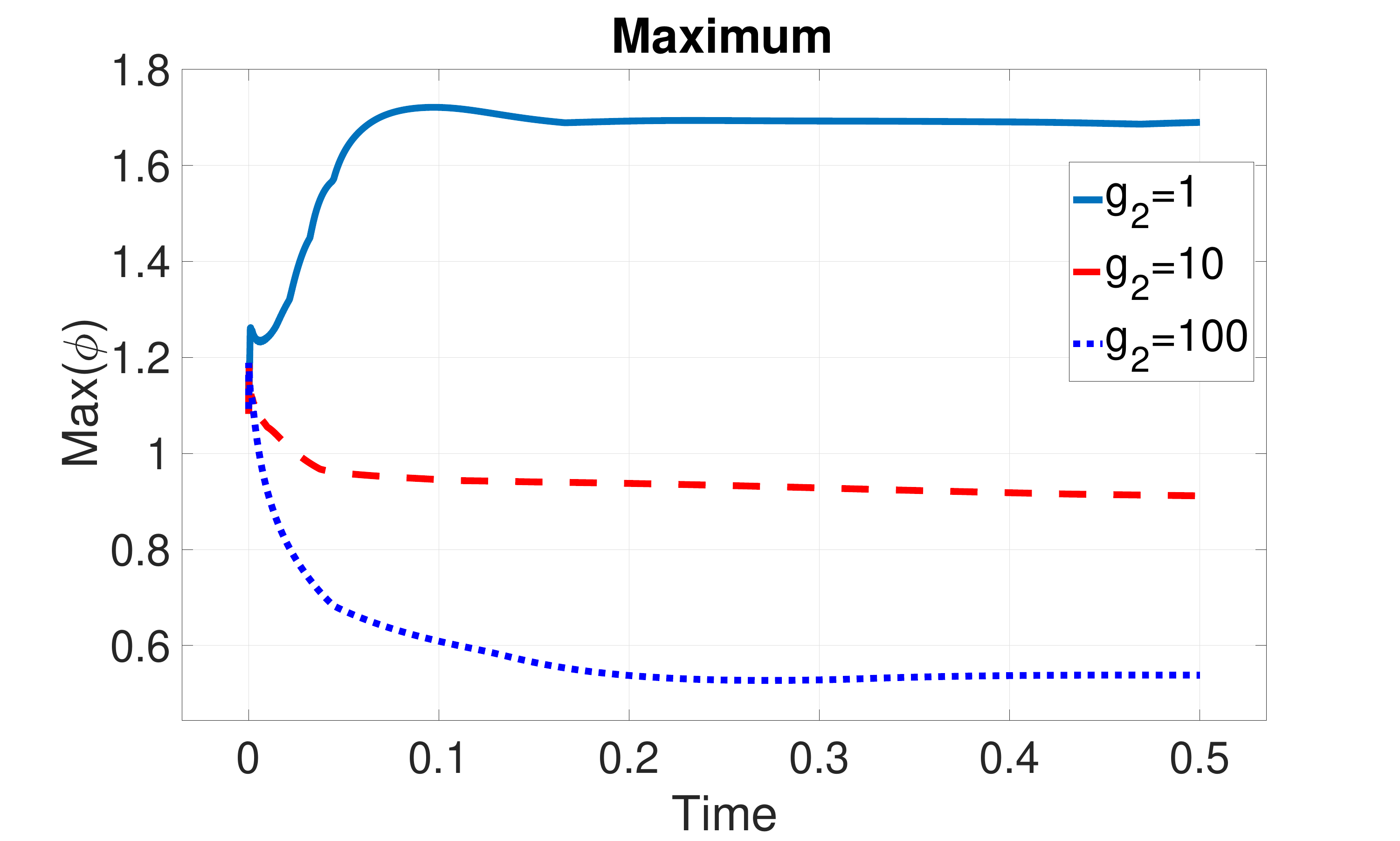}
	\includegraphics[width=0.45\textwidth]{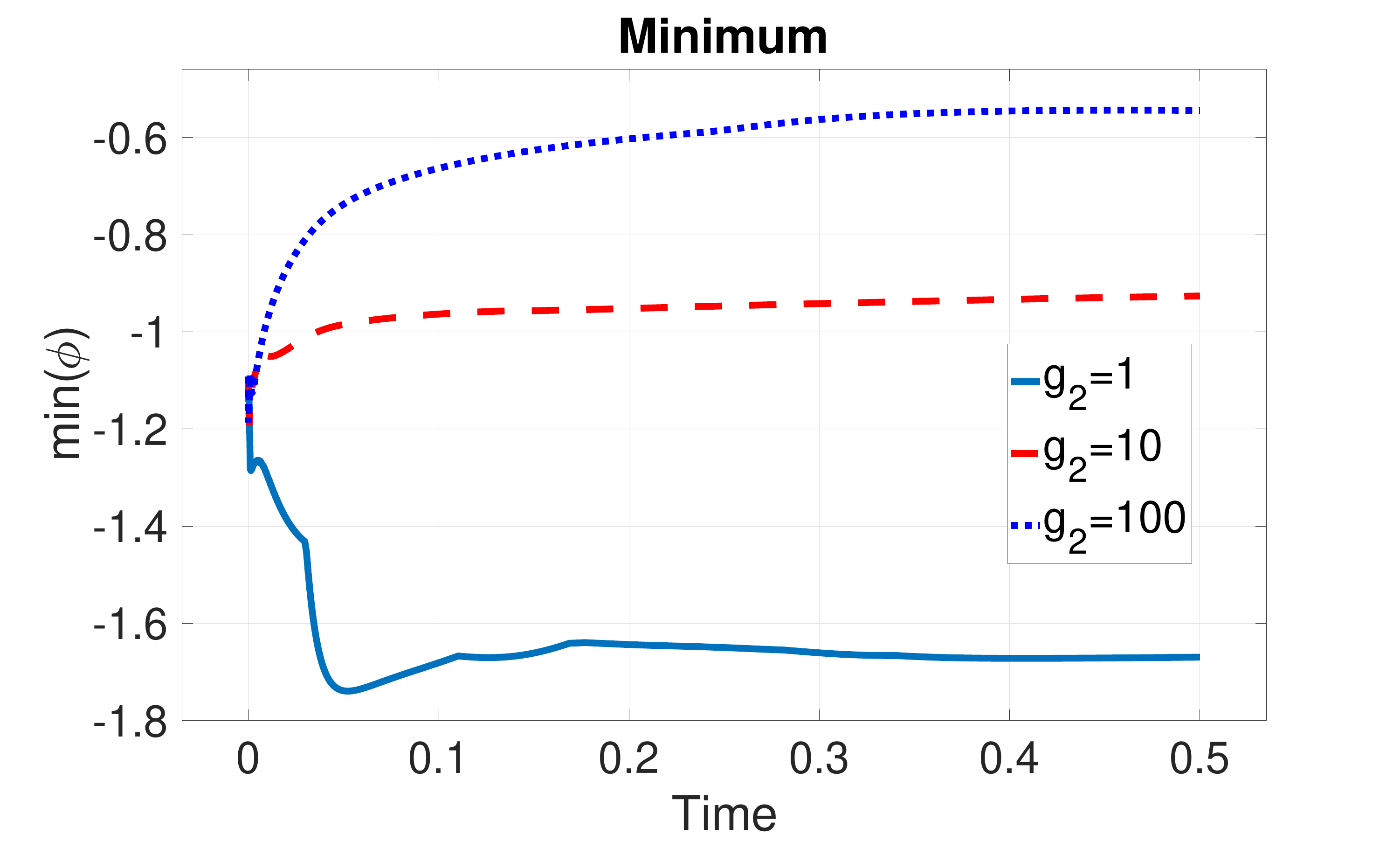}
	\caption{Evolution in time of the energies (top left), the number of iterations to achieve tolerance $\texttt{TOL}=10^{-7}$ (top right), maximum of $\phi$ (bottom left) and minimum of $\phi$ (bottom right) taking $M=0.1$, $g_0=-4$, $h_0=0.5$, $\lambda=0.1$ and $\beta=1$ with $g_2=1, 10$ and $g_2=10^2$.} \label{fig:g2splot}
\end{center}
\end{figure}

\subsubsection{Study of influence of parameter $g_0$ on the dynamics of the system}
In this example we fix the parameters to
$M=0.1$, $g_2=1$, $h_0=0.5$, $\lambda=0.1$, $\beta=1$ and  we consider different values of parameter $g_0$ {in the definition of the function $g(\phi)$ in \eqref{eq:deffunctiong}}, namely $g_0=-4$, $g_0=-10$ and $g_0=-25$. The dynamics associated to these simulations are presented in Figure~\ref{fig:g0s} and the evolution of the energies, maximum of $\phi$, minimum of $\phi$ and number of iterations are presented in Figure~\ref{fig:g0splot}. 
The obtained results suggest that decreasing $g_0$ leads to thinner interfaces and at the same time induces the dynamics of the system to increase the amount of interface in the domain. Larger number of interfaces indicate increased stability of the microemulsion phase~\cite{PawlowZajaczkowski11}. Moreover, 
the values of $\phi$ gets away from the minima of the functional $f_0(\phi)$ as $g_0$ is decreased. In all the simulations, the energies decreases as expected and in accordance with Lemma~\ref{lem:iterativealgorithm}, there is a strong sensitivity on the value of $g_0$ and the number of iterations needed to achieve the required tolerance in the iterative algorithm. As expected, an increase in the value $|g_0|$ leads to an increase in the number of iterations needed for the iterative algorithm to converge.~While the model and the numerical scheme is valid for any arbitrary choice of $g_0$ (regardless of sign), the physically relevant choices of $g_0$ are $g_0<0$. This is because theory dictates that in the microemulsion phase, at a high amphiphile concentration $g(\phi)$ becomes negative (see~\cite{PawlowZajaczkowski11} and the references therein). Furthermore, negative values of $g_0$ creates more interfaces~\cite{GompperSchick94} as is numerically validated in Figure~\ref{fig:g0s}. 
{Finally we consider the case $g_0=0$ in Figure~\ref{fig:g0_0}, and we observe how the dynamics indicate that the negative value of this parameter is important for creating an interphase between the phases.}
\begin{figure}[h]
\begin{center}
\includegraphics[width=0.19\textwidth]{images/Ex2/standard/Ex2_Standard_0}
\includegraphics[width=0.19\textwidth]{images/Ex2/standard/Ex2_Standard_50}
\includegraphics[width=0.19\textwidth]{images/Ex2/standard/Ex2_Standard_150}
\includegraphics[width=0.19\textwidth]{images/Ex2/standard/Ex2_Standard_350}
\includegraphics[width=0.19\textwidth]{images/Ex2/standard/Ex2_Standard_500}
\\
\includegraphics[width=0.19\textwidth]{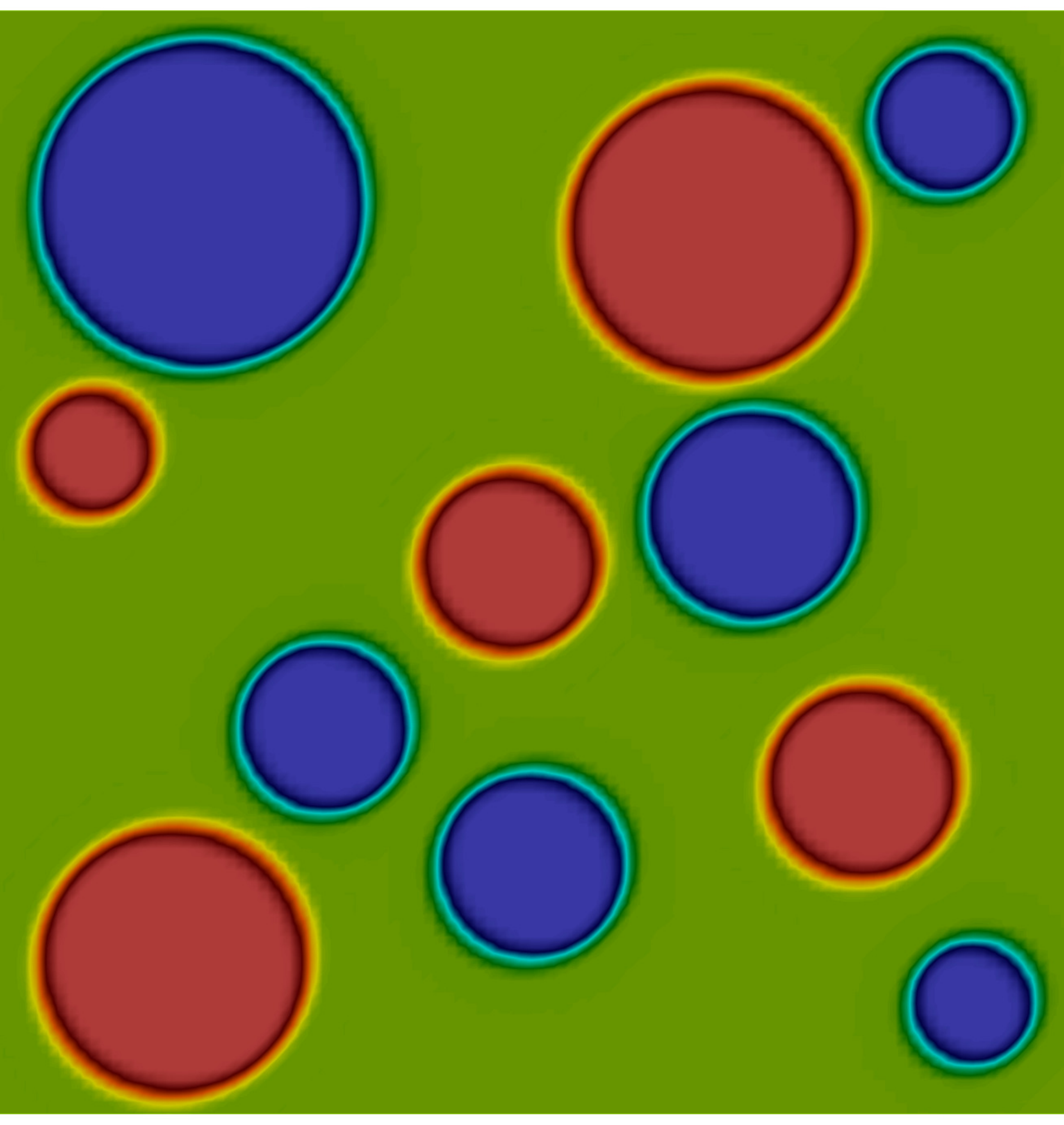}
\includegraphics[width=0.19\textwidth]{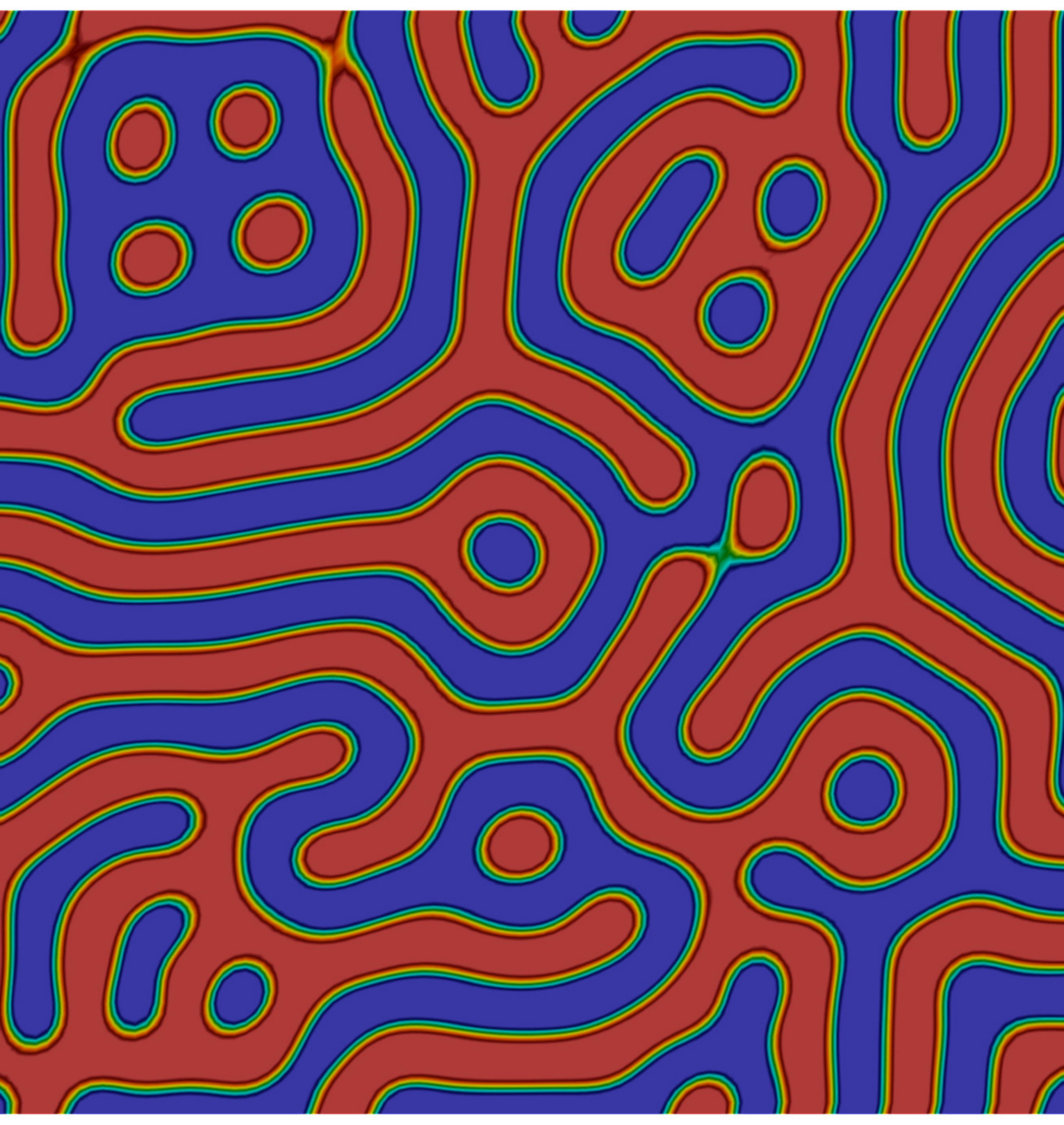}
\includegraphics[width=0.19\textwidth]{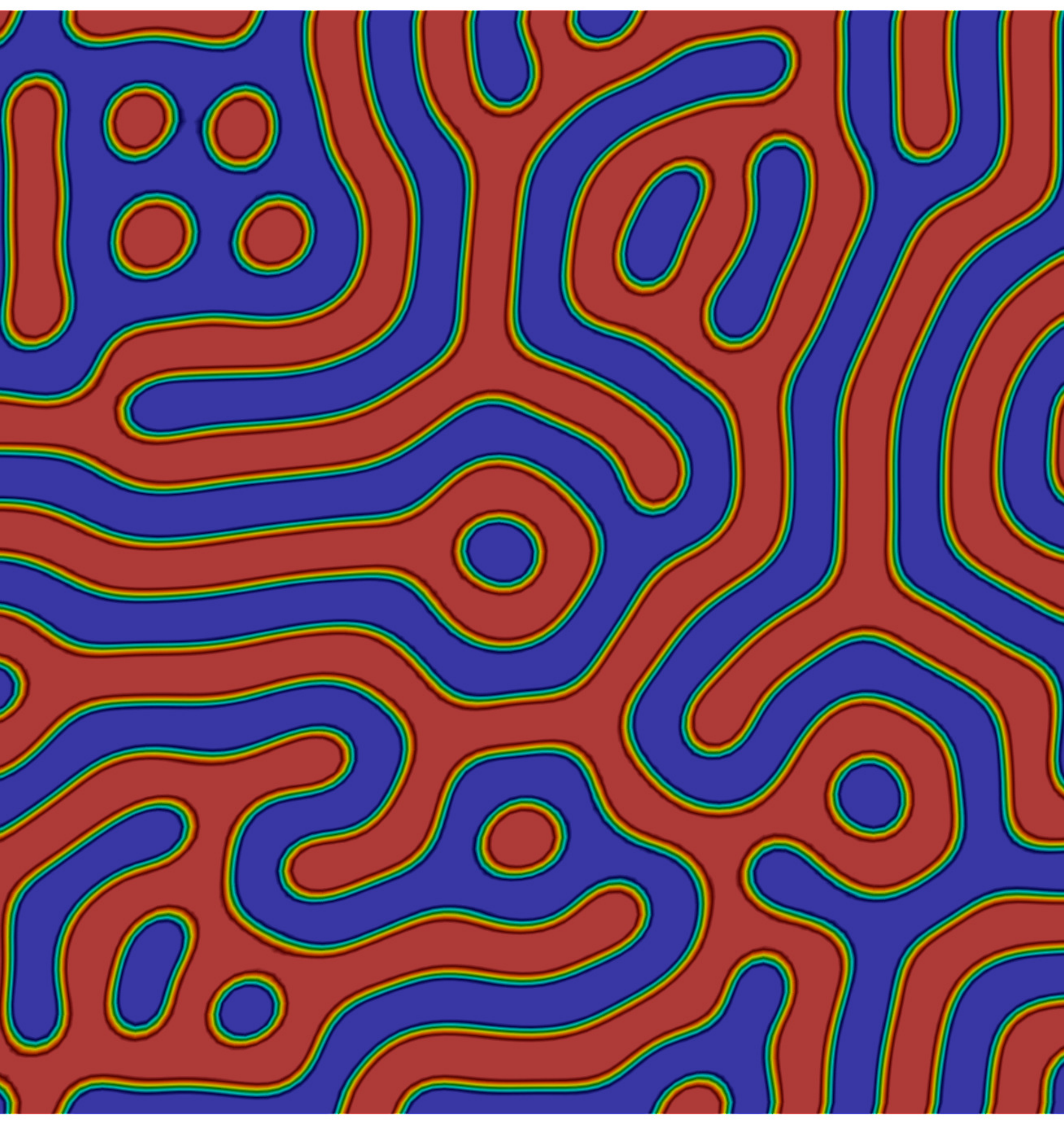}
\includegraphics[width=0.19\textwidth]{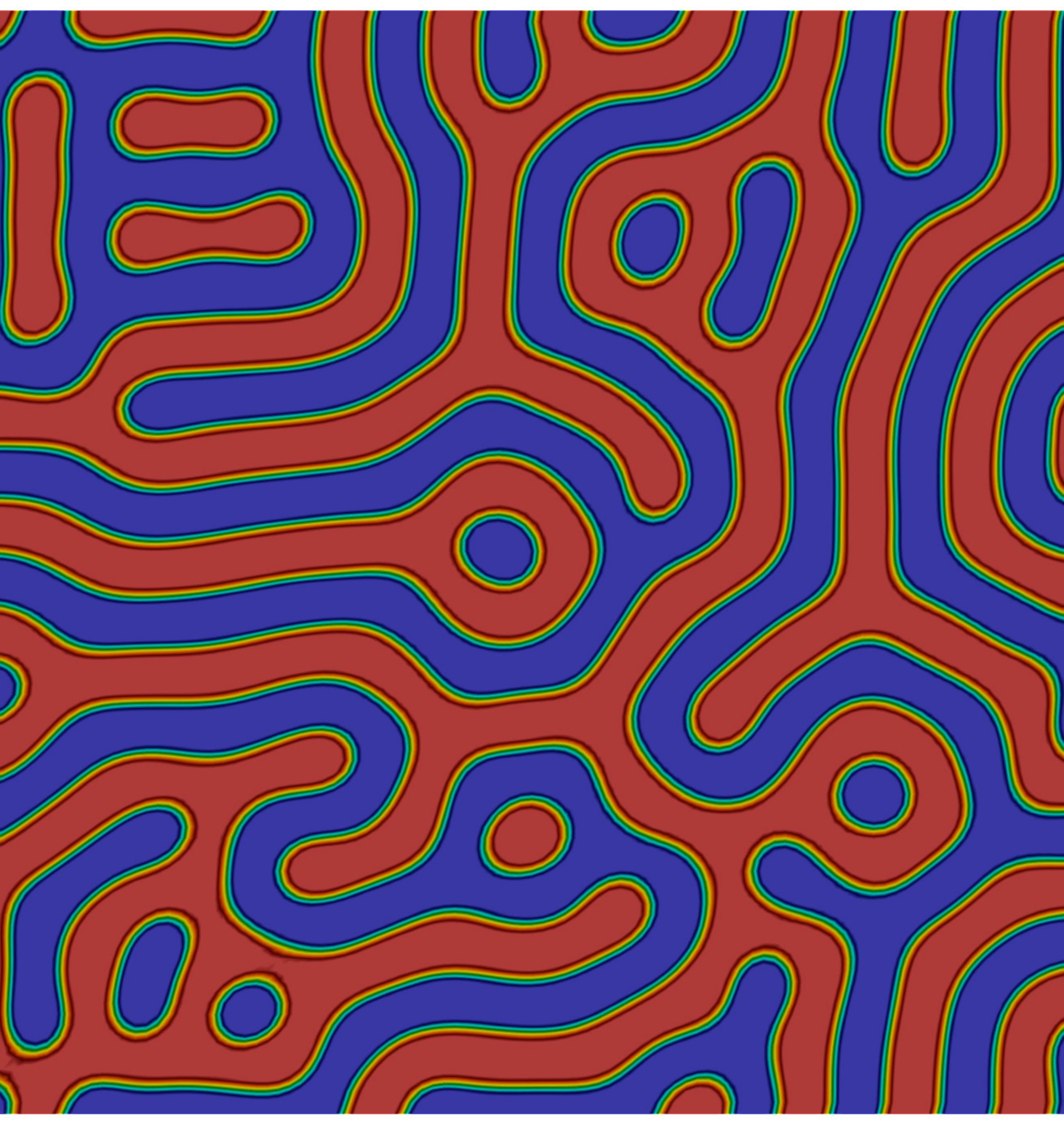}
\includegraphics[width=0.19\textwidth]{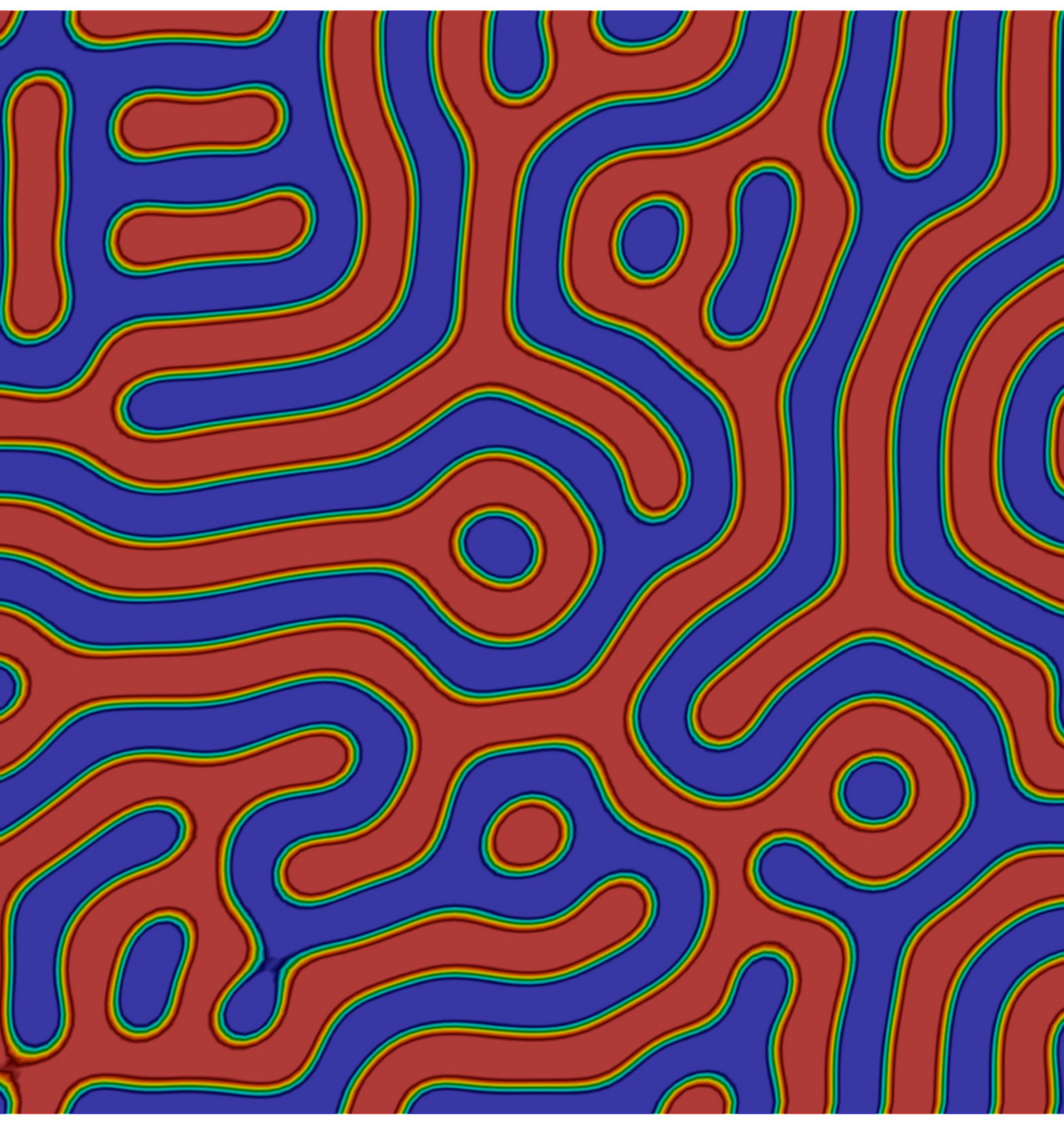}
\\
\includegraphics[width=0.19\textwidth]{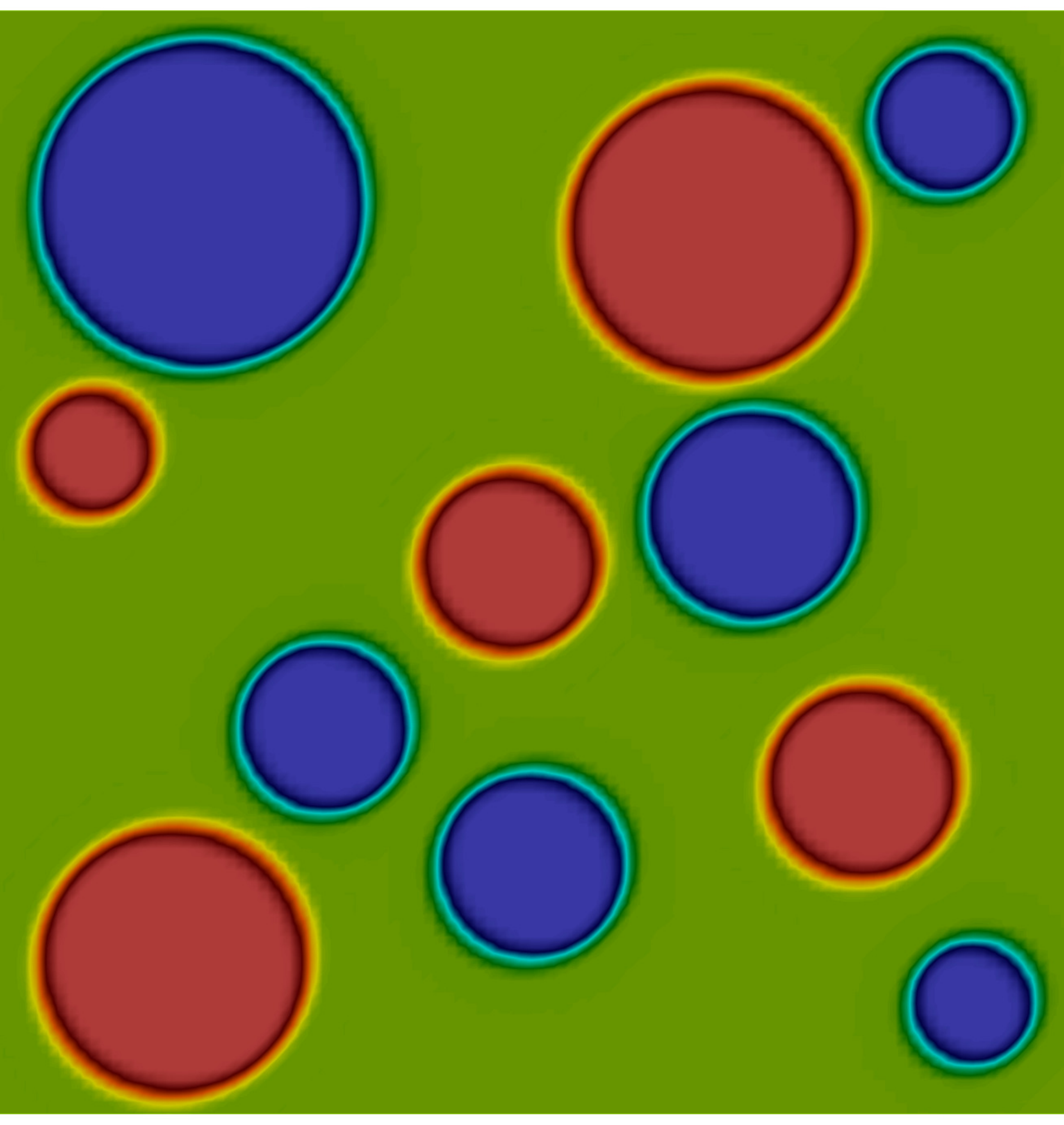}
\includegraphics[width=0.19\textwidth]{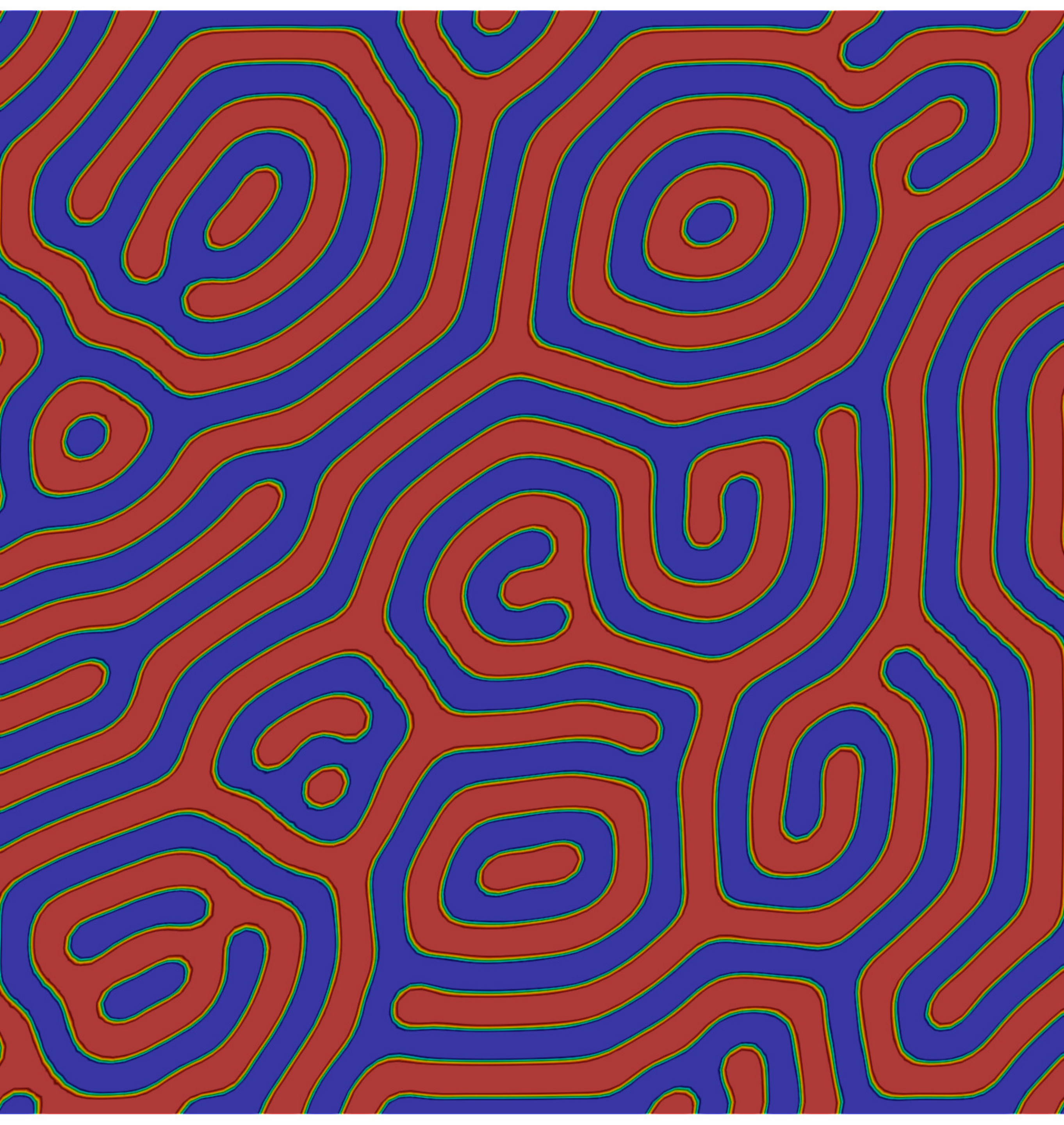}
\includegraphics[width=0.19\textwidth]{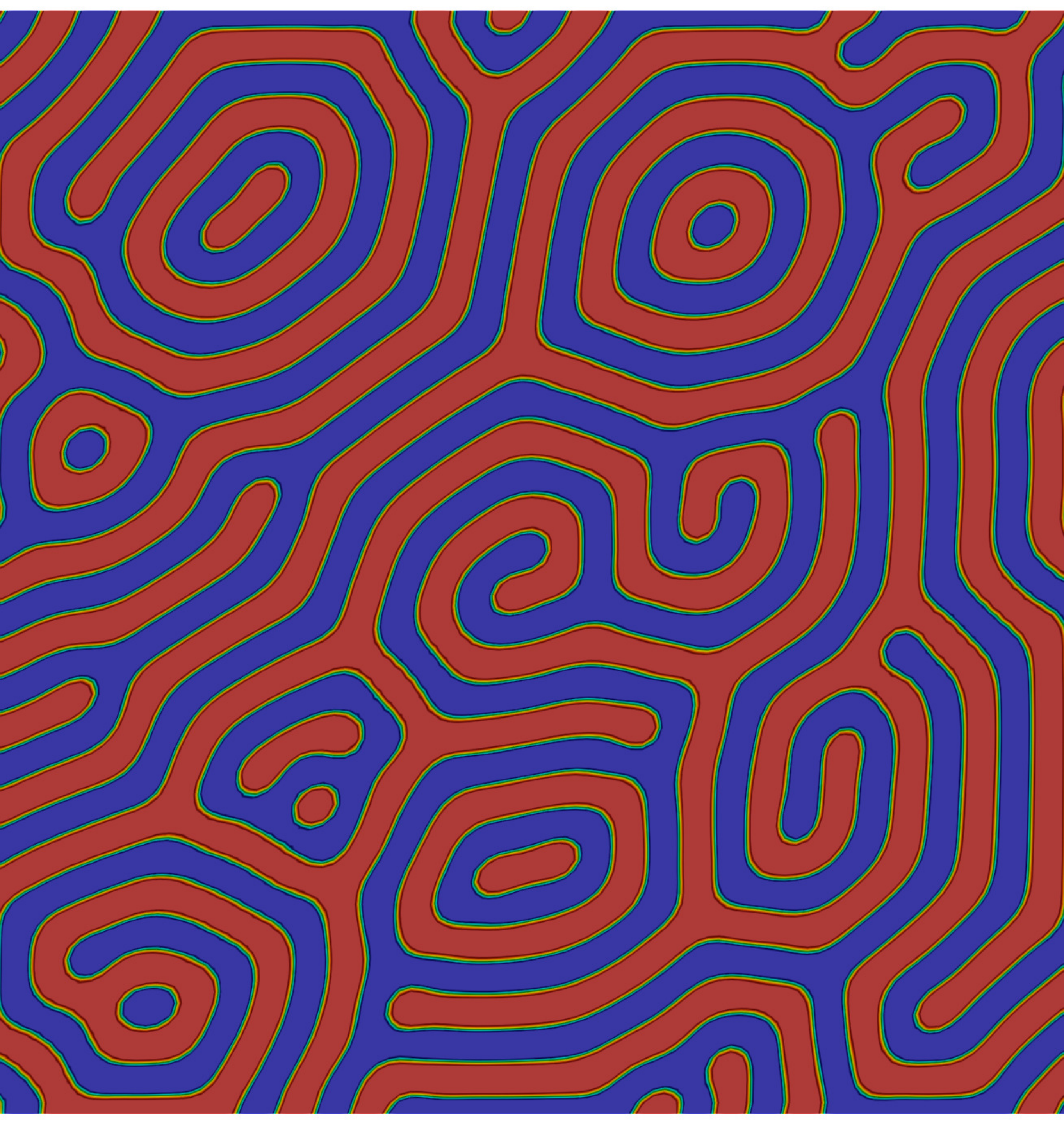}
\includegraphics[width=0.19\textwidth]{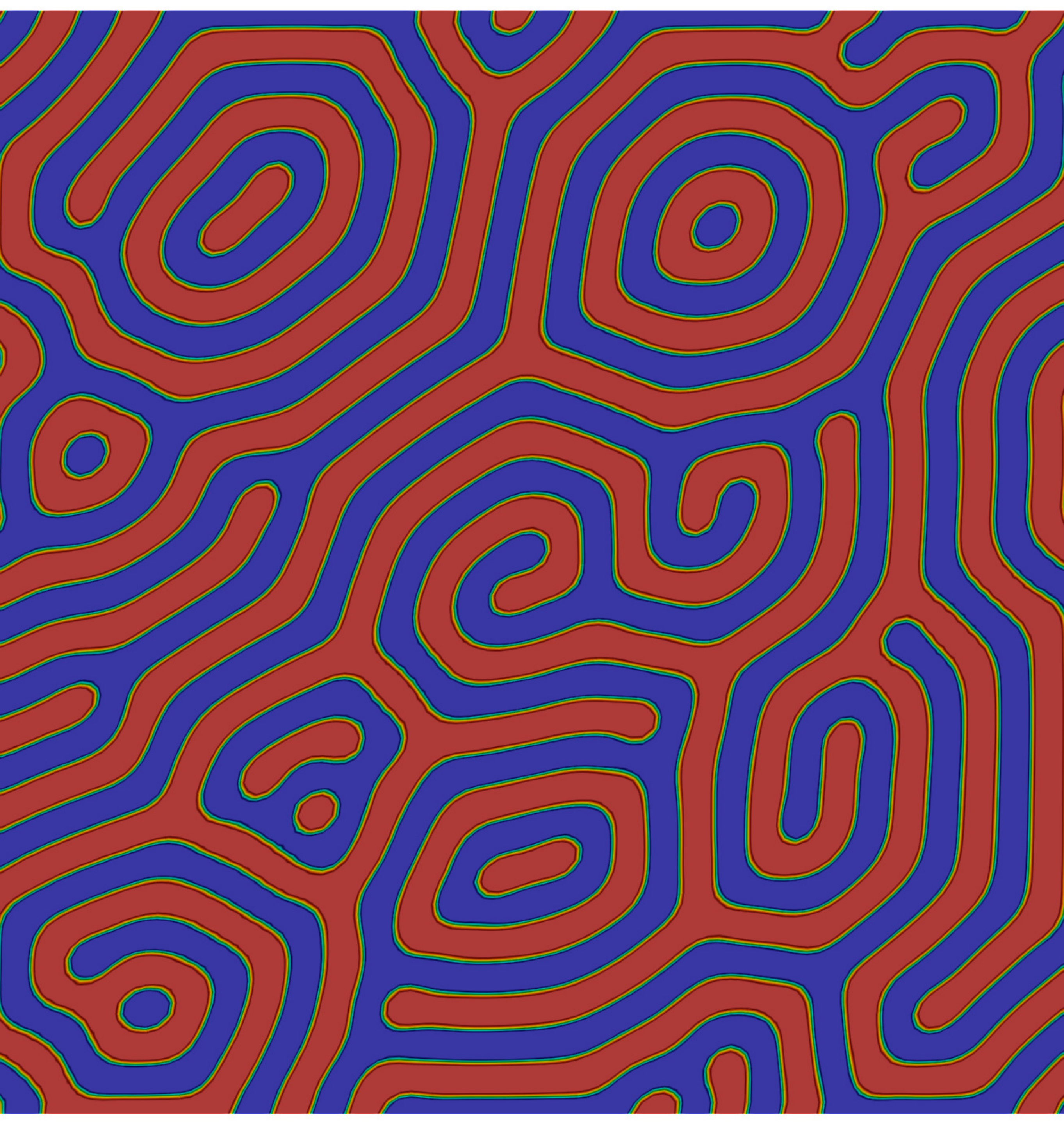}
\includegraphics[width=0.19\textwidth]{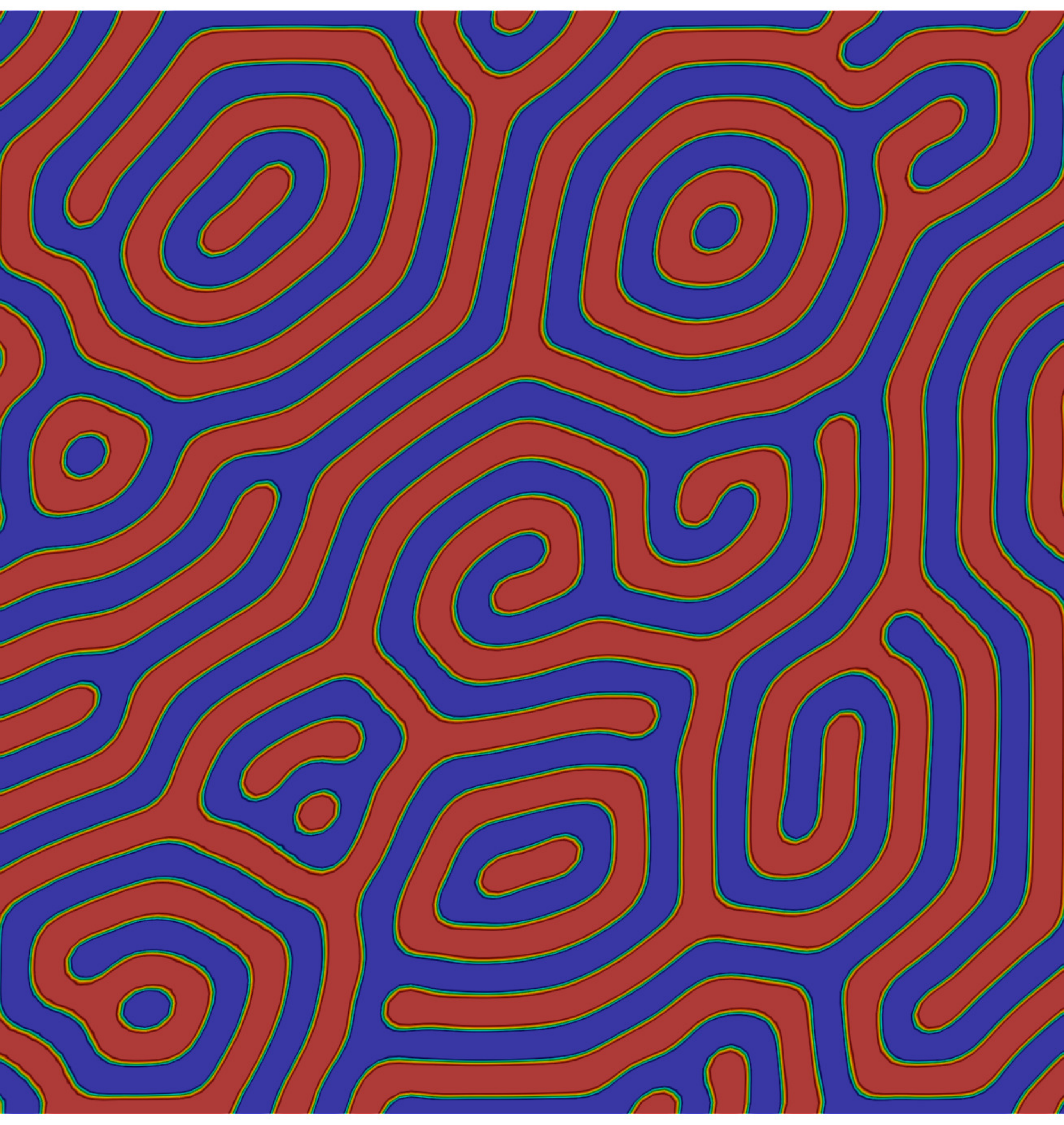}
\caption{Evolution of $\phi$ at times $t=0, 0.05, 0.15, 0.35$ and $0.5$ (from left to right) taking $M=0.1$, $g_2=1$, $h_0=0.5$, $\lambda=0.1$ and $\beta=1$ with $g_0=-4$ (top row), $g_0=-10$ (center row) and $g_0=-25$ (bottom row).} \label{fig:g0s}
\end{center}
\end{figure}

\begin{figure}[h]
\begin{center}
\includegraphics[width=0.45\textwidth]{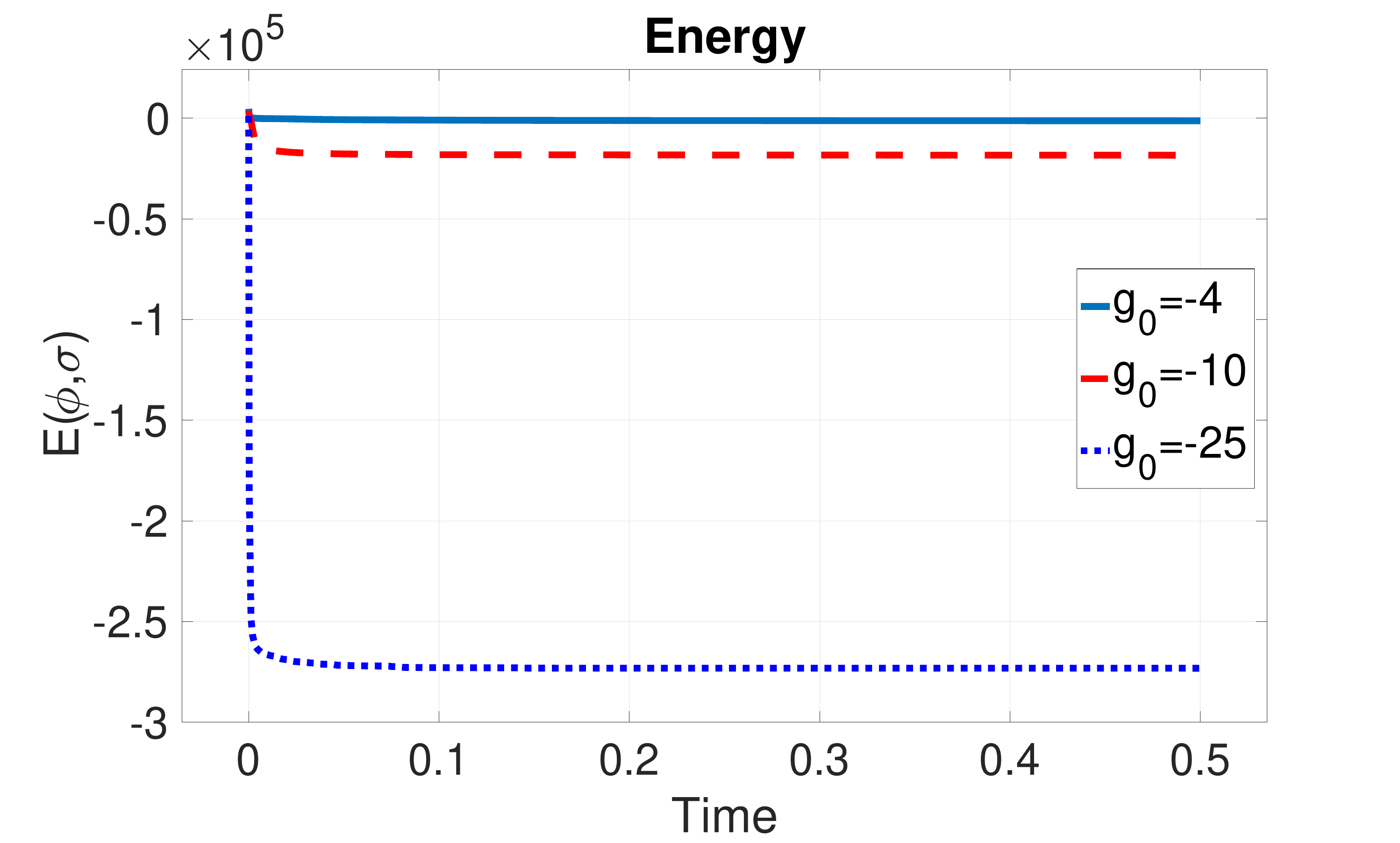}
\includegraphics[width=0.45\textwidth]{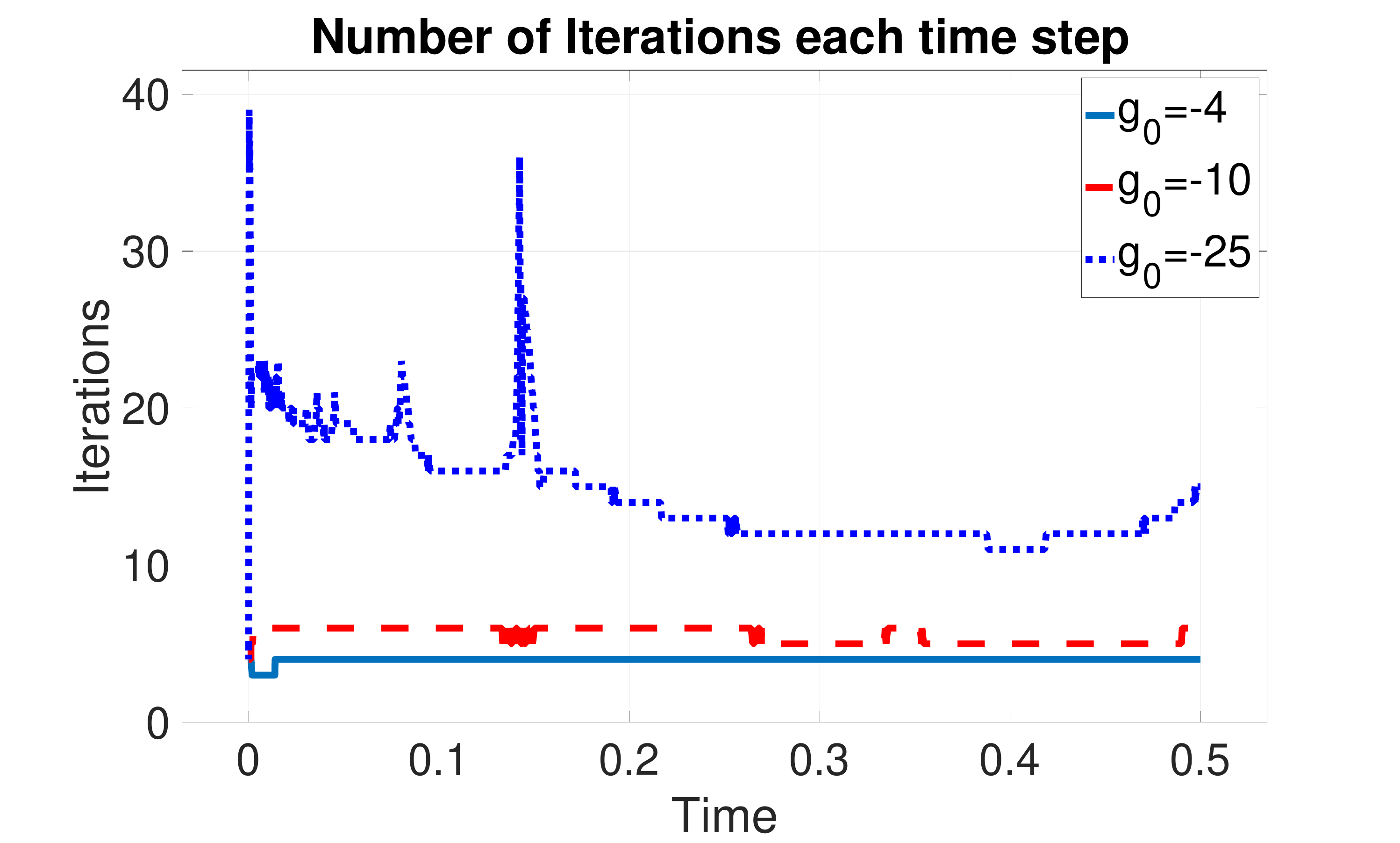}
\\
\includegraphics[width=0.45\textwidth]{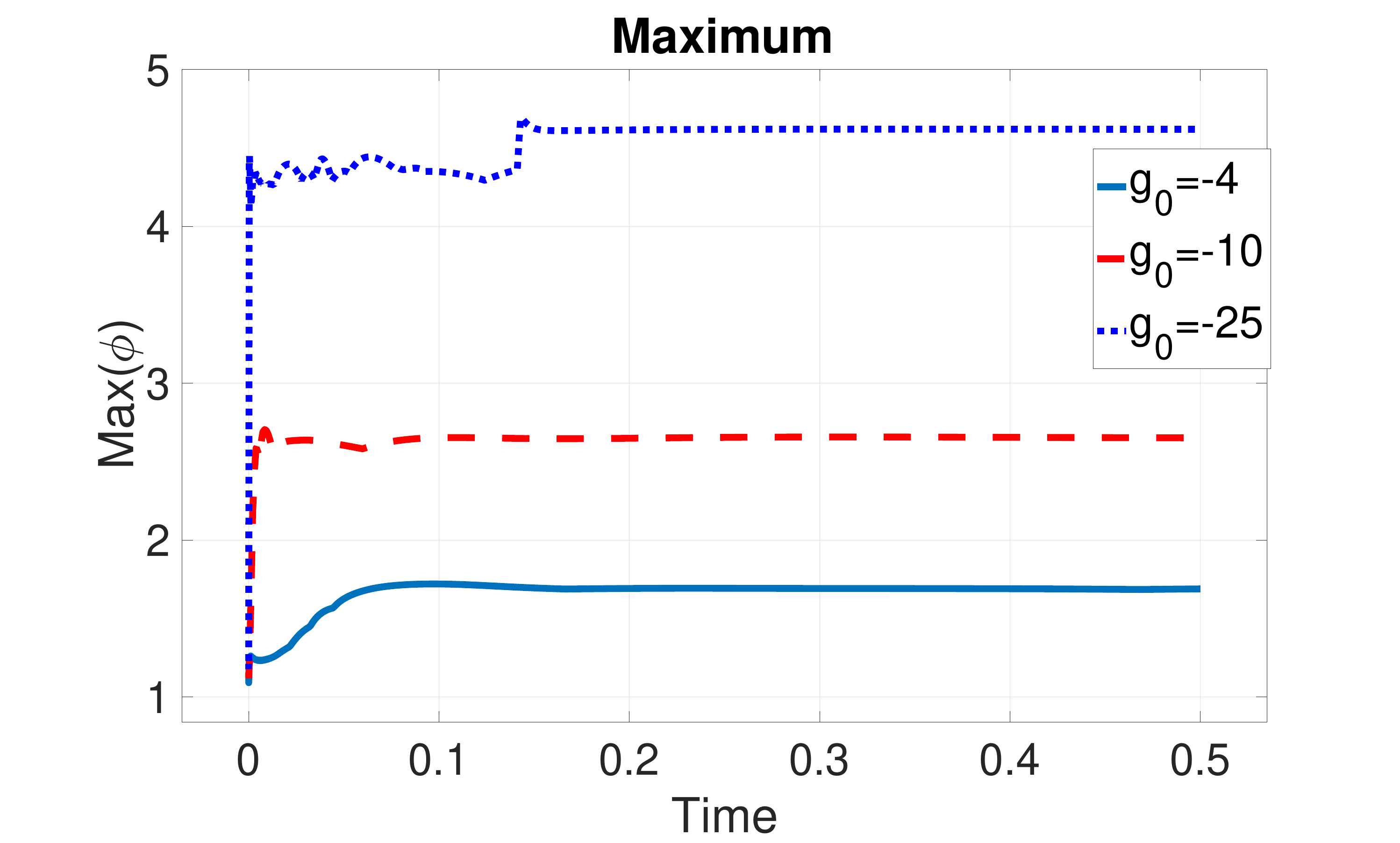}
\includegraphics[width=0.45\textwidth]{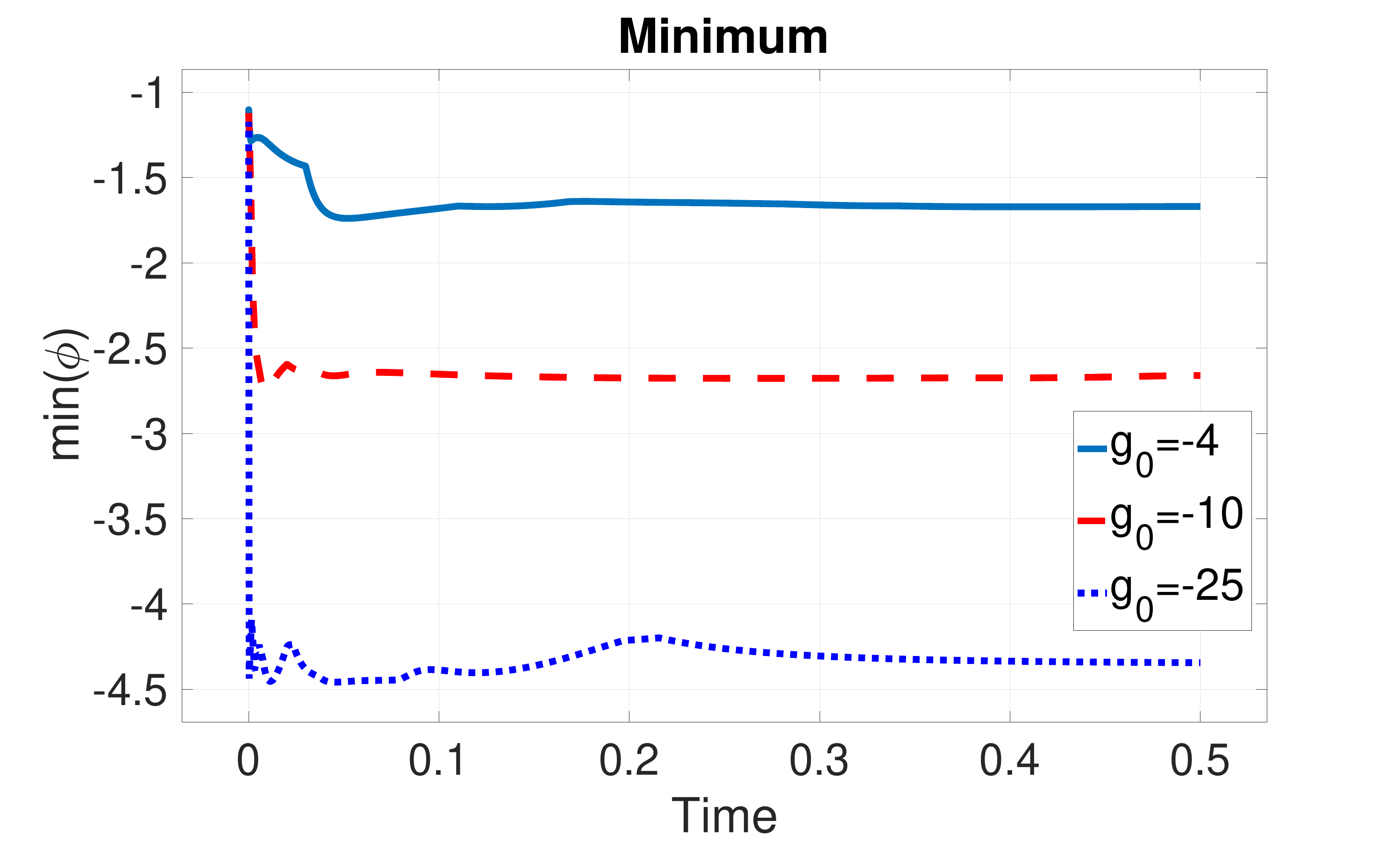}
\caption{Evolution in time of the energies (top left), the number of iterations to achieve tolerance $\texttt{TOL}=10^{-7}$ (top right), maximum of $\phi$ (bottom left) and minimum of $\phi$ (bottom right) taking $M=0.1$, $g_2=1$, $h_0=0.5$, $\lambda=0.1$, $\beta=1$ and $g_0=-4, -10$ and $-100$.} 
\label{fig:g0splot}
\end{center}
\end{figure}

\begin{figure}[h]
\begin{center}
\includegraphics[width=0.19\textwidth]{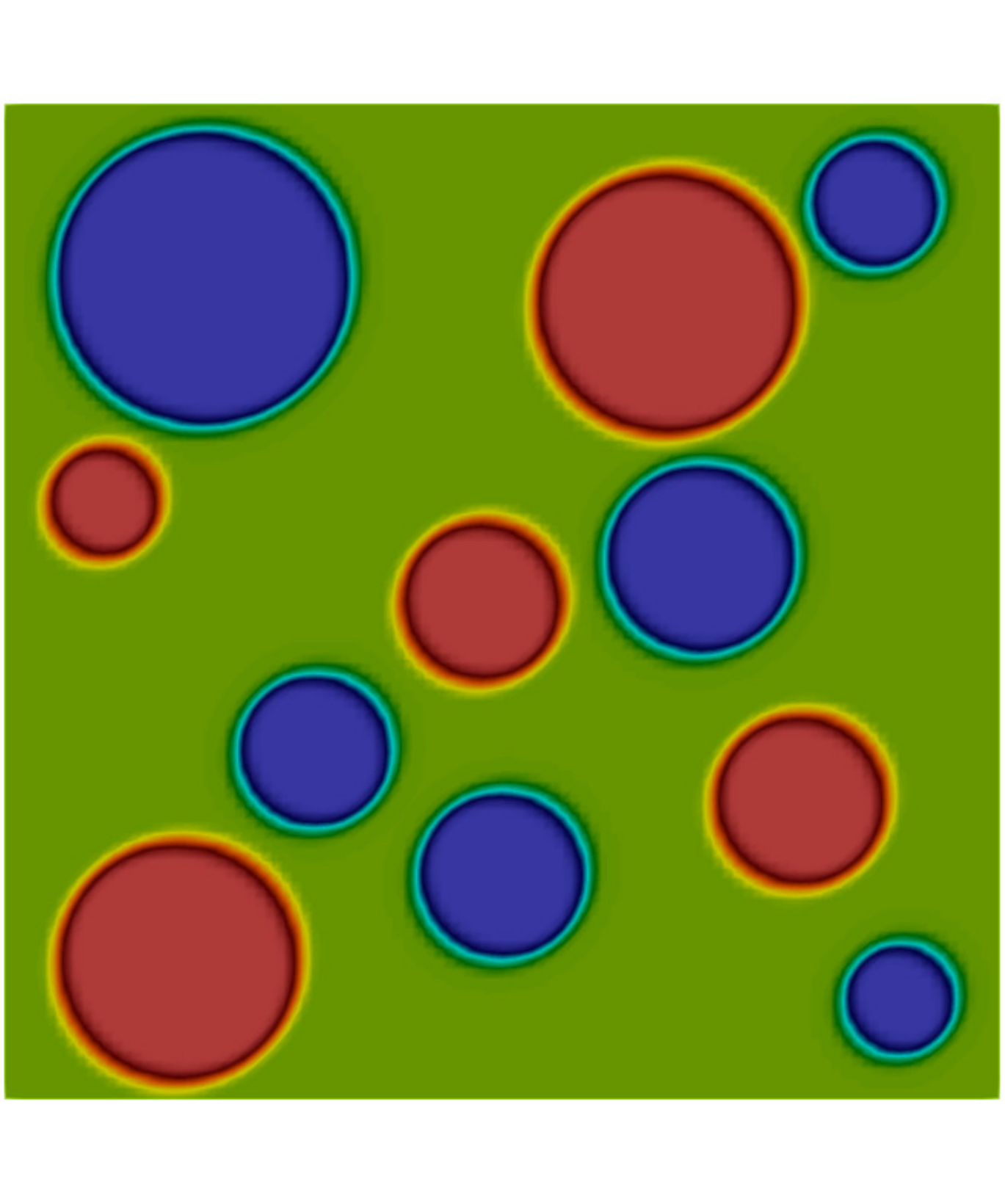}
\includegraphics[width=0.19\textwidth]{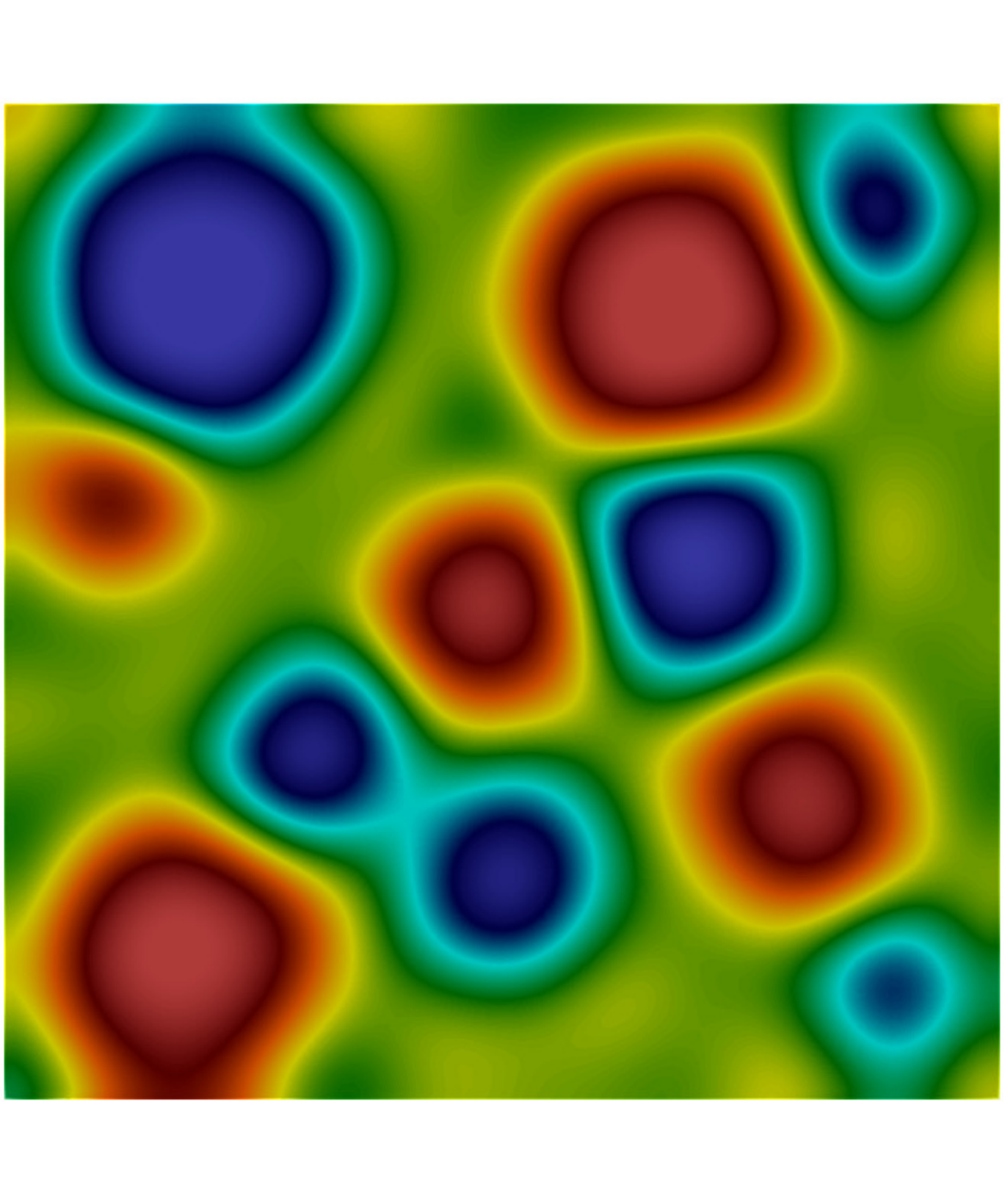}
\includegraphics[width=0.19\textwidth]{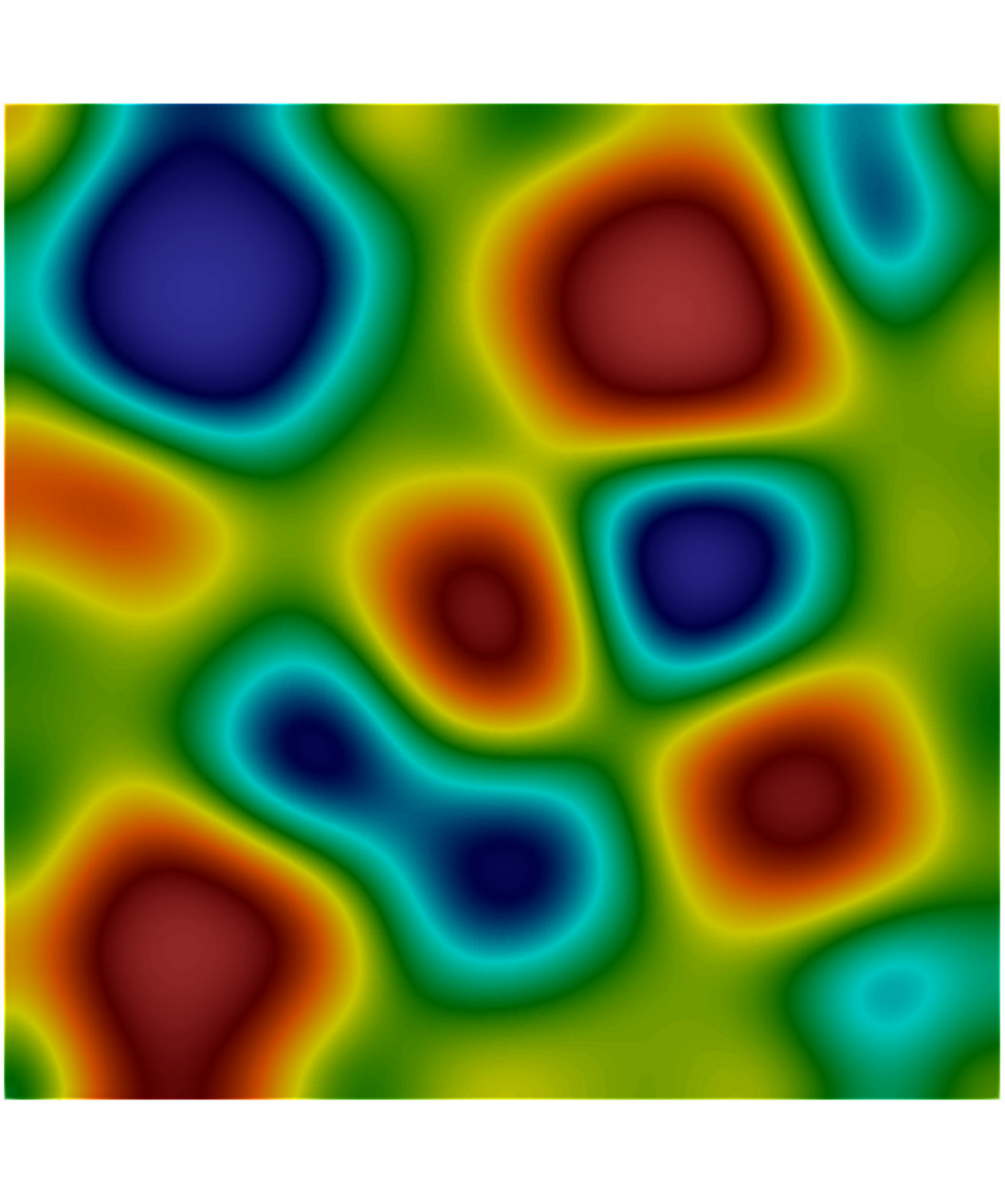}
\includegraphics[width=0.19\textwidth]{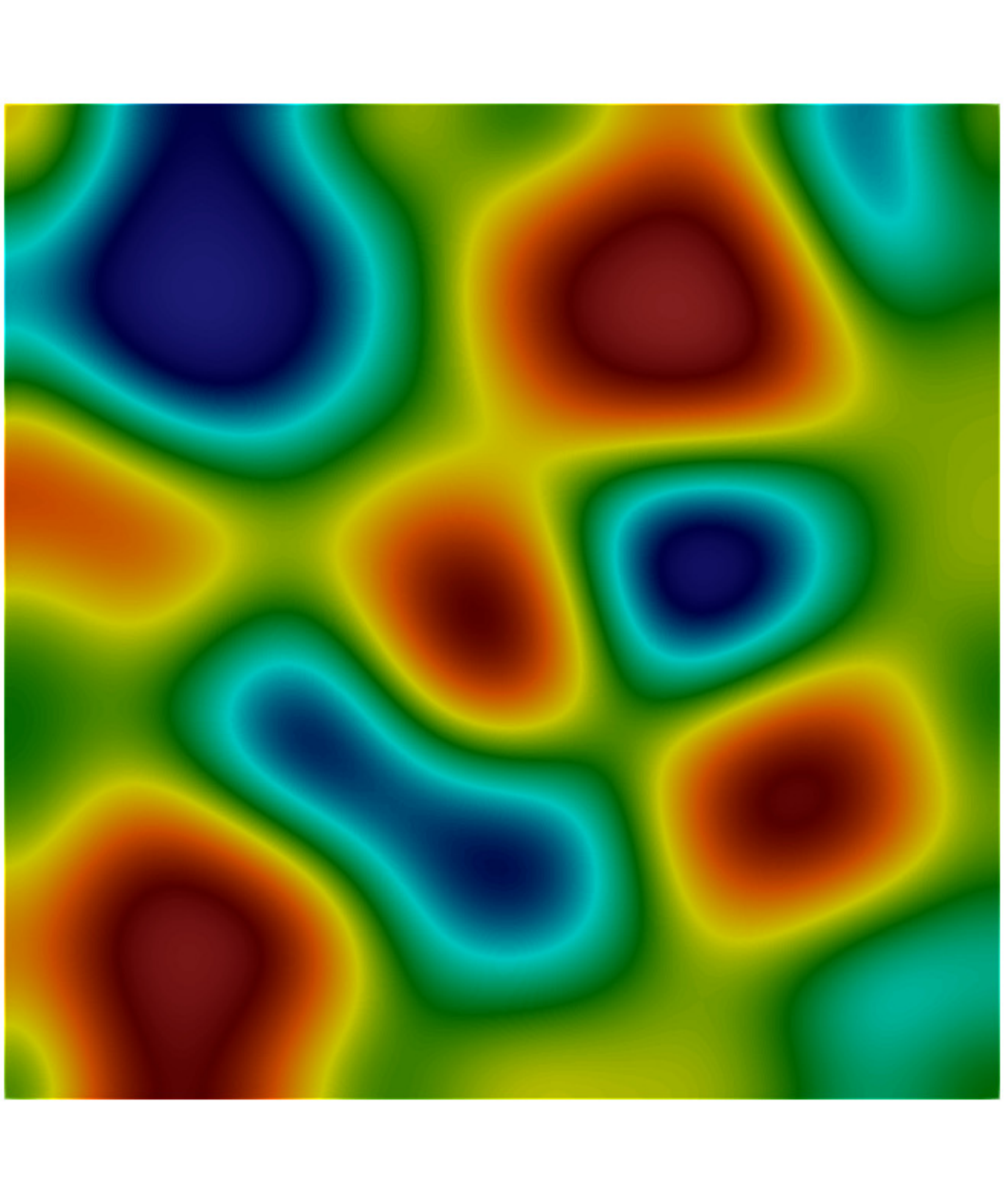}
\includegraphics[width=0.19\textwidth]{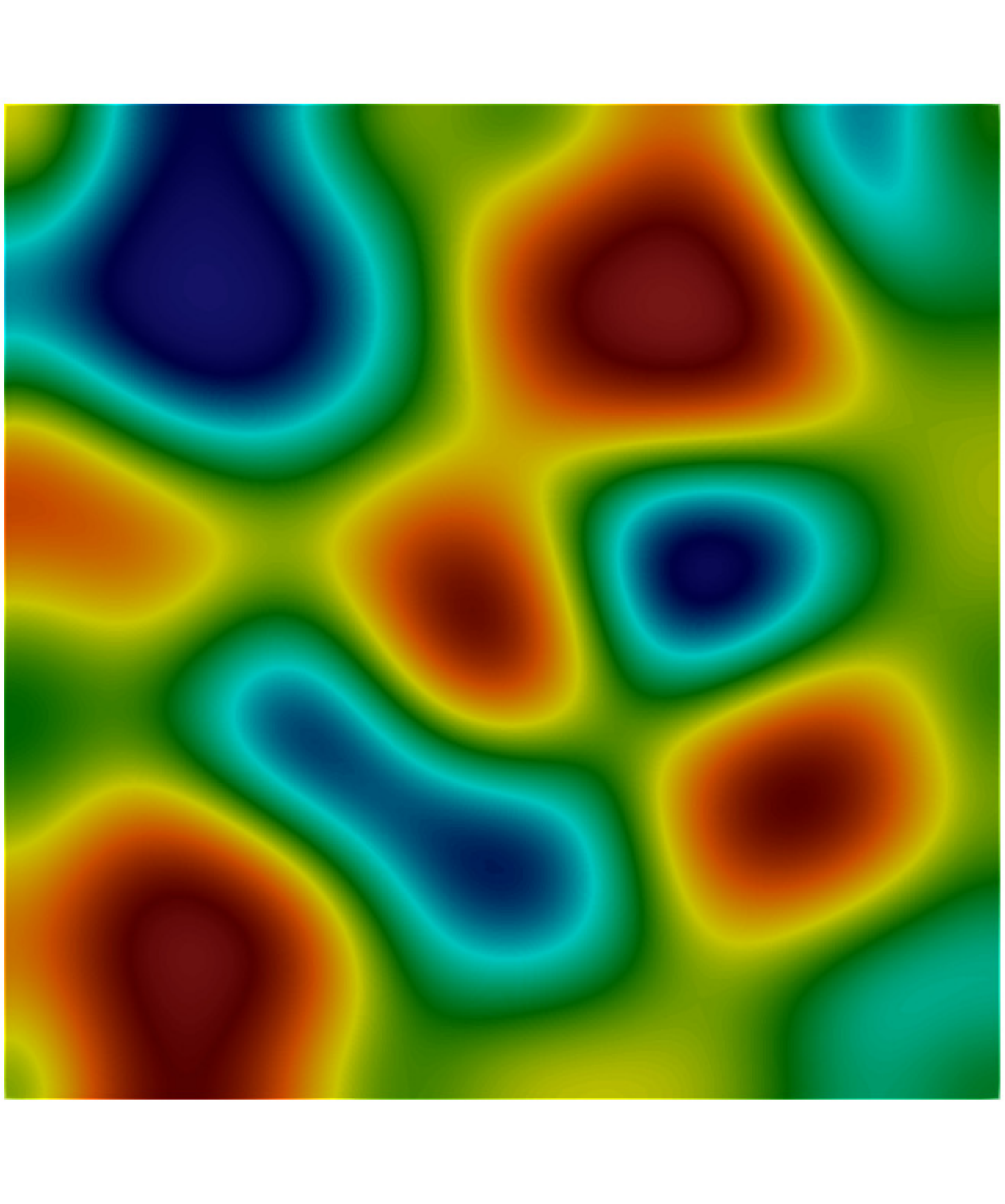}
\caption{{Evolution of $\phi$ at times $t=0, 0.05, 0.15, 0.35$ and $0.5$ (from left to right) taking $M=0.1$, $g_2=1$, $h_0=0.5$, $\lambda=0.1$ and $\beta=1$ with $g_0=0$.}} \label{fig:g0_0}
\end{center}
\end{figure}

\subsubsection{Study of influence of parameter $\lambda$ on the dynamics of the system}\label{sub:lambda}

In this example we fix the parameters to
$M=0.1$, $g_2=1$, $g_0=-4$, $h_0=0.5$, $\beta=1$ and we consider different values of parameter $\lambda$, namely $\lambda=0.1$, $\lambda=0.05$ and $\lambda=0.01$. The dynamics associated to these simulations are presented in Figure~\ref{fig:lambdas} and the evolution of the energies, maximum of $\phi$, minimum of $\phi$ and number of iterations are presented in Figure~\ref{fig:lambdasplot}. The parameter $\lambda$ plays the role of the weight of the term $\sigma^2$ in the energy $E(\phi,\sigma)$, which is associated with a part of the energy associated with the curvature of the interface, meaning that considering larger values of $\lambda$ will induce more importance to minimize the curvature of the interface. The obtained results suggest that there is a clear direct relation between the value of $\lambda$ and the interfacial width, that is, decreasing its value leads to thinner interfaces. Moreover, as $\lambda$ is decreased, the values of $\phi$ gets away from the minima of the functional $f_0(\phi)$. In all the simulations the energies decrease as expected and there is a mild effect on the value of $\lambda$ and the number of iterations needed to achieve the required tolerance in the iterative algorithm, that is, taking smaller values $\lambda$ leads to more difficulties to get the iterative algorithm to converge but in a milder way when compared with varying the value of $g_0$. These results coincide with the results in Lemma~\ref{lem:iterativealgorithm}, where variations on $|g_0|$ has stronger effects on $\Delta t$ than variations on $\lambda$ due to the power $3$ on $|g_0|$.
\\
\begin{figure}[h]
\begin{center}
\includegraphics[width=0.19\textwidth]{images/Ex2/standard/Ex2_Standard_0}
\includegraphics[width=0.19\textwidth]{images/Ex2/standard/Ex2_Standard_50}
\includegraphics[width=0.19\textwidth]{images/Ex2/standard/Ex2_Standard_150}
\includegraphics[width=0.19\textwidth]{images/Ex2/standard/Ex2_Standard_350}
\includegraphics[width=0.19\textwidth]{images/Ex2/standard/Ex2_Standard_500}
\\
\includegraphics[width=0.19\textwidth]{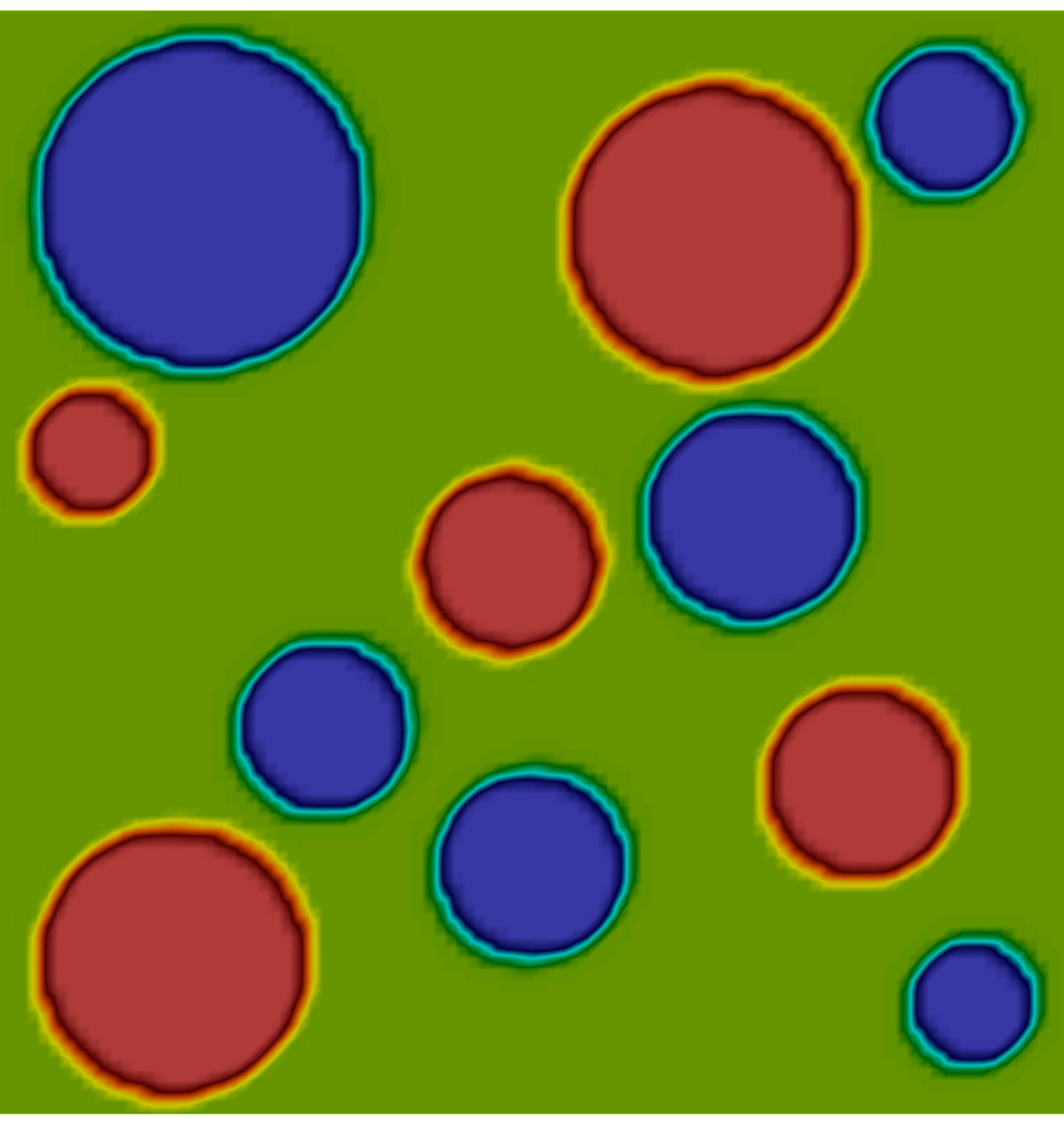}
\includegraphics[width=0.19\textwidth]{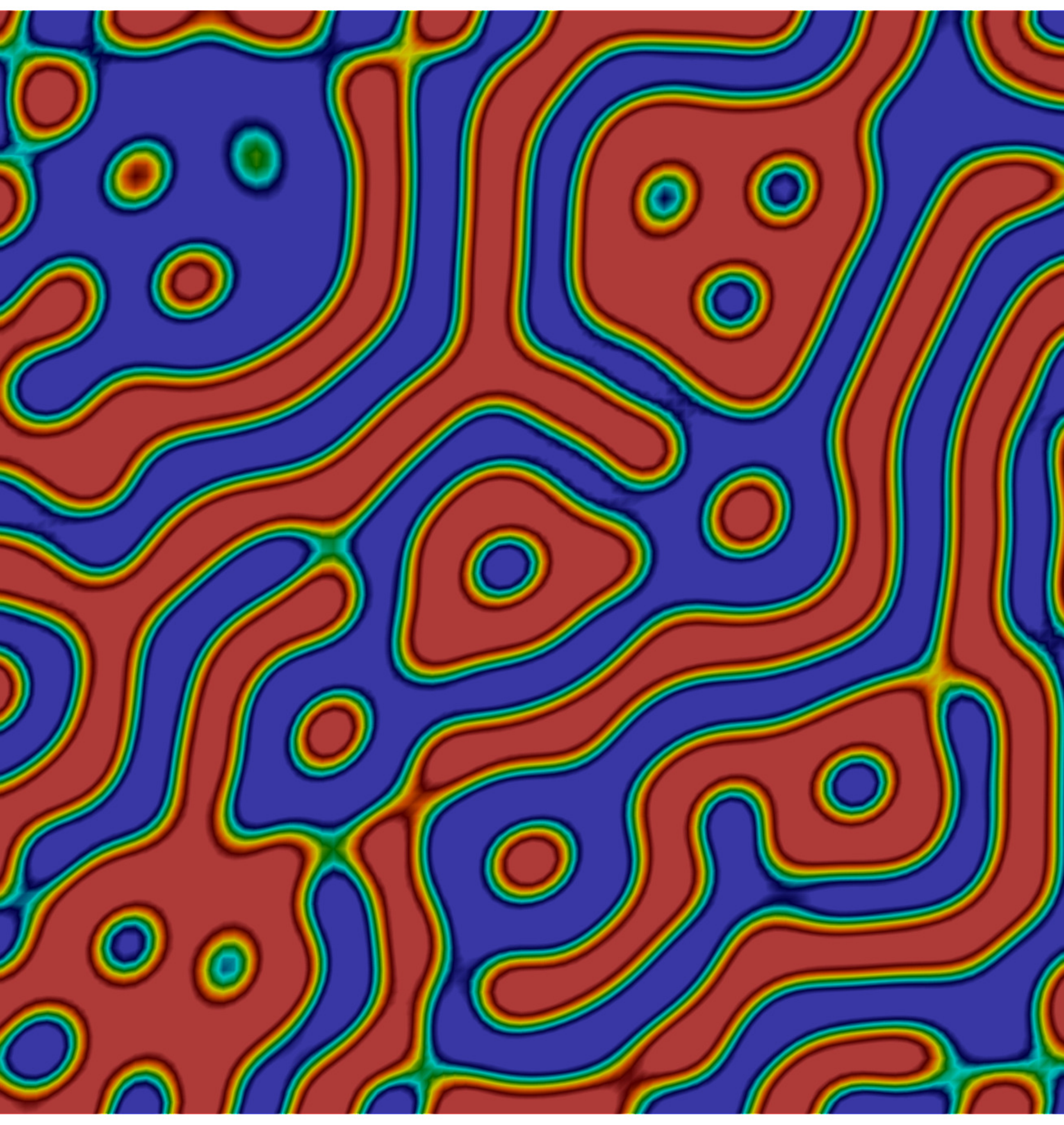}
\includegraphics[width=0.19\textwidth]{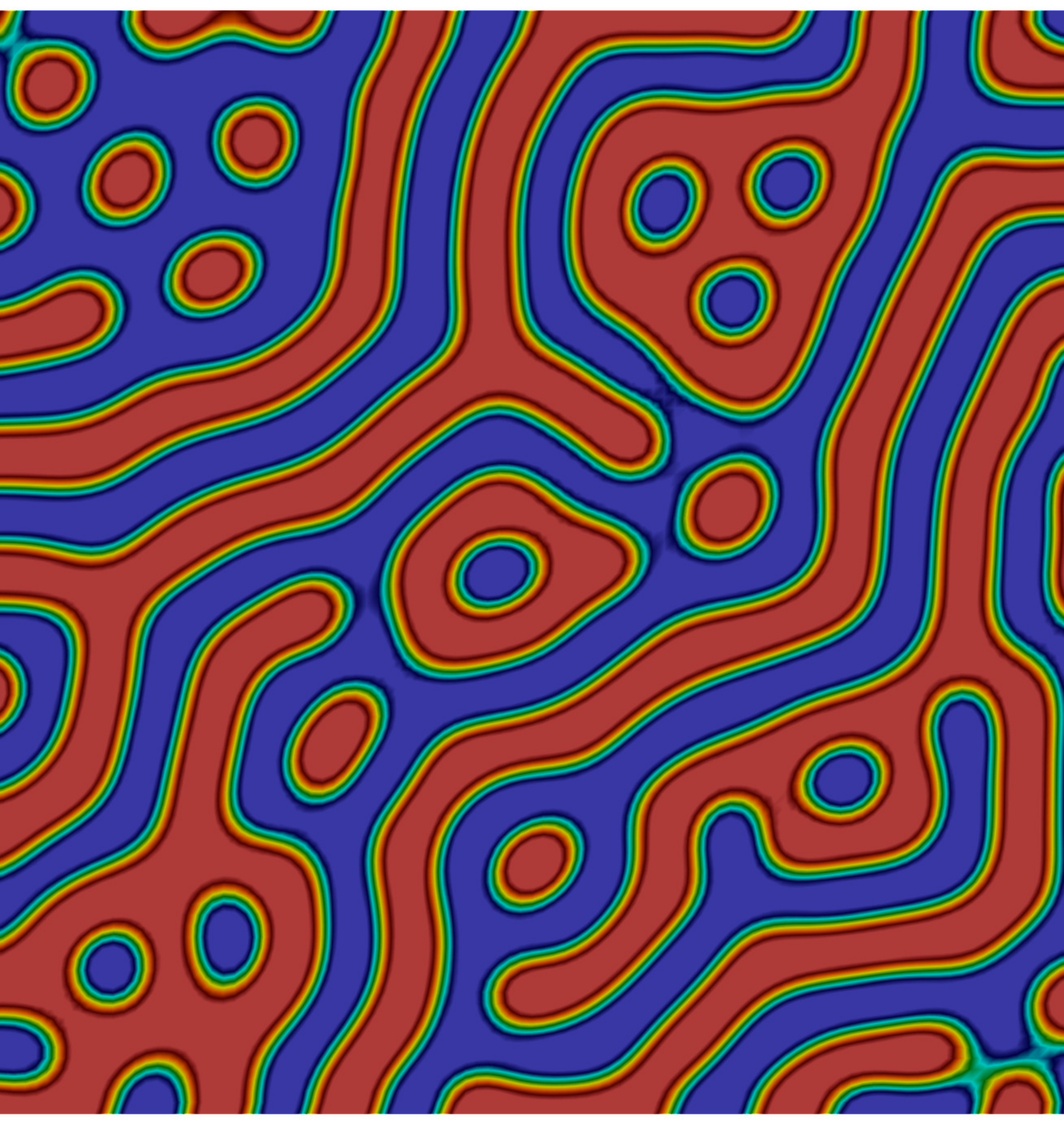}
\includegraphics[width=0.19\textwidth]{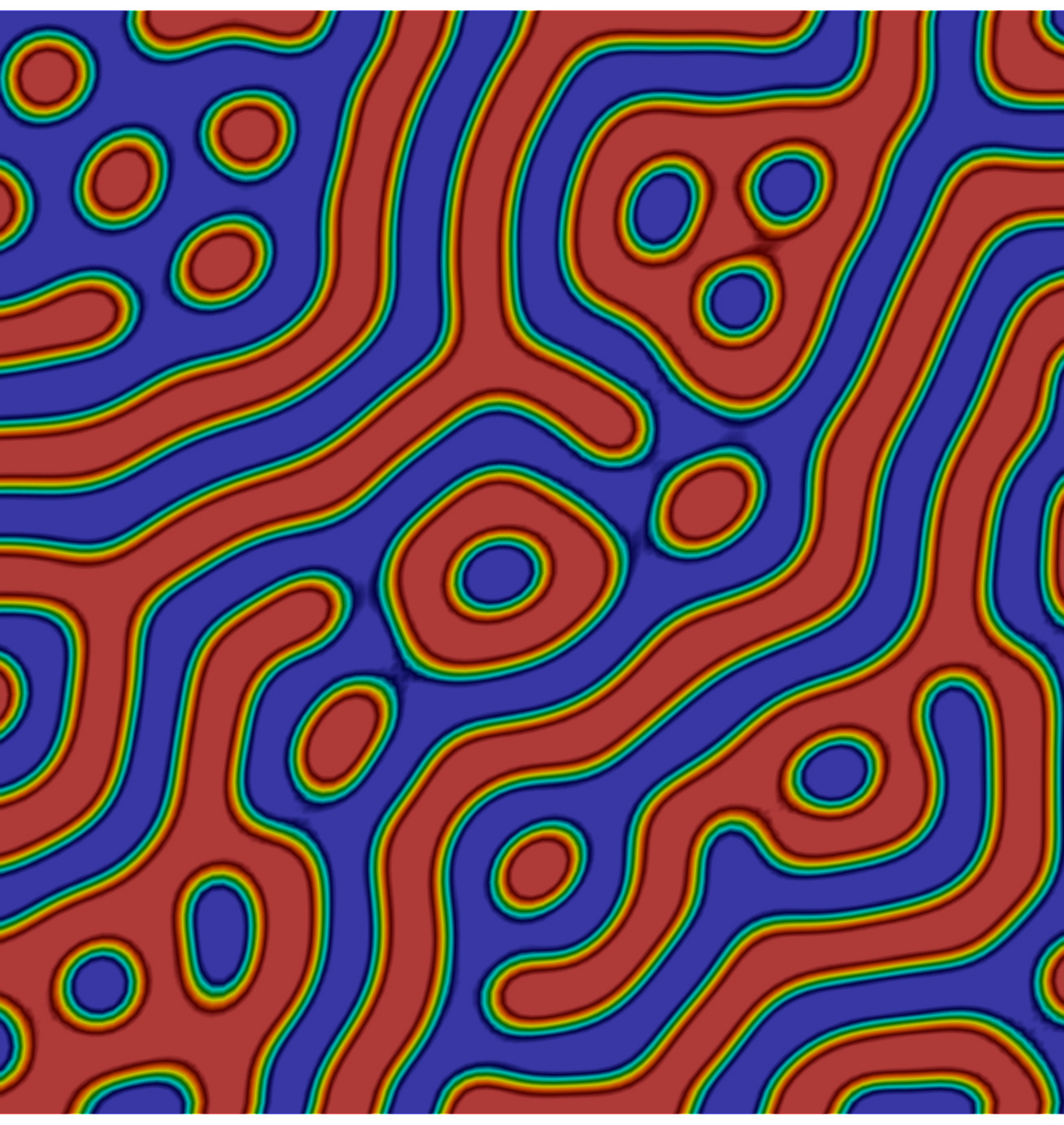}
\includegraphics[width=0.19\textwidth]{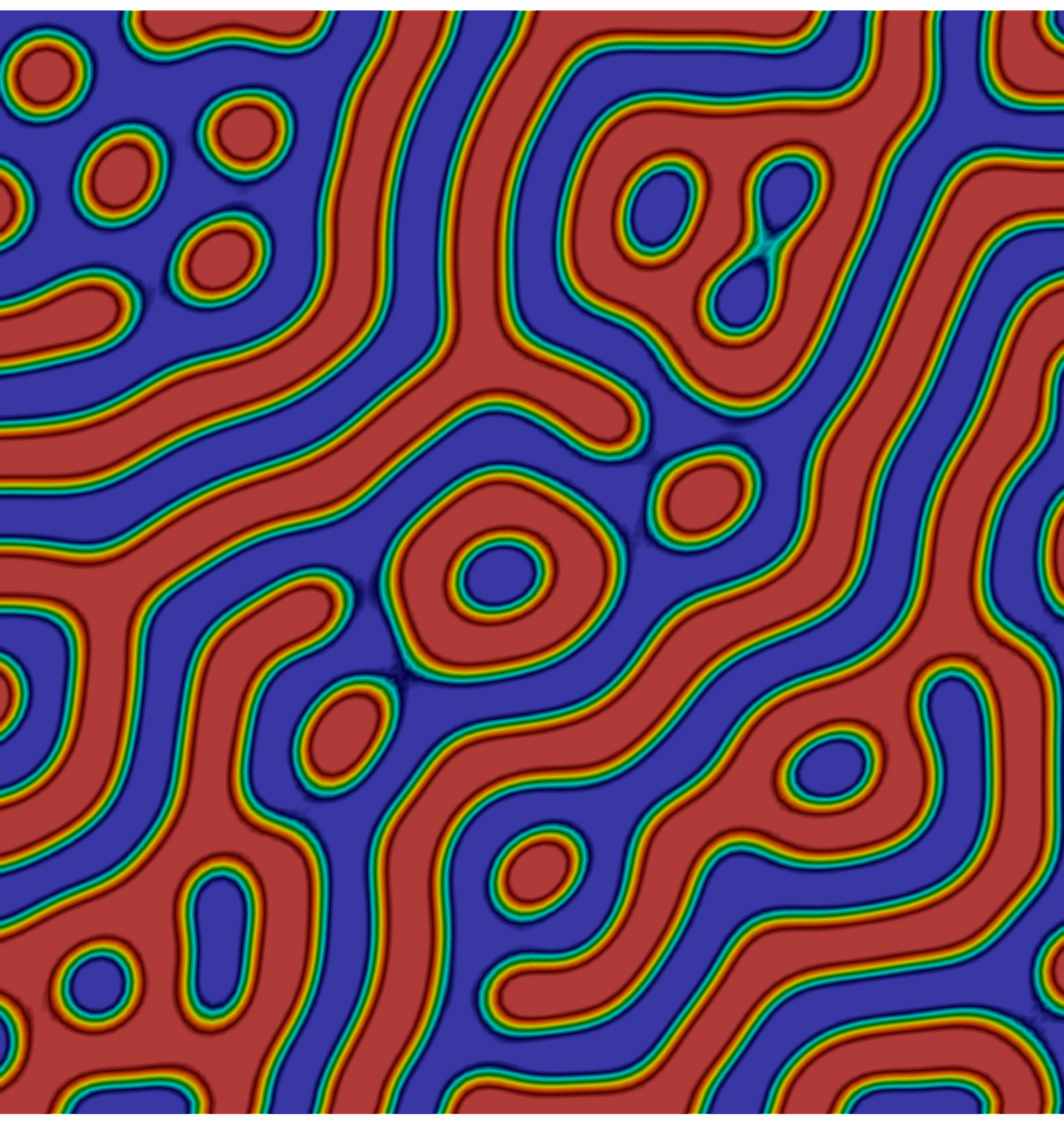}
\\
\includegraphics[width=0.19\textwidth]{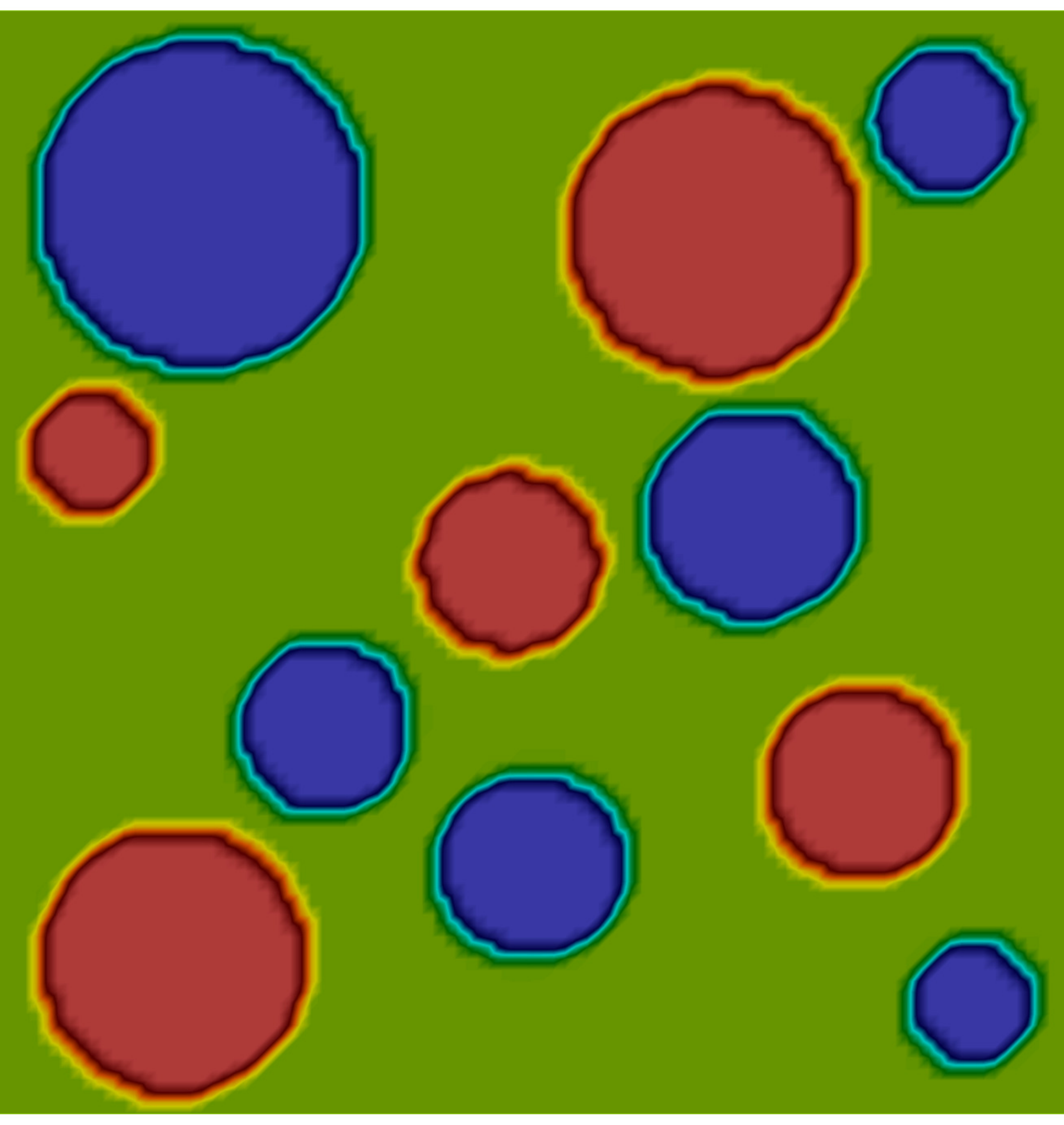}
\includegraphics[width=0.19\textwidth]{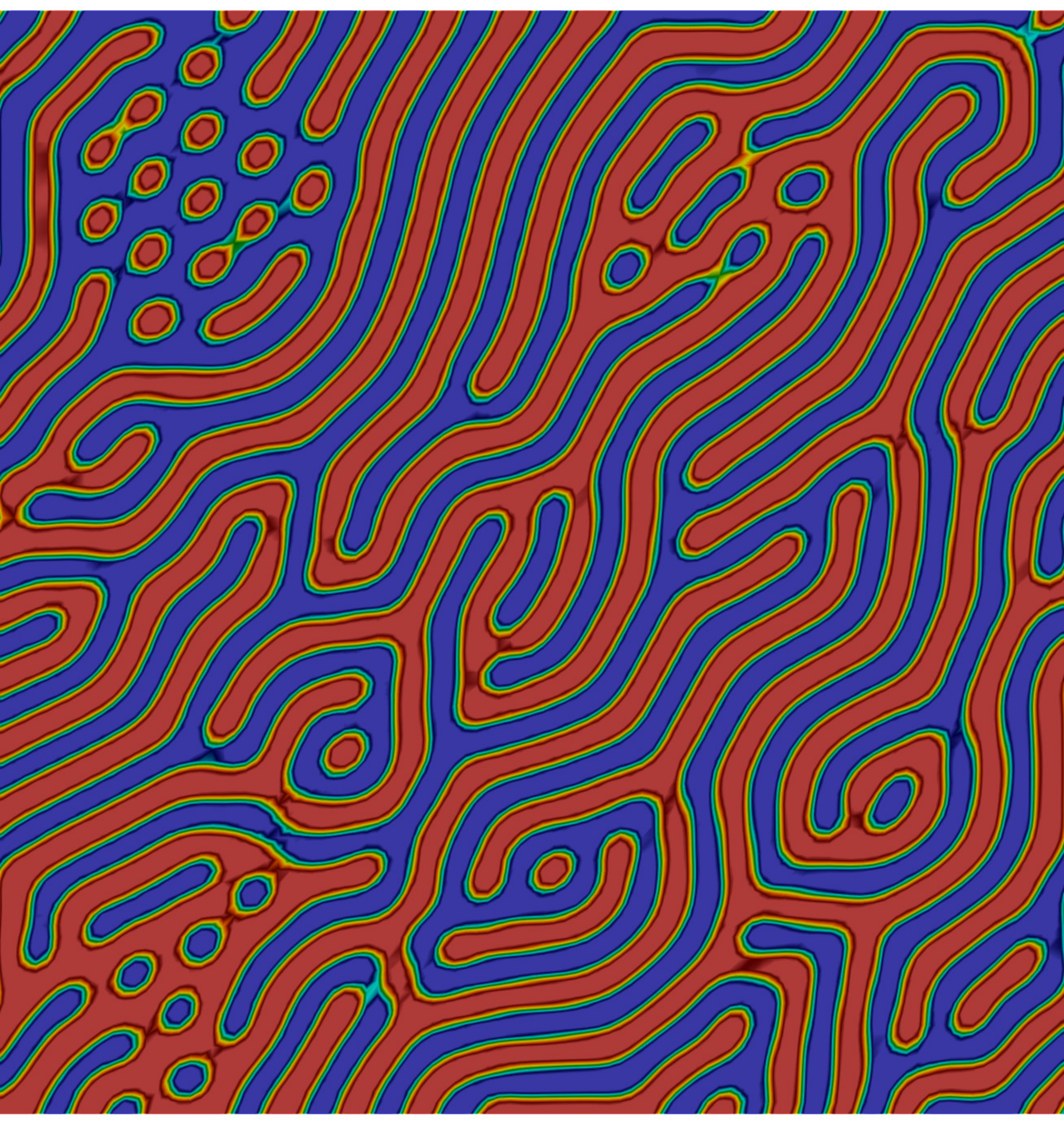}
\includegraphics[width=0.19\textwidth]{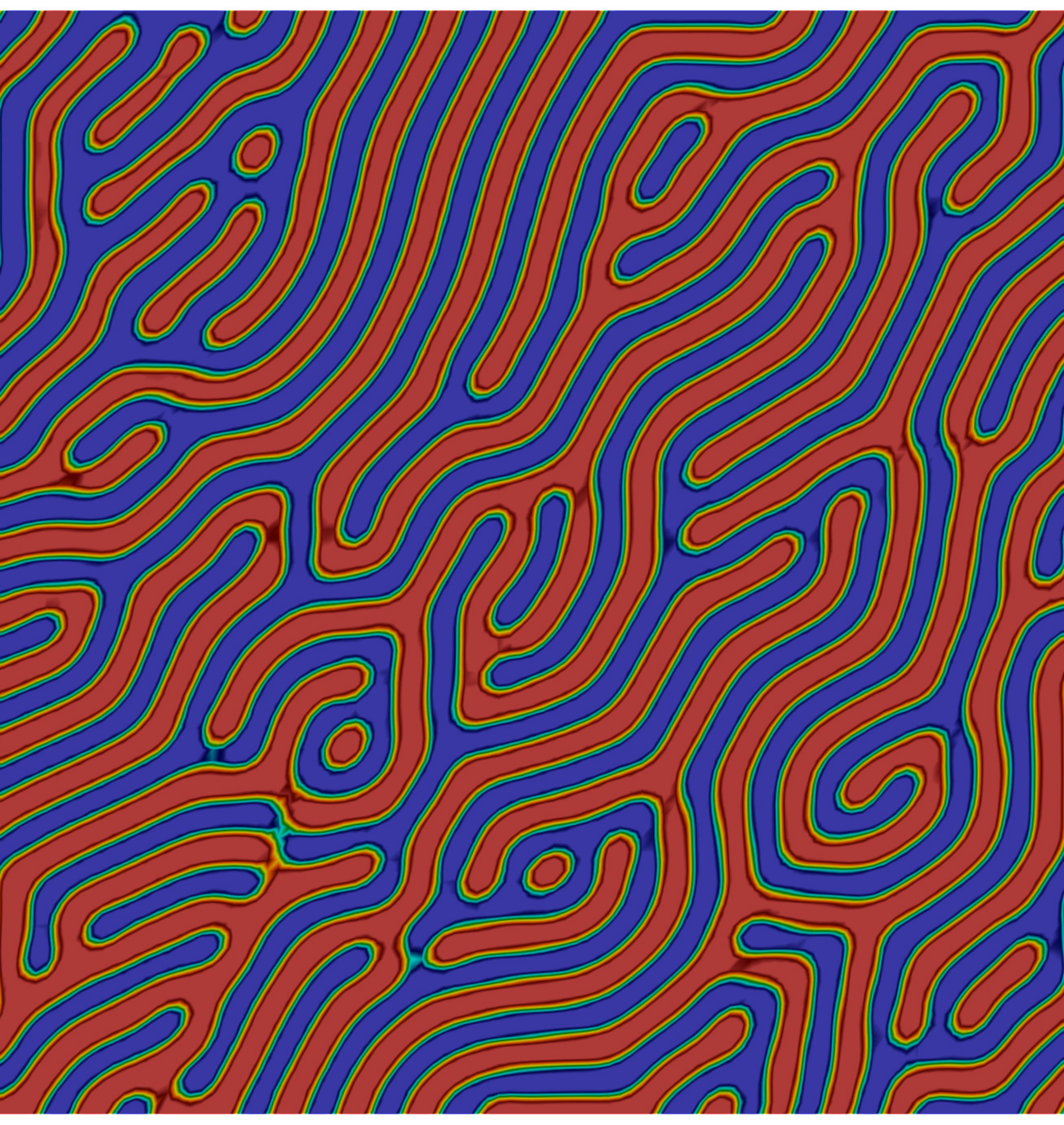}
\includegraphics[width=0.19\textwidth]{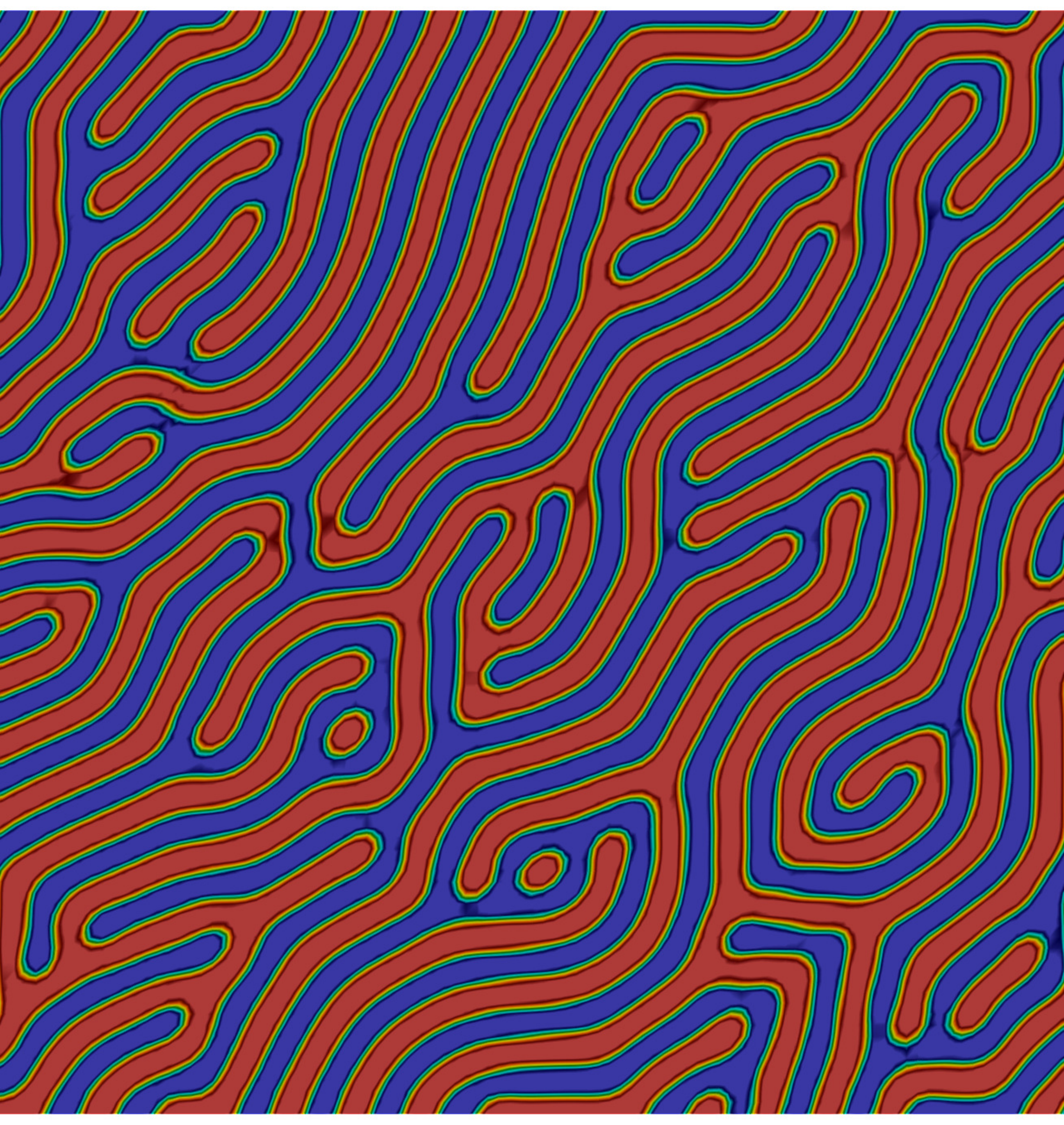}
\includegraphics[width=0.19\textwidth]{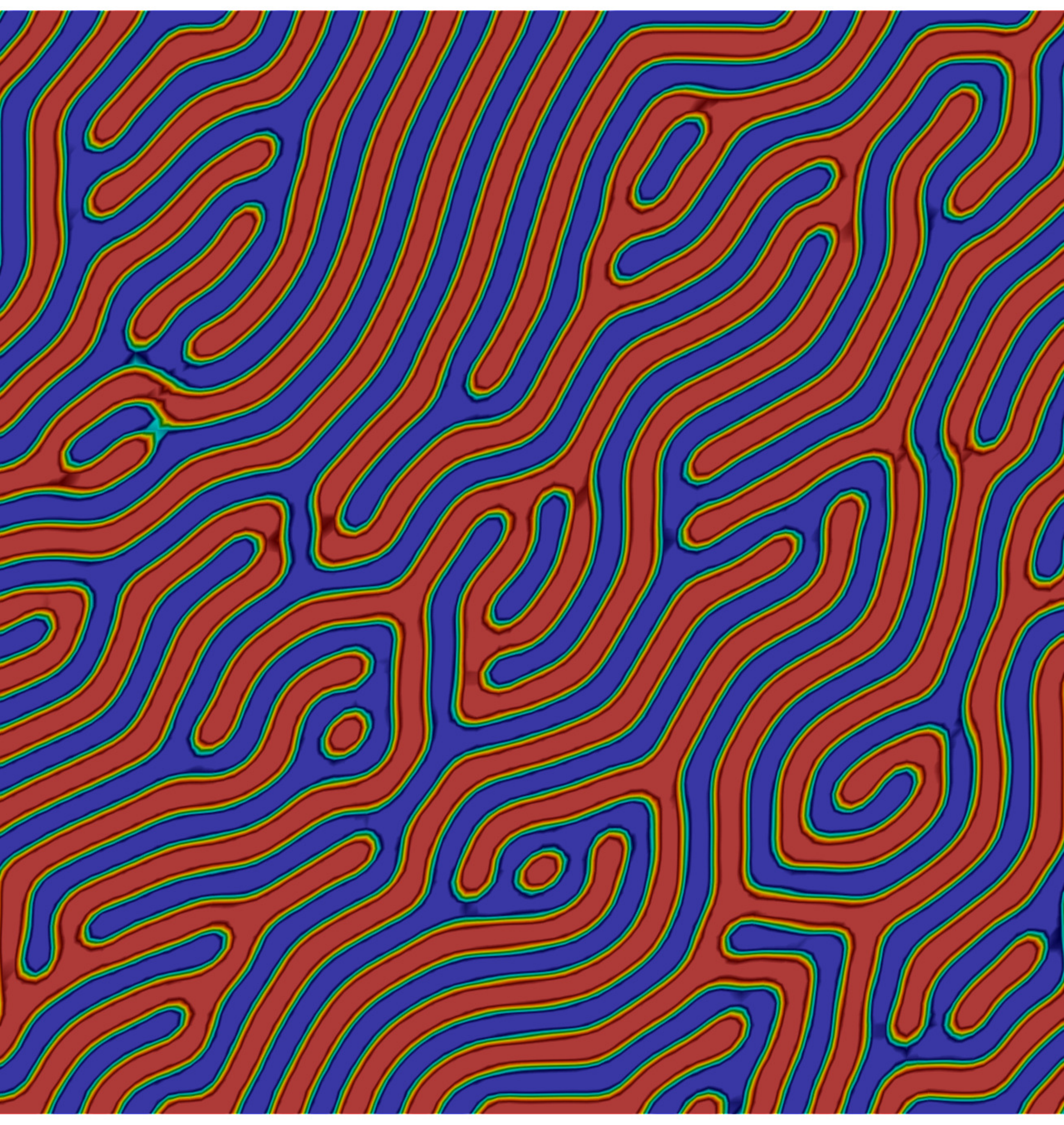}
\caption{Evolution of $\phi$ at times $t=0, 0.05, 0.15, 0.35$ and $0.5$ (from left to right) taking $M=0.1$, $g_2=1$, $g_0=-4$, $h_0=0.5$ and $\beta=1$ with $\lambda=0.1$ (top row), $\lambda=0.05$ (center row) and $\lambda=0.01$ (bottom row).} \label{fig:lambdas}
\end{center}
\end{figure}

\begin{figure}[h]
\begin{center}
\includegraphics[width=0.45\textwidth]{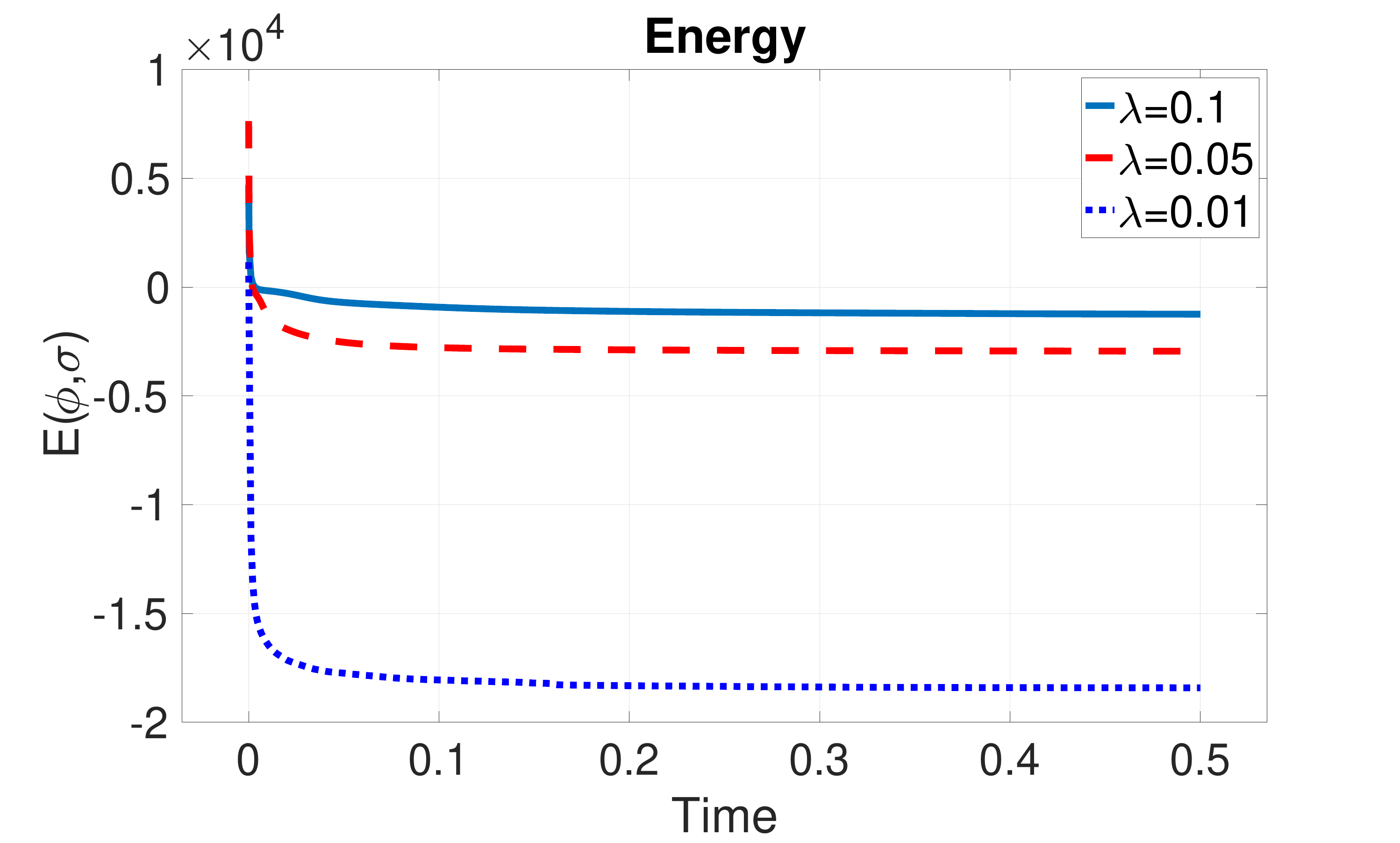}
\includegraphics[width=0.45\textwidth]{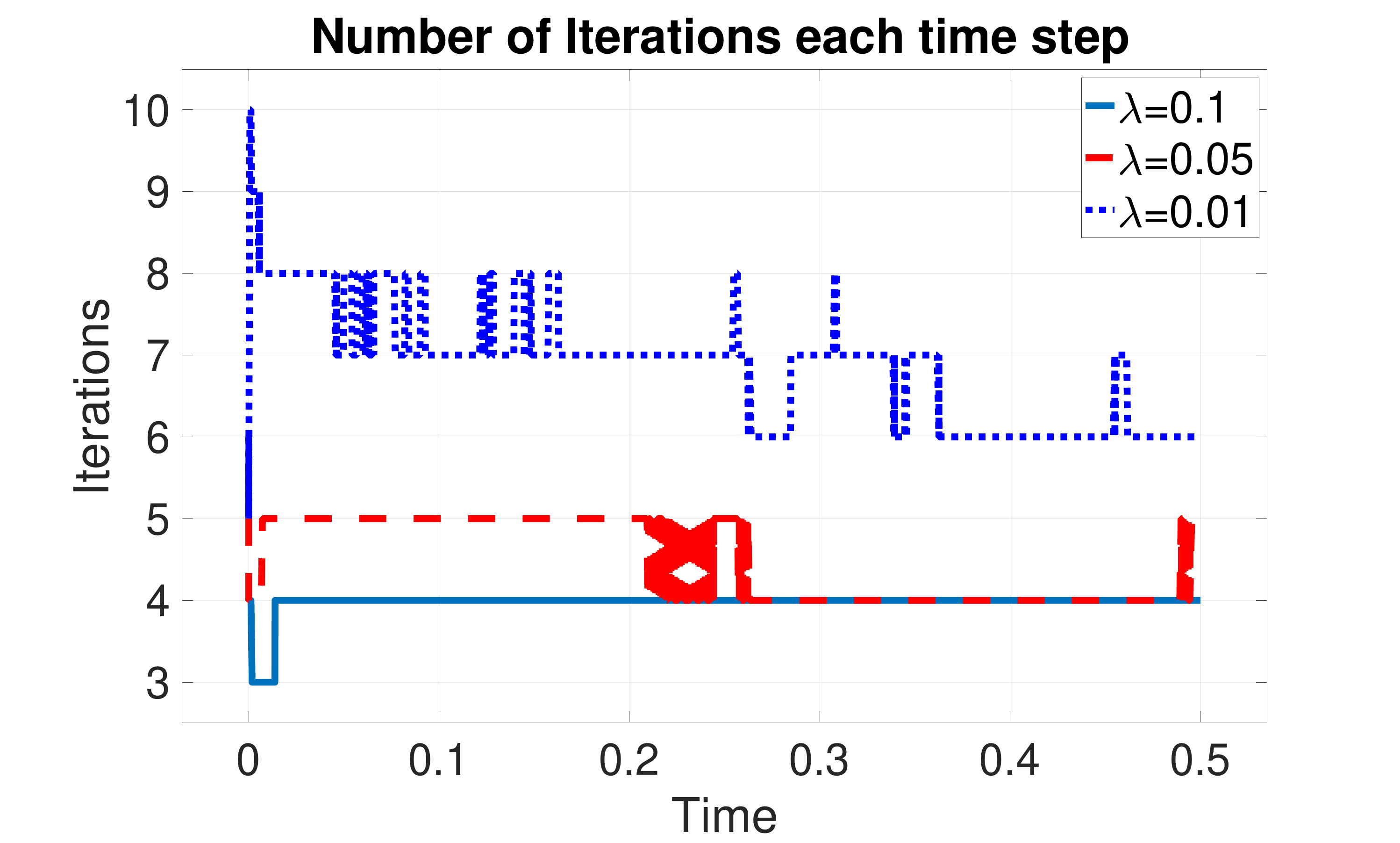}
\\
\includegraphics[width=0.45\textwidth]{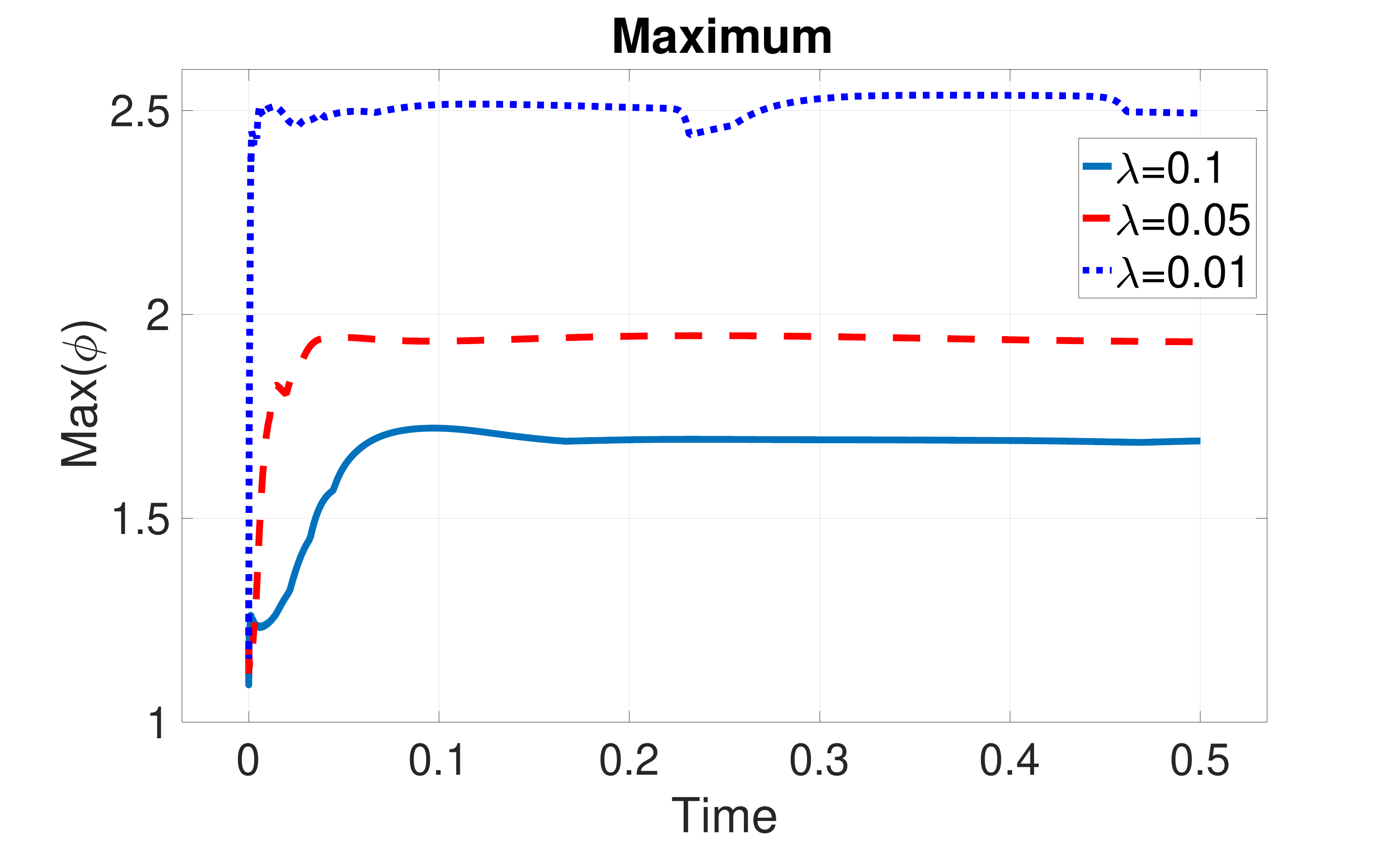}
\includegraphics[width=0.45\textwidth]{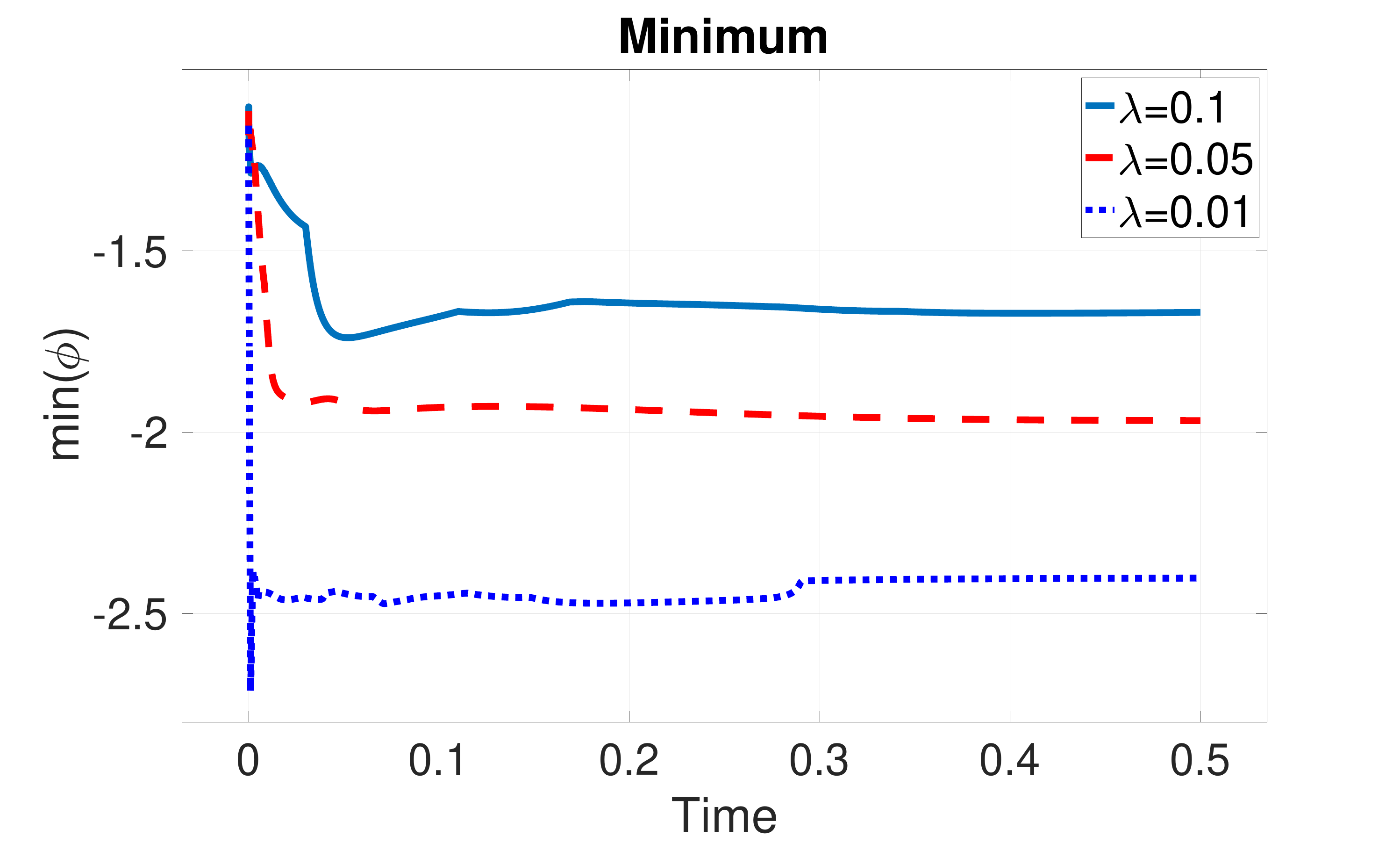}
\caption{Evolution in time of the energies (top left), the number of iterations to achieve tolerance $\texttt{TOL}=10^{-7}$ (top right), maximum of $\phi$ (bottom left) and minimum of $\phi$ (bottom right) taking $M=0.1$, $g_2=1$, $g_0=-4$, $h_0=0.5$ and $\beta=1$ and $\lambda=0.1, 0.05$ and $0.01$.} \label{fig:lambdasplot}
\end{center}
\end{figure}

\subsubsection{Study of influence of considering the choice $\beta=\lambda^{-1}$}

We have observed in subsection~\ref{sub:beta} that high values of parameter $\beta$ are related with $\phi$ being able to achieve values that are close to the minima of $f_0(\phi)$. At the same time, we have seen in subsection~\ref{sub:lambda} that the value of parameter $\lambda$ is associated with the width of the interface, but with the undesired effect of thinner interfaces producing that the values of $\phi$ go away from the minima of $f_0(\phi)$. In order to produce results where the reduction of the width of the interface does not produce the values of $\phi$ being too far away of the minima of $f_0(\phi)$, we propose to link the choice of these two parameters such that $\beta=\lambda^{-1}$. 
In this example we fix the parameters to
$M=0.1$, $g_2=1$, $g_0=-4$ and $h_0=0.5$ and we consider different values of parameter $\beta=\lambda^{-1}$, namely $\beta=\lambda^{-1}=10$, $\beta=\lambda^{-1}=20$ and $\beta=\lambda^{-1}=50$. The dynamics associated to these simulations are presented in Figure~\ref{fig:lambdabetas} and the evolution of the energies, maximum of $\phi$, minimum of $\phi$ and number of iterations are presented in Figure~\ref{fig:lambdabetasplot}. 
We can observe how now decreasing the value of $\lambda$ still reduces the amount of interface in the system; however, this undesirable effect is mitigated with the increasing value of $\beta$  and as observed in the bottom row of subfigures in Figure~\ref{fig:lambdabetasplot}, the maximum and minimum of $\phi$ stay closer to the physical interval $[-1,1]$ than previously observed. 
In all the simulations, the energies decrease as expected, and there is the same mild effect on the value of $\lambda$ and the number of iterations needed to achieve the required tolerance in the iterative algorithm that we observed in subsection~\ref{sub:lambda}. 
We conclude this subsection with the observation that linking the values of these two parameters is needed to keep $\phi$ relatively close to the physical interval $[-1,1]$.

\begin{figure}[h]
\begin{center}

\includegraphics[width=0.19\textwidth]{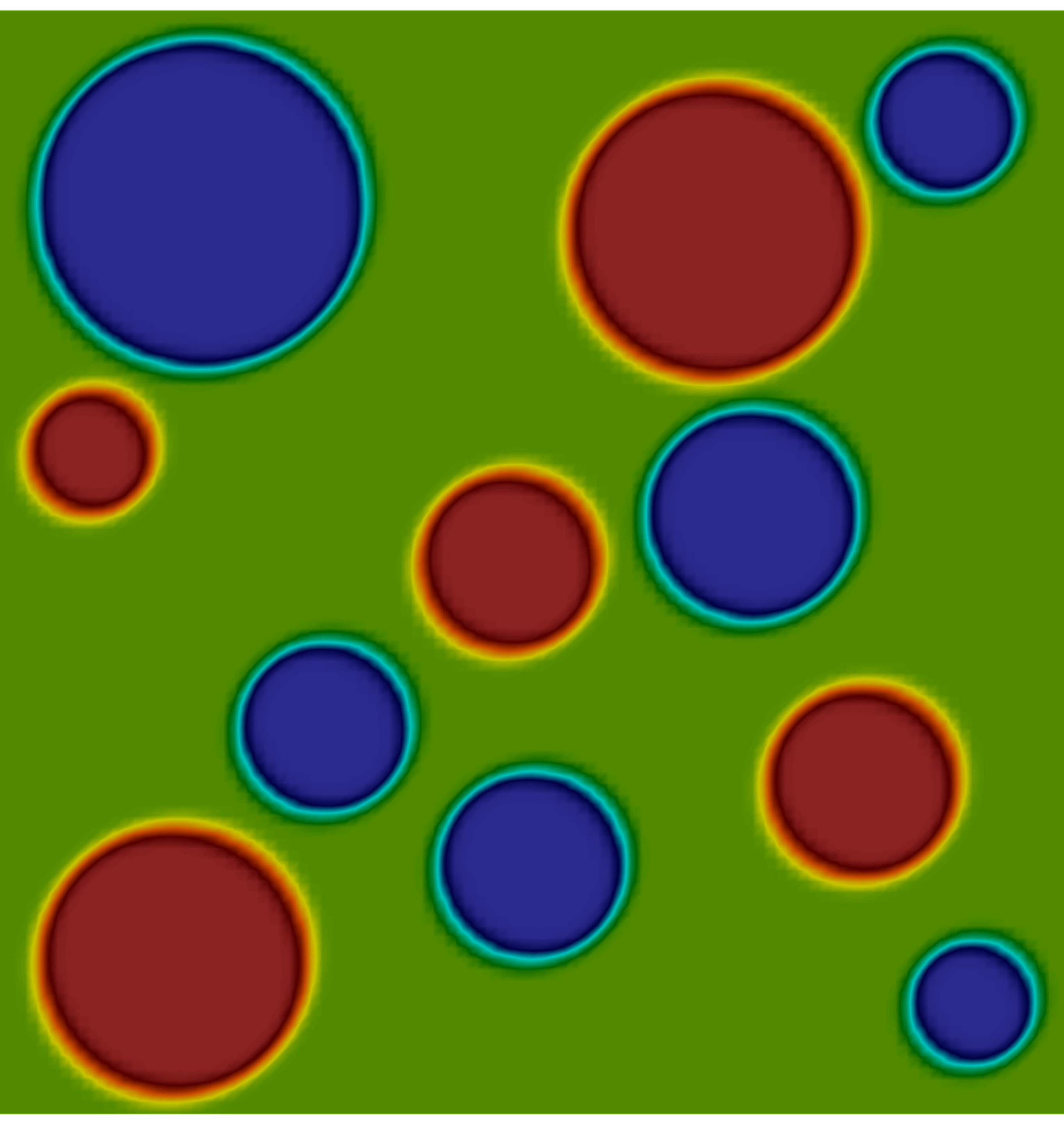}
\includegraphics[width=0.19\textwidth]{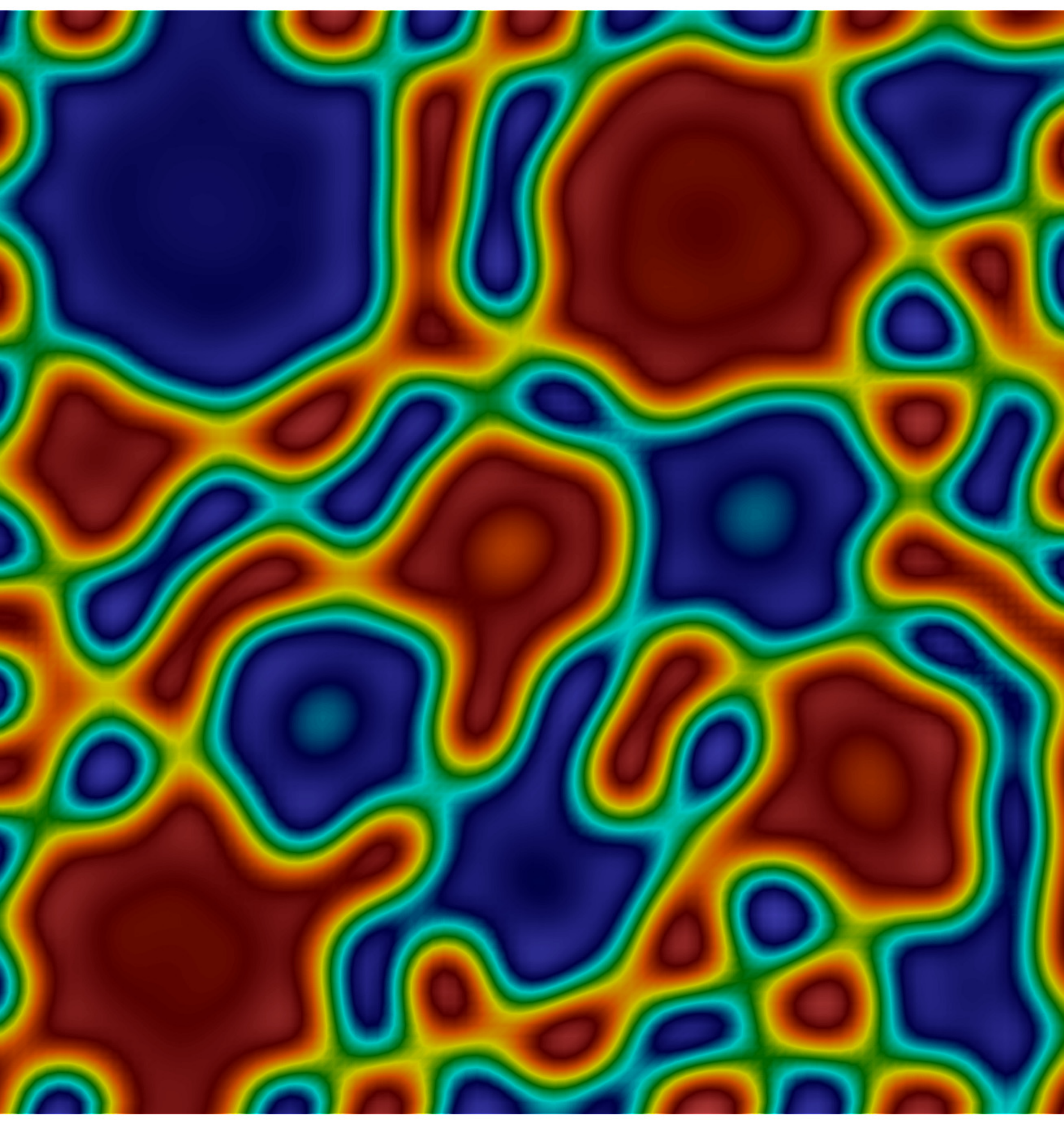}
\includegraphics[width=0.19\textwidth]{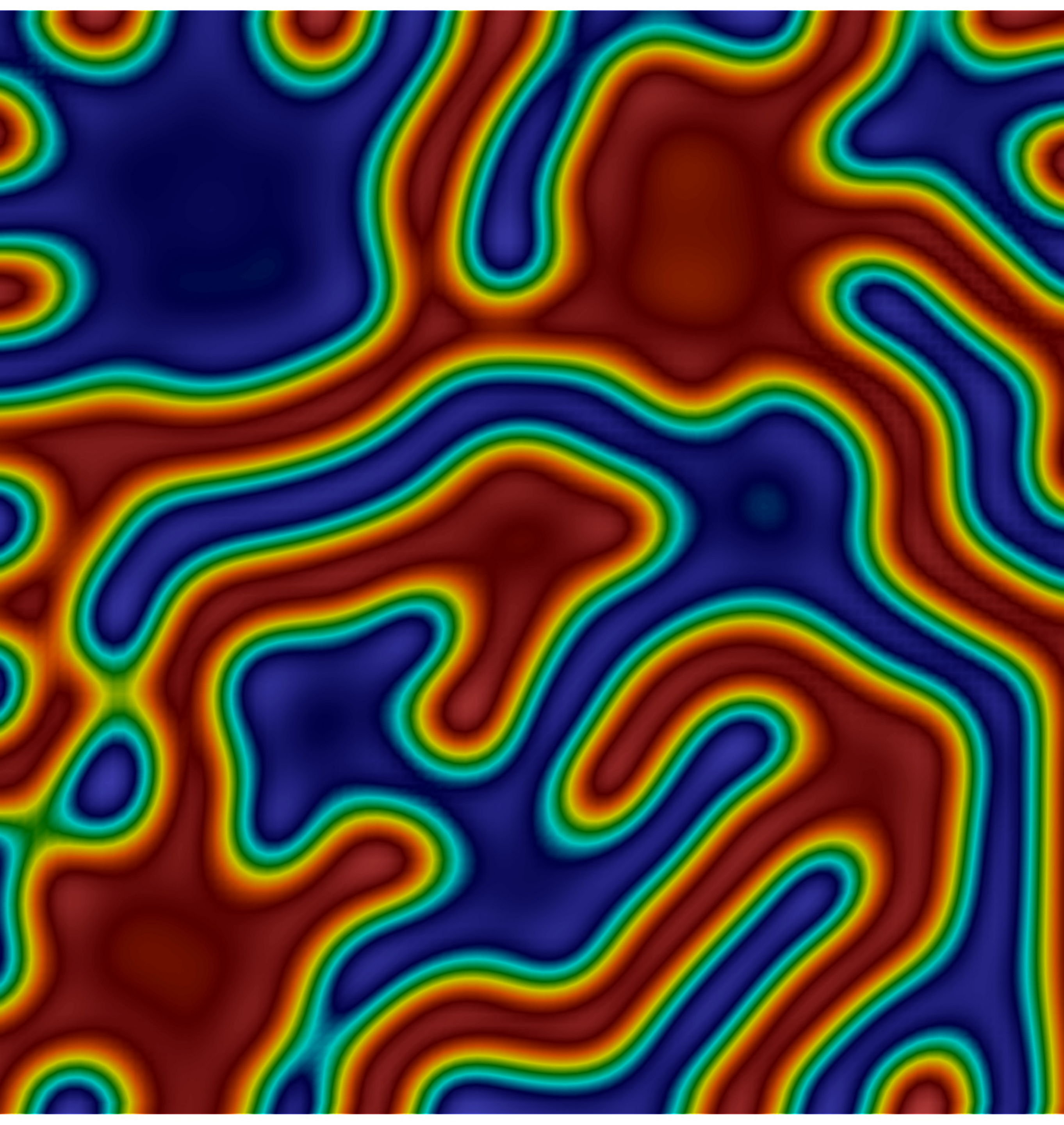}
\includegraphics[width=0.19\textwidth]{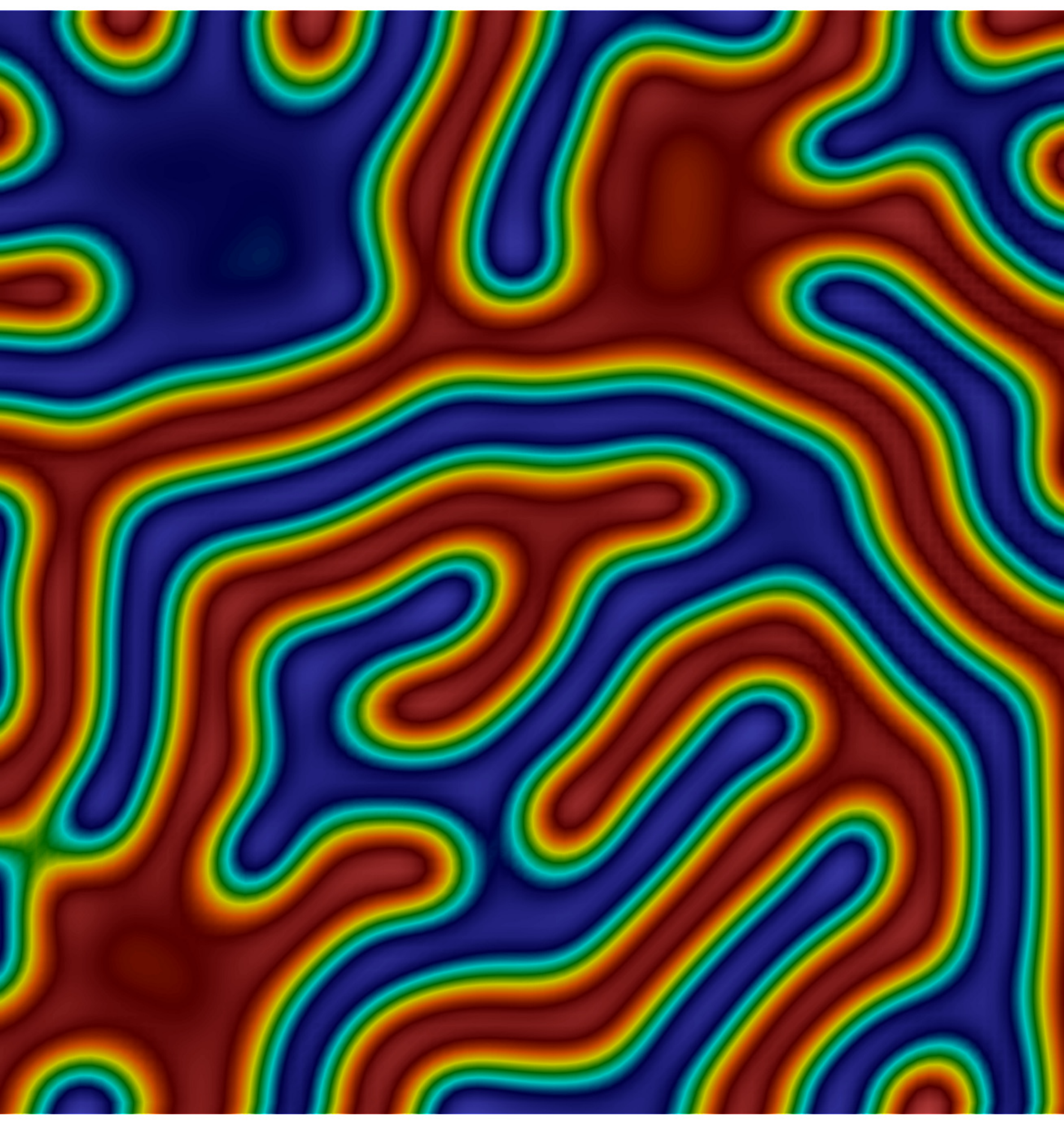}
\includegraphics[width=0.19\textwidth]{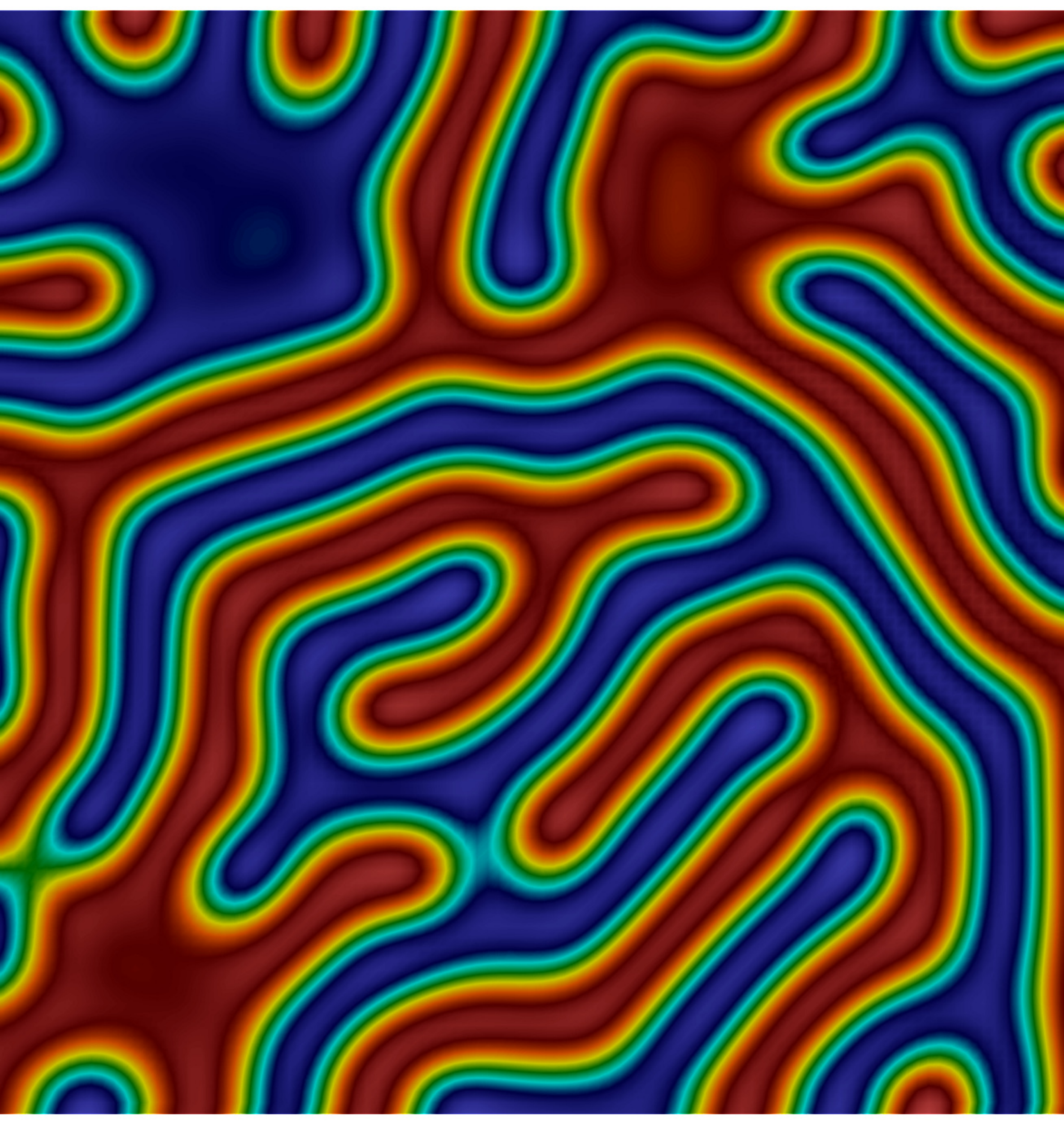}
\\
\includegraphics[width=0.19\textwidth]{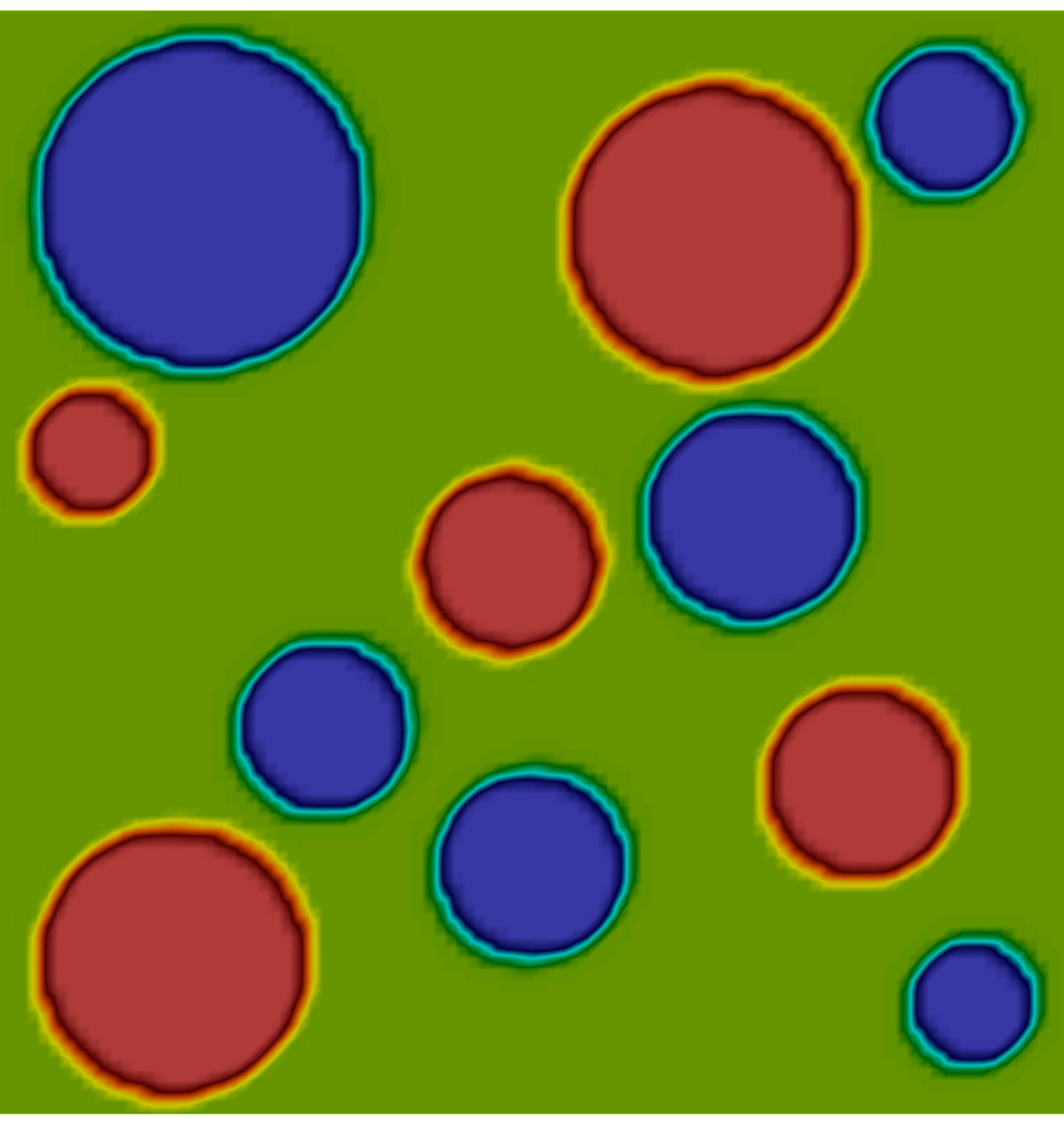}
\includegraphics[width=0.19\textwidth]{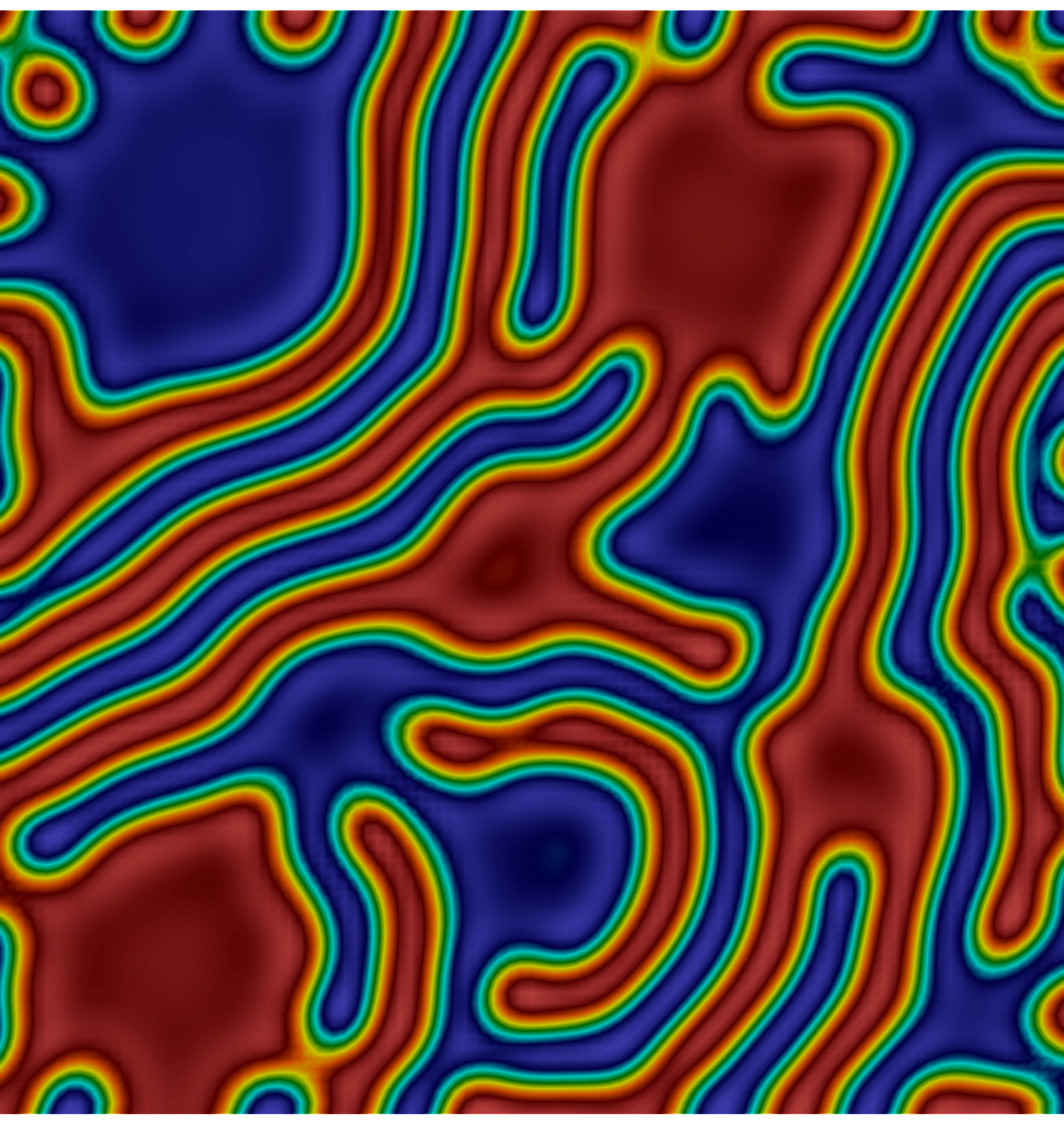}
\includegraphics[width=0.19\textwidth]{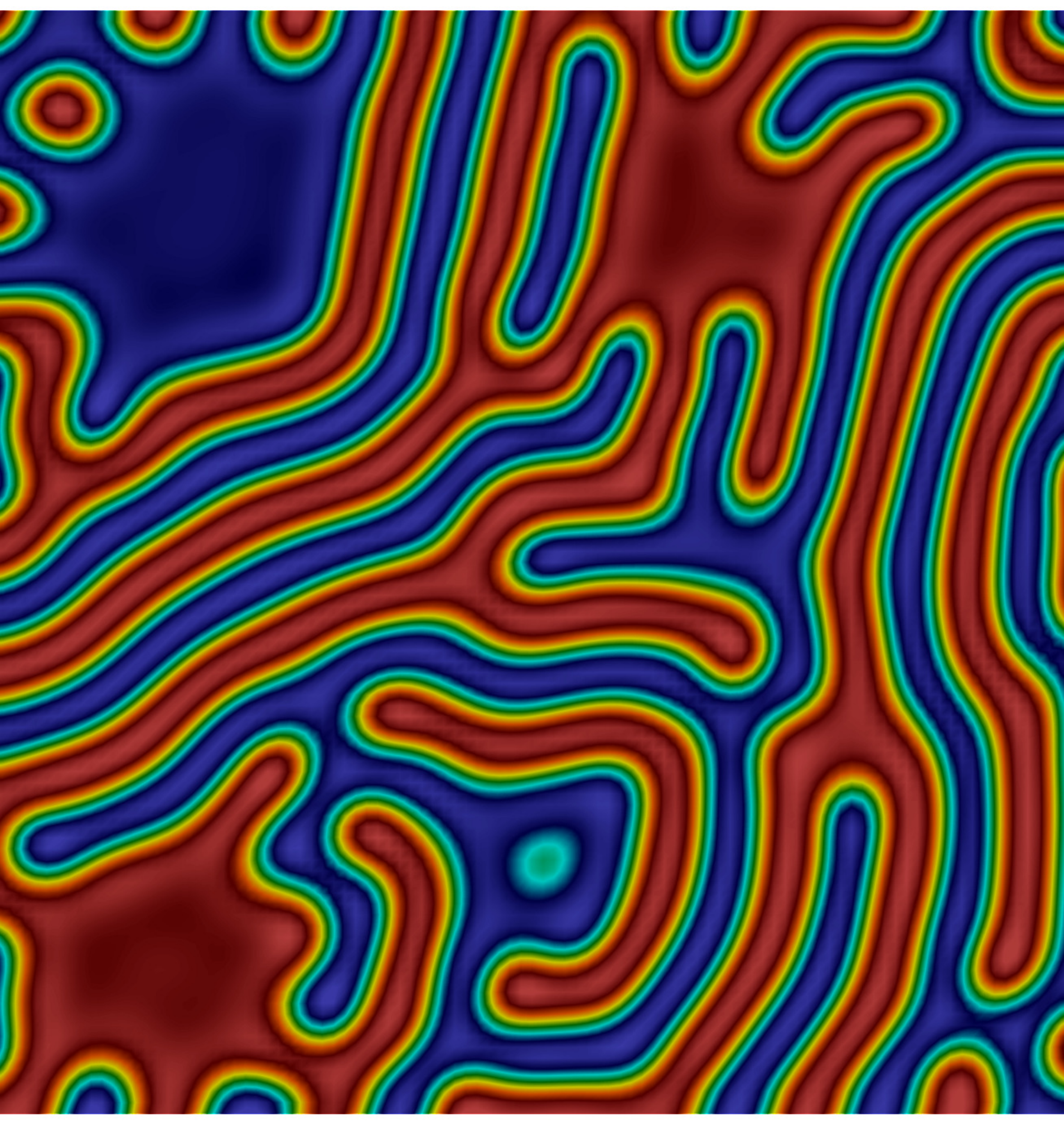}
\includegraphics[width=0.19\textwidth]{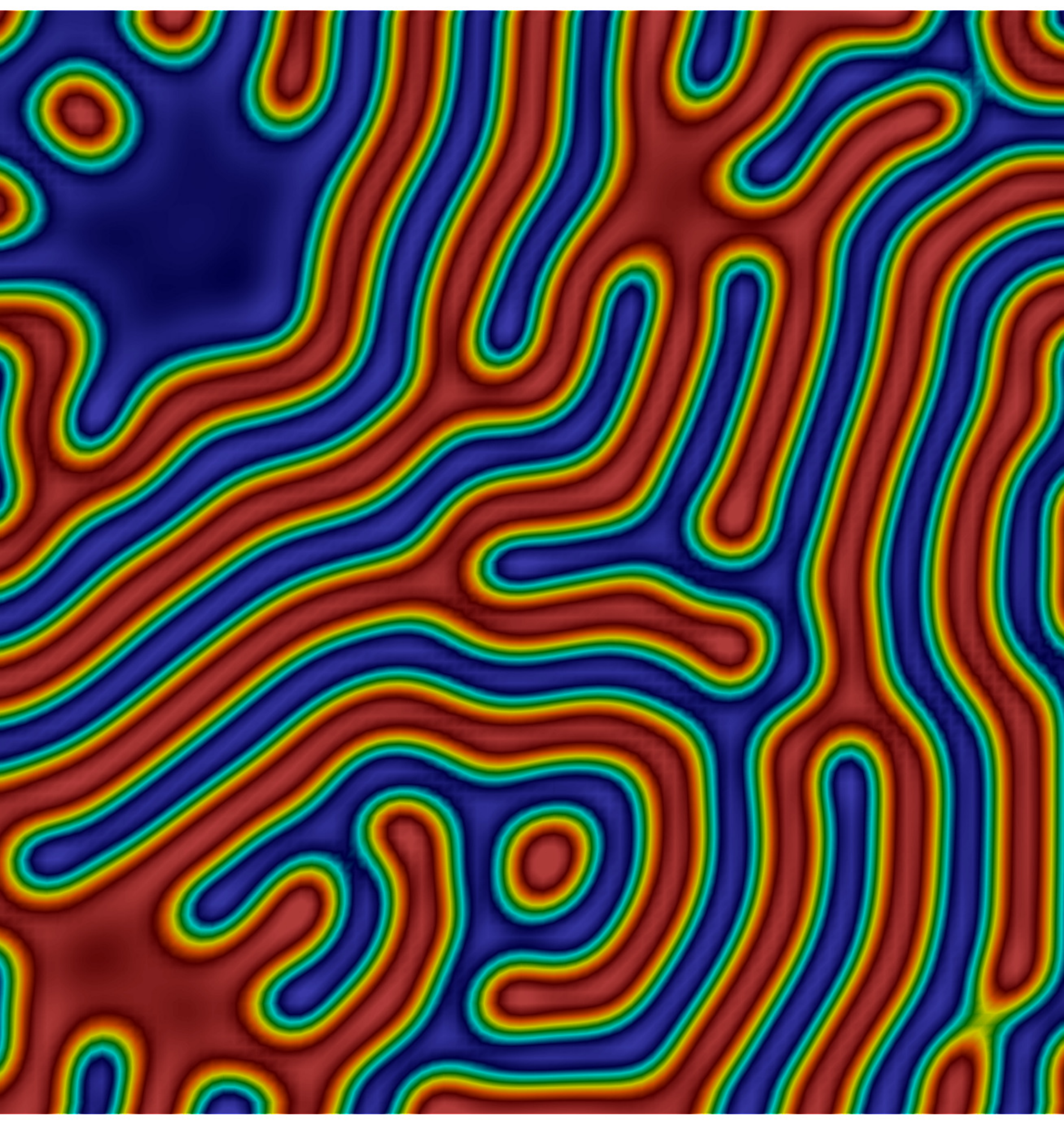}
\includegraphics[width=0.19\textwidth]{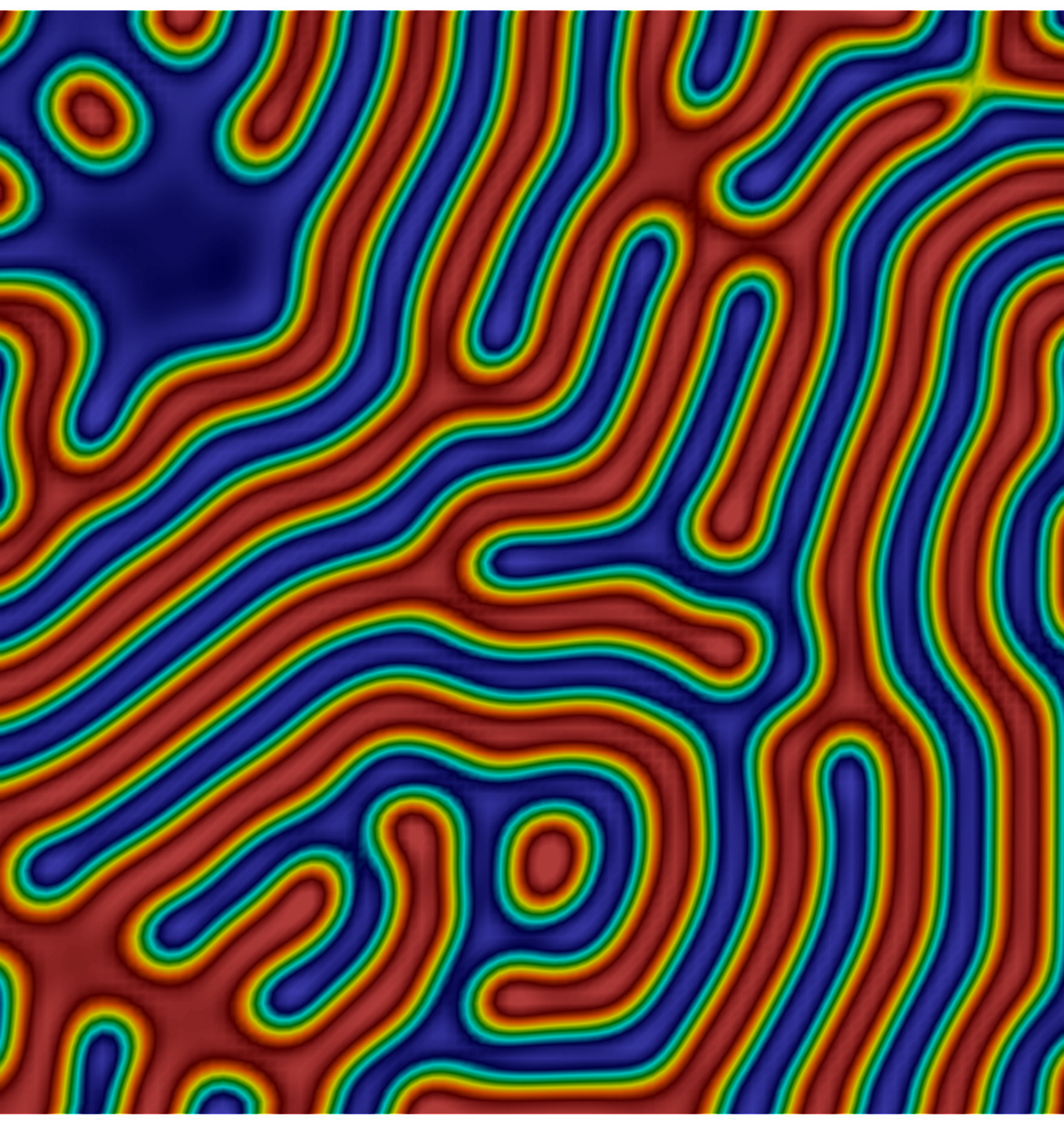}
\\
\includegraphics[width=0.19\textwidth]{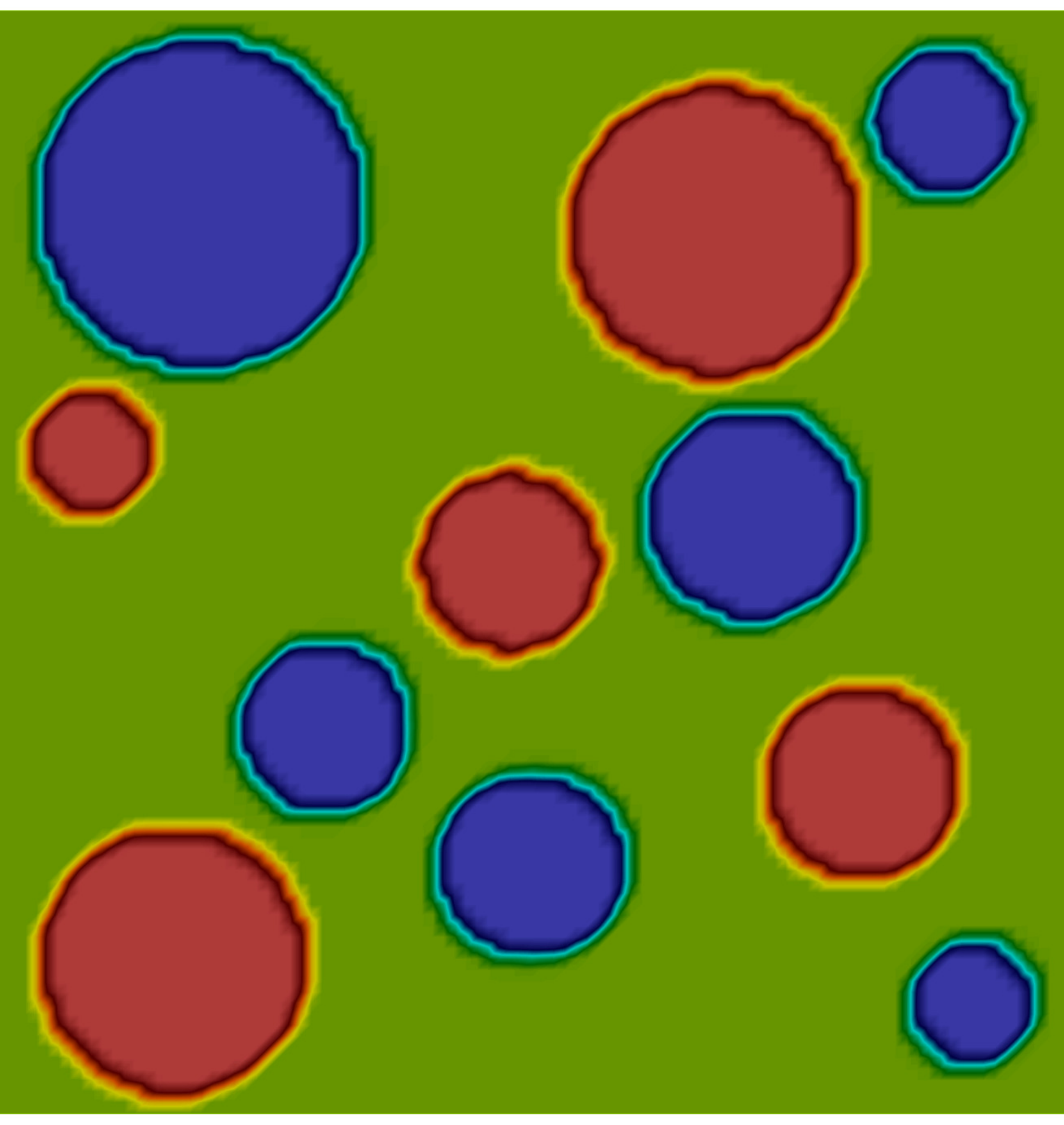}
\includegraphics[width=0.19\textwidth]{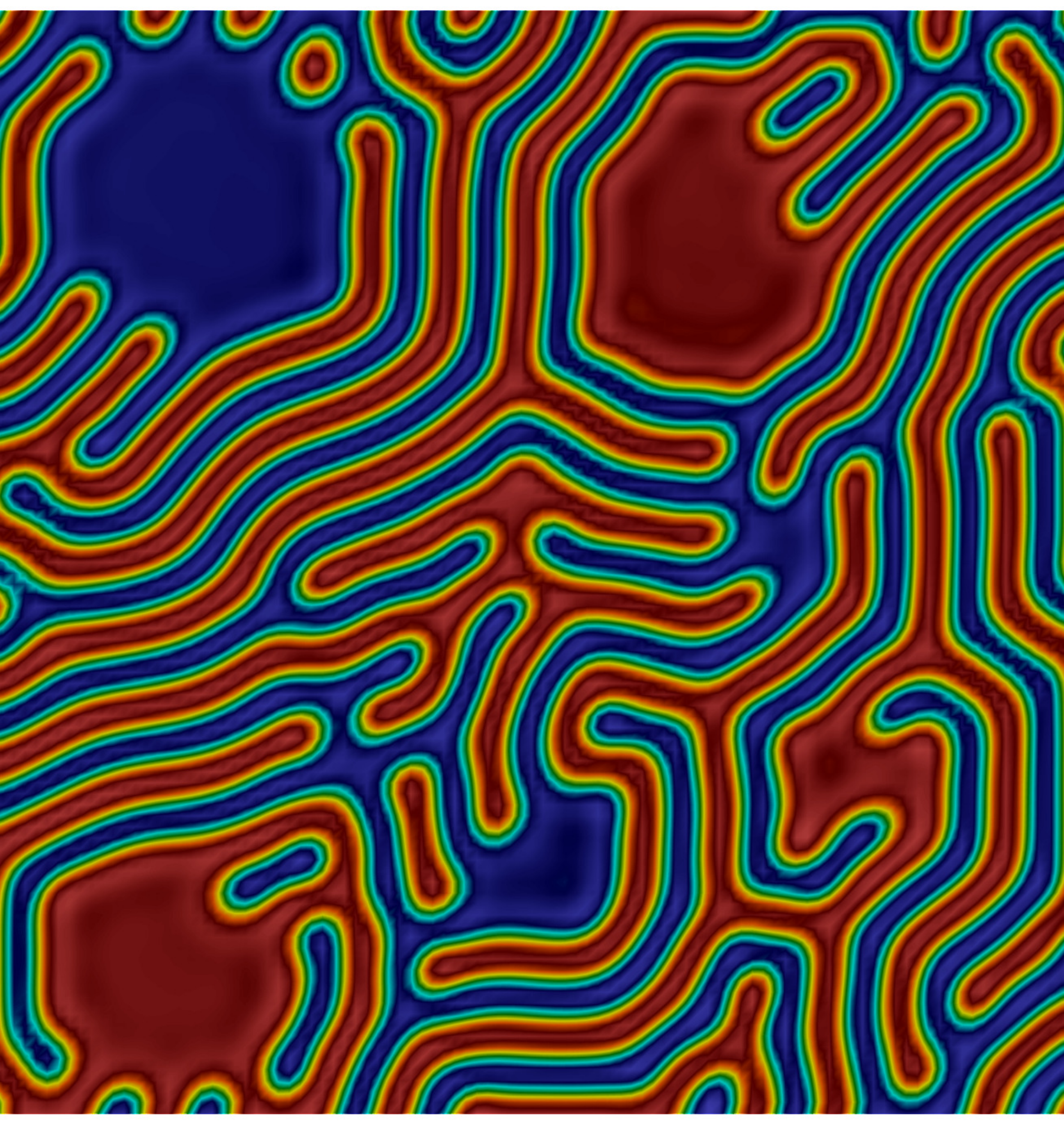}
\includegraphics[width=0.19\textwidth]{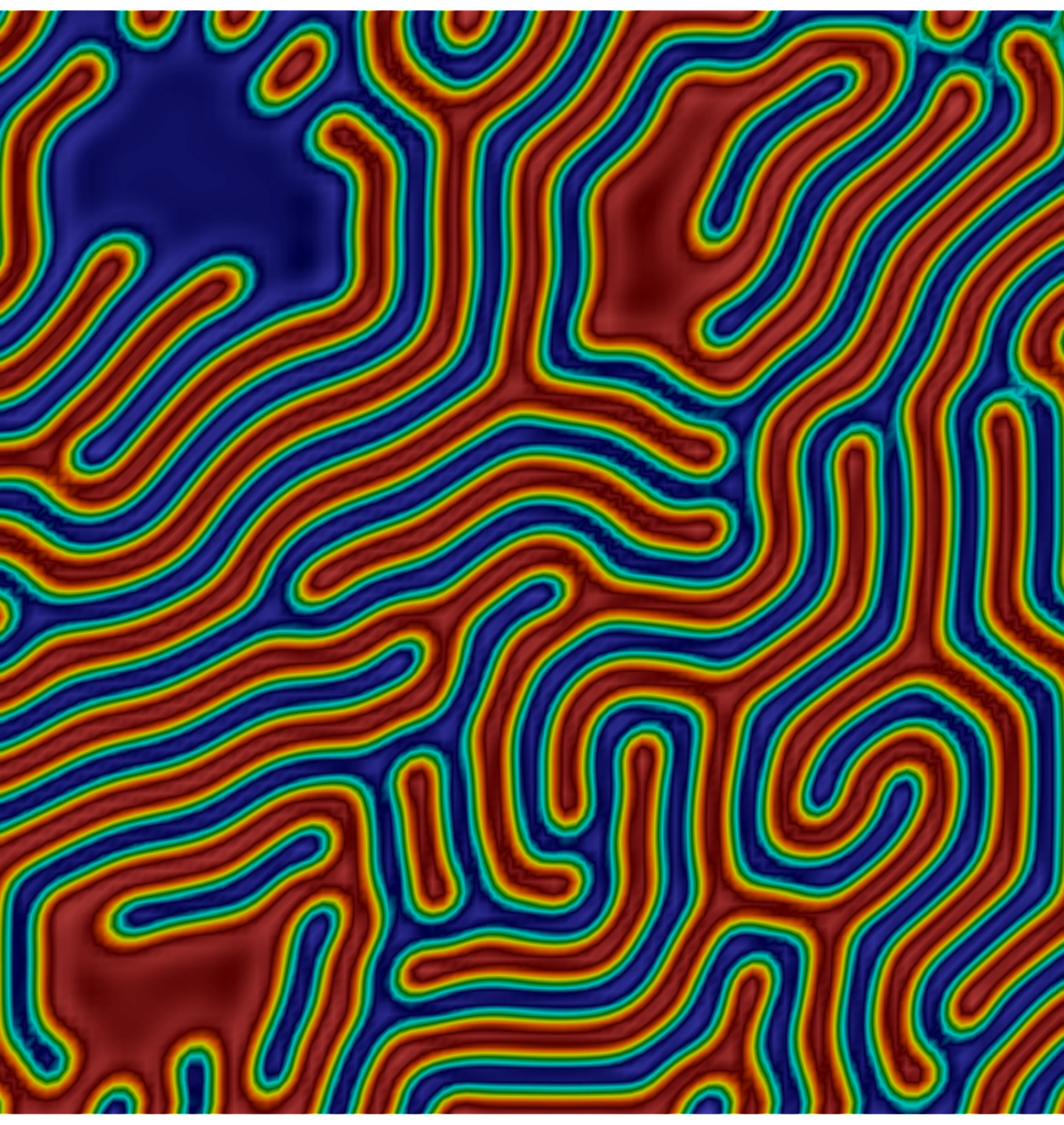}
\includegraphics[width=0.19\textwidth]{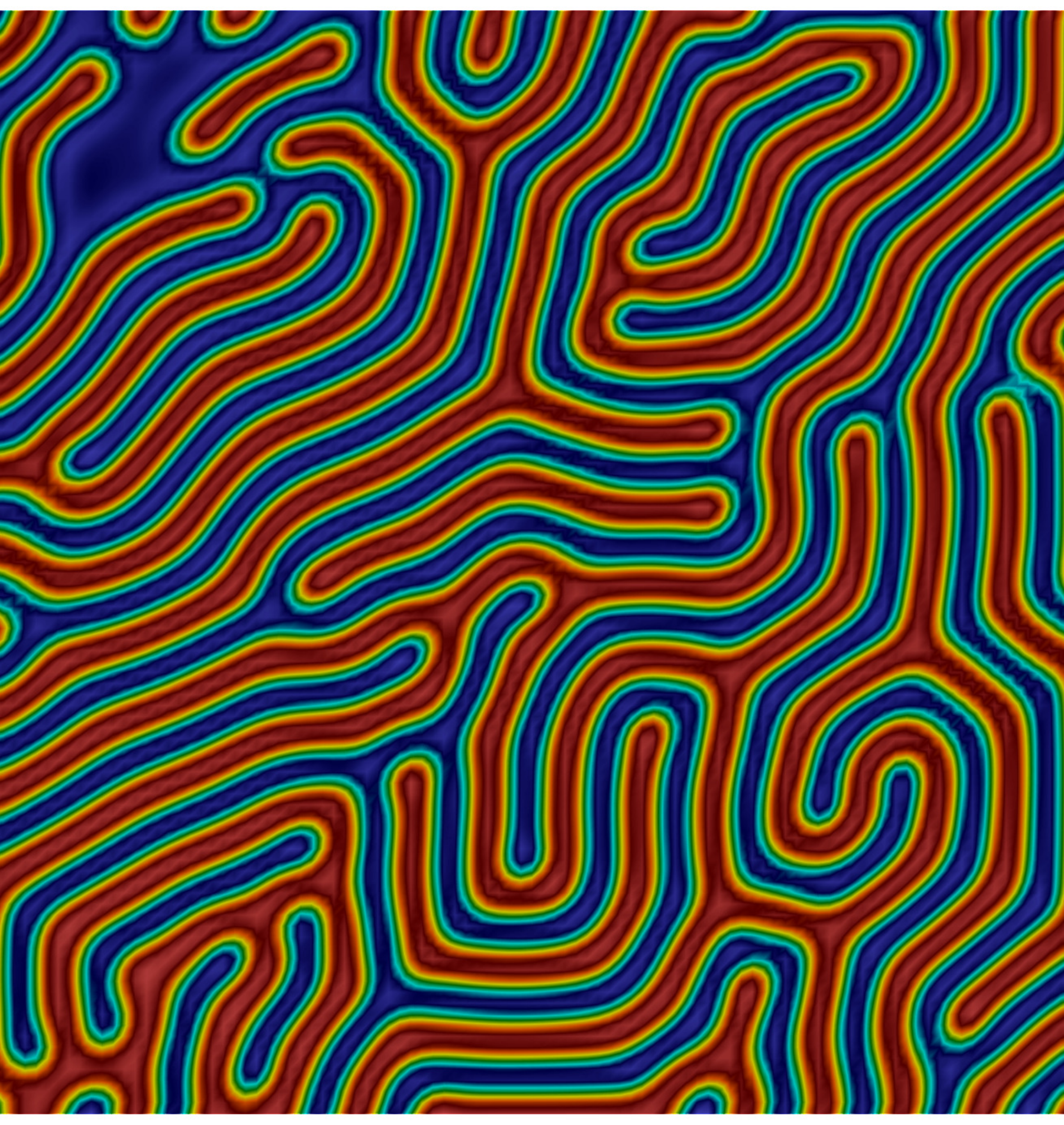}
\includegraphics[width=0.19\textwidth]{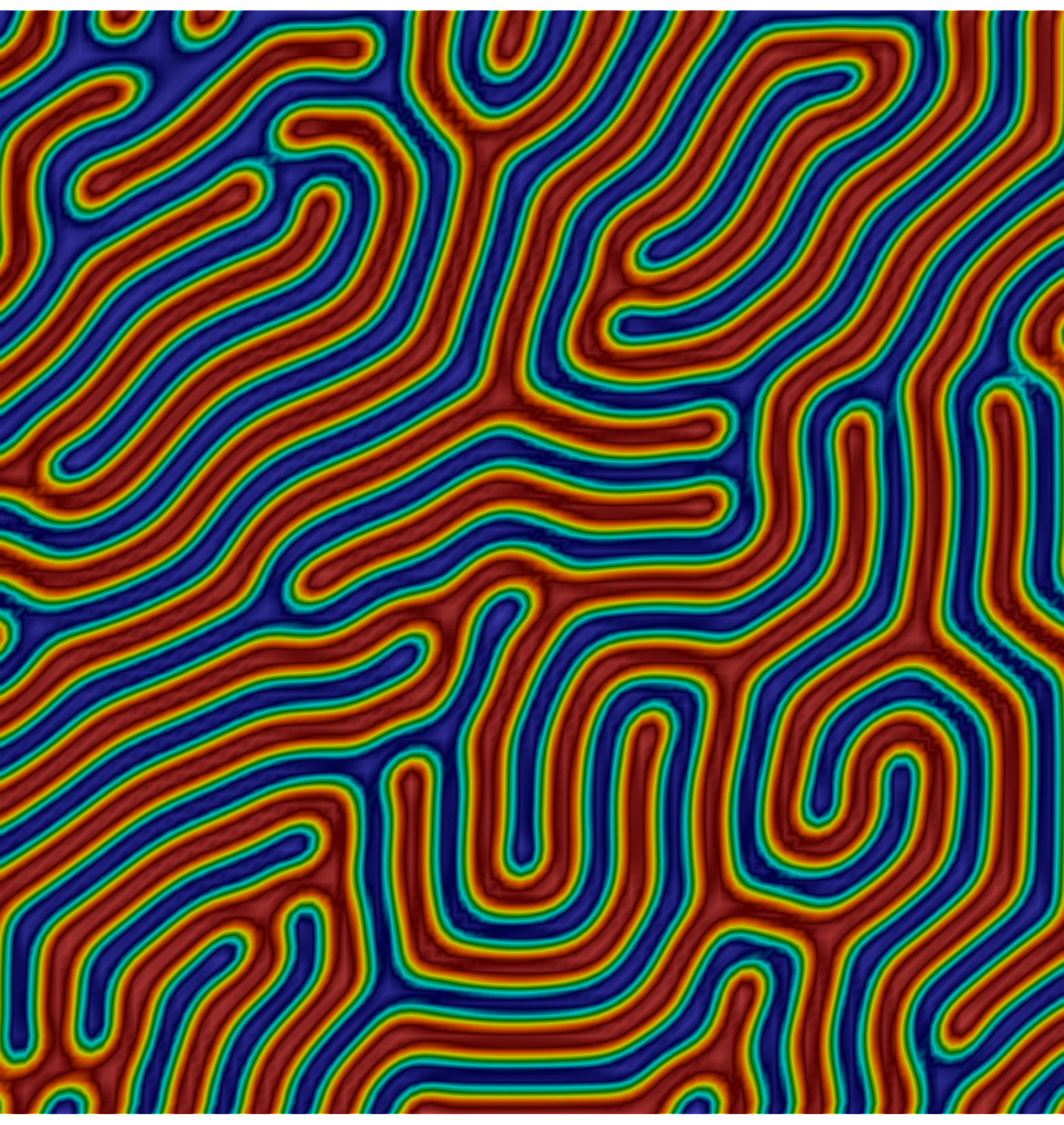}
\caption{Evolution of $\phi$ at times $t=0, 0.05, 0.15, 0.35$ and $0.5$ (from left to right) taking $M=0.1$, $g_2=1$, $g_0=-4$ and $h_0=0.5$ with $\beta=\lambda^{-1}=10$ (top row), $\beta=\lambda^{-1}=20$ (center row) and $\beta=\lambda^{-1}=50$ (bottom row).} \label{fig:lambdabetas}
\end{center}
\end{figure}

\begin{figure}[h]
\begin{center}
\includegraphics[width=0.45\textwidth]{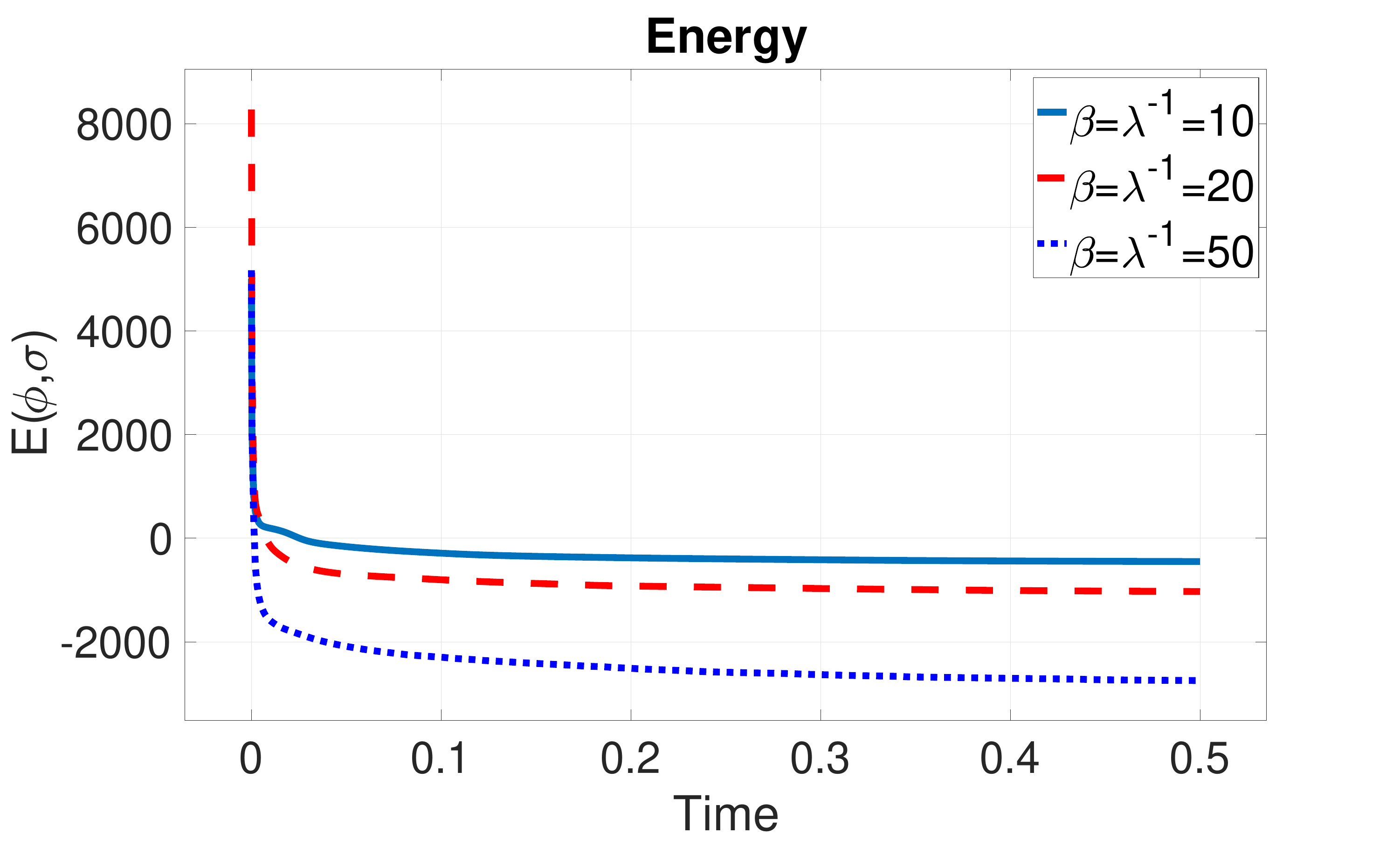}
\includegraphics[width=0.45\textwidth]{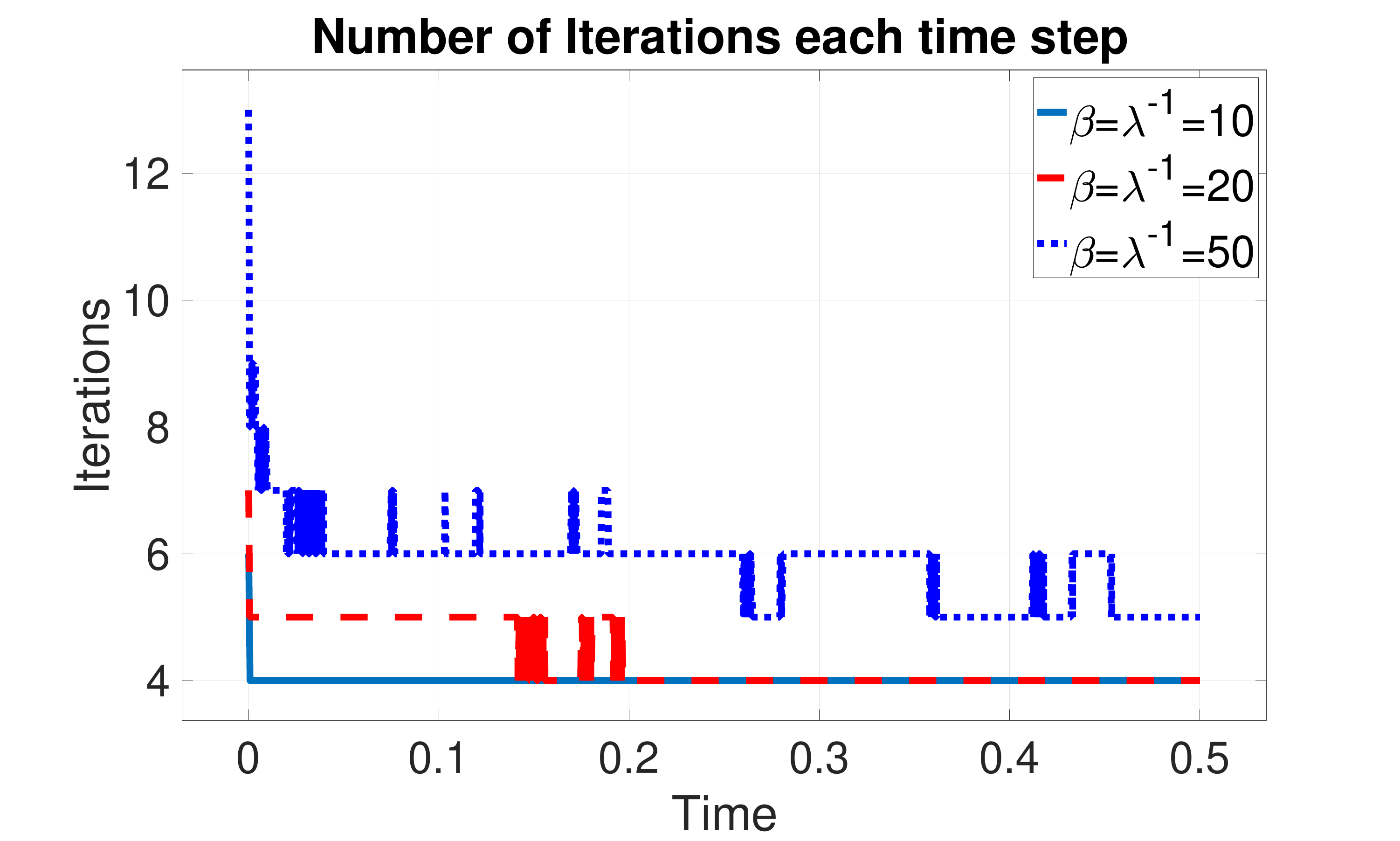}
\\
\includegraphics[width=0.45\textwidth]{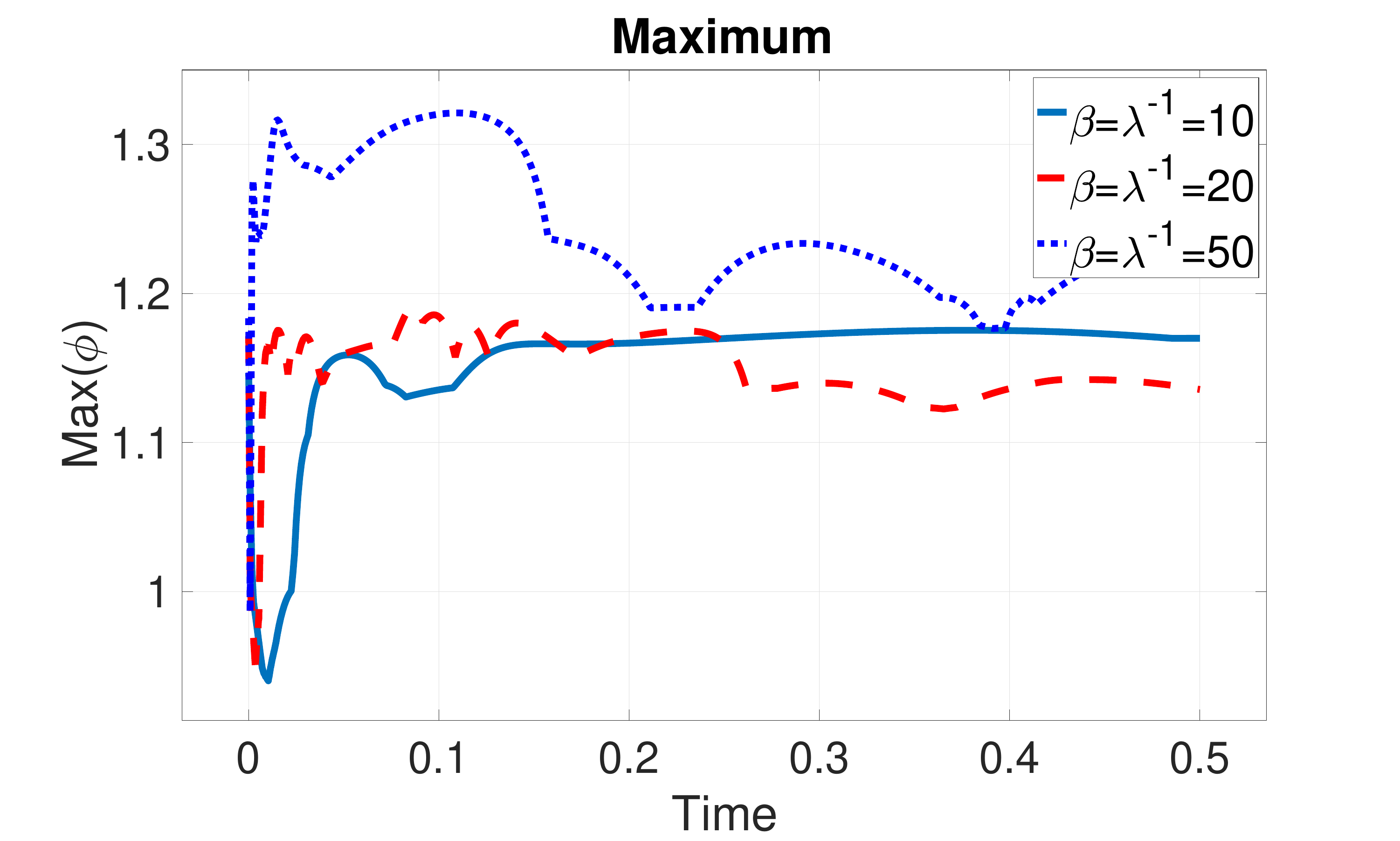}
\includegraphics[width=0.45\textwidth]{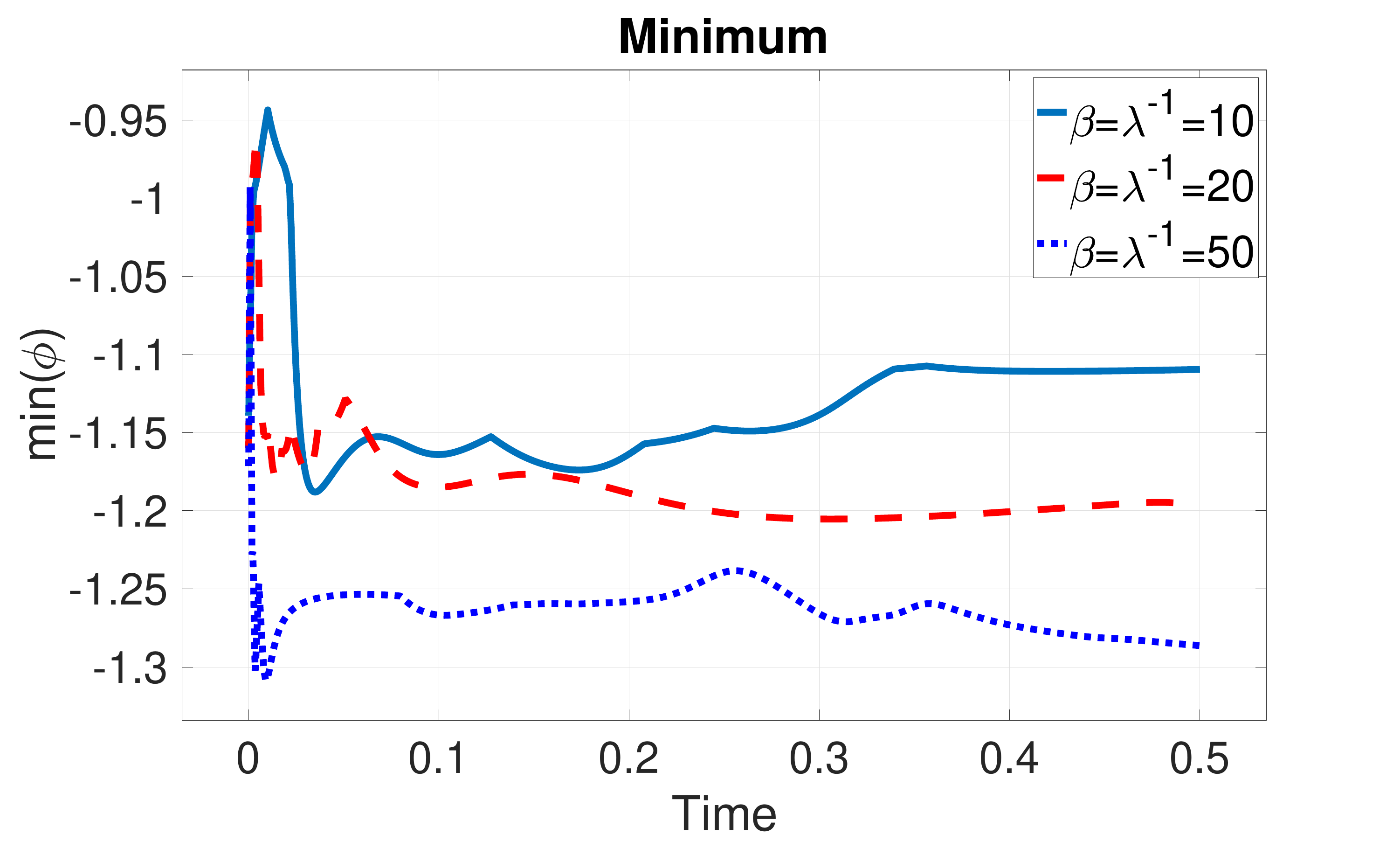}
\caption{Evolution in time of the energies (top left), the number of iterations to achieve tolerance $\texttt{TOL}=10^{-7}$ (top right), maximum of $\phi$ (bottom left) and minimum of $\phi$ (bottom right) taking $M=0.1$, $g_2=1$, $g_0=-4$, $h_0=0.5$ and $\beta=\lambda^{-1}=10, 20$ and $50$.} \label{fig:lambdabetasplot}
\end{center}
\end{figure}

\subsection{3D simulations}
In this section, we present the results obtained by performing simulations in $3D$ to illustrate our numerical scheme's capability to capture relevant results independently of the dimension considered. In particular, we consider a spatial domain $\Omega={(}-2.5,2.5{)}\times
{(}-2.5,2.5{)}
\times{(}-0.5,0.5{)}$
with a structured tetrahedral mesh of $75\times75\times15$.
The choice of the physical parameters are
$M=0.1$, $g_0 = -4$,  $g_2=1$, $h_0=0.5$, $\lambda=0.01$, $\beta=5$, the time step is fixed to $\Delta t=10^{-5}$ and the tolerance of the iterative algorithm set to $\texttt{TOL}=10^{-5}$.
We consider as initial condition a configuration similar to the one used in the $2D$ examples, and we run the simulations over the time interval $[0,T]=[0,0.1]$. The dynamics associated to this simulation is presented in Figure~\ref{fig:3Ddynamics} and the evolution of the energies, the volume, the maximum and minimum of $\phi$ and number of iterations are presented in Figure~\ref{fig:3Dplot}. We observe how the proposed numerical scheme is able to capture the complicated dynamics of the system while minimizing the energy and conserving the volume.

\begin{figure}[h]
\begin{center}
\includegraphics[width=0.24\textwidth]{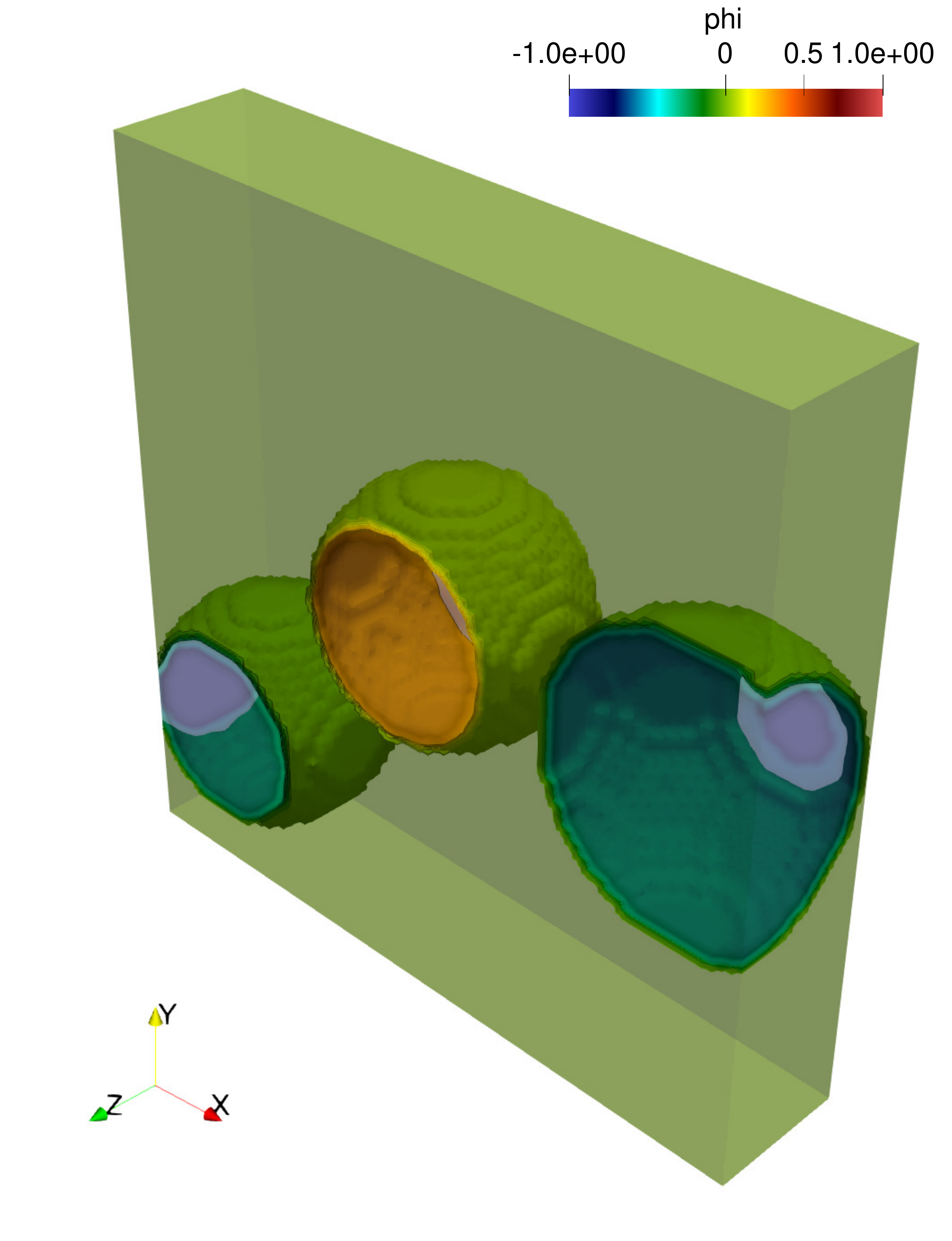}
\includegraphics[width=0.24\textwidth]{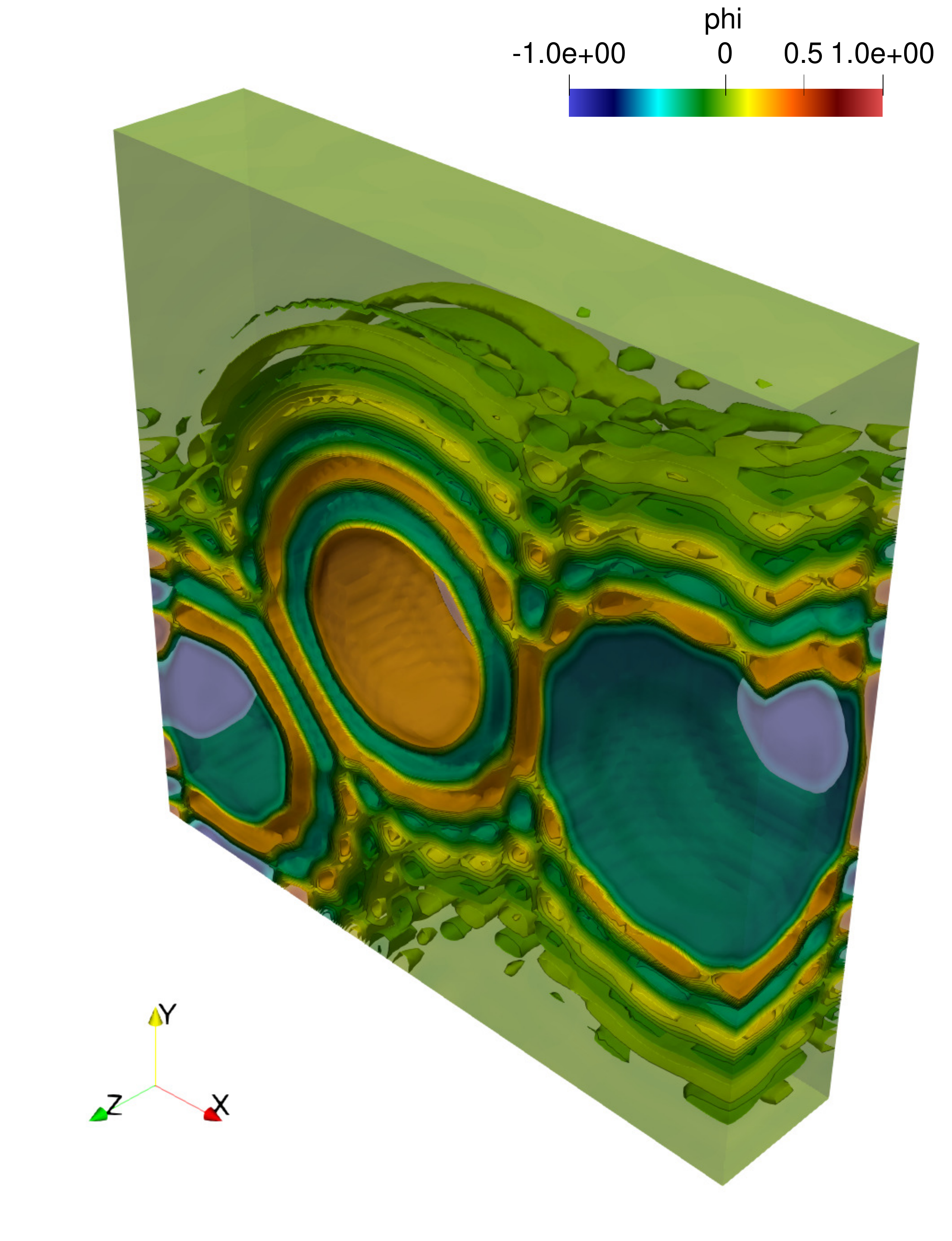}
\includegraphics[width=0.24\textwidth]{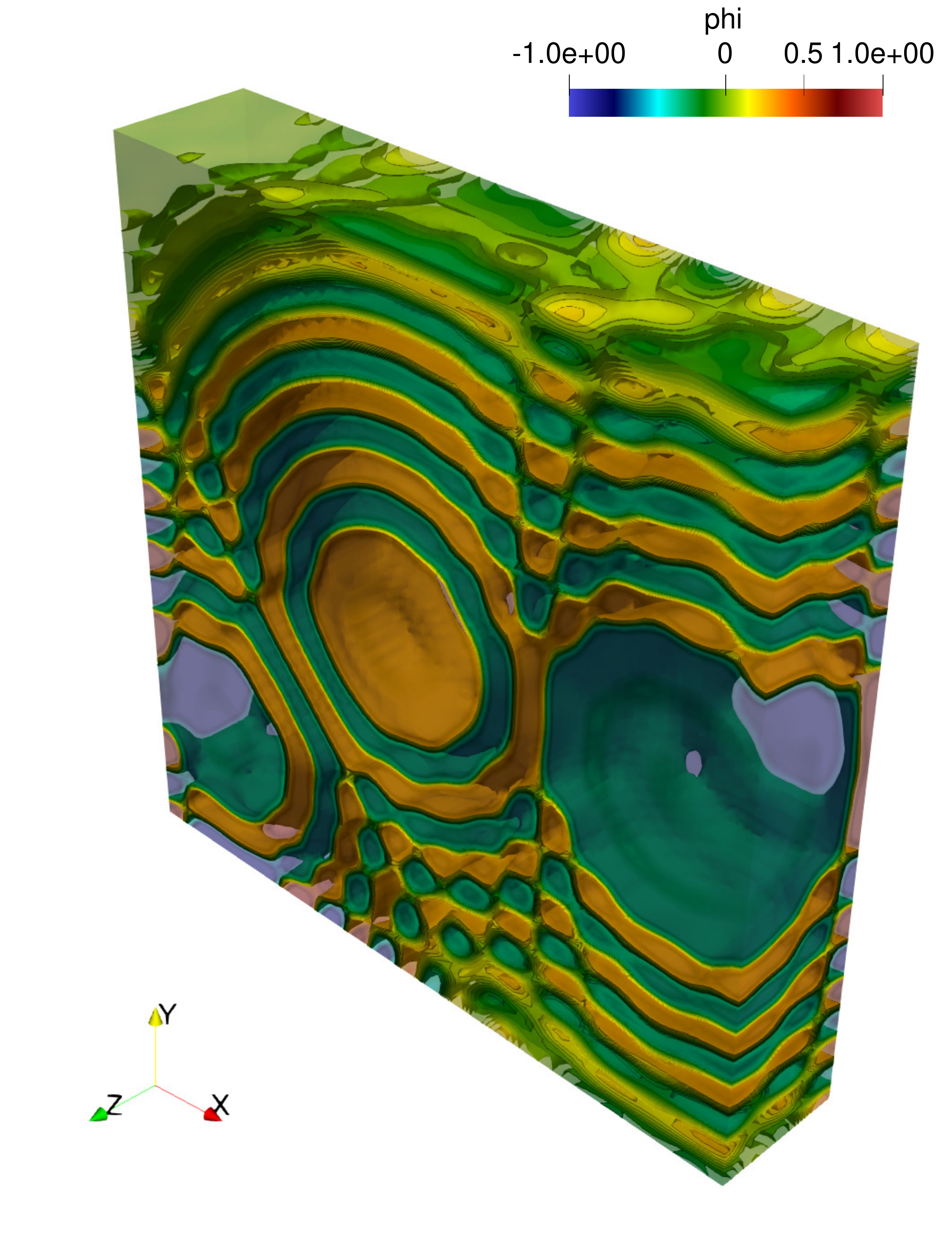}
\includegraphics[width=0.24\textwidth]{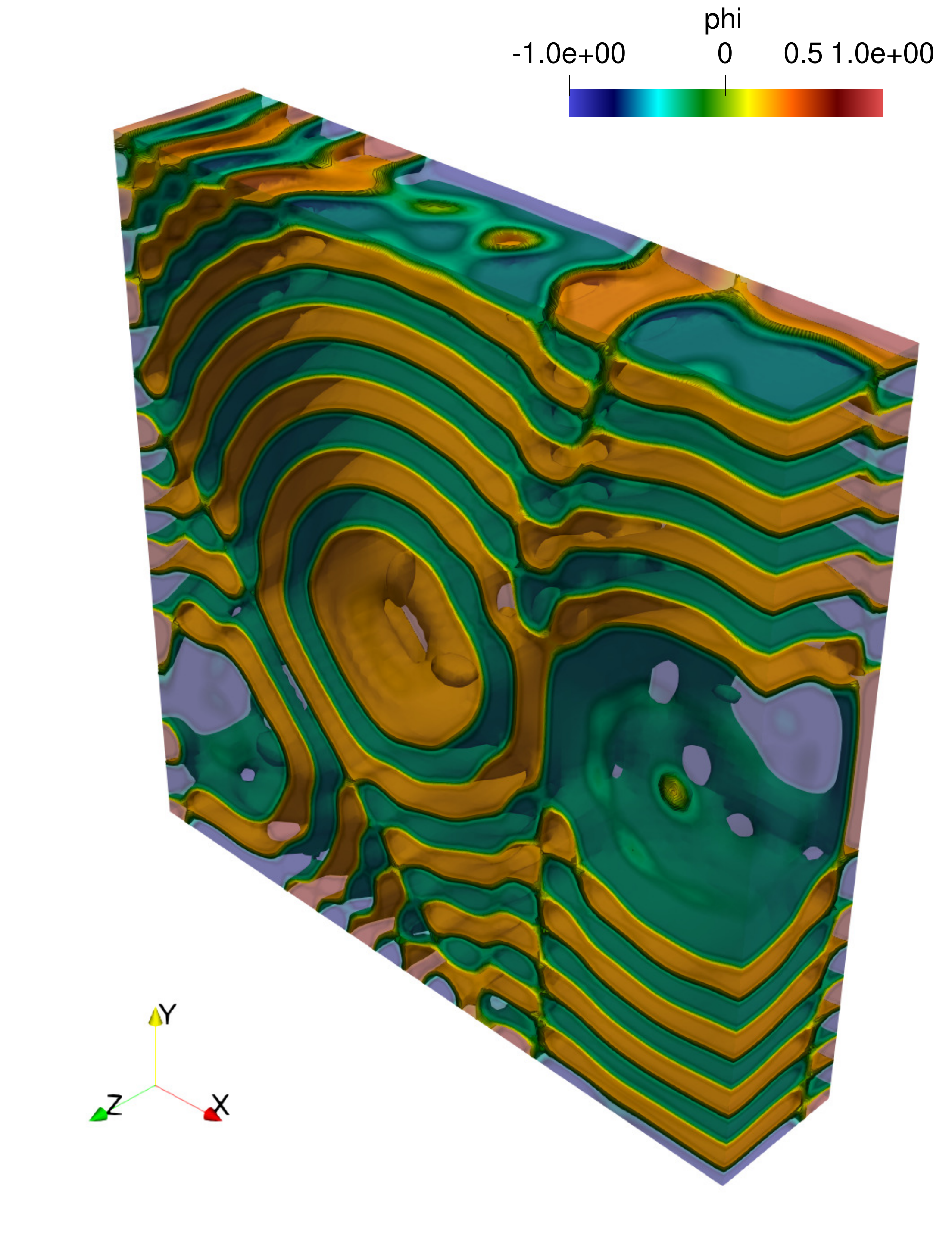}
\\
\includegraphics[width=0.24\textwidth]{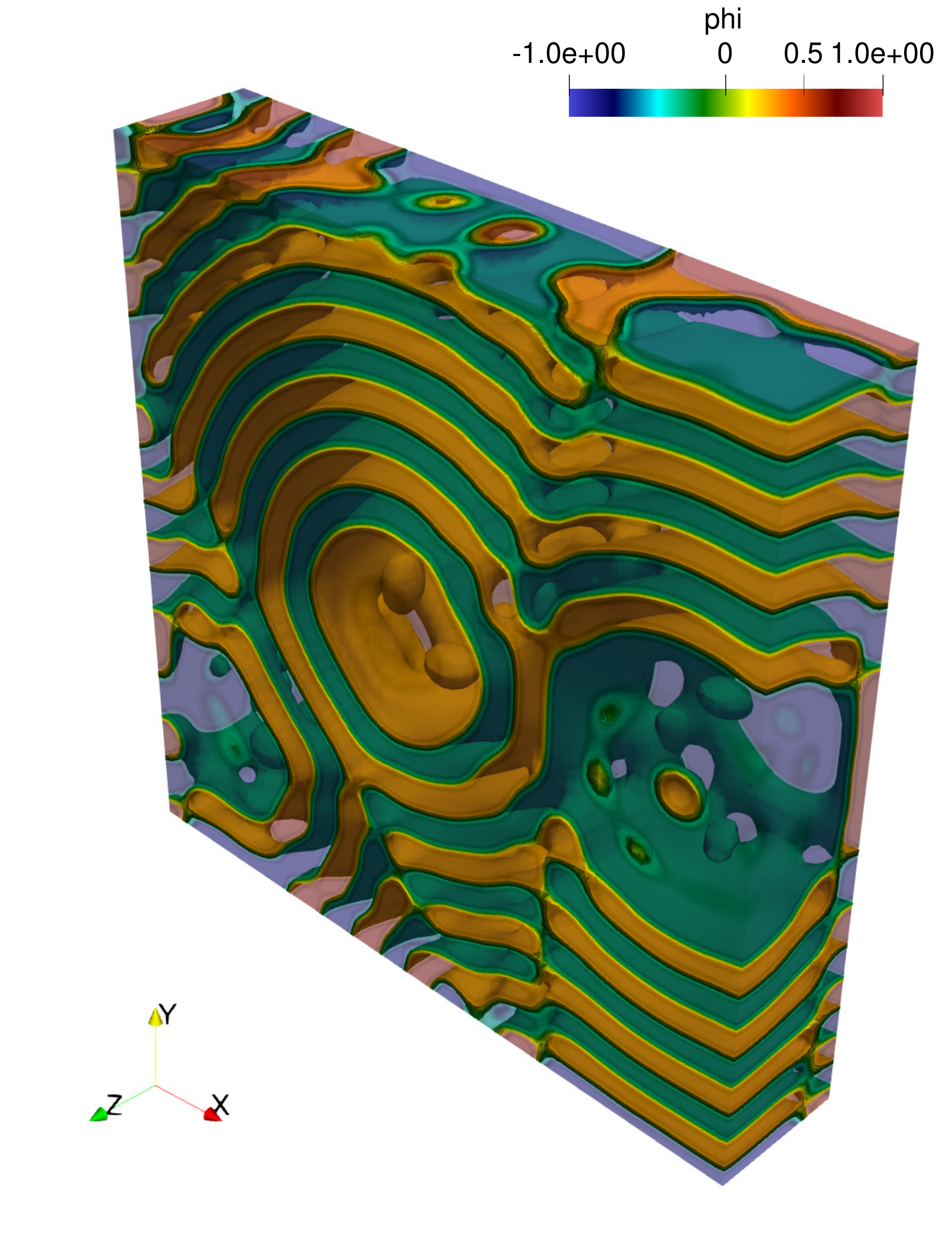}
\includegraphics[width=0.24\textwidth]{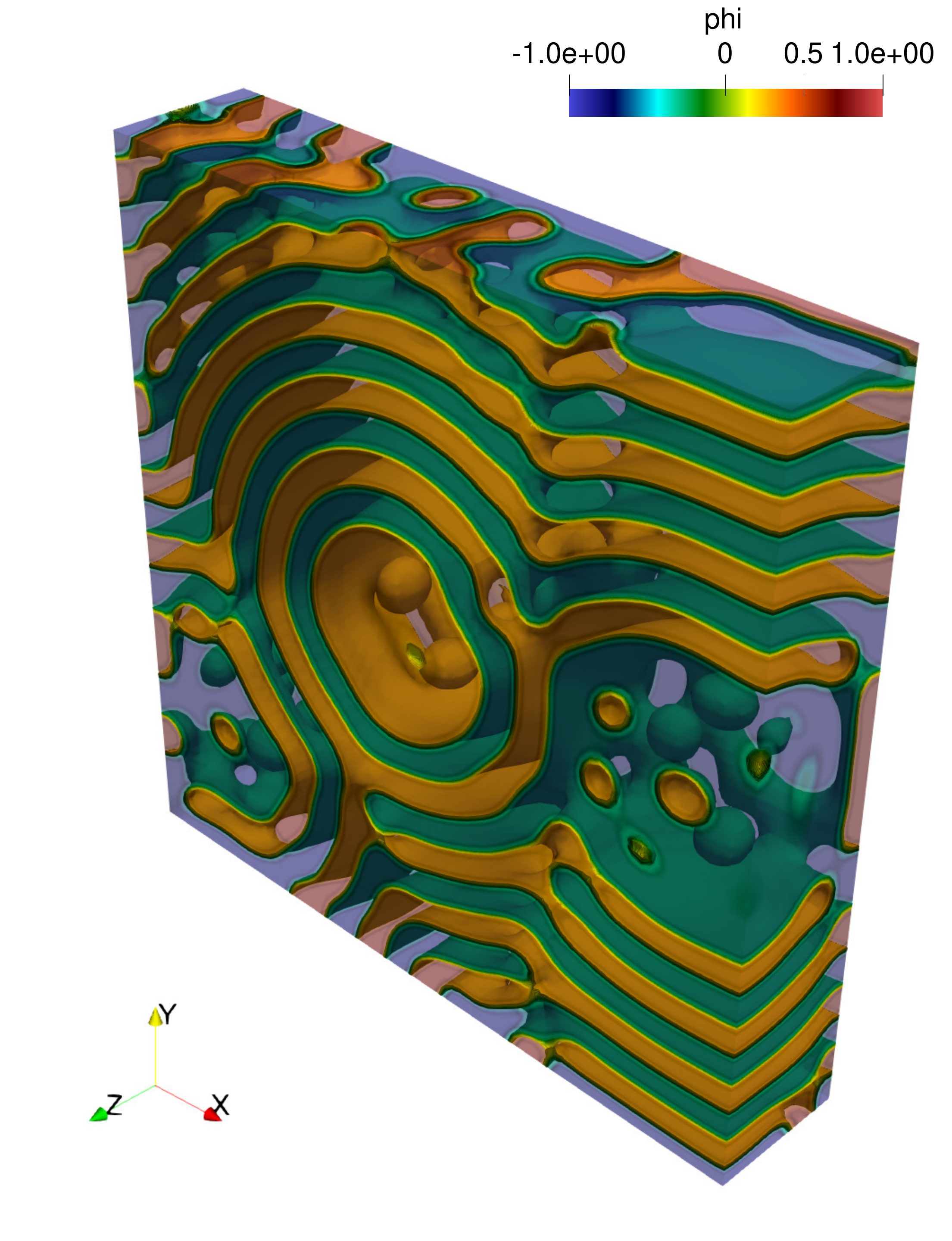}
\includegraphics[width=0.24\textwidth]{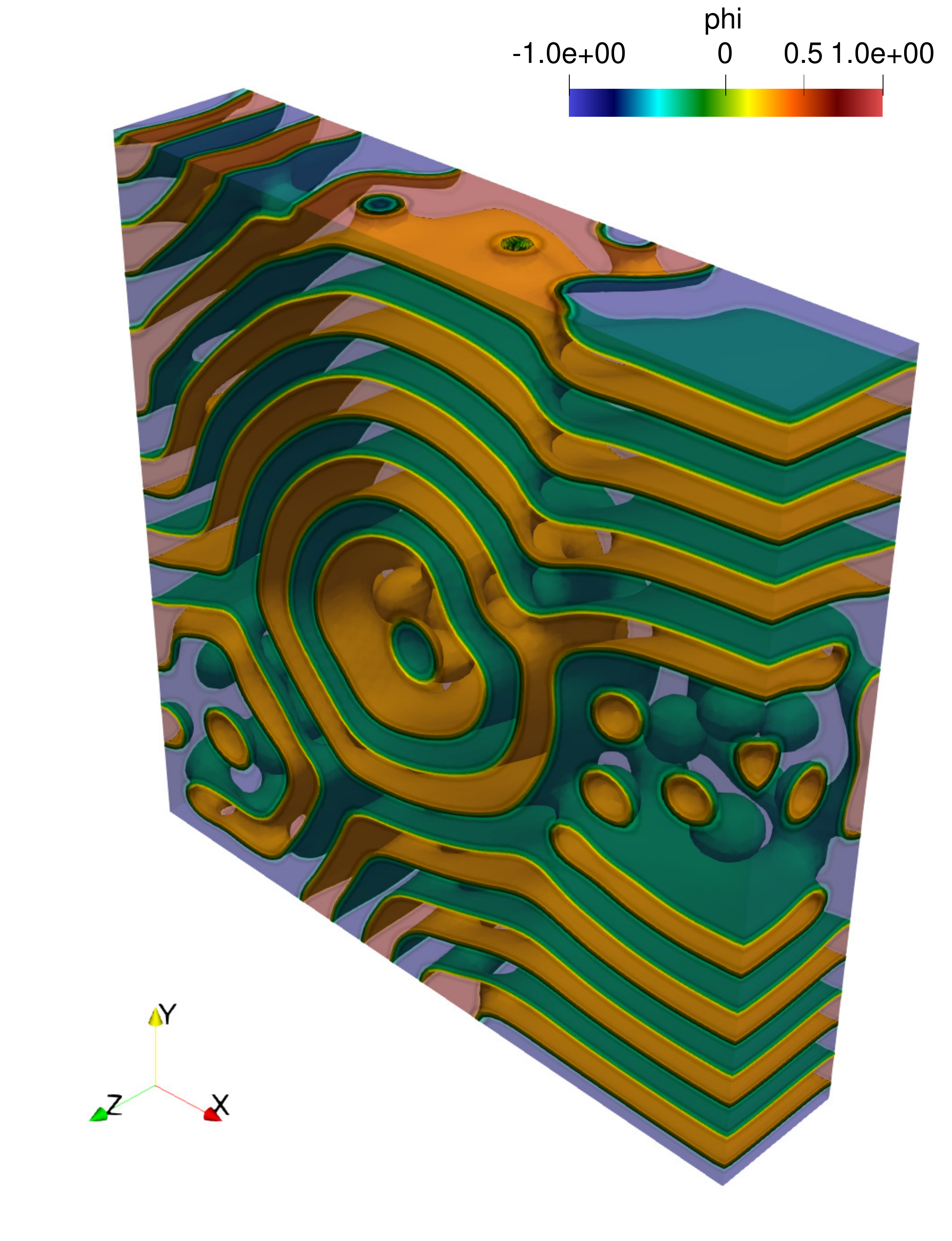}
\includegraphics[width=0.24\textwidth]{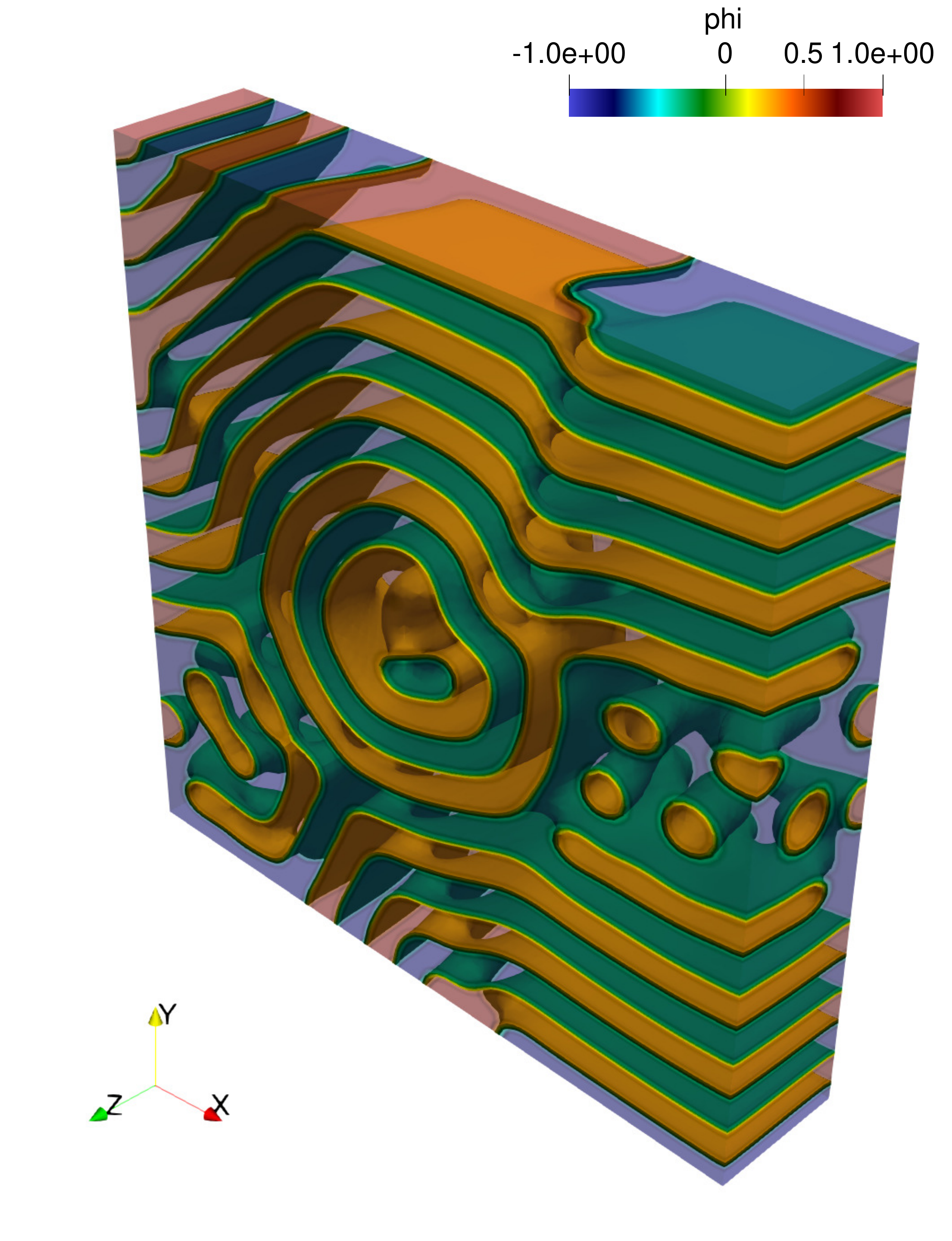}
\\
\includegraphics[width=0.24\textwidth]{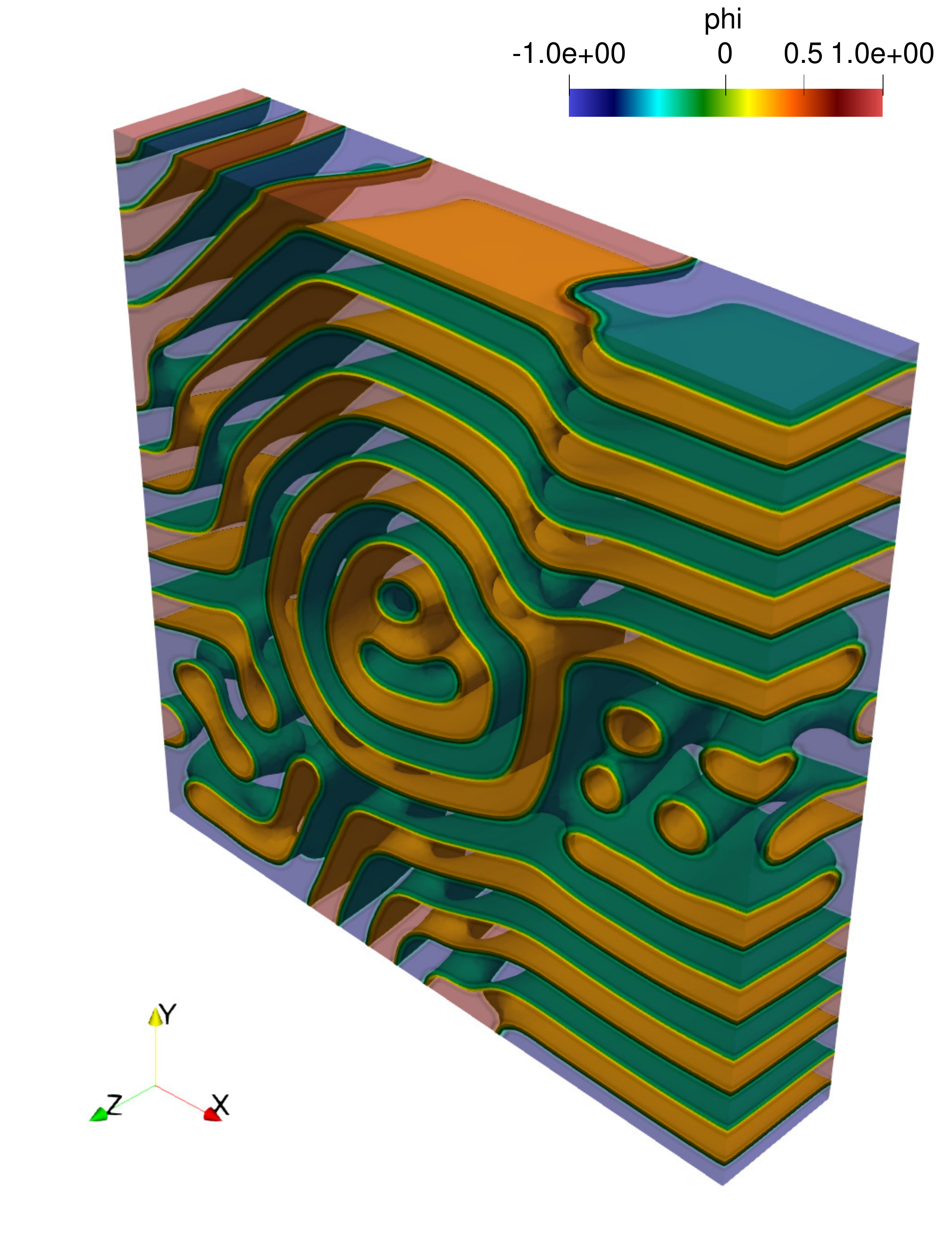}
\includegraphics[width=0.24\textwidth]{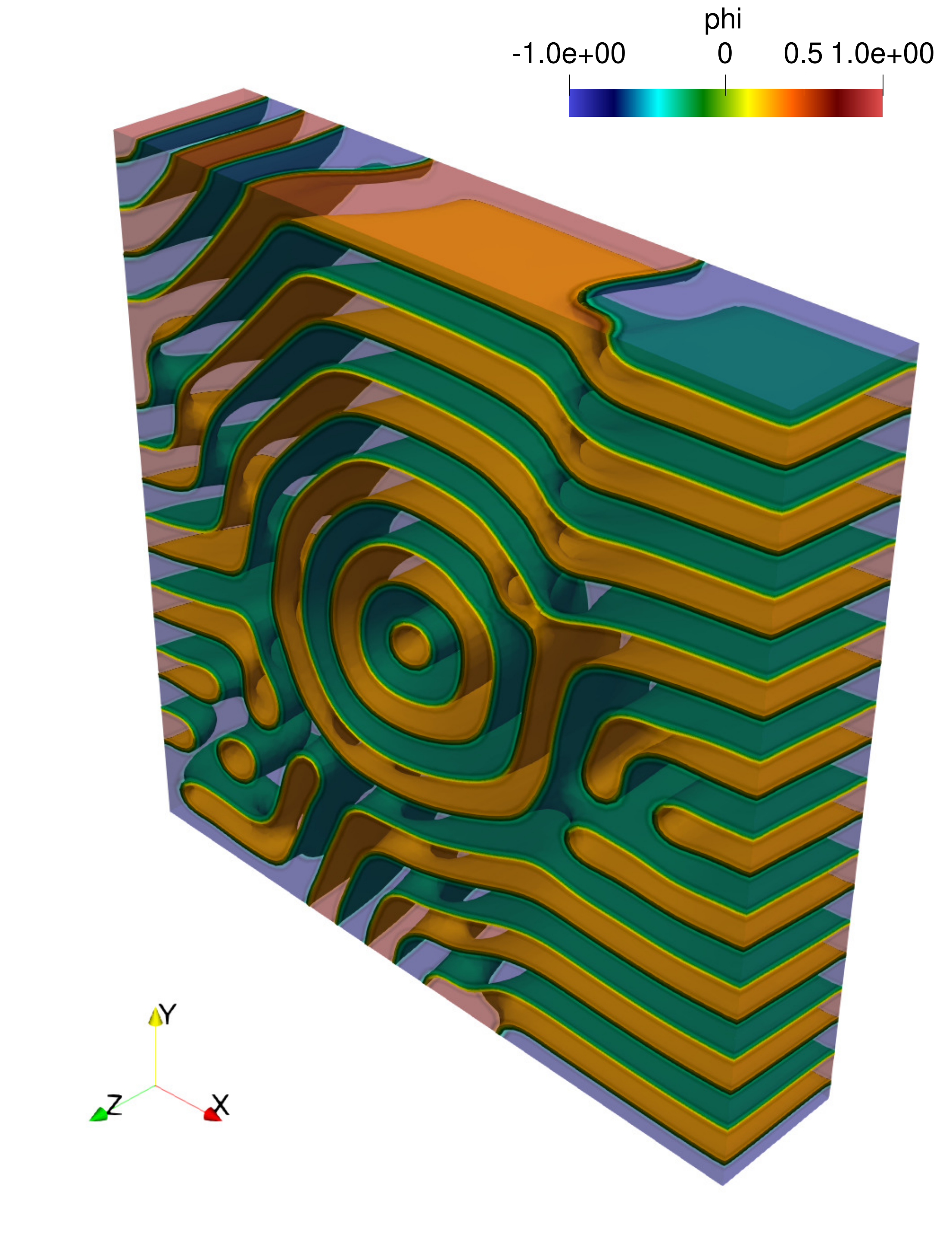}
\includegraphics[width=0.24\textwidth]{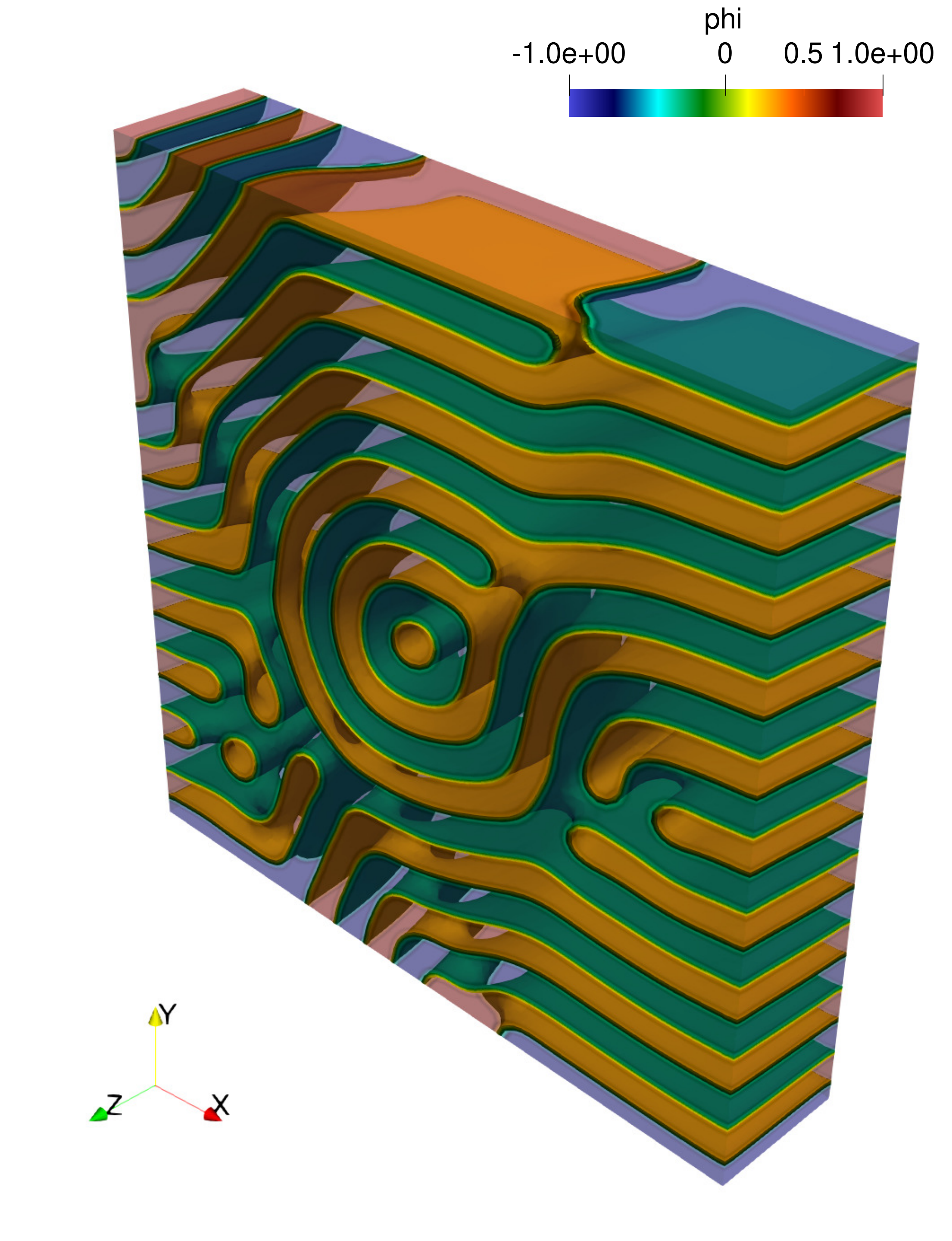}
\includegraphics[width=0.24\textwidth]{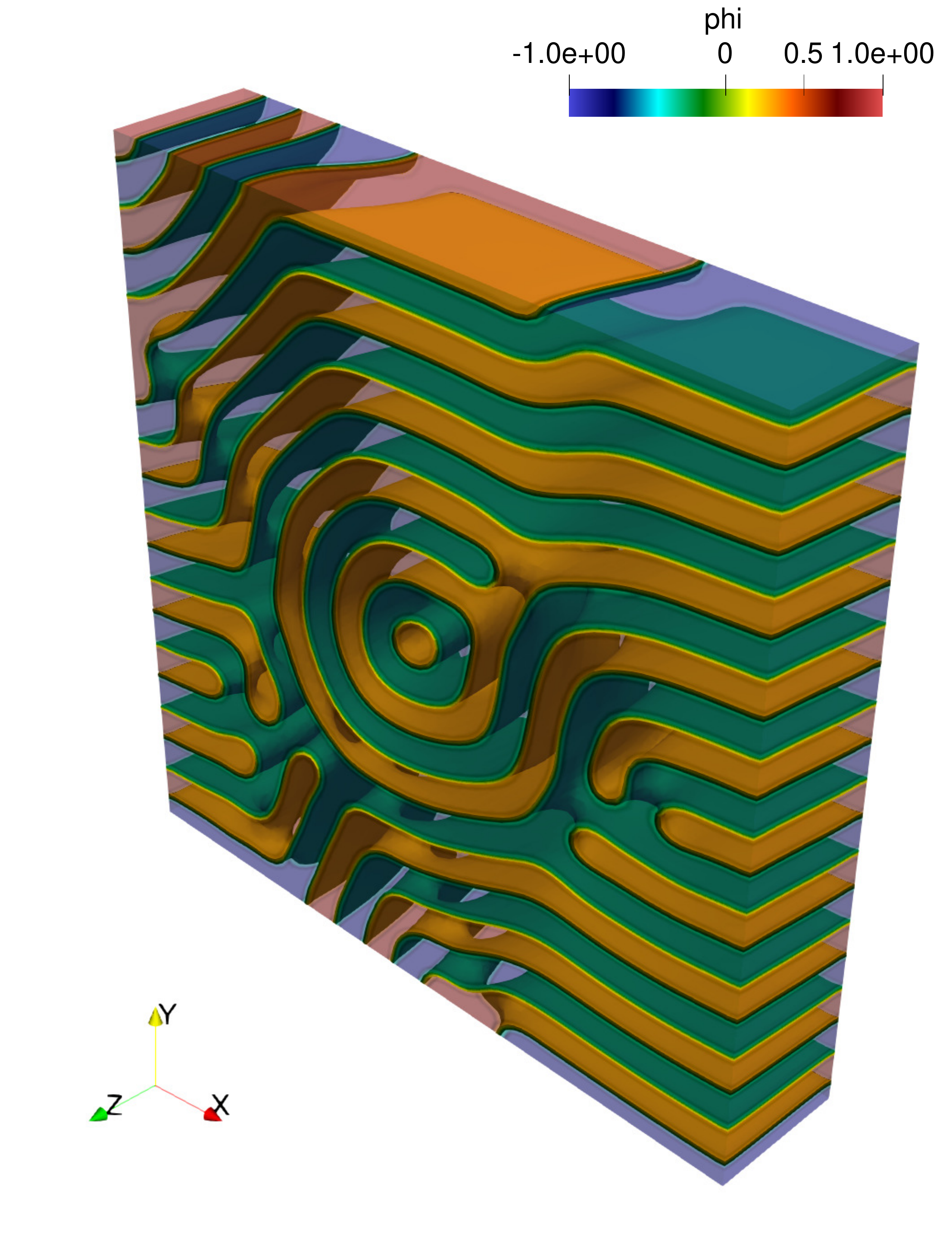}
\caption{Evolution of $\phi$ at times $t=0, 0.0002, 0.0004, 0.0007, 0.001, 0.0015, 0.005, 0.015, 0.025, 0.05, 0.075$ and $0.1$ (from left to right and top to bottom) taking $M=0.1$, $g_2=1$, $h_0=0.5$, $\lambda=0.01$ and $\beta=5$.} \label{fig:3Ddynamics}
\end{center}
\end{figure}

\begin{figure}[h]
\begin{center}
\includegraphics[width=0.45\textwidth]{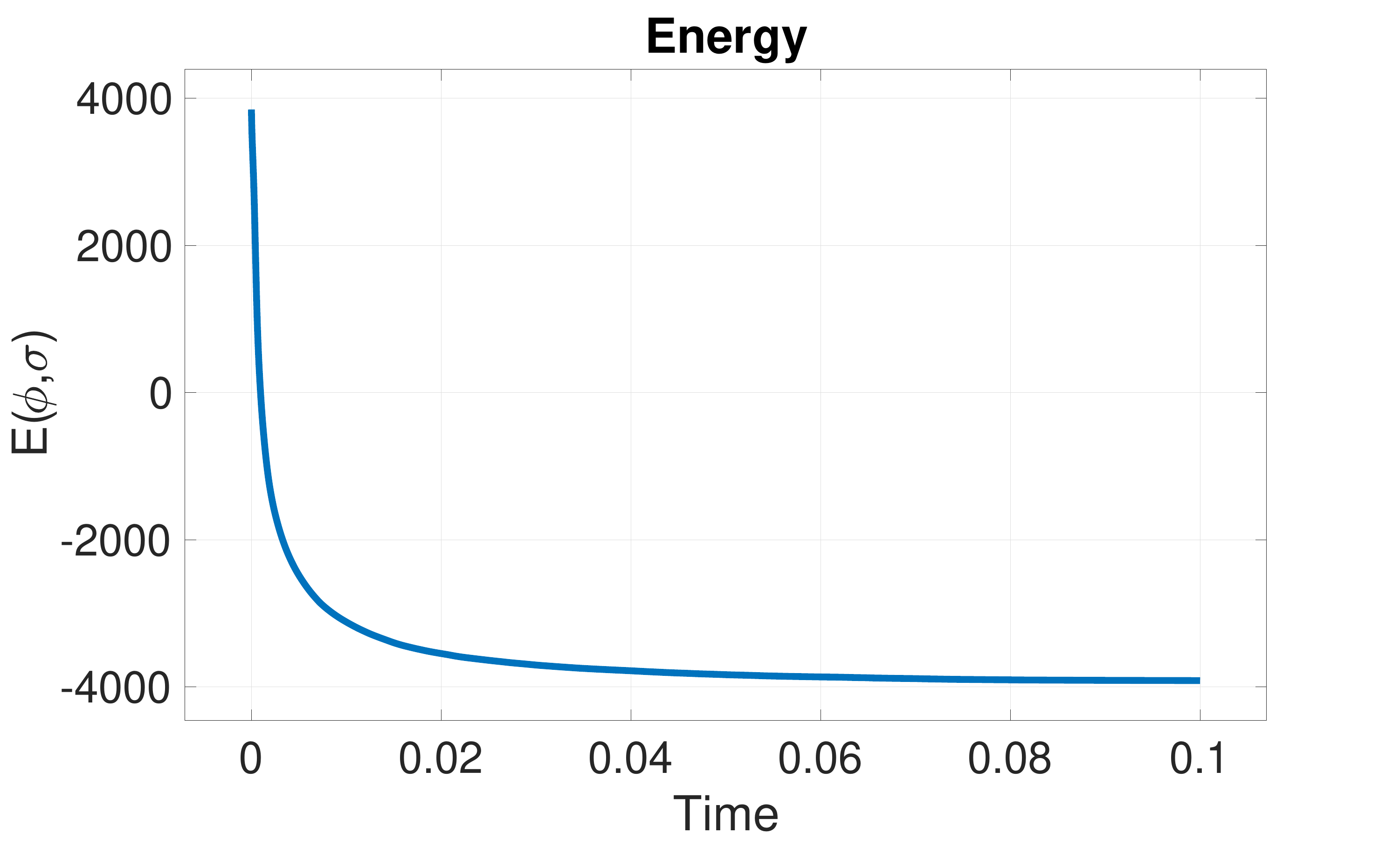}
\includegraphics[width=0.45\textwidth]{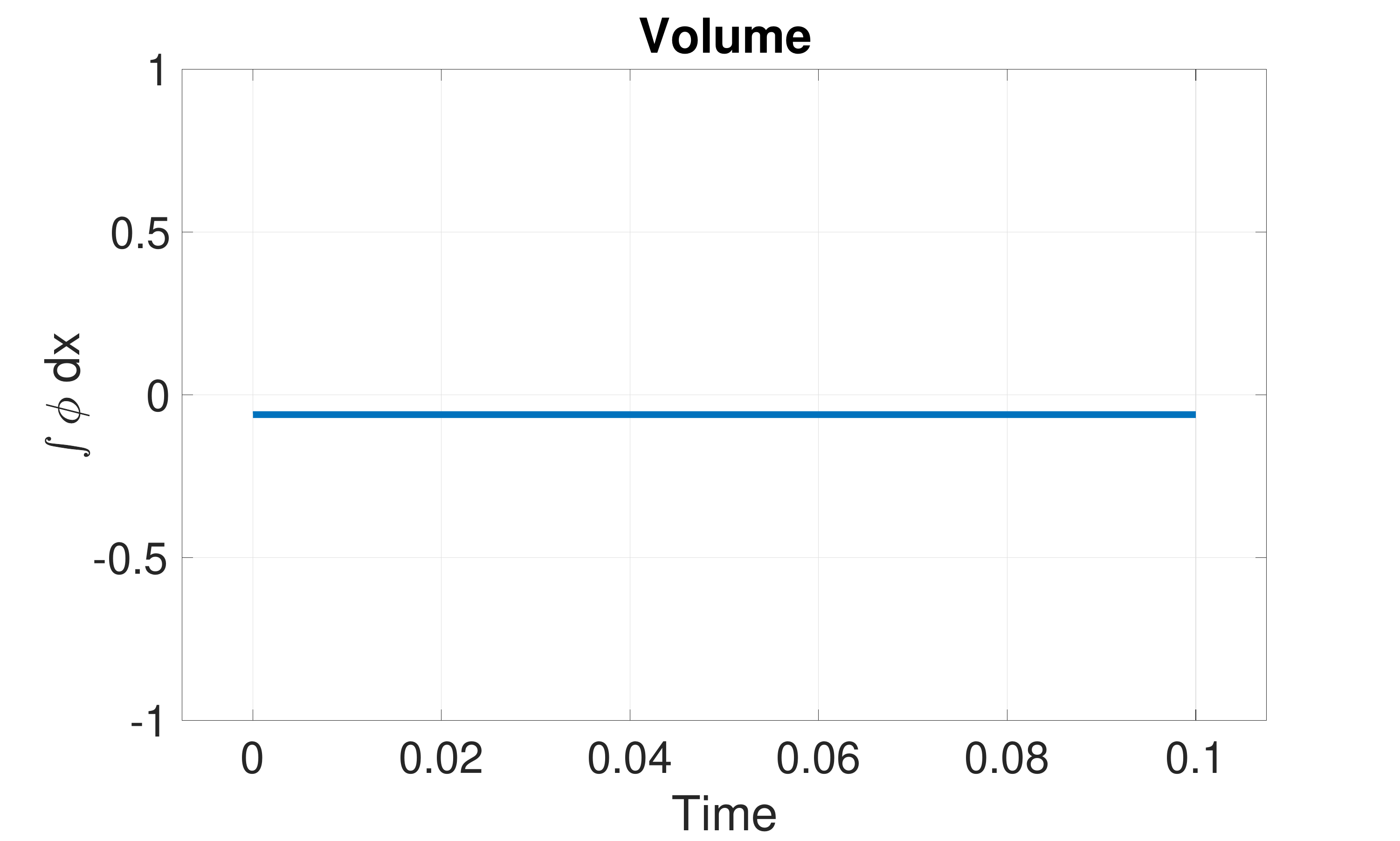}
\\
\includegraphics[width=0.45\textwidth]{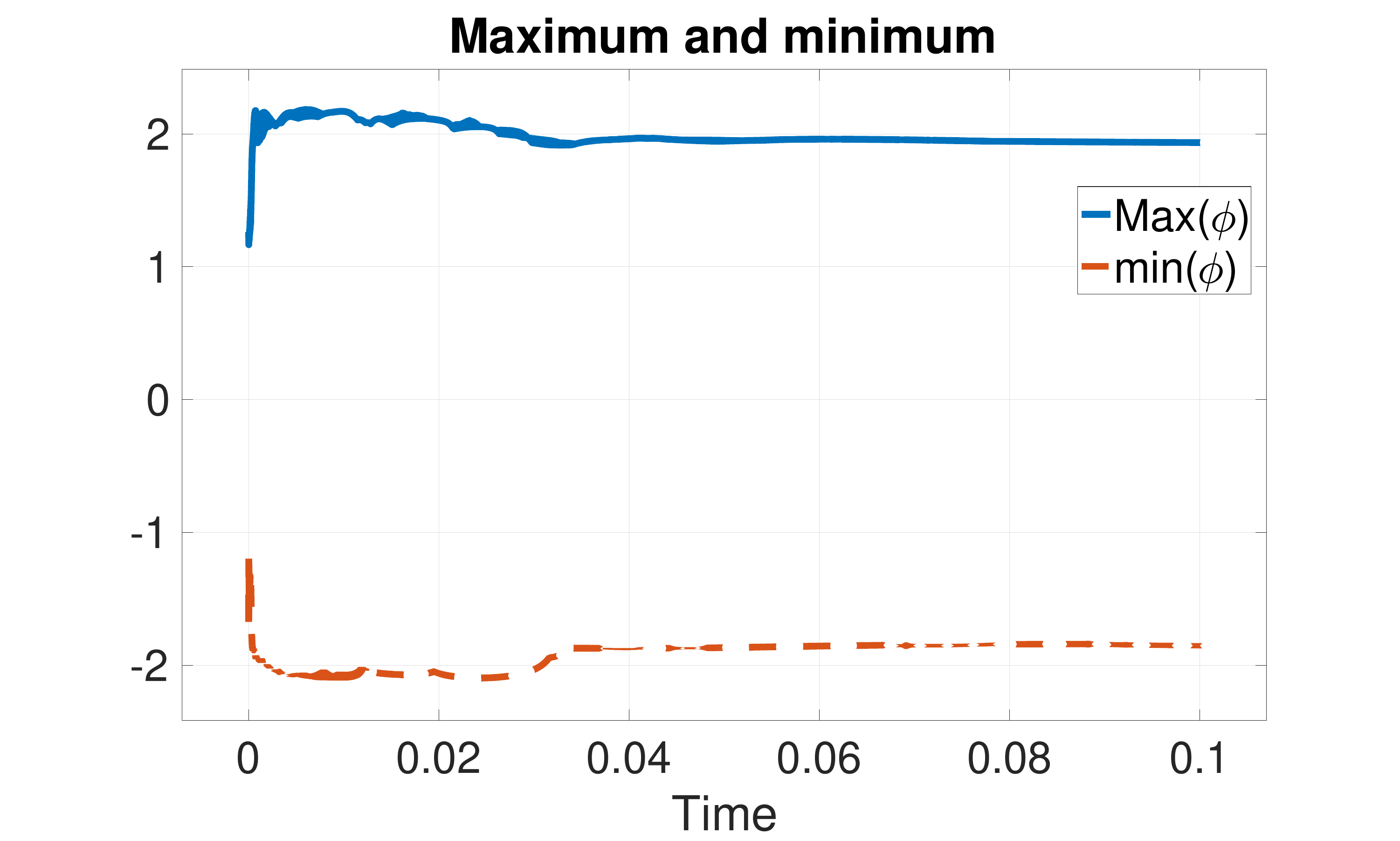}
\includegraphics[width=0.45\textwidth]{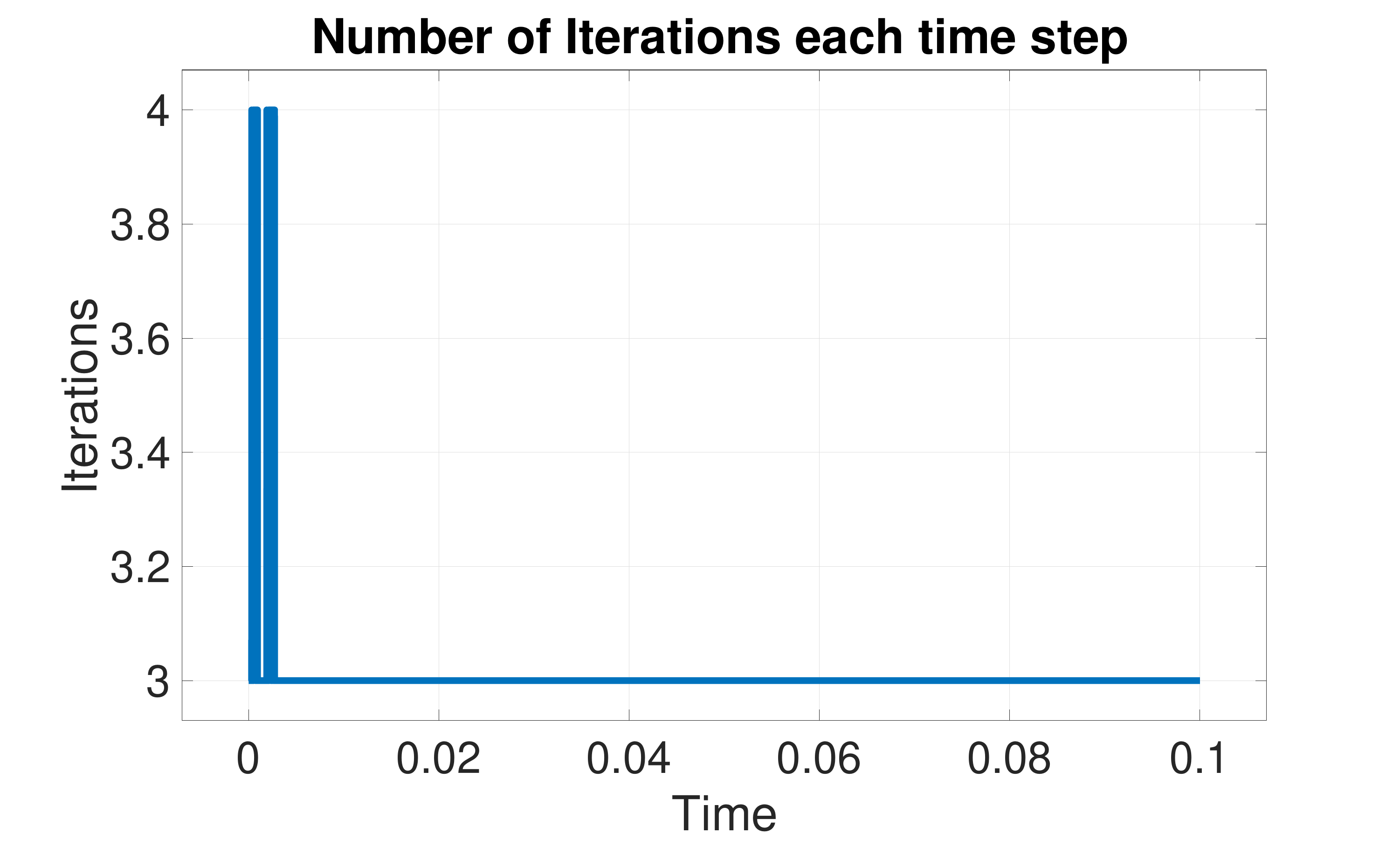}
\caption{Evolution in time of the energy (top left), volume of $\phi$ (top right), maximum and minimum of $\phi$ (bottom left) and the number of iterations to achieve tolerance $\texttt{TOL}=10^{-5}$ (bottom right)  taking $M=0.1$, $g_2=1$, $g_0=-4$, $h_0=0.5$, $\beta=5$ and $\lambda=0.01$.} \label{fig:3Dplot}
\end{center}
\end{figure}

\section{Conclusions}\label{sec:conclusions}
In this paper, we proposed a new numerical scheme that is second order and is energy stable with no  numerical dissipation added to the energy law. We note that unlike the existing numerical schemes for this model, our scheme has demonstrated its ability of capturing the dynamics for a wide range of physical parameters and even in three dimensions. We remark that the mathematical model does not satisfy a maximum principle, nor does it provide any information related to the surfactant concentration. It is a part of our future work to modify the existing model so that it overcomes these two shortcomings. Ideas to circumvent the first drawback include considering a degenerate mobility or a singular potential. Furthermore, a natural way to keep track of the surfactant concentration is to introduce an additional scalar order parameter, as proposed in Chapter 3 of~\cite{GompperSchick94}.

\section*{Acknowledgments}
The first author NSS would like to acknowledge the support of NSF Grant No. DMS-2110774.
\bibliographystyle{siamplain}
\bibliography{references-short}
\end{document}